\renewcommand*{\backrefalt}[4]{%
    \ifcase #1 \footnotesize{(Not cited.)}%
    \or        \footnotesize{(Cited on page~#2)}%
    \else      \footnotesize{(Cited on pages~#2)}%
    \fi}
\definecolor{bgcolor}{rgb}{0.8,1,1}
\definecolor{bgcolor2}{rgb}{0.8,1,0.8}
\definecolor{niceblue}{rgb}{0.0,0.19,0.56}
\newcommand{\cmark}{{\color{PineGreen}\ding{51}}}%
\newcommand{\xmark}{{\color{BrickRed}\ding{55}}}%
\newcommand{\R}{\mathbb{R}}
\newcommand{\eqdef}{\stackrel{\text{def}}{=}}
\def\<#1,#2>{\left\langle #1,#2\right\rangle}
\newcolumntype{Y}{>{\centering\arraybackslash}X}
\newtheorem{lemma}{Lemma}[section]
\newtheorem{theorem}{Theorem}[section]
\newtheorem{assumption}{Assumption}[section]
\newtheorem{corollary}{Corollary}[section]
\theoremstyle{plain}
\newcommand\figscale{0.25}
\newcommand\evofigscale{0.21}
\newcommand{\algname}[1]{{\sf  #1}\xspace}
\newcommand{\circledOne}{\text{\ding{172}}}
\newcommand{\circledTwo}{\text{\ding{173}}}
\newcommand{\circledThree}{\text{\ding{174}}}
\newcommand{\circledFour}{\text{\ding{175}}}
\newcommand{\circledFive}{\text{\ding{176}}}
\newcommand{\circledSix}{\text{\ding{177}}}
\newcommand{\circledSeven}{\text{\ding{178}}}
\newcommand{\circledEight}{\text{\ding{179}}}
\newcommand{\cD}{{\cal D}}
\newcommand{\cO}{{\cal O}}
\newcommand{\cP}{{\cal P}}
\newcommand{\cX}{{\cal X}}
\newcommand{\EE}{\mathbb{E}}
\newcommand{\PP}{\mathbb{P}}
\newcommand{\tx}{\widetilde{x}}
\newcommand{\tX}{\widetilde{X}}
\newcommand{\tF}{\widetilde{F}}
\def\Bxi{\boldsymbol{\xi}}
\def\clip{\texttt{clip}}
\def\avg{\texttt{avg}}
\def\gap{\texttt{Gap}}
\newlength{\dhatheight}
\newcommand{\pd}[1]{{\color{black}#1}}
\newcommand{\ggi}[1]{{\color{black}#1}}
\title{Clipped Stochastic Methods \\ for Variational Inequalities with Heavy-Tailed Noise}
\author{%
  Eduard Gorbunov\thanks{Equal contribution.}~~\thanks{Corresponding author: \texttt{eduard.gorbunov@mbzuai.ac.ae}}\\
  MIPT, Russia\\
  Mila \& UdeM, Canada\\
  MBZUAI, UAE\\
  \And
  Marina Danilova$^\ast$\\
  MIPT, Russia\\
  \And
  David Dobre$^\ast$\\
  Mila \& UdeM, Canada\\
  \And
  Pavel Dvurechensky\\
  WIAS, Germany\\
  \And
  Alexander Gasnikov\\
  MIPT, Russia\\
  HSE University, Russia\\
  IITP RAS, Russia\\
  \And
  Gauthier Gidel \\
  Mila \& UdeM, Canada\\
  Canada CIFAR AI Chair \\
}
\begin{document}

\maketitle

\begin{abstract}
    Stochastic first-order methods such as Stochastic Extragradient (\algname{SEG}) or Stochastic Gradient Descent-Ascent (\algname{SGDA}) for solving smooth minimax problems and, more generally, variational inequality problems \eqref{eq:main_problem} have been gaining a lot of attention in recent years due to the growing popularity of adversarial formulations in machine learning. However, while high-probability convergence bounds are known to reflect the actual behavior of stochastic methods more accurately, most convergence results are provided in expectation. Moreover, the only known high-probability complexity results have been derived under restrictive sub-Gaussian (light-tailed) noise and bounded domain assumption~\citep{juditsky2011solving}. In this work, we prove the first high-probability complexity results with logarithmic dependence on the confidence level for stochastic methods for solving monotone and structured non-monotone \ref{eq:main_problem}s with non-sub-Gaussian (heavy-tailed) noise and unbounded domains. In the monotone case, our results match the best-known ones in the light-tails case \citep{juditsky2011solving}, and are novel for structured non-monotone problems such as negative comonotone, quasi-strongly monotone, and/or star-cocoercive ones. We achieve these results by studying \algname{SEG} and \algname{SGDA} with clipping. In addition, we numerically validate that the gradient noise of many practical GAN formulations is heavy-tailed and show that clipping improves the performance of \algname{SEG}/\algname{SGDA}.
\end{abstract}

\section{Introduction}

Recently, game formulations have been receiving a lot of interest from the optimization and machine learning communities. In such problems, different models/players competitively minimize their loss functions, e.g., see adversarial example games~\citep{bose2020adversarial}, hierarchical reinforcement learning~\citep{wayne2014hierarchical,vezhnevets2017feudal}, and  generative adversarial networks (GANs)~\citep{goodfellow2014generative}. Very often, such problems are studied through the lens of solving a \emph{variational inequality problem} (VIP) \citep{harker1990finite, ryu2021large, gidel2019variational}. In the unconstrained case, for an operator\footnote{For example, when we deal with a differentiable game/minimax problem, $F$ can be chosen as the concatenation of the gradients of the players' objective functions, e.g., see \citep{gidel2019variational} for the details.} $F:\R^d \to \R^d$ VIP can be written as follows:
\begin{equation}
    \text{find } x^* \in \mathcal{X}^* \qquad \text{where} \qquad \mathcal{X}^* := \{ x^* \in \R^d \;\; \text{such that}\;\; F(x^*) = 0\}. \label{eq:main_problem} \tag{VIP}
\end{equation}
In machine learning applications, operator $F$ usually has an expectation form, i.e., $F(x) = \EE_{\xi}[F_\xi(x)]$, where $x$ corresponds to the parameters of the model, $\xi$ is a sample from some (possibly unknown) distribution $\cD$, and $F_{\xi}(x)$ is the operator corresponding to the sample $\xi$.

Such problems are typically solved via first-order stochastic methods such as Stochastic Extragradient (\algname{SEG}), also known as \algname{Mirror-Prox} algorithm \citep{juditsky2011solving}, or Stochastic Gradient Descent-Ascent (\algname{SGDA}) \citep{dem1972numerical, nemirovski2009robust} due to their practical efficiency. However, despite the significant attention to these methods and their modifications, their convergence is usually analyzed in expectation only, e.g., see \citep{gidel2019variational, hsieh2019convergence, hsieh2020explore, mishchenko2020revisiting, loizou2021stochastic}. In contrast, while high-probability analysis more accurately reflects the behavior of the stochastic methods \citep{gorbunov2020stochastic}, a little is known about it in the context of solving \ref{eq:main_problem}. To the best of our knowledge, there is only one work addressing this question for monotone variational inequalities \citep{juditsky2011solving}. However, \citep{juditsky2011solving} derive their results under the assumption that the problem has ``light-tailed'' (sub-Gaussian) noise and bounded domain, which is restrictive even for simple classes of minimization problems \citep{zhang2020adaptive}. This leads us to the following open question.
\begin{gather*}
    \textbf{{\color{blue} Q1}: }\textit{Is it possible to achieve the same high-probability results as in \citep{juditsky2011solving}}\\
    \textit{without assuming that the noise is sub-Gaussian and the domain is bounded?}
\end{gather*}
\looseness=-1
Next, in the context of GANs' training, empirical investigation~\citep{jelassi2022adam} and practical use~\citep{gulrajani2017improved,miyato2018spectral,tran2019self,brock2019large,Sauer2021ARXIV} indicate the practical superiority of \algname{Adam}-based methods, e.g., alternating \algname{SGDA} with stochastic estimators from \algname{Adam} \citep{kingma2014adam}, over classical methods like \algname{SEG} or (alternating) \algname{SGDA}. This interesting phenomenon has no rigorous theoretical explanation yet. In contrast, there exists a partial understanding of why \algname{Adam}-like methods perform well in different tasks such as training attention models \citep{zhang2020adaptive}. In particular, \citet{zhang2020adaptive} empirically observe that stochastic gradient noise is heavy-tailed in several NLP tasks and theoretically shows that vanilla \algname{SGD} \citep{robbins1951stochastic} can diverge in such cases and its version with gradient clipping (\algname{clipped-SGD}) \citep{pascanu2013difficulty} converges. Moreover, the state-of-the-art high-probability convergence results for heavy-tailed stochastic minimization problems are also obtained for the methods with gradient clipping \citep{nazin2019algorithms, gorbunov2020stochastic, gorbunov2021near, cutkosky2021high}. Since \algname{Adam} can be seen as a version of adaptive \algname{clipped-SGD} with momentum \citep{zhang2020adaptive}, these advances establish the connection between good performance of \algname{Adam}, heavy-tailed gradient noise, and gradient clipping for \emph{minimization problems}. Motivated by these recent advances in understanding the superiority of \algname{Adam} \pd{for minimization problems}, we formulate the following research question.
\begin{gather*}
    \textbf{{\color{blue} Q2}: }\textit{In the training of popular GANs, does the gradient noise have heavy-tailed distribution}\\
    \textit{and does gradient clipping improve the performance of the classical \algname{SEG}/\algname{SGDA}?}
\end{gather*}
In this paper, we give positive answers to {\color{blue}\bf Q1} and {\color{blue}\bf Q2}. In particular, we derive high-probability results for \algname{clipped-SEG} and \algname{clipped-SDGA} for monotone and structured non-monotone \ref{eq:main_problem}s with non-sub-Gaussian noise, validate that the gradient noise in several GANs formulations is indeed heavy-tailed, and show that gradient clipping does significantly improve the results of \algname{SEG}/\algname{SGDA} in these tasks. That is, our work closes a noticeable gap in theory of stochastic methods for solving \ref{eq:main_problem}s.

\subsection{Technical Preliminaries}\label{sec:techicalities}

Before we summarize our main contributions, we introduce some notations and technical assumptions.

\looseness=-1
\textbf{Notation.}
Throughout the text $\langle x, y \rangle$ is the standard inner-product, $\|x\| = \sqrt{\langle x, x \rangle}$ denotes $\ell_2$-norm, $B_{r}(x) = \{u\in \R^d\mid \|u - x\| \leq r\}$. $\EE[X]$ and $\EE_{\xi}[X]$ are full and conditional expectations w.r.t.\ the randomness coming from $\xi$ of random variable $X$. $\PP\{E\}$ denotes the probability of event $E$. $R_0 = \|x^0 - x^*\|$ denotes the distance between the starting point $x^0$ of a method and the solution $x^*$ of \ref{eq:main_problem}.\footnote{If not specified, we assume that $x^*$ is the projection of $x^0$ to the solution set of \ref{eq:main_problem}.}

\textbf{High-probability convergence for \ref{eq:main_problem}.} For deterministic \ref{eq:main_problem}s there exist several convergence metrics $\cP(x)$. These metrics include restricted gap-function \citep{nesterov2007dual} $\gap_R(x) := \max_{y\in B_{R}(x^*)}\langle F(y), x - y \rangle$, (averaged) squared norm of the operator $\|F(x)\|^2$, and squared distance to the solution $\|x - x^*\|^2$. Depending on the assumptions on the problem, one or another criterion is preferable. For example, for monotone and strongly monotone problems $\gap_R(x)$ and $\|x-x^*\|^2$ are valid metrics of convergence, while in the non-monotone case, one typically has to use $\|F(x)\|^2$.

In the stochastic case, one needs either to upper bound $\EE[\cP(x)]$ or to derive a bound on $\cP(x)$ that holds with some probability. In this work, we focus on the second type of bounds. That is, for a given confidence level $\beta \in (0,1]$, we aim at deriving bounds on $\cP(x)$ that hold with probability at least $1 - \beta$, where $x$ is produced by \algname{clipped-SEG}/\algname{clipped-SGDA}. However, to achieve this goal one has to introduce some assumptions on the stochastic noise such as \emph{bounded variance} assumption, which is standard in the stochastic optimization literature \citep{ghadimi2012optimal, ghadimi2013stochastic, juditsky2011first, nemirovski2009robust}. We rely on a weaker version of this assumption.
\begin{assumption}[Bounded Variance]\label{as:UBV}
    There exists a bounded set $Q \subseteq \R^d$ and $\sigma \geq 0$ such that
    \begin{equation}
        \EE\left[\|F_\xi(x) - F(x)\|^2\right] \leq \sigma^2,\quad \forall\; x \in Q. \label{eq:UBV}
    \end{equation}
\end{assumption}
In contrast to most of the existing works assuming \eqref{eq:UBV} on the whole space/domain, we need \eqref{eq:UBV} to hold only on some \emph{bounded} set $Q$. More precisely, in the analysis, we rely on \eqref{eq:UBV} only on some ball $B_r(x^*)$ around the solution $x^*$ and radius $r \sim R_0$. Although we consider an \emph{unconstrained} \ref{eq:main_problem}, we manage to show that the iterates of \algname{clipped-SEG}/\algname{clipped-SGDA} stay inside this ball with high probability. Therefore, for achieving our purposes, it is sufficient to make all the assumptions on the problem only in some ball around the solution.

We also notice that the majority of existing works providing high-probability analysis with logarithmic dependence\footnote{Using Markov inequality, one can easily derive high-probability bounds with non-desirable polynomial dependence on $\nicefrac{1}{\beta}$, e.g., see the discussion in \citep{davis2021low, gorbunov2021near}.} on $\nicefrac{1}{\beta}$ rely on the so-called \emph{light tails} assumption: $\EE[\exp(\nicefrac{\|F_\xi(x) - F(x)\|^2}{\sigma^2})] \leq \exp(1)$ meaning that the noise has a sub-Gaussian distribution. This assumption always implies \eqref{eq:UBV} but not vice versa. However, even for minimization problems, there are only few works that do not rely on the light tails assumption \citep{nazin2019algorithms, davis2021low, gorbunov2020stochastic, gorbunov2021near, cutkosky2021high}. In the context of solving \ref{eq:main_problem}s, the existing high-probability guarantees in \citep{juditsky2011solving} rely on the light tails assumption.

\textbf{Optimization properties.} We also need to introduce several assumptions about the operator $F$. We start with the standard Lipschitzness. As we write above, all assumptions are introduced only on some bounded set $Q$, which will be specified later.

\begin{assumption}[Lipschitzness]\label{as:lipschitzness}
    Operator $F:\R^d \to \R^d$ is $L$-Lipschitz on $Q\subseteq \R^d$, i.e.,
    \begin{gather}
        \|F(x) - F(y)\| \leq L \|x - y\|, \quad \forall\; x,y\in Q. \label{eq:Lipschitzness} \tag{Lip}
    \end{gather}
\end{assumption}

Next, we need to introduce some assumptions\footnote{Each complexity result, which we derive, relies only on one or two of these assumptions simultaneously.} on the monotonicity of $F$, since approximating local first-order optimal solutions is intractable in the general non-monotone case \citep{daskalakis2021complexity, diakonikolas2021efficient}. We start with the standard monotonicity assumption and its relaxations.

\begin{assumption}[Monotonicity]\label{as:monotonicity}
    Operator $F:\R^d \to \R^d$ is monotone on $Q\subseteq \R^d$, i.e.,
    \begin{gather}
        \langle F(x) - F(y), x - y \rangle \geq 0, \quad \forall\; x,y\in Q. \label{eq:monotonicity} \tag{Mon}
    \vspace{-2mm}
    \end{gather}
\end{assumption}
\begin{assumption}[Star-Negative Comonotonicity]\label{as:negative_comon}
    Operator $F:\R^d \to \R^d$ is $\rho$-star-negatively comonotone on $Q\subseteq \R^d$ for some $\rho \in [0, +\infty)$, i.e., for any $x^*$ we have
    \begin{gather}
        \langle F(x), x - x^* \rangle \geq -\rho\|F(x)\|^2, \quad \forall\; x\in Q. \label{eq:negative_comon} \tag{SNC}
    \end{gather}
    \ggi{When $\rho = 0$, the operator $F$ is called star-monotone (\emph{SM}) on $Q$.}
\end{assumption}
\eqref{eq:negative_comon} is also known as weak Minty condition \citep{diakonikolas2021efficient}, a relaxation of negative comonotonicity \citep{bauschke2021generalized}. Another name of star-monotonicity (SM) is variational stability condition \citep{hsieh2020explore}.
The following assumption is a relaxation of strong monotonicity.
\begin{assumption}[Quasi-Strong Monotonicity]\label{as:str_monotonicity}
    Operator $F:\R^d \to \R^d$ is $\mu$-quasi strongly monotone on $Q\subseteq \R^d$ for some $\mu \geq 0$, i.e.,
    \begin{gather}
        \langle F(x), x - x^* \rangle \geq \mu\|x - x^*\|^2, \quad \forall\, x\in Q, \label{eq:str_monotonicity} 
        \quad \text{where $x^*$ is the unique solution of \eqref{eq:main_problem}.}
        \tag{QSM}
    \end{gather}
\end{assumption}
Under this name, the above assumption is introduced by \citep{loizou2021stochastic}, but it is also known as strong coherent \cite{song2020optimistic} and strong stability \citep{mertikopoulos2019learning} conditions. Moreover, unlike strong monotonicity that always implies monotonicity, \eqref{eq:str_monotonicity} can hold for non-monotone operators \citep[Appendix A.6]{loizou2021stochastic}. However, in contrast to \eqref{eq:monotonicity}, \eqref{eq:str_monotonicity} allows to achieve linear convergence\footnote{Linear convergence can be also achieved under different relatively weak assumptions such as sufficient bilinearity \citep{abernethy2019last,loizou2020stochastic} or error bound condition \citep{hsieh2020explore}.} of deterministic methods for solving \ref{eq:main_problem}.

Finally, we consider a relaxation of standard cocoercivity: $\ell\langle F(x) - F(y), x - y \rangle \geq \|F(x) - F(y)\|^2$.
\begin{assumption}[Star-Cocoercivity]\label{as:star_cocoercivity}
    Operator $F:\R^d \to \R^d$ is $\ell$-star-cocoercive on $Q\subseteq \R^d$ for some $\ell > 0$, i.e.,
    \begin{gather}
        \|F(x)\|^2 \leq \ell\langle F(x), x - x^* \rangle, \quad \forall\, x \in Q, \label{eq:star_cocoercivity} 
        \quad \text{where $x^*$ is the projection of $x$ on $\mathcal{X}^*$.}
        \tag{SC}
    \end{gather}
\end{assumption}

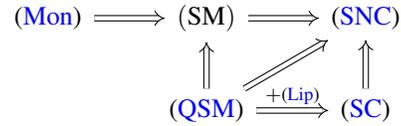
\begin{wrapfigure}{o}{0.4\textwidth}
\vspace{-6mm}
    \centering
    \begin{tikzcd} 
        \eqref{eq:monotonicity} \arrow[Rightarrow, r]& (\text{SM}) \arrow[Rightarrow, r] & \eqref{eq:negative_comon}\\
        & \eqref{eq:str_monotonicity} \arrow[Rightarrow, r, "+\eqref{eq:Lipschitzness}"] \arrow[Rightarrow, u] \arrow[Rightarrow, ru] & \eqref{eq:star_cocoercivity} \arrow[Rightarrow, u] 
    \end{tikzcd}
    \caption{\small
    \ggi{Relation between the assumptions on the structured non-monotonicity of the problem.}}
    \label{fig:assumption_relation}
    \vspace{-1cm}
\end{wrapfigure}
This assumption is introduced by \citep{loizou2021stochastic}. One can construct an operator $F$ being star-cocoercive, but not cocoercive \citep{gorbunov2022extragradient}. Moreover, although cocoercivity implies monotonicity and Lipschitzness, there exist operators satisfying \eqref{eq:star_cocoercivity}, but neither \eqref{eq:monotonicity} nor \eqref{eq:Lipschitzness} \citep[\S A.6]{loizou2021stochastic}. We summarize the relations between the assumptions in Fig.~\ref{fig:assumption_relation}.

\subsection{Our Contributions}\label{sec:contributions}
\begin{table*}[t]
    \centering
    \scriptsize
    \vspace{-1cm}
    \caption{\scriptsize Summary of known and new high-probability complexity results for solving variational inequalities. Column ``Setup'' indicates the additional assumptions in addition to Assumption~\ref{as:UBV}. All assumptions are made only on some ball around the solution with radius $\sim R_0$ (unless the opposite is indicated). By the complexity we mean the number of \pd{stochastic} oracle calls needed for a method to guarantee that $\PP\{\text{Metric} \leq \varepsilon\} \geq 1 - \beta$ for some $\varepsilon> 0$, $\beta \in (0,1]$ and ``Metric'' is taken from the corresponding column. For simplicity, we omit numerical and logarithmic factors in the complexity bounds. Column ``HT?'' indicates whether the result is derived in the heavy-tailed case (without assuming that the noise is sub-Gaussian) and column ``UD?'' shows whether the analysis works on unbounded domains. Notation: $\tx^K_{\text{avg}} = \frac{1}{K+1}\sum_{k=0}^K \tx^k$ (for \ref{eq:clipped_SEG}), $x^K_{\text{avg}} = \frac{1}{K+1}\sum_{k=0}^K x^k$ (for \ref{eq:clipped_SGDA}), $L$ = Lipschitz constant; $D$ = diameter of the domain (used in \citep{juditsky2011solving}); $\gap_{D}(x) = \max_{y \in \cX}\langle F(y), x - y\rangle$, where $\cX$ is a bounded domain with diameter $D$ where the problem is defined (used in \citep{juditsky2011solving}); $\sigma^2$ = bound on the variance (in the results from \citep{juditsky2011solving} $\sigma^2$ is a sub-Gaussian variance); $R$ = any upper bound on $\|x^0 - x^*\|$; $\mu$ = quasi-strong monotonicity parameter; $\ell$ = star-cocoercivity parameter.}
    \label{tab:comparison_of_rates}
    \begin{threeparttable}
        \begin{tabular}{|c|c c c c c c|}
        \hline
        Setup & Method & Citation & Metric & Complexity & HT? & UD?\\
        \hline\hline
        \multirow{2.5}{*}{\eqref{eq:monotonicity}+\eqref{eq:Lipschitzness}} & \algname{Mirror-Prox} & \citep{juditsky2011solving}\tnote{\color{blue}(1)} & $\gap_D(\tx^K_{\text{avg}})$ & $\max\left\{\frac{LD^2}{\varepsilon}, \frac{\sigma^2 D^2}{\varepsilon^2}\right\}$ & \xmark & \xmark \\
        & \ref{eq:clipped_SEG} & Thm.~\ref{thm:main_result_gap_SEG} \& Cor.~\ref{cor:main_result_gap_SEG} & $\gap_R(\tx^K_{\text{avg}})$ & $\max\left\{\frac{LR^2}{\varepsilon}, \frac{\sigma^2 R^2}{\varepsilon^2}\right\}$  & \cmark & \cmark\\
        \hline
        \eqref{eq:negative_comon}+\eqref{eq:Lipschitzness} & \ref{eq:clipped_SEG} & Thm.~\ref{thm:main_result_avg_sq_norm_SEG} \& Cor.~\ref{cor:main_result_avg_sq_norm_SEG} \tnote{{\color{blue} (2)}} & $\frac{1}{K+1}\sum\limits_{k=0}^K \|F(x^k)\|^2$ & $L^2\max\left\{\frac{R^2}{\varepsilon}, \frac{\sigma^2 R^2}{\varepsilon^2}\right\}$ & \cmark & \cmark\\
        \hline
        \eqref{eq:str_monotonicity}+\eqref{eq:Lipschitzness} & \ref{eq:clipped_SEG} & Thm.~\ref{thm:main_result_str_mon_SEG} \& Cor.~\ref{cor:main_result_SEG_str_mon} & $\|x^K - x^*\|^2$ & $\max\left\{\frac{L}{\mu}, \frac{\sigma^2}{\mu\varepsilon}\right\}$ & \cmark & \cmark\\
        \hline\hline
        \eqref{eq:monotonicity}+\eqref{eq:star_cocoercivity} & \ref{eq:clipped_SGDA} & Thm.~\ref{thm:main_result_SGDA} \& Cor.~\ref{cor:main_result_SGDA} & $\gap_R(x^K_{\text{avg}})$ & $\max\left\{\frac{\ell R^2}{\varepsilon}, \frac{\sigma^2 R^2}{\varepsilon^2}\right\}$ & \cmark & \cmark\\
        \hline
        \eqref{eq:star_cocoercivity} & \ref{eq:clipped_SGDA} & Thm.~\ref{thm:main_result_SGDA_sq_norm} \& Cor.~\ref{cor:main_result_SGDA_sq_norm} & $\frac{1}{K+1}\sum\limits_{k=0}^K \|F(x^k)\|^2$ & $\ell^2\max\left\{\frac{R^2}{\varepsilon}, \frac{\sigma^2 R^2}{\varepsilon^2}\right\}$ & \cmark & \cmark\\
        \hline
        \eqref{eq:str_monotonicity}+\eqref{eq:star_cocoercivity} & \ref{eq:clipped_SGDA} & Thm.~\ref{thm:main_result_SGDA_str_mon} \& Cor.~\ref{cor:main_result_SGDA_str_mon} & $\|x^K - x^*\|^2$ & $\max\left\{\frac{\ell}{\mu}, \frac{\sigma^2}{\mu\varepsilon}\right\}$ & \cmark & \cmark\\
        \hline
    \end{tabular}
    \begin{tablenotes}
        {\scriptsize \item [{\color{blue}(1)}] Monotonicity and Lipschitzness of $F$ are assumed on the whole domain.
        
        \item [{\color{blue}(2)}] The results holds for any $0 \leq \rho \leq \nicefrac{1}{(640 L A)}$, where $A = \ln \frac{8(K+1)}{\beta}$, if parameters of the method are set properly. Moreover, batchsizes should be large enough (see Thm.~\ref{thm:SEG_meta_theorem} and \ref{thm:main_result_avg_sq_norm_SEG} for the details).
        }
    \end{tablenotes}
    \end{threeparttable}
    \vspace{-3mm}
\end{table*}

\textbf{$\diamond$ New high-probability results for \ref{eq:main_problem}s with heavy-tailed noise.} In our work, we circumvent the limitations of the existing high-probability analysis of stochastic methods for solving \ref{eq:main_problem}s \citep{juditsky2011solving} and derive the first high-probability results for the methods that solve monotone \ref{eq:main_problem}s with heavy-tailed noise in the unconstrained case. The key algorithmic ingredient helping us to achieve these results is a proper modification of \algname{SEG} and \algname{SGDA} based on the \emph{gradient clipping} \pd{and leading to our } \ref{eq:clipped_SEG} and \ref{eq:clipped_SGDA}. Moreover, we derive several high-probability results for \ref{eq:clipped_SEG}/\ref{eq:clipped_SGDA} applied to solve structured non-monotone \ref{eq:main_problem}s. To the best of our knowledge, these results do not have analogs even under the light tails assumption. We summarize the derived complexity results in Tbl.~\ref{tab:comparison_of_rates}.

\textbf{$\diamond$ Tight analysis.} The derived complexities satisfy a desirable for high-probability results property: they have logarithmic dependence on $\nicefrac{1}{\beta}$, where $\beta$ is a confidence level. Next, up to the logarithmic factors, our results match known lower bounds\footnote{These lower bounds are derived for the convergence in expectation. Deriving tight lower bounds for the convergence with high-probability for solving stochastic \ref{eq:main_problem}s is an open question.} in monotone and strongly monotone cases \citep{beznosikov2020distributed}. Moreover, we recover and even improve the results from \citep{juditsky2011solving}, which are obtained in the light-tailed case, since our bounds do not depend on the diameter of the domain, see Tbl.~\ref{tab:comparison_of_rates} for the details.

\looseness=-1
\textbf{$\diamond$ Weak assumptions.} One of the key features of our theoretical results is that it relies on assumptions restricted \emph{to a ball around the solution}. We achieve this via showing that, with high probability, \algname{clipped-SEG}/\algname{clipped-SGDA} do not leave a ball (with a radius proportional to $R_0$) around the solution. In contrast, the existing works on stochastic methods for solving \ref{eq:main_problem}s usually make assumptions such as boundedness of the variance and Lipschitzness on the whole domain of the considered problem. Since many practical tasks are naturally unconstrained, such assumptions become too unrealistic since, e.g., stochastic gradients and their variance in the training of neural networks with more than $2$ layers grow polynomially fast when $x$ goes to infinity. However, for a large class of problems including the ones with polynomially growing operators, boundedness of the variance and Lipschitzness hold on \emph{any compact set}. That is, our analysis covers a broad class of problems.

\textbf{$\diamond$ Numerical experiments.} We empirically observe that heavy-tailed gradient noise arises in the training of several practical GANs formulations including StyleGAN2 and WGAN-GP. Moreover, our experiments show that gradient clipping significantly improves the convergence of \algname{SEG}/\algname{SGDA} on such tasks. These results shed a light on why \algname{Adam}-based methods are good at training GANs. Our codes are publicly available: \url{https://github.com/busycalibrating/clipped-stochastic-methods}.

\subsection{Closely Related Work}\label{sec:related_work}

In this section, we discuss the most closely related works. Further discussion is deferred to \S~\ref{app:extra_related_work}.

\textbf{High-probability convergence.} To the best of our knowledge, the only work deriving high-probability convergence in the context of solving \ref{eq:main_problem}s is \citep{juditsky2011solving}. In particular, \citet{juditsky2011solving} consider monotone and Lipschitz VIP defined\footnote{In this case, the goal is to find $x^* \in \cX$ such that inequality $\langle F(x^*), x - x^* \rangle \geq 0$ holds for all $x \in \cX$.} on a convex compact set $\cX$ with the diameter $D := \max_{x,y\in\cX}\|x-y\|$, and assume that the noise in $F_{\xi}(x)$ is light-tailed. In this setting, \citet{juditsky2011solving} propose a stochastic version of the celebrated Extragradient method (\algname{EG}) \citep{korpelevich1976extragradient} with non-Euclidean proximal setup -- \algname{Mirror-Prox}. Moreover, \citet{juditsky2011solving} derive that after $K = \cO(\max\{\frac{LD^2}{\varepsilon}, \frac{\sigma^2 D^2}{\varepsilon^2} \ln^2\frac{1}{\beta}\})$ \pd{stochastic oracle calls}  for some $\varepsilon > 0$, $\beta \in (0,1]$, the averaged extrapolated iterate $\tx_{\text{avg}}^K = \frac{1}{K+1}\sum_{k=0}^K \tx^k$ (see also \ref{eq:clipped_SEG}) satisfies $\gap_D(\tx_{\text{avg}}^K) \leq \varepsilon$ with probability at least $1 - \beta$. Although this result has a desirable logarithmic dependence on $\nicefrac{1}{\beta}$ and optimal dependence on $\varepsilon$ \citep{beznosikov2020distributed}, it is derived only for (i) light-tailed case and (ii) bounded domains. Our results do not have such limitations.

\section{Clipped Stochastic Extragradient}
In this section, we consider a version of \algname{SEG} with gradient clipping. That is, we apply clipping operator $\clip(y, \lambda) := \min\{1, \nicefrac{\lambda}{\|y\|}\} y$, which is defined for any $y \in \R^d$ (when $y = 0$ we set $\clip(0, \lambda) := 0$) and any clipping level $\lambda > 0$, to the mini-batched stochastic estimators used both at the extrapolation and update steps of \algname{SEG}. This results in the following iterative algorithm:
\begin{gather}
    x^{k+1} = x^k - \gamma_2 \tF_{\Bxi_2^k}(\tx^k),\quad \text{where}\quad \tx^k = x^k - \gamma_1 \tF_{\Bxi_1^k}(x^k), \tag{\algname{clipped-SEG}}\label{eq:clipped_SEG}\\
    \tF_{\Bxi_1^k}(x^k) = \ggi{\clip\left(\frac{1}{m_{1,k}}\sum\limits_{i=1}^{m_{1,k}} F_{\xi_{1}^{i,k}}(x^k), \lambda_{1,k}\right),\quad} \tF_{\Bxi_2^k}(\tx^k) = \clip\left(\frac{1}{m_{2,k}}\sum\limits_{i=1}^{m_{2,k}} F_{\xi_{2}^{i,k}}(\tx^k), \lambda_{2,k}\right), \notag
\end{gather}
where $\{\xi_1^{i,k}\}_{i=1}^{m_{1,k}}, \{\xi_2^{i,k}\}_{i=1}^{m_{2,k}}$ are independent samples from the distribution $\cD$. In the considered algorithm, clipping bounds the effect of heavy-tailedness of the gradient noise, but also creates a bias that one has to properly control in the analysis. Moreover, we allow different stepsizes $\gamma_1, \gamma_2$ \citep{hsieh2020explore, diakonikolas2021efficient, gorbunov2022stochastic}, different batchsizes $m_{1,k}, m_{2,k}$, and different clipping levels $\lambda_{1,k}, \lambda_{2,k}$. In particular, taking $\gamma_2 < \gamma_1$ is crucial for our analysis to handle \ref{eq:main_problem} satisfying star-negative comonotonicity \eqref{eq:negative_comon} with $\rho > 0$.
Our convergence results for \ref{eq:clipped_SEG} are summarized in the following theorem. \pd{For simplicity, we omit here the numerical constants, which are explicitly given in \S~\ref{app:clipped_SEG_proofs}.}
\begin{theorem}\label{thm:SEG_meta_theorem}
    Consider \ref{eq:clipped_SEG} run for $K \geq 0$ iterations. Let $R \geq R_0 = \|x^0 - x^*\|$.\newline
    \textbf{Case 1.}  Let Assump.~\ref{as:UBV}, \ref{as:lipschitzness}, \ref{as:monotonicity} hold for $Q = B_{4R}(x^*)$, where $\gamma_1 = \gamma_2 = \gamma$ with $0 < \gamma = \cO(\nicefrac{1}{(L A)})$, $\lambda_{1,k} = \lambda_{2,k} \equiv \lambda = \Theta(\nicefrac{R}{(\gamma A)})$, $m_{1,k} = m_{2,k} \equiv m = \Omega(\max\{1, \nicefrac{(K+1) \gamma^2\sigma^2 A}{R^2}\})$, where $A = \ln\frac{6(K+1)}{\beta}$, $\beta \in (0,1]$ are such that $A \geq 1$.\newline
    \textbf{Case 2.}  Let Assump.~\ref{as:UBV}, \ref{as:lipschitzness}, \ref{as:negative_comon} hold for $Q = B_{3R}(x^*)$, where $\gamma_2 + 2\rho \leq \gamma_1 \leq \cO(\nicefrac{1}{(L A)})$, $\lambda_{1,k} \equiv \lambda_1 = \Theta(\nicefrac{R}{(\gamma_1 A)})$, $\lambda_{2,k} \equiv \lambda_2 = \Theta(\nicefrac{R}{(\gamma_2 A)})$, $m_{1,k} \equiv m_1 = \Omega(\max\{1, \nicefrac{\max\{\gamma_1\gamma_2(K+1), \sqrt{\gamma_1^3\gamma_2(K+1)}\}\sigma^2 A}{R^2}\})$, $m_{2,k} \equiv m_2 = \Omega(\max\{1, \nicefrac{(K+1) \gamma_2^2\sigma^2 A}{R^2}\})$, where $A = \ln\frac{8(K+1)}{\beta}$, $\beta \in (0,1]$ are such that $A \geq 1$.\newline
    \textbf{Case 3.}  Let Assump.~\ref{as:UBV}, \ref{as:lipschitzness}, \ref{as:str_monotonicity} hold for $Q = B_{3R}(x^*)$, where $\gamma_1 = \gamma_2 = \gamma$ with $0 < \gamma \leq \cO(\nicefrac{1}{(L A)})$, $\lambda_{1,k} = \lambda_{2,k} = \lambda_k = \Theta(\nicefrac{\exp(-\gamma\mu(1+\nicefrac{k}{2}))R}{(\gamma A)})$, $m_{1,k} = m_{2,k} = m_k = \Omega(\max\{1, \nicefrac{(K+1) \gamma^2\sigma^2 A}{(\exp(-\gamma\mu k)R^2)}\})$, where $A = \ln\frac{6(K+1)}{\beta}$ , $\beta \in (0,1]$ are such that $A \geq 1$.
    
    Then, to guarantee $\gap_{R}(\tx_{\text{avg}}^K) \leq \varepsilon$ in \textbf{Case 1} with $\tx_{\text{avg}}^K = \frac{1}{K+1}\sum_{k=0}^K \tx^k$, $\frac{1}{K+1}\sum_{k=0}^{K} \|F(x^k)\|^2 \leq L\varepsilon$ in \textbf{Case 2}, $\|x^K - x^*\|^2 \leq \varepsilon$ in \textbf{Case 3}, with probability $\geq 1-\beta$ \ref{eq:clipped_SEG} requires 
    \begin{eqnarray}
        \text{\textbf{Case 1} and \textbf{2}}: &  \widetilde{\cO}\left(\max\left\{\frac{LR^2}{\varepsilon}, \frac{\sigma^2R^2}{\varepsilon^2}\right\}\right) 
        \qquad  \text{and} \qquad \text{\textbf{Case 3}:}& \widetilde{\cO}\left(\max\left\{\tfrac{L}{\mu}, \tfrac{\sigma^2}{\mu\varepsilon}\right\}\right)\,
        \label{eq:SEG_complexity_Gap}
    \end{eqnarray}
    oracle calls. The above guarantees hold in  two different regimes: large step-sizes $\gamma \approx \nicefrac{1}{LA}$ (requiring large batch-sizes), and small step-sizes, allowing small batch-sizes $m=\cO(1)$.
\end{theorem}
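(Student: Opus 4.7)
The plan is to build a one-step descent inequality for \ref{eq:clipped_SEG}, decompose the clipped stochastic noise into a bounded martingale difference plus a small bias, control the accumulated martingale difference by a Freedman-type inequality, and wrap everything inside an induction showing that the iterates stay in a fixed ball around $x^*$ with probability $\geq 1-\beta$. All three cases share this skeleton; the particular monotonicity assumption only changes the way the deterministic cross term $\langle F(\tx^k), \tx^k - x^*\rangle$ is handled, and the choice of schedules for $\gamma_i$, $\lambda_{i,k}$, $m_{i,k}$ is tuned to make the resulting recurrence close on itself.

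First I would record the key properties of the clipped mini-batched estimator. Writing $G = \tfrac{1}{m}\sum_{i=1}^m F_{\xi^i}(x)$ and $\tF = \clip(G,\lambda)$, one checks that, provided $\|F(x)\| \leq \lambda/2$ and \eqref{eq:UBV} holds at $x$, the estimator satisfies (i) $\|\tF\| \leq \lambda$ almost surely; (ii) bias $\|\EE \tF - F(x)\| \leq \cO(\sigma^2/(m\lambda))$; (iii) variance $\EE\|\tF - \EE \tF\|^2 \leq \cO(\sigma^2/m)$. These are the standard clipped-stochastic-gradient estimates, here applied to the operator $F$ and needing only $x\in Q$. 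Combined with the local Lipschitz estimate $\|F(x)\| \leq L\|x-x^*\|$ on the ball $B_{CR}(x^*)$, this explains the scaling $\lambda \sim R/(\gamma A)$: we need $LCR \leq \lambda/2$, which together with $\gamma L \lesssim 1/A$ is precisely the stepsize condition in Cases~1--3.

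Second I would derive the deterministic-style one-step inequality. From $x^{k+1} = x^k - \gamma_2 \tF_{\Bxi_2^k}(\tx^k)$ and $\tx^k = x^k - \gamma_1 \tF_{\Bxi_1^k}(x^k)$, adding and subtracting $F(\tx^k)$ and $F(x^k)$ and applying Young's inequality and Lipschitzness yields, after a standard calculation,
\begin{equation*}
\|x^{k+1} - x^*\|^2 \leq \|x^k - x^*\|^2 - 2\gamma_2 \langle F(\tx^k), \tx^k - x^*\rangle - \big(1 - \gamma_1\gamma_2 L^2\big)\|\tx^k - x^k\|^2 + S_k,
\end{equation*}
where $S_k$ collects inner products of centered clipped noise with $x^k - x^*$ (a martingale-difference term), squared centered noise, and bias terms of total size $\gamma R \cdot \sigma^2/(m\lambda)$. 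Case~1 then uses $\langle F(\tx^k), \tx^k - y\rangle \geq 0$ at $y = x^*$ and, after telescoping, takes $\max_{y \in B_R(x^*)}$ to convert the sum into $\gap_R(\tx_{\avg}^K)$. Case~2 replaces $\langle F(\tx^k), \tx^k - x^*\rangle$ by $-\rho\|F(\tx^k)\|^2$ via \eqref{eq:negative_comon}; the condition $\gamma_2 + 2\rho \leq \gamma_1$ is exactly what makes $-2\gamma_2\rho\|F(\tx^k)\|^2$ absorbable into the $\|\tx^k-x^k\|^2$ term so that $\sum_k \|F(x^k)\|^2$ survives on the left. Case~3 uses \eqref{eq:str_monotonicity} to extract a factor $(1-\gamma\mu)\|x^k - x^*\|^2$ per step, and the geometrically-shrinking schedules for $\lambda_k$ and $m_k$ match this contraction so that the stochastic terms also decay at the right rate.

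The remaining work is probabilistic, and this is where the main obstacle lies: the clipping estimates of Step~1 are only valid when $\tx^k, x^k \in Q$, which is what we are ultimately trying to prove. The clean way through is to induct on $k$. Let $E_k$ be the event that $\|x^i - x^*\|, \|\tx^i - x^*\| \leq C R$ for all $i \leq k$. On $E_{k-1}$, all bias and variance bounds apply, so the martingale-difference term $\sum_{i\leq k}\langle \theta_i, x^i - x^*\rangle$ has a.s.\ increment bounded by $\gamma \lambda R$ and conditional variance bounded by $\gamma^2\sigma^2 R^2/m$; the Bernstein/Freedman inequality, applied to the stopped martingale that is set to zero outside $E_{k-1}$, gives each tail bound of size $\beta/\cO(K+1)$ provided $A = \Theta(\ln((K+1)/\beta))$ and $m \gtrsim (K+1)\gamma^2 \sigma^2 A/R^2$. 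A union bound over the $\cO(K)$ events and the $\cO(1)$ terms inside $S_k$ then yields $\PP(E_K) \geq 1 - \beta$, on which the telescoped descent inequality delivers the claimed rates; the two regimes (large $\gamma$ with large $m$ vs.\ small $\gamma$ with $m = \cO(1)$) and the complexity \eqref{eq:SEG_complexity_Gap} follow by balancing the two terms inside the maxima under the stepsize constraints.
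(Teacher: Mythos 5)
Your overall route is the same as the paper's: a one-step (then telescoped) inequality conditional on the iterates staying in a ball, the clipped-estimator bias/variance bounds of Lemma~\ref{lem:bias_variance} (valid once $\|F(x)\|\leq\nicefrac{\lambda}{2}$, enforced by $\lambda=\Theta(\nicefrac{R}{\gamma A})$ and $\gamma L\lesssim\nicefrac{1}{A}$), an induction over probability events certifying iterate localization, Bernstein's inequality for martingale differences (Lemma~\ref{lem:Bernstein_ineq}) with a union bound over $\cO(K)$ events, and the two stepsize/batchsize regimes obtained by balancing the deterministic and stochastic terms. Cases~2 and~3 as you sketch them (absorbing $-2\gamma_2\rho\|F(\tx^k)\|^2$ via $\gamma_2+2\rho\leq\gamma_1$; geometric schedules for $\lambda_k,m_k$ matching the $(1-\gamma\mu)$ contraction) coincide with the paper's Lemmas and Theorems~\ref{thm:main_result_avg_sq_norm_SEG},~\ref{thm:main_result_str_mon_SEG}.

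There is, however, a genuine gap in your treatment of Case~1. You invoke monotonicity only at $y=x^*$ and then propose to ``take $\max_{y\in B_R(x^*)}$ after telescoping,'' while your probabilistic step controls only the fixed-center martingale $\sum_l\langle \theta_l, x^l-x^*\rangle$. For the restricted gap you must keep the comparison point $u$ arbitrary throughout: the per-step noise term is $\langle x^l-u-\gamma F(\tx^l),\theta_l\rangle$, and the quantity you need to bound with high probability is its \emph{maximum over} $u\in B_R(x^*)$, which is not a single martingale. A fixed-$u$ (or fixed-$x^*$) Bernstein bound does not survive the maximization, and a naive covering/union bound over the ball would cost extra dimension-dependent factors. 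The paper resolves this by writing $x^l-u=(x^l-x^*)+(x^*-u)$, so that $\max_u$ of the $u$-dependent part is exactly $R\,\|\sum_l\theta_l\|$, and then controlling $\|\gamma\sum_l\theta_l\|\leq R$ via a \emph{second} martingale argument (applied to $\langle\zeta_l,\theta_l\rangle$ with truncated partial sums $\zeta_l$), which is carried as an additional induction invariant \eqref{eq:induction_inequality_2_SEG} alongside the localization bound; see the proof of Theorem~\ref{thm:main_result_gap_SEG}. Without this extra invariant and the accompanying Bernstein application, your induction does not close for the $\gap_R$ criterion (it would only yield the distance bound $\|x^K-x^*\|^2$). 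The same uniformity issue does not arise in your Cases~2 and~3, where the comparison point is fixed at $x^*$.
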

\begin{proof}[Proof sketch in \textbf{Case 1}]
    Modifying the analysis of \algname{EG}, we first derive that for all $t \geq 0$ we have $\gap_R(\tx_{\text{avg}}^t)$ and $\|x^t - x^*\|^2$ are not greater than $\max_{u\in B_R(x^*)}\{\|x^0 - u\|^2 + 2\gamma \sum_{l = 0}^{t-1} \langle x^l - u - \gamma F(\tx^l), \theta_l \rangle + \gamma^2 \sum_{l=0}^{t-1}(\|\theta_l\|^2 + 2\|\omega_l\|^2)\}$ if $x^l, \tx^l$ lie in $B_{4R}(x^*)$ for all $l = 0,1,\ldots,t-1$, where $\theta_l =  F(\tx^l) - \tF_{\Bxi_2^l}(\tx^l)$ and $\omega_l = F(x^l) - \tF_{\Bxi_1^l}(x^l)$. Next, using this recursion and the induction argument, we show that with high probability $x^t, \tx^t \in B_{4R}(x^*)$ for all $t = 0,1, \ldots, K+1$. This gives us an upper bound for $\gap_R(\tx_{\text{avg}}^t)$. After that, we upper bound $\max_{u\in B_R(x^*)}\{2\gamma \sum_{l = 0}^{t-1} \langle x^l - u - \gamma F(\tx^l), \theta_l \rangle + \gamma^2 \sum_{l=0}^{t-1}(\|\theta_l\|^2 + 2\|\omega_l\|^2)\}$ by $5R^2$ with high-probability. We achieve this via the proper choice of the clipping level $\lambda = \Theta(\nicefrac{R}{(\gamma A)})$ implying that $\|F(\tx^l)\| \leq \nicefrac{\lambda}{2}$ and  $\|F(x^l)\| \leq \nicefrac{\lambda}{2}$ with high probability for all $l = 0,1,\ldots, t-1$. This clipping politics helps to properly bound the bias and the variance of the clipped estimators using Lem.~\ref{lem:bias_variance}. After that, it remains to apply the Bernstein inequality for the martingale differences (Lem.~\ref{lem:Bernstein_ineq}). See the detailed proof in \S~\ref{app:clipped_SEG_proofs}.
\end{proof}

In addition to the discussion given in the introduction (see \S~\ref{sec:contributions}, \ref{sec:related_work} and Tbl.~\ref{tab:comparison_of_rates}), we provide here several important details about the derived results. First of all, we notice that up to the logarithmic factors depending on $\beta$ our high-probability convergence results recover the state-of-the-art in-expectation ones for \algname{SEG} in the monotone \citep{beznosikov2020distributed}, star-negative comonotone \citep{diakonikolas2021efficient}, and quasi-strongly monotone \citep{gorbunov2022stochastic} cases. Moreover, as we show in Corollaries~\ref{cor:main_result_gap_SEG} and \ref{cor:main_result_SEG_str_mon}, to achieve these results in monotone and quasi-strongly monotone cases, it is sufficient to choose constant batchsize $m = \cO(1)$ and small enough stepsize $\gamma$. 
\pd{In contrast, when the operator is star-negatively comonotone, we do rely on the usage of large stepsize $\gamma_1$ and large batchsize $m_1 = \cO(K)$ for the extrapolation step to obtain \eqref{eq:SEG_complexity_Gap}.}
However, known in-expectation results from \citep{diakonikolas2021efficient, lee2021fast} also rely on large $\cO(K)$ batchsizes in this case. We leave the investigation of this limitation to the future work.

\section{Clipped Stochastic Gradient Descent-Ascent}
In this section, we extend the approach described above to the analysis of \algname{SGDA} with clipping. That is, we consider the following algorithm:
\begin{gather}
    x^{k+1} = x^k - \gamma \tF_{\Bxi^k}(x^k),\;\; \text{where}\;\; \tF_{\Bxi^k}(x^k) = \ggi{\clip\left(\frac{1}{m_k}\sum\limits_{i=1}^{m_k} F_{\xi^{i,k}}(x^k), \lambda_k\right)} \tag{\algname{clipped-SGDA}}\label{eq:clipped_SGDA}
\end{gather}
where $\{\xi^{i,k}\}_{i=1}^{m}$ are independent samples from the distribution $\cD$. In the above method, clipping plays a similar role as in \ref{eq:clipped_SEG}. Our convergence results for \ref{eq:clipped_SGDA} are summarized below. The general idea of the proof is similar to the one for \ref{eq:clipped_SEG}, see the details in Appendix~\ref{app:clipped_SGDA_proofs}\pd{, where we also explicitly give the constants that are omitted here for simplicity}.
\begin{theorem}\label{thm:SGDA_meta_theorem}
    Consider \ref{eq:clipped_SGDA} run for $K \geq 0$ iterations. Let $R \geq R_0 = \|x^0 - x^*\|$.\newline
    \textbf{Case 1.}  Let Assump.~\ref{as:UBV}, \ref{as:monotonicity}, \ref{as:star_cocoercivity} hold for $Q = B_{3R}(x^*)$, where $0 < \gamma = \cO(\nicefrac{1}{(\ell A)})$, $\lambda_{k} \equiv \lambda = \Theta(\nicefrac{R}{(\gamma A)})$, $m_{k} \equiv m = \Omega(\max\{1, \nicefrac{(K+1) \gamma^2\sigma^2 A}{R^2}\})$, where $\beta \in (0,1]$ and $A = \ln\frac{6(K+1)}{\beta} \geq 1$.\newline
    \textbf{Case 2.}  Let Assump.~\ref{as:UBV}, \ref{as:star_cocoercivity} hold for $Q = B_{2R}(x^*)$, where $0 < \gamma = \cO(\nicefrac{1}{(\ell A)})$, $\lambda_{k} \equiv \lambda = \Theta(\nicefrac{R}{(\gamma A)})$, $m_{k} \equiv m = \Omega(\max\{1, \nicefrac{(K+1) \gamma^2\sigma^2 A}{R^2}\})$, where $\beta \in (0,1]$ and $A = \ln\frac{4(K+1)}{\beta} \geq 1$.\newline
    \textbf{Case 3.}  Let Assump.~\ref{as:UBV}, \ref{as:str_monotonicity}, \ref{as:star_cocoercivity} hold for $Q = B_{2R}(x^*)$, where $0 < \gamma = \cO(\nicefrac{1}{(\ell A)})$, $\lambda_{k} = \Theta(\tfrac{e^{-\gamma\mu(1+\nicefrac{k}{2})}R}{(\gamma A)})$, $m_k = \Omega(\max\{1, \tfrac{(K+1) \gamma^2\sigma^2 A}{e^{\gamma\mu k}R^2}\})$, where $\beta \in (0,1]$ and $A = \ln\frac{4(K+1)}{\beta}\geq 1$.
    
    Then, to guarantee $\gap_{R}(\tx_{\text{avg}}^K) \leq \varepsilon$ in \textbf{Case 1} with $\tx_{\text{avg}}^K = \frac{1}{K+1}\sum_{k=0}^K \tx^k$, $\frac{1}{K+1}\sum_{k=0}^{K} \|F(x^k)\|^2 \leq \ell\varepsilon$ in \textbf{Case 2}, $\|x^K - x^*\|^2 \leq \varepsilon$ in \textbf{Case 3}, with probability $\geq 1-\beta$, \ref{eq:clipped_SGDA} requires 
    \begin{eqnarray}
        \text{\textbf{Case 1} and \textbf{2}:} & \widetilde{\cO}\left(\max\left\{\frac{\ell R^2}{\varepsilon}, \frac{\sigma^2R^2}{\varepsilon^2}\right\}\right)
        \qquad \text{and} \qquad 
        \text{\textbf{Case 3}:} & \widetilde{\cO}\left(\max\left\{\tfrac{\ell}{\mu}, \tfrac{\sigma^2}{\mu\varepsilon}\right\}\right) \,
        \label{eq:SGDA_complexity_Gap}
    \end{eqnarray}
    oracle calls. The above guarantees hold in  two different regimes: large step-sizes $\gamma \approx \nicefrac{1}{\ell A}$ (requiring large batch-sizes), and small step-sizes, allowing small batch-sizes $m=\cO(1)$.
\end{theorem}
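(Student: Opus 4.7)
The plan is to follow the same skeleton as the \ref{eq:clipped_SEG} argument but adapted to the single-step nature of \ref{eq:clipped_SGDA}. First, I would derive a deterministic one-step recursion for $\|x^{k+1} - x^*\|^2$. Write $\tF_{\Bxi^k}(x^k) = F(x^k) - \omega_k$ where $\omega_k := F(x^k) - \tF_{\Bxi^k}(x^k)$ is the clipping-induced error. Expanding the square and using $\|x^{k+1}-x^*\|^2 = \|x^k - x^*\|^2 - 2\gamma\langle F(x^k), x^k-x^*\rangle + 2\gamma\langle \omega_k, x^k-x^*\rangle + \gamma^2\|\tF_{\Bxi^k}(x^k)\|^2$, I would use \eqref{eq:star_cocoercivity} to absorb the $\gamma^2\|F(x^k)\|^2$ term into the negative $-2\gamma\langle F(x^k), x^k-x^*\rangle$ term for $\gamma \lesssim 1/\ell$, leaving a recursion of the form
\begin{equation*}
\|x^{k+1}-x^*\|^2 \le \|x^k-x^*\|^2 - \gamma\langle F(x^k), x^k-x^*\rangle + 2\gamma\langle \omega_k, x^k-x^*\rangle + 2\gamma^2\|\omega_k\|^2.
\end{equation*}
In Case 1, I would additionally exploit \eqref{eq:monotonicity} together with the usual trick $\langle F(x^k), x^k - u\rangle \ge \langle F(u), x^k - u\rangle$ to produce a telescoping bound for $\gap_R(x_{\text{avg}}^K)$ in terms of $\max_{u\in B_R(x^*)}\{\|x^0-u\|^2 + 2\gamma\sum_{l=0}^{K}\langle x^l - u, \omega_l\rangle + 2\gamma^2\sum_l\|\omega_l\|^2\}$; in Case 2, summing and using \eqref{eq:star_cocoercivity} will produce $\sum_k \|F(x^k)\|^2$ on the left; in Case 3, \eqref{eq:str_monotonicity} gives a contraction factor $(1-\gamma\mu)$ per step.

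Next, I would split the clipping error into a bias part $b_k := \mathbb{E}_{\Bxi^k}[\omega_k]$ and a zero-mean martingale-difference part $\omega_k - b_k$, and control each using Lem.~\ref{lem:bias_variance}. The key technical point is to choose $\lambda_k$ so that on the event $\{x^k \in B_{cR}(x^*)\}$ we have $\|F(x^k)\| \le \lambda_k/2$; by Lipschitz/cocoercive growth this boils down to $\lambda_k \asymp R/(\gamma A)$ (resp. with the exponential decay $e^{-\gamma\mu k/2}$ in Case 3). Under this ``clipping-at-twice-the-signal'' regime, Lem.~\ref{lem:bias_variance} gives $\|b_k\| \le 4\sigma^2/(m_k\lambda_k)$ and $\mathbb{E}\|\omega_k - b_k\|^2 \le 18\sigma^2/m_k$, so that the chosen $m_k$ makes both the bias sum and the variance sum of order $R^2$ (resp. $R^2 e^{-\gamma\mu k}$ in Case 3).

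The heart of the proof is an induction on $t = 0,1,\ldots,K+1$ showing that with probability at least $1 - t\beta/(K+1)$ the iterates satisfy $x^s \in B_{cR}(x^*)$ for all $s \le t$ (with $c = 3$ in Case 1 and $c = 2$ in Cases 2, 3). Conditioning on this event makes the bound $\|F(x^s)\|\le \lambda_s/2$ valid and therefore allows the bias/variance estimates above. To convert the inductive hypothesis into the $(t+1)$-th step, I would apply Bernstein's inequality for martingale differences (Lem.~\ref{lem:Bernstein_ineq}) to $\sum_{l=0}^{t}\langle x^l - x^* - \text{(shift in Case 1)}, \omega_l - b_l\rangle$ and a Freedman-type bound to $\sum_l (\|\omega_l\|^2 - \mathbb{E}\|\omega_l\|^2)$; the deterministic clip bound $\|\omega_l\|\le 2\lambda_l$ and the conditional variance bound above give the required parameters. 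Combining with the deterministic control of the bias terms, all stochastic contributions are dominated by $R^2$ (or $e^{-\gamma\mu t}R^2$) with the prescribed probability, which both closes the induction and yields $\gap_R(x_{\text{avg}}^K)\le\varepsilon$ in Case 1, $\tfrac{1}{K+1}\sum\|F(x^k)\|^2\le \ell\varepsilon$ in Case 2 (after dividing by $\gamma(K+1)$ and using \eqref{eq:star_cocoercivity}), and $\|x^K-x^*\|^2\le\varepsilon$ in Case 3 (by unrolling the contractive recursion).

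The main obstacle, as in the \ref{eq:clipped_SEG} proof, is the induction step: establishing that the stochastic noise does not knock $x^k$ out of $B_{cR}(x^*)$ requires the bound on $\|F(x^k)\|$ that is itself only known on the event $\{x^k\in B_{cR}(x^*)\}$. Handling this circularity cleanly — by maintaining the induction on a ``good event'' whose probability is incremented by one application of Bernstein per step, with union bound giving the final $\beta$ — is the technical crux. Case 3 adds one further wrinkle: the clipping level and batch size must scale with $e^{-\gamma\mu k/2}$ and $e^{\gamma\mu k}$ respectively, so the Bernstein bound has to be applied to a weighted martingale; rescaling $\|x^k-x^*\|^2$ by $(1-\gamma\mu)^{-k}$ and tracking the geometric series carefully is what produces the $\widetilde{\cO}(\max\{\ell/\mu,\sigma^2/(\mu\varepsilon)\})$ complexity. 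Finally, for the ``two regimes'' statement, I would verify that the stepsize $\gamma \asymp 1/(\ell A)$ with $m \asymp \max\{1,\gamma^2\sigma^2(K+1)A/R^2\}$ and the alternative small-stepsize $\gamma \asymp \min\{1/(\ell A), R/(\sigma\sqrt{(K+1)A})\}$ with $m = \cO(1)$ both satisfy the assumptions of Cases 1--3 and produce the same complexity \eqref{eq:SGDA_complexity_Gap}.
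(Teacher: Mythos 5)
Your proposal is correct and follows essentially the same route as the paper's proof: a one-step expansion combined with monotonicity, star-cocoercivity, or quasi-strong monotonicity per case, a bias/unbiased decomposition of the clipping error controlled via Lem.~\ref{lem:bias_variance} under the choice $\lambda_k \gtrsim 2\|F(x^k)\|$ (which here comes from star-cocoercive growth $\|F(x)\|\le \ell\|x-x^*\|$ rather than Lipschitzness), an induction on the event that the iterates stay in $B_{cR}(x^*)$ closed by Bernstein's inequality (Lem.~\ref{lem:Bernstein_ineq}), and the large-step/large-batch vs.\ small-step/small-batch corollaries. The one ingredient you gloss over is the Case 1 term $\max_{u\in B_R(x^*)}\langle x^*-u,\sum_l\omega_l\rangle \le R\,\|\sum_l\omega_l\|$, which the paper handles by adding the second invariant $\gamma\|\sum_{l}\omega_l\|\le R$ to the induction and running an extra Bernstein argument on truncated partial sums of the noise.
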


As for the \ref{eq:clipped_SEG}, up to the logarithmic factors depending on $\beta$, our high-probability convergence results from the theorem above recover the state-of-the-art in-expectation ones for \algname{SGDA} under monotonicity and star-cocoercivity \citep{beznosikov2022stochastic}, star-cocoercivity \citep{gorbunov2022stochastic}\footnote{Although \citet{gorbunov2022stochastic} do not consider \algname{SGDA} explicitly, it does fit their framework implying that \algname{SGDA} achieves $\frac{1}{K+1}\sum_{k=0}^K\EE[\|F(x^k)\|^2] \leq \varepsilon$ after $K = \cO(\max\{\frac{\ell R^2}{\varepsilon}, \frac{\sigma^2 R^2}{\varepsilon^2}\})$ iterations with $m = 1$.}, quasi-strong monotonicity and star-cocoercivity \citep{loizou2021stochastic} assumptions. Moreover, in \emph{all the cases}, one can achieve the results from \eqref{eq:SGDA_complexity_Gap} using constant batchsize $m = \cO(1)$ and small enough stepsize $\gamma$ (see Corollaries~\ref{cor:main_result_SGDA}, \ref{cor:main_result_SGDA_sq_norm}, \ref{cor:main_result_SGDA_str_mon} for the details). Finally, to the best of our knowledge, we derive the first high-probability convergence results for \algname{SGDA}-type methods.

\section{Experiments}\label{sec:experiments}

To validate our theoretical results, we conduct experiments on heavy-tailed min-max problems to demonstrate the importance of clipping when using non-adaptive methods such as \algname{SGDA} or \algname{SEG}.
We train a Wasserstein GAN with gradient penalty \citep{gulrajani2017improved} on CIFAR-10 \citep{krizhevsky2009cifar10} using \algname{SGDA}, \algname{clipped-SGDA}, and \algname{clipped-SEG}, and show the evolution of the gradient noise histograms during training.
We demonstrate that gradient clipping also stabilizes the training of more sophisticated GANs when using \algname{SGD} by training a StyleGAN2 model \citep{karras2020stylegan2} on FFHQ \citep{karras2019stylegan1}, downsampled to $128\times128$.
Although the generated sample quality is not competitive with a model trained with \algname{Adam},
we find that StyleGAN2 fails to generate anything meaningful when trained with regular \algname{SGDA} whereas clipped methods learn meaningful features.
We do not claim to outperform state-of-the-art adaptive methods (such as \algname{Adam}); our focus is to validate our theoretical results and to demonstrate that clipping improves \algname{SGDA} in this context.

\begin{figure}
    \vspace{-1cm}
    \hspace{-20pt}
    \begin{tabular}{cc}
        \begin{tabular}{c}
            \subfigure[\scriptsize{\algname{SGDA}} ($67.4$)]{
                \includegraphics[width=.19\textwidth]{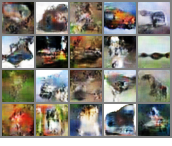}
                \label{fig:samples_sgda}
            } 
            \subfigure[\scriptsize{\algname{clipped-SGDA}} ($19.7$)]{
                \includegraphics[width=.19\textwidth]{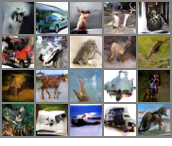}
                \label{fig:samples_clip_sgda}
            } 
            \subfigure[\scriptsize{\algname{clipped-SEG}} ($25.3$)]{
                \includegraphics[width=.19\textwidth]{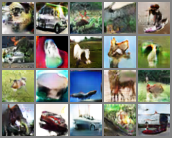}
                \label{fig:samples_clip_seg}
            } \\
            \subfigure[Generator]{\includegraphics[width=0.25\textwidth]{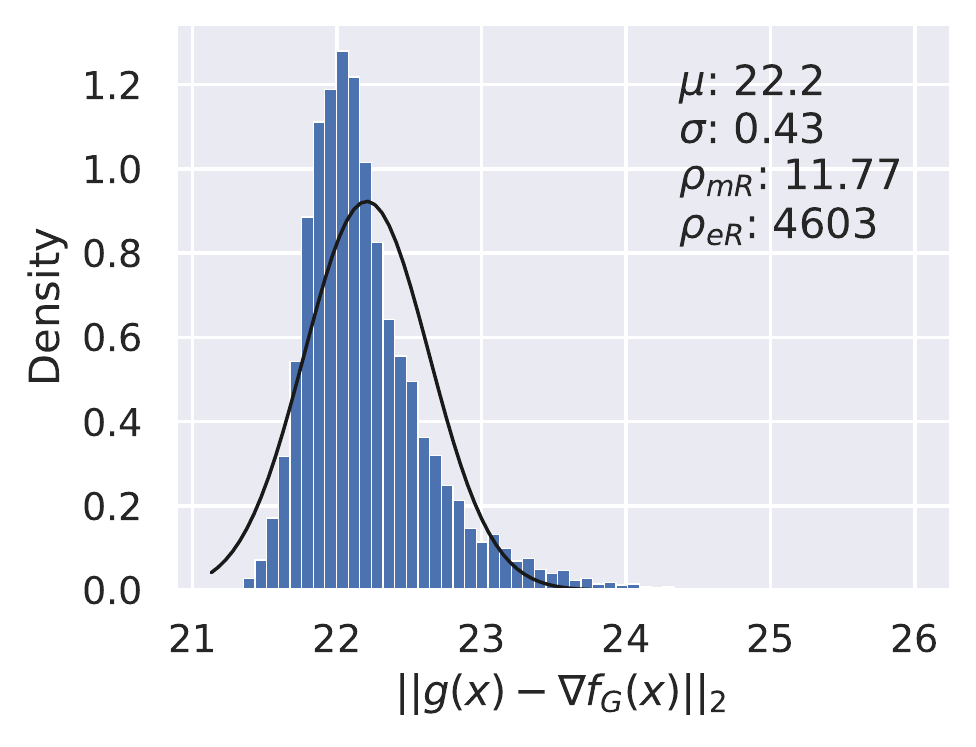}}
            \subfigure[Discriminator]{\includegraphics[width=0.25\textwidth]{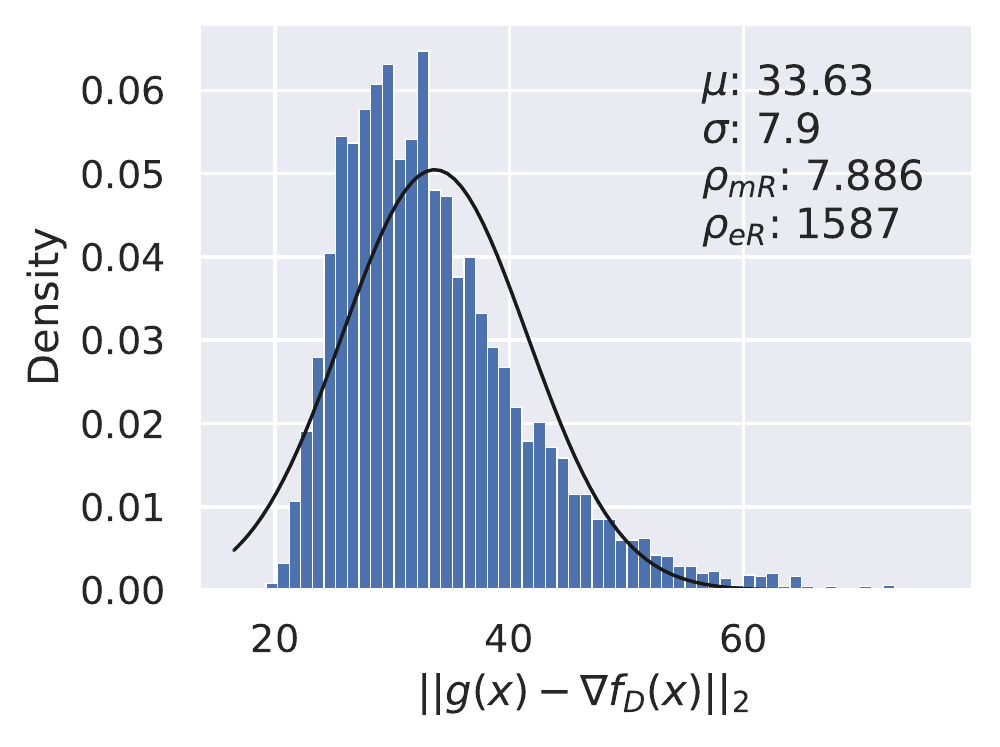}} \\
        \end{tabular}  &

        \raisebox{-6\normalbaselineskip}[0pt][0pt]{
            \subfigure[Hyperparameters]{\includegraphics[width=0.35\textwidth]{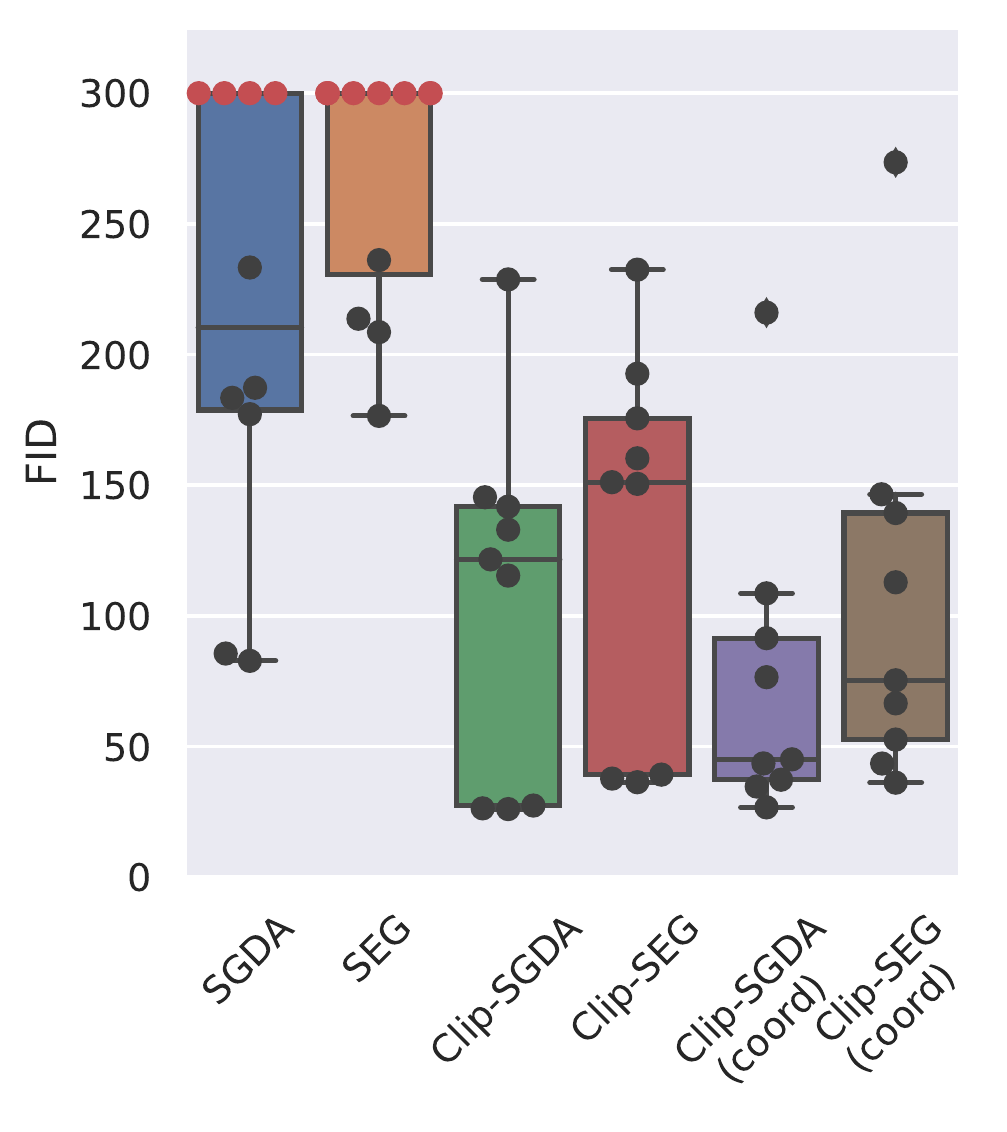}}
        }\\

    \end{tabular}
    \caption{\small
    (a, b, c) Random samples generated from the best CIFAR models trained (100k steps) by the specified optimizers, with their corresponding FIDs. 
    (d, e) Gradient noise histograms for WGAN-GP upon initialization; a Gaussian was fit using maximum likelihood estimation to showcase the tail of the distribution.
    $p_{mR}$ is a value estimating the ``heavy tailed-ness'' of the distribution.
    (f) The best (lowest) FID scores obtained within the first 35k (out of 100k) training iterations for the same WGAN-GP model trained on CIFAR-10 when sweeping over different hyperparameters.
    Red points indicate that the run \textit{diverged}, meaning that the loss becomes \texttt{NaN}.
    Note that we clip the FID for \textit{diverged} runs to 300 to not skew the boxplot.
    Boxes are the quartiles of data.}
    \label{fig:wgangp}
    \vspace{-6mm}
\end{figure}

\textbf{WGAN-GP.} In this section, we focus on the ResNet architecture proposed in \cite{gulrajani2017improved}, and we adapt our code from a publicly available WGAN-GP implementation.\footnote{\url{https://github.com/w86763777/pytorch-gan-collections}}
We first compute the gradient noise distribution and validate if it is heavy-tailed. 
Taking the fixed randomly initialized weights, we iterate through 1000 steps (without parameter updates) to compute the noise norm for each minibatch (sample with replacement as in normal GAN training).
We show the distributions for the generator and discriminator in Fig.~\ref{fig:wgangp} and also compute $p_{mR} = F_{1.5}(X)$ and $p_{eR} = F_{3}(X)$, where $F_\lambda(X) = P(Q_3 + \lambda (Q_3 - Q_1) < X)$, $X$ is the gradient noise distribution, and $Q_i$ is the $i^{th}$ quartile.
This is a metric introduced by \cite{jordanova2017measuringheavy} to measure how heavy-tailed a distribution is based on the distribution's quantiles, where $p_{mR}$ and $p_{eR}$ quantify ``mild'' and ``extreme'' (right side) heavy tails respectively.
A normal distribution should have $p_{mR\mathcal{N}} \approx 0.0035$ and $p_{eR\mathcal{N}}\approx 1.2\times10^{-6}$.
In all tests, we compute the ratios $\rho_{mR} = p_{mR} / p_{mR\mathcal{N}}$ and $\rho_{eR} = p_{eR} / p_{eR\mathcal{N}}$ and empirically find that we at least have mild heavy tails, and sometimes extremely heavy tails. 

We train the ResNet generator on CIFAR-10 with \algname{SGDA}/\algname{SEG}, and \algname{clipped-SGDA}/\algname{SEG}.
We use the default architectures and training parameters specified in \cite{gulrajani2017improved} ($\lambda_{GP}=10$, $n_{dis}=5$, learning rate decayed linearly to 0 over 100k steps), with the exception of doubling the feature map of the generator.
The clipped methods are implemented as standard gradient norm clipping, applied after computing the gradient penalty term for the critic. In addition to norm clipping, we also test coordinate-wise gradient clipping, which is more common in practice \citep{goodfellow2016deep}. 
For all methods, we tune the learning rates and clipping threshold where applicable. We do not use momentum following a standard practice for GAN training~\citep{gidel2019negative}.

We find that clipped methods outperform regular \algname{SGDA} and \algname{SEG}.
In addition to helping prevent exploding gradients, clipped methods achieve a better \textit{Fr\'echet inception distance} (FID) score \citep{heusel2017fid}; the best FID obtained for clipped methods is $19.65$ in comparison to $67.37$ for regular \algname{SGDA}, both trained for 100k steps.
A summary of FIDs obtained during hyperparameter tuning is shown Fig.~\ref{fig:wgangp}, where the best FID score obtained in the first 35k iterations is drawn for each hyperparameter configuration and optimization method.
At a high level, we do a log-space sweep over  $[2\mathrm{e-}5, 0.2]$ for the learning rates, $[10^{-1},10]$ for the norm-clip parameter, and $[10^{-3},10^{-1}]$ for the coordinate clip parameter (with some exceptions) -- please refer to \S~\ref{app:extra_exps} for further details.
We also show the evolution of the gradient noise histograms during training for \algname{SGD} and \algname{clipped-SGDA} in Fig.~\ref{fig:wgangp_evo}.
Note that for regular \algname{SGD}, the noise distribution remains heavy tailed and does not appear to change much throughout training.
In contrast, the noise histograms for \algname{clipped-SGDA} (in particular the generator) seem to develop lighter tails during training.

\textbf{StyleGAN2.} We extend our experiments to StyleGAN2, but limit our scope to \algname{clipped-SGDA} with coordinate clipping as coordinate \algname{clipped-SGDA} generally performs the best, and StyleGAN2 is expensive to train.
We train on FFHQ downsampled to $128\times128$ pixels, and use the recommended StyleGAN2 hyperparameter configuration for this resolution (batch~size~$=32$, $\gamma=0.1024$, map~depth~$=2$, channel~multiplier~$=16384$), see further experimental details in \S~\ref{app:extra_exps}. 
We obtain the gradient noise histograms at initialization and for the best trained \algname{clipped-SGDA} from the hyperparameter sweep and once again observe heavy tails (especially in the discriminator, see Fig.~\ref{fig:stylegan2}).
We find that \textit{all} regular \algname{SGDA} runs fail to learn anything meaningful, with FID scores fluctuating around $320$ and only able to generate noise.
In contrast, while there is a clear gap in quality when compared to what StyleGAN2 is capable of, a model trained with \algname{clipped-SGDA} with appropriately set hyperparameters is able to produce images that distinctly resemble faces (see Fig.~\ref{fig:stylegan2}).

\begin{figure}
    \hspace{-35pt}  
    \begin{tabular}{ccccc}
        \begin{tabular}{l}
            \raisebox{3.8\normalbaselineskip}[0pt][0pt]{
                \rotatebox[origin=c]{90}{Generator}
            }\\
            \raisebox{-2.4\normalbaselineskip}[0pt][0pt]{
                \rotatebox[origin=c]{90}{Discriminator}
            }\\
        \end{tabular} &
        
        \hspace{-25pt}
        \begin{tabular}{c}
            \includegraphics[width=\figscale\textwidth]{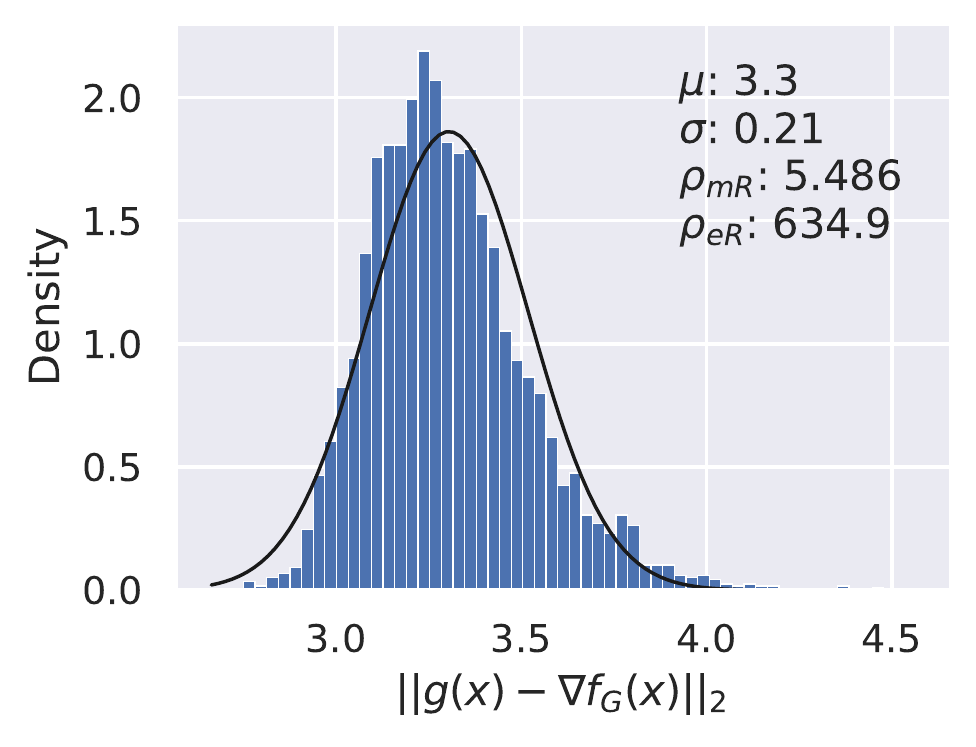} \\
            \includegraphics[width=\figscale\textwidth]{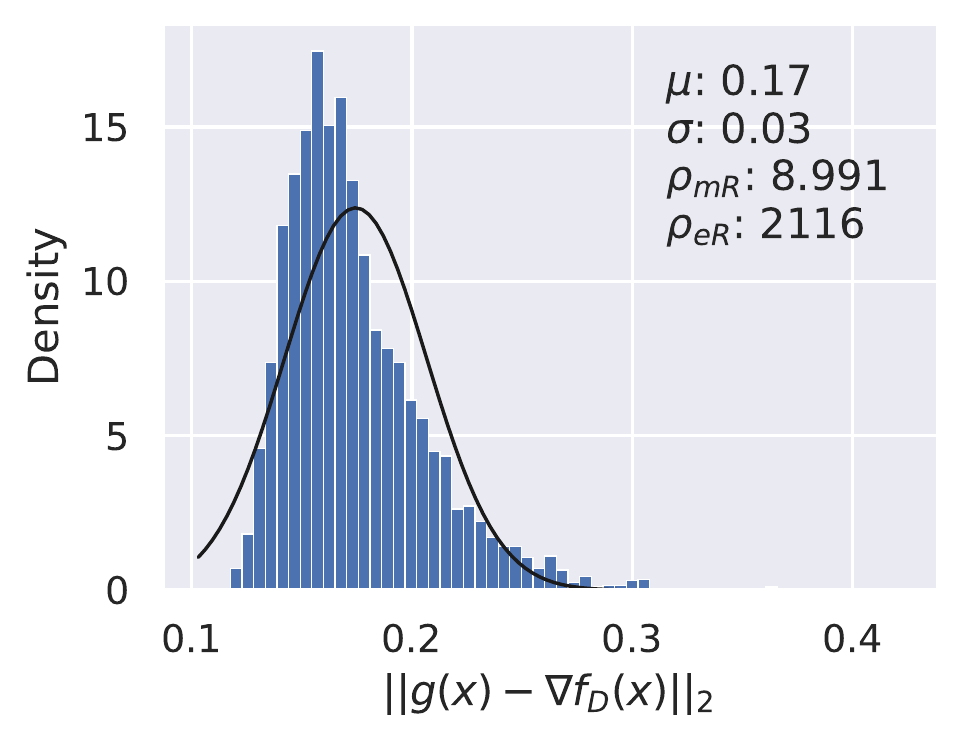} \\
        \end{tabular} &
        
        \hspace{-25pt}
        \begin{tabular}{c}
            \includegraphics[width=\figscale\textwidth]{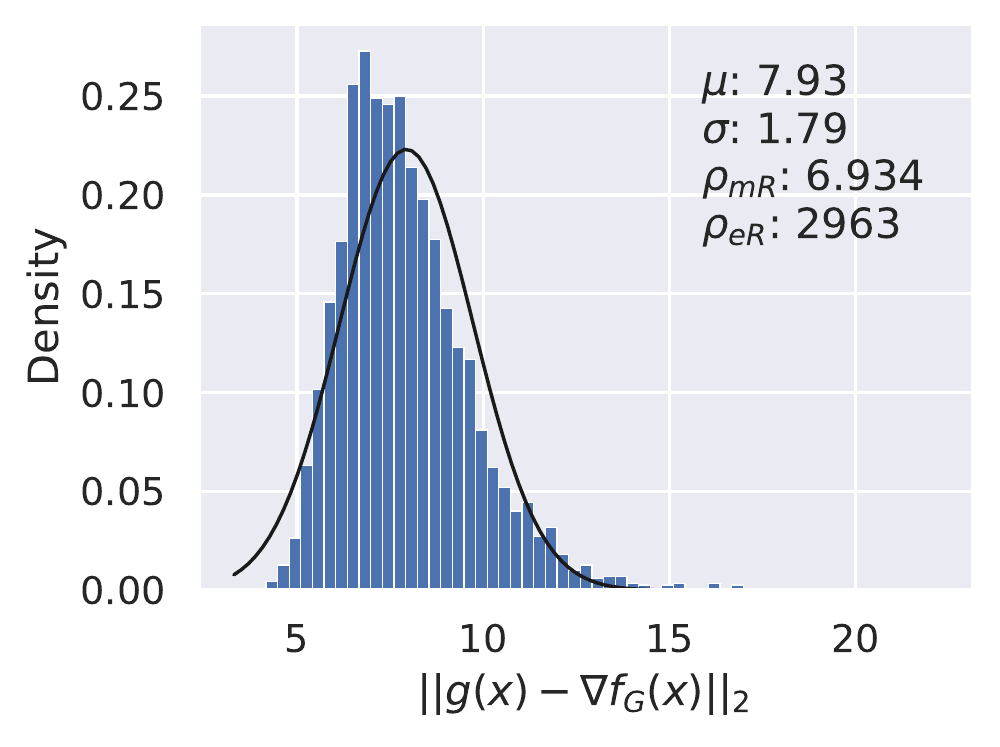} \\
            \includegraphics[width=\figscale\textwidth]{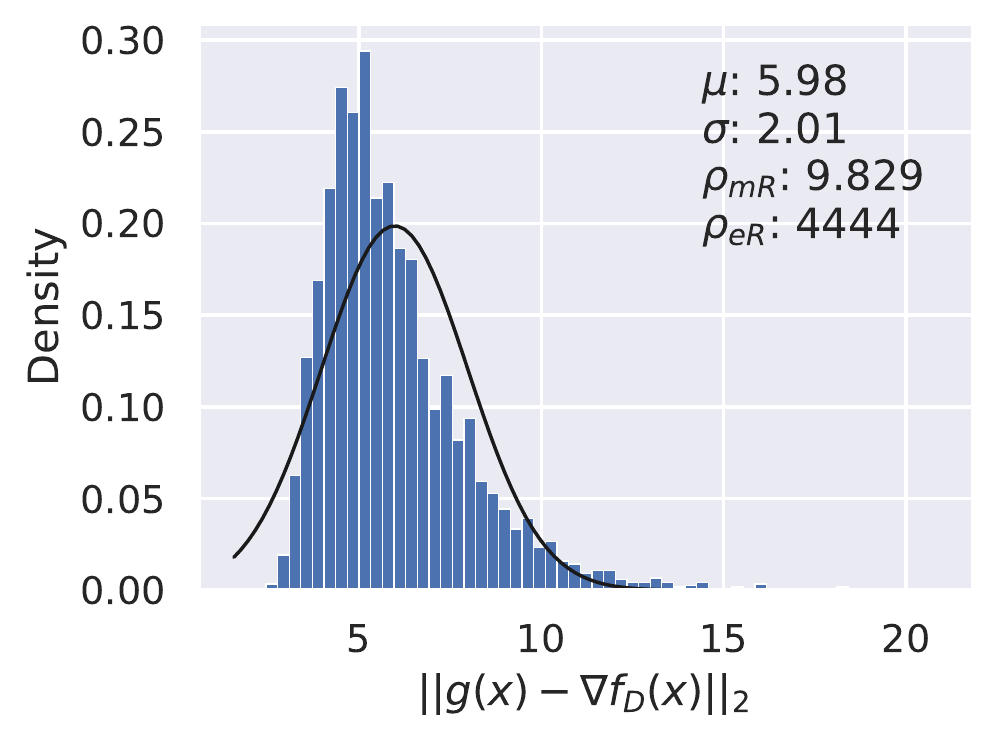} \\
        \end{tabular} &
        
        \hspace{-18pt}
        \raisebox{-5.6\normalbaselineskip}[0pt][0pt]{
            \includegraphics[width=0.235\textwidth]{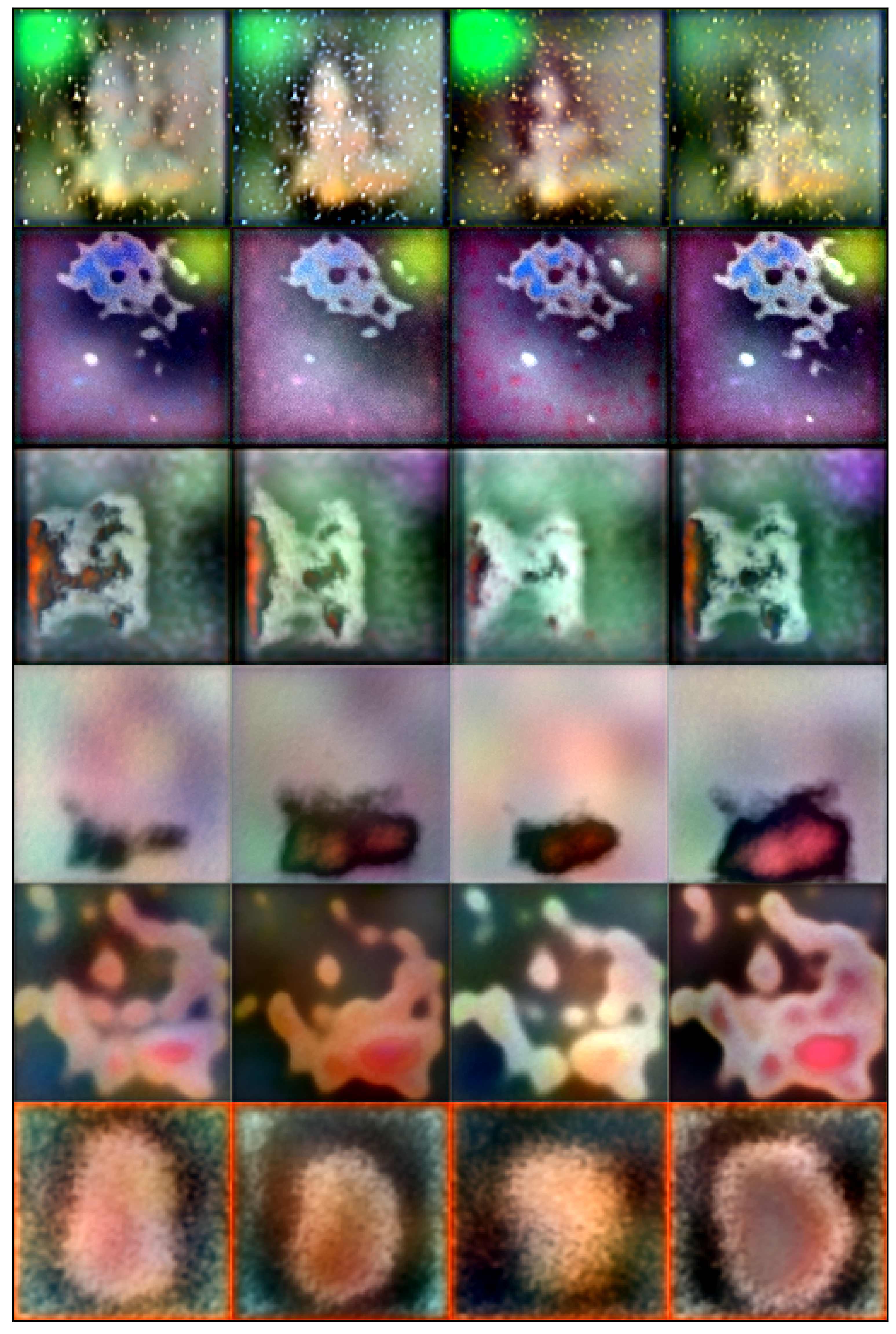}
        } & 
        
        \hspace{-14pt}
        \raisebox{-5.6\normalbaselineskip}[0pt][0pt]{
            \includegraphics[width=0.235\textwidth]{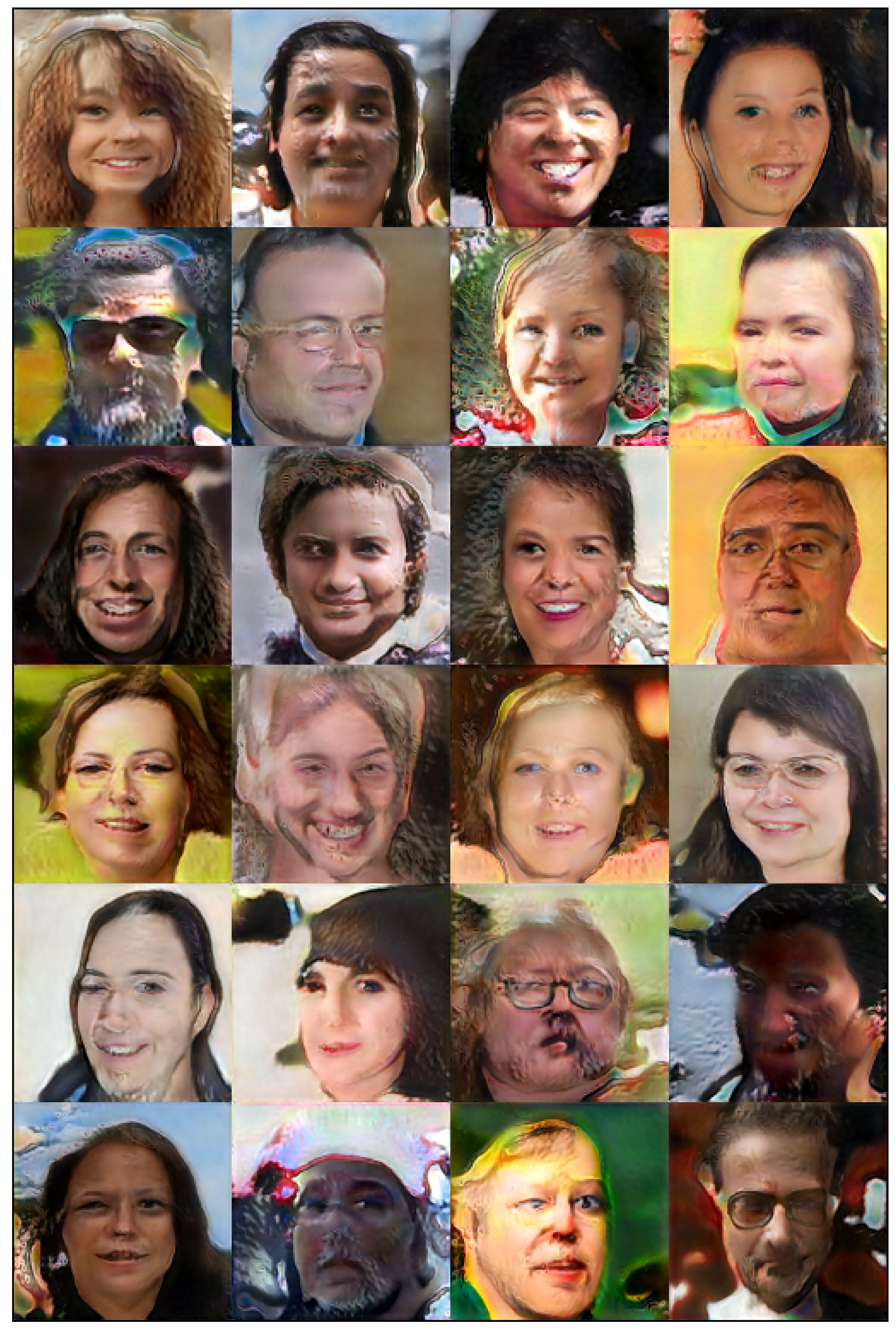}
        } \\
        
        & 
        (a) Initialization & 
        \hspace{-18pt}(b) \small{\algname{clipped-SGDA}} & 
        \hspace{-18pt}(c) \small{\algname{SGDA}} & 
        \hspace{-18pt}(d) \small{\algname{clipped-SGDA}} \\
    \end{tabular}

    \caption{\small
    (a) Gradient noise histograms for StyleGAN2 at random initialization. 
        (b) Gradient noise histograms for StyleGAN2 trained with \algname{clipped-SGDA}.
        (c) Random samples generated from several models trained with \algname{SGDA} with different learning rates (FID $>300$). 
        Each row corresponds to a different trained model, and all of our attempts to train StyleGAN2 with \algname{SGDA} produced similar results.
        (d) Random samples generated from the best \algname{clipped-SGDA} trained model (FID~$=72.68$).}
    \label{fig:stylegan2}
    \vspace{-4mm}
\end{figure}

\begin{figure}
    \centering
    \begin{tabular}{lcccc}
        \raisebox{3.5\normalbaselineskip}[0pt][0pt]{
            \rotatebox[origin=c]{90}{Generator}
        }\hspace{-7pt} & 
        \hspace{-7pt}
        \includegraphics[width=\evofigscale\textwidth]{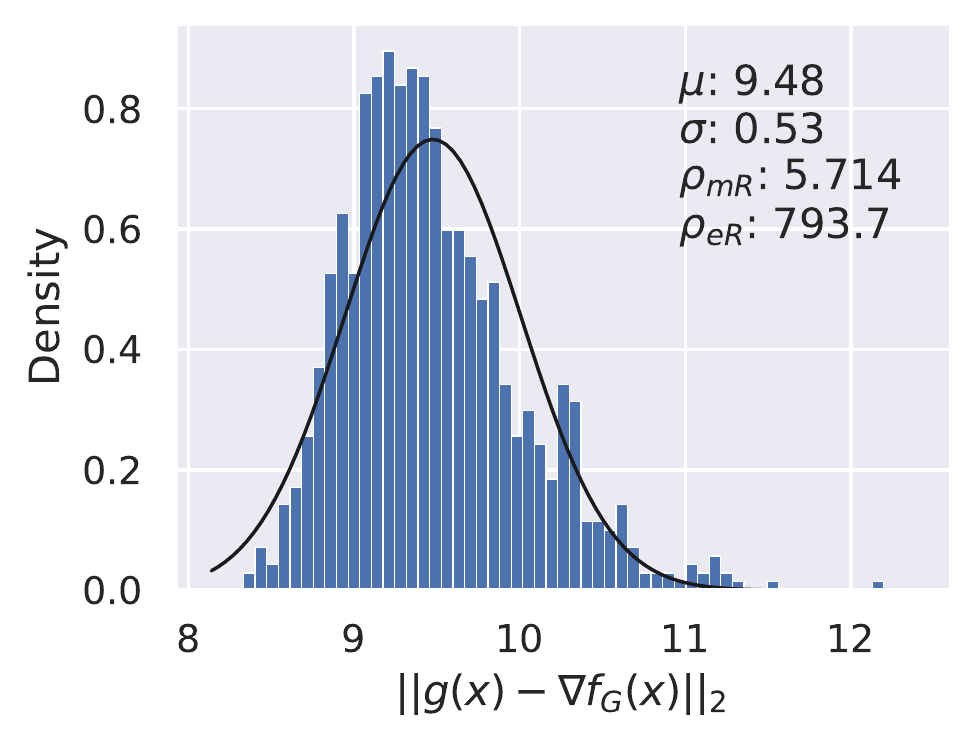} &   
        \hspace{-10pt}
        \includegraphics[width=\evofigscale\textwidth]{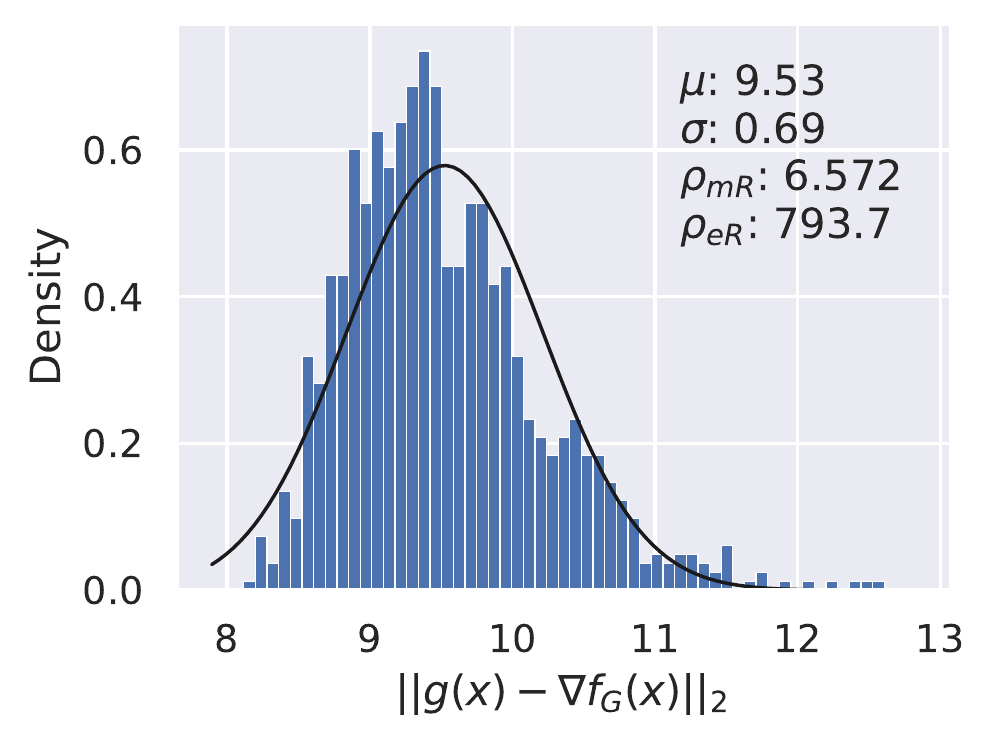} &
        \hspace{-10pt}
        \includegraphics[width=\evofigscale\textwidth]{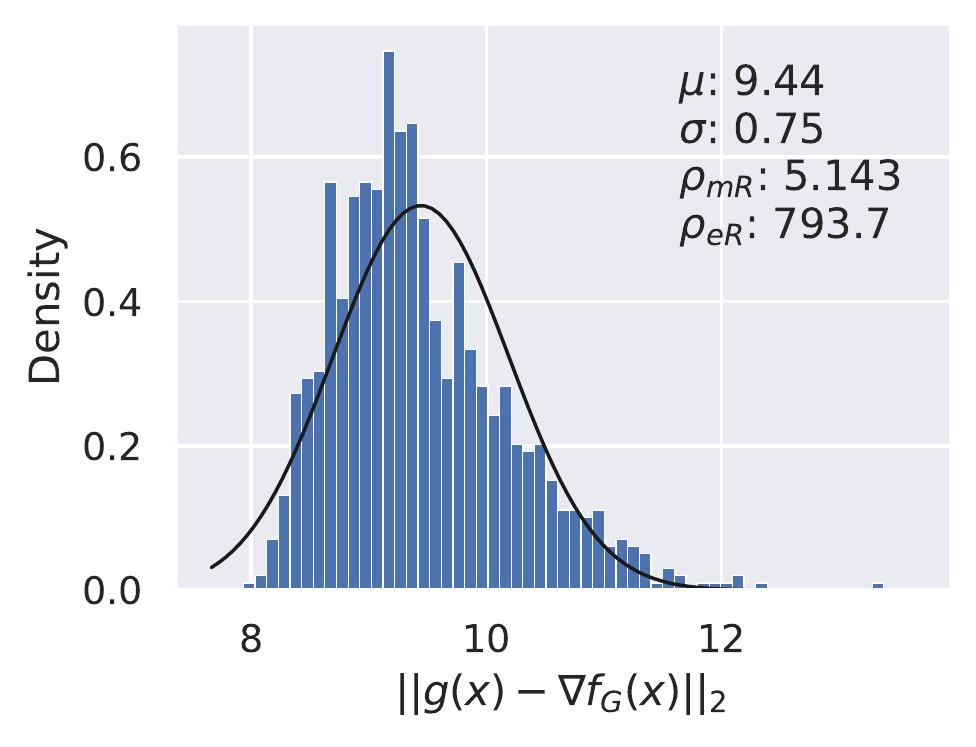} &   
        \hspace{-10pt}
        \includegraphics[width=\evofigscale\textwidth]{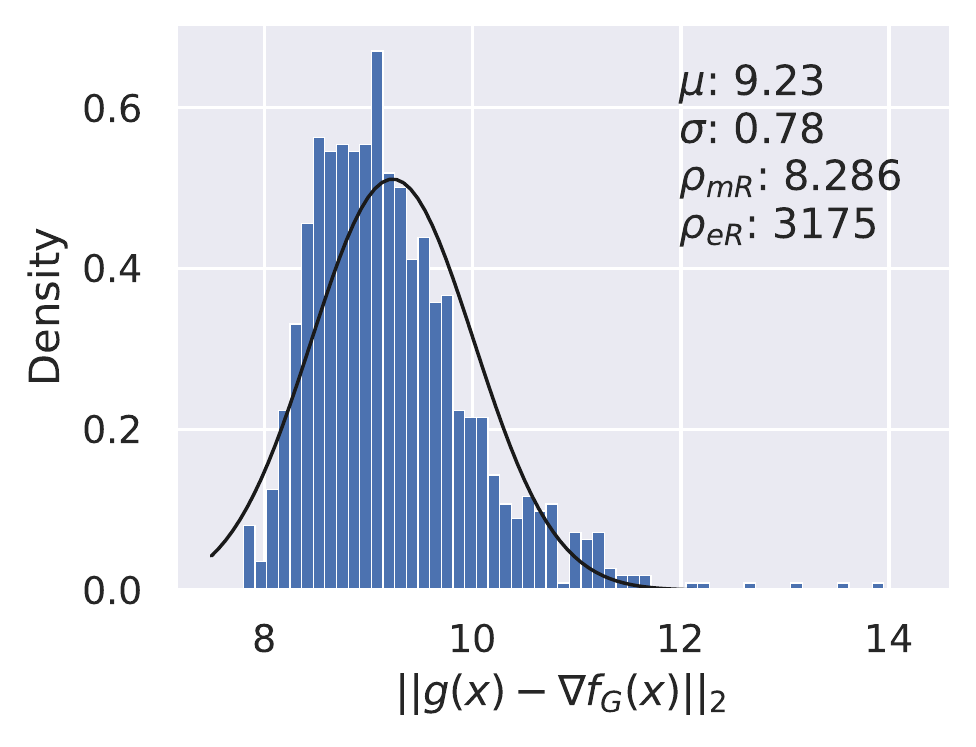} \\
    
        \raisebox{3.5\normalbaselineskip}[0pt][0pt]{
            \rotatebox[origin=c]{90}{Discriminator}
        }\hspace{-7pt} & 
        \hspace{-7pt}
        \includegraphics[width=\evofigscale\textwidth]{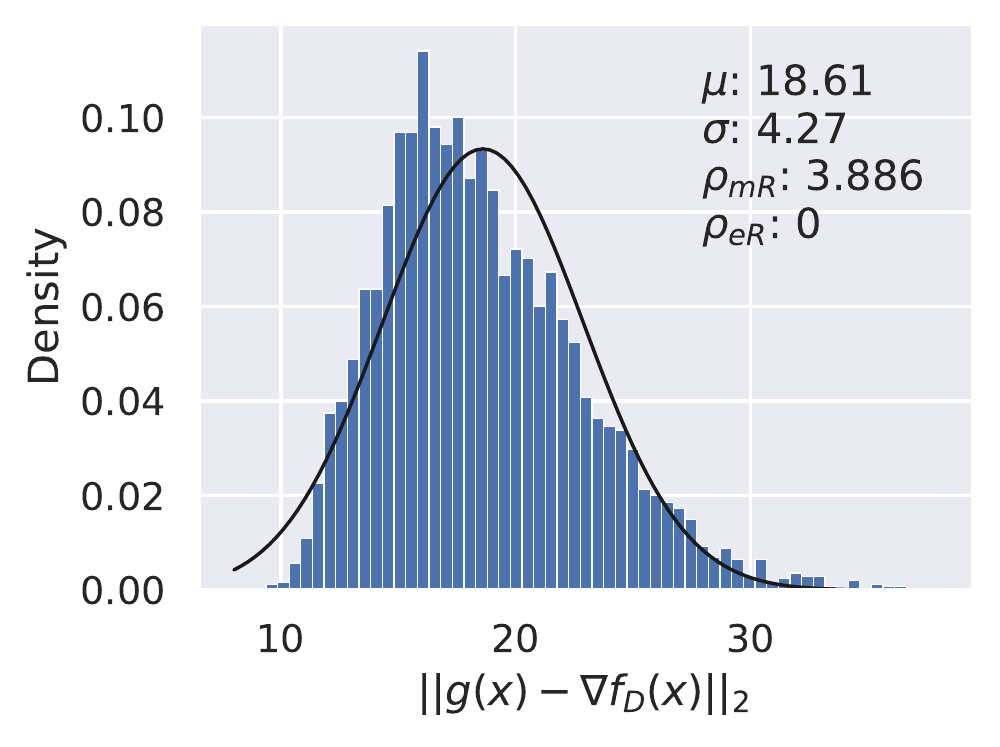} &   
        \hspace{-10pt}
        \includegraphics[width=\evofigscale\textwidth]{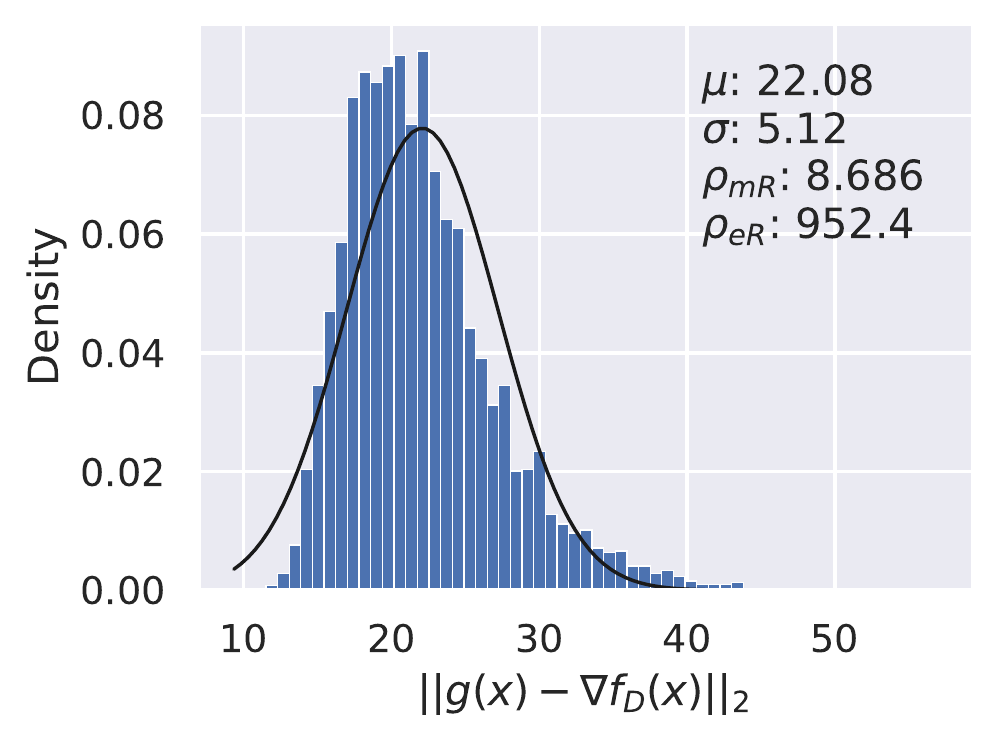} &
        \hspace{-10pt}
        \includegraphics[width=\evofigscale\textwidth]{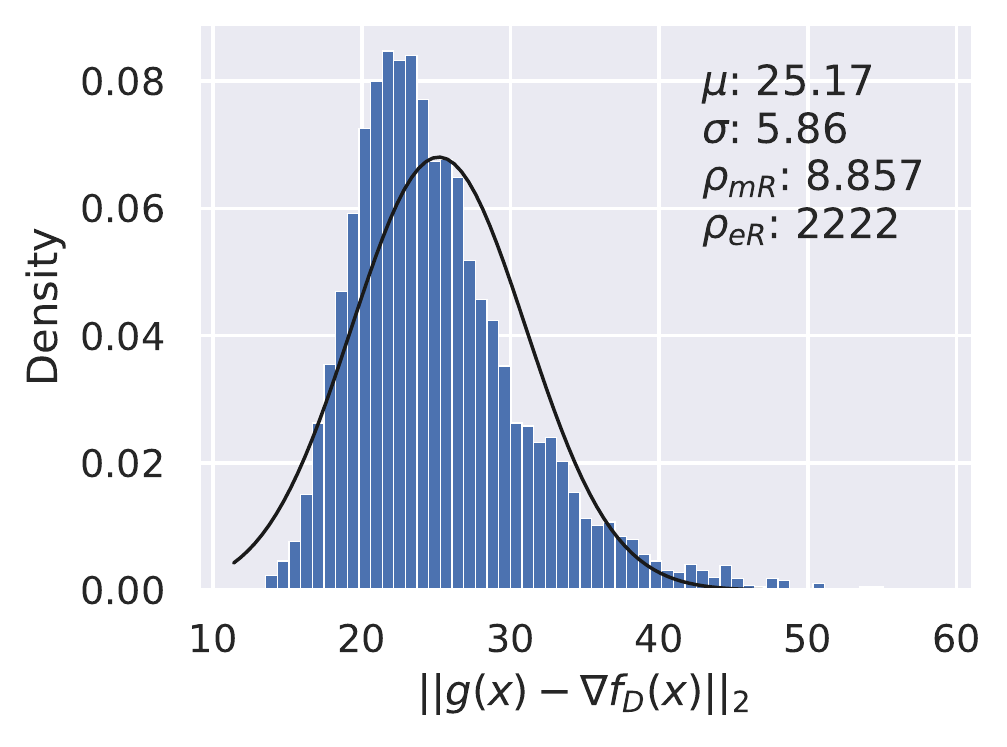} &   
        \hspace{-10pt}
        \includegraphics[width=\evofigscale\textwidth]{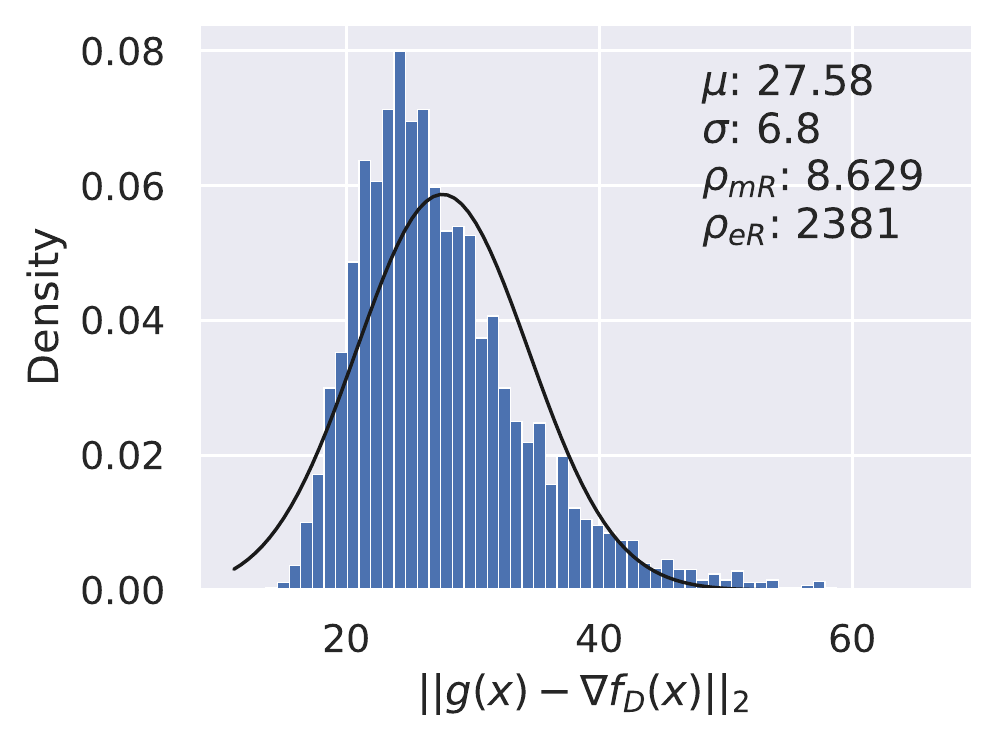} \\

        \cmidrule{2-5}\vspace{-3.5mm}\\
        
        \raisebox{3.5\normalbaselineskip}[0pt][0pt]{
            \rotatebox[origin=c]{90}{Generator}
        }\hspace{-7pt} & 
        \hspace{-7pt}
        \includegraphics[width=\evofigscale\textwidth]{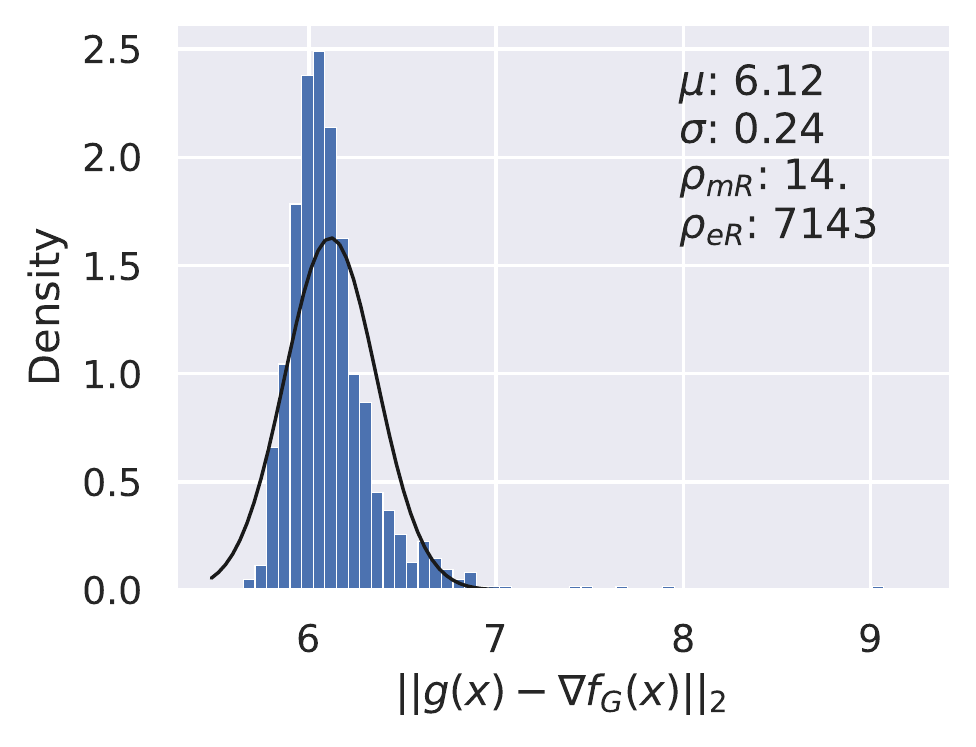} &   
        \hspace{-10pt}
        \includegraphics[width=\evofigscale\textwidth]{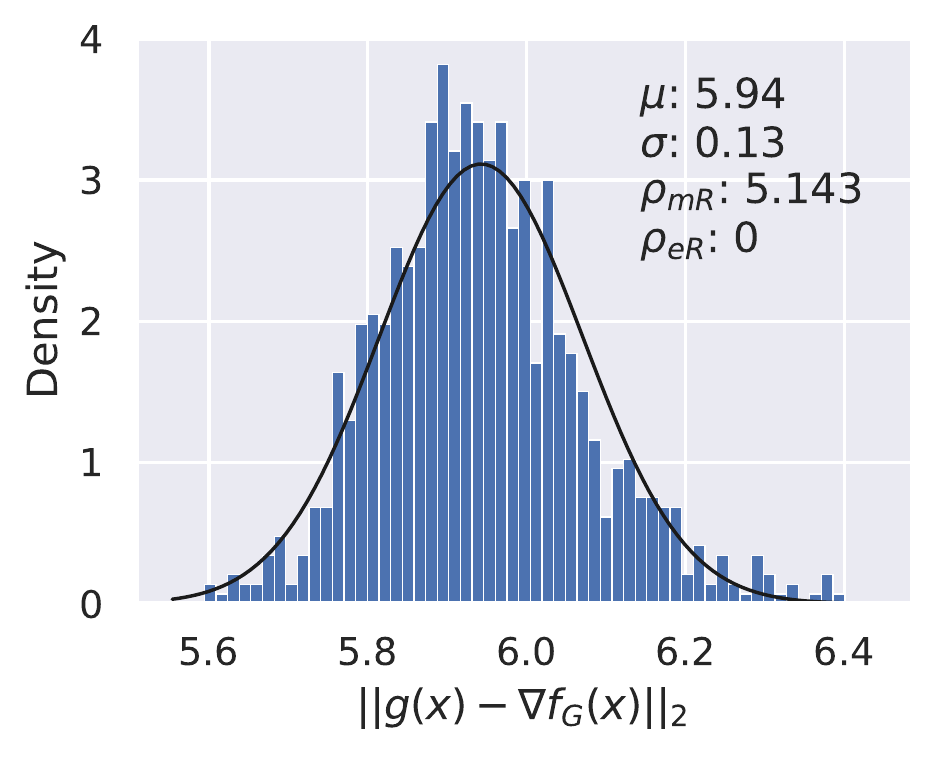} &
        \hspace{-10pt}
        \includegraphics[width=\evofigscale\textwidth]{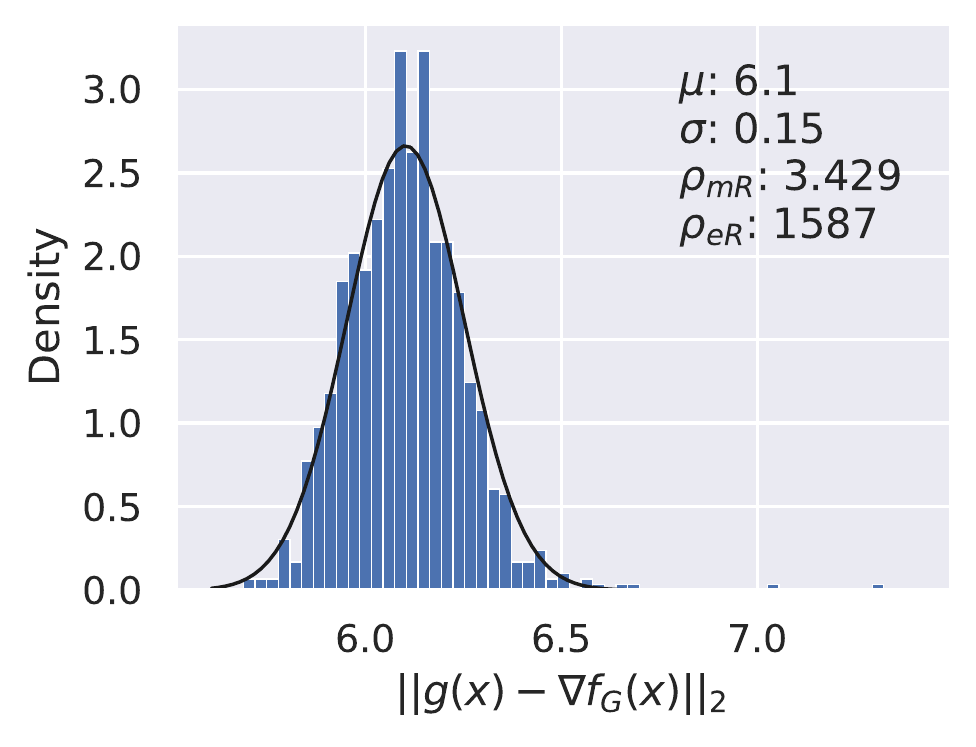} &   
        \hspace{-10pt}
        \includegraphics[width=\evofigscale\textwidth]{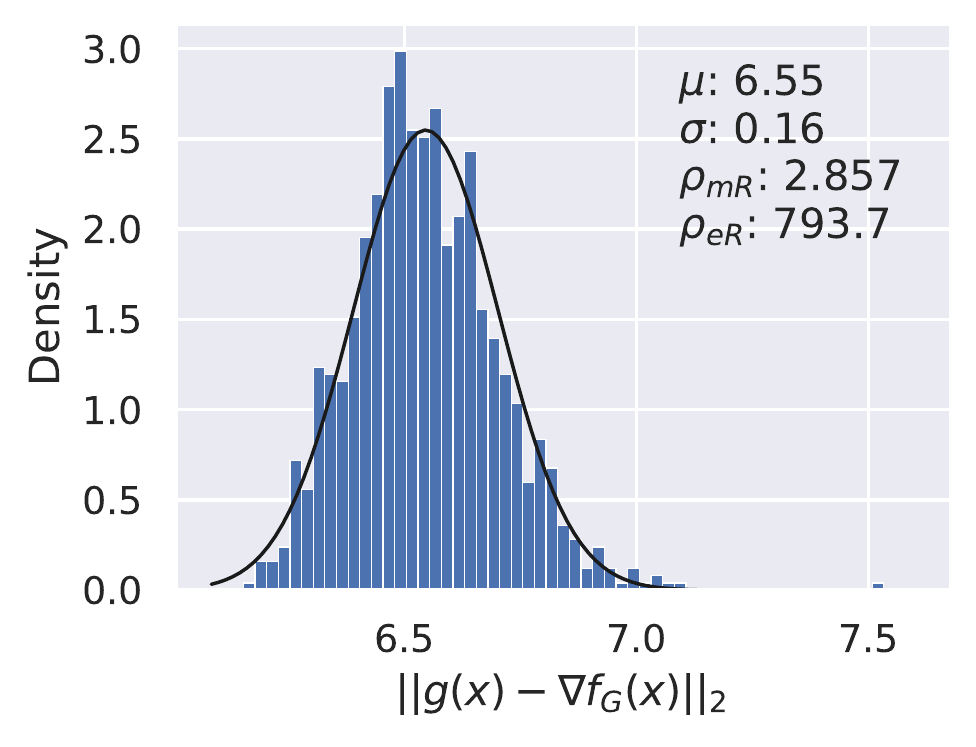} \\
    
        \raisebox{3.5\normalbaselineskip}[0pt][0pt]{
            \rotatebox[origin=c]{90}{Discriminator}
        }\hspace{-7pt} & 
        \hspace{-7pt}
        \includegraphics[width=\evofigscale\textwidth]{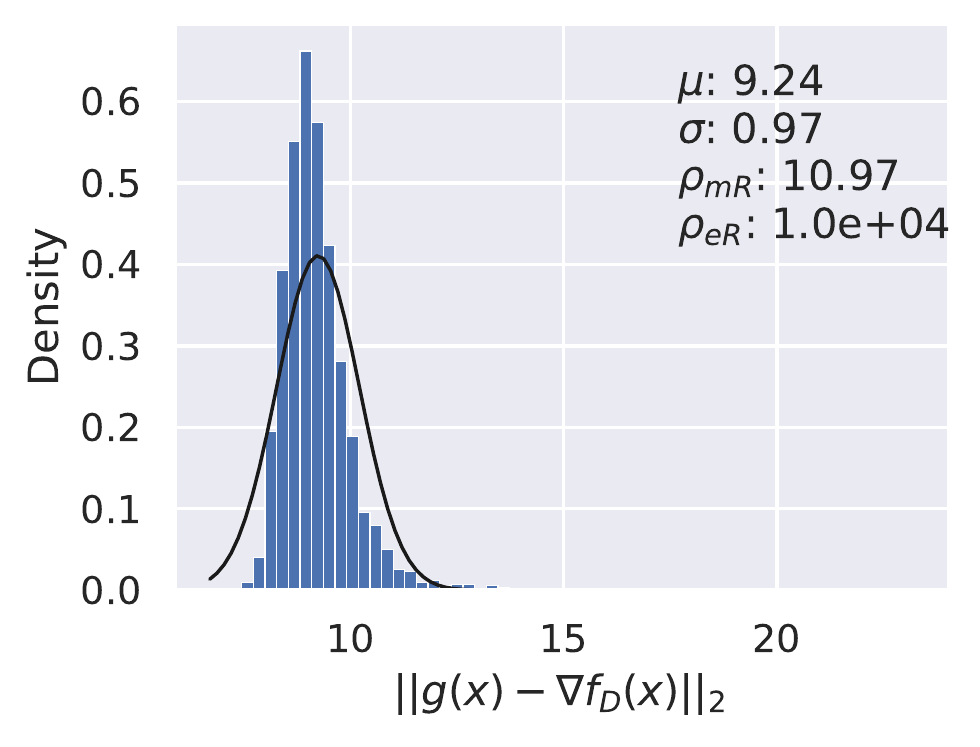} &   
        \hspace{-10pt}
        \includegraphics[width=\evofigscale\textwidth]{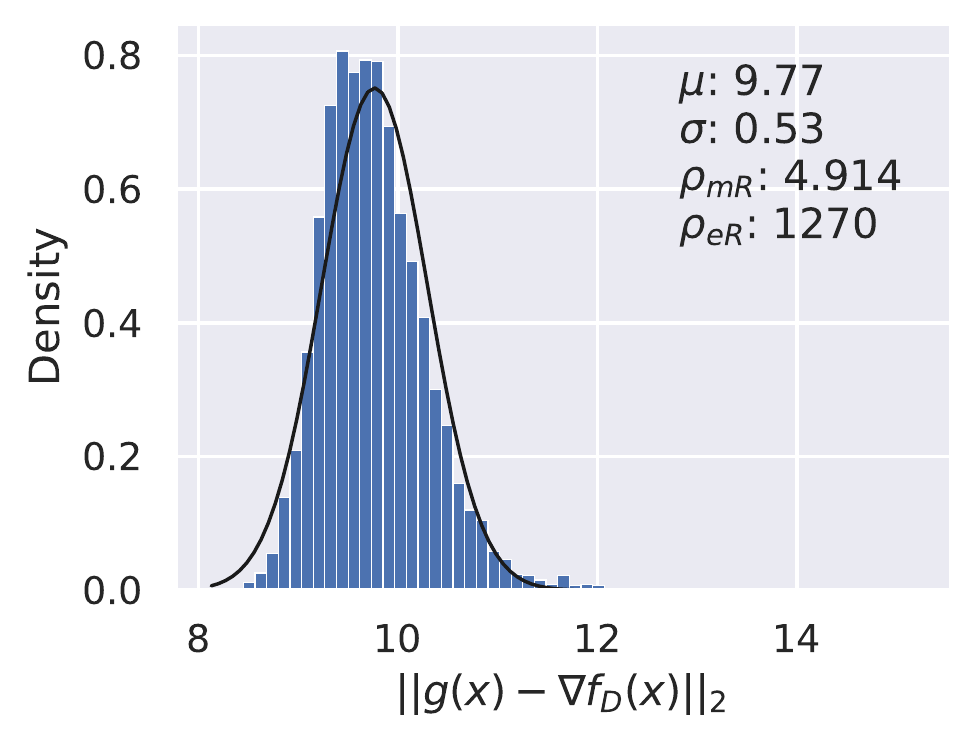} &
        \hspace{-10pt}
        \includegraphics[width=\evofigscale\textwidth]{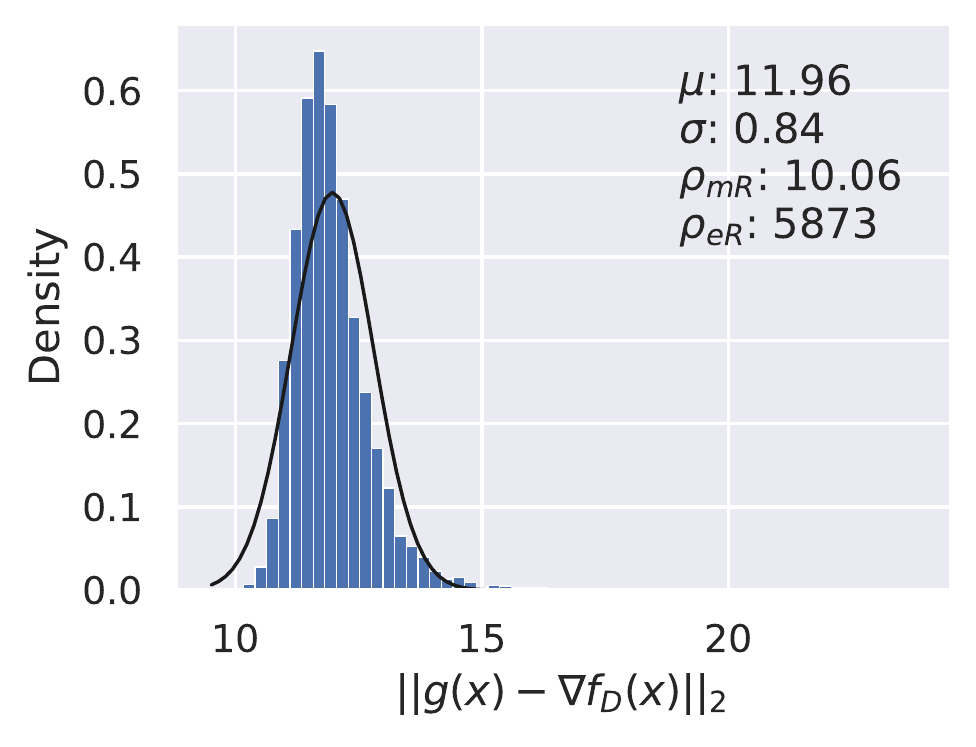} &   
        \hspace{-10pt}
        \includegraphics[width=\evofigscale\textwidth]{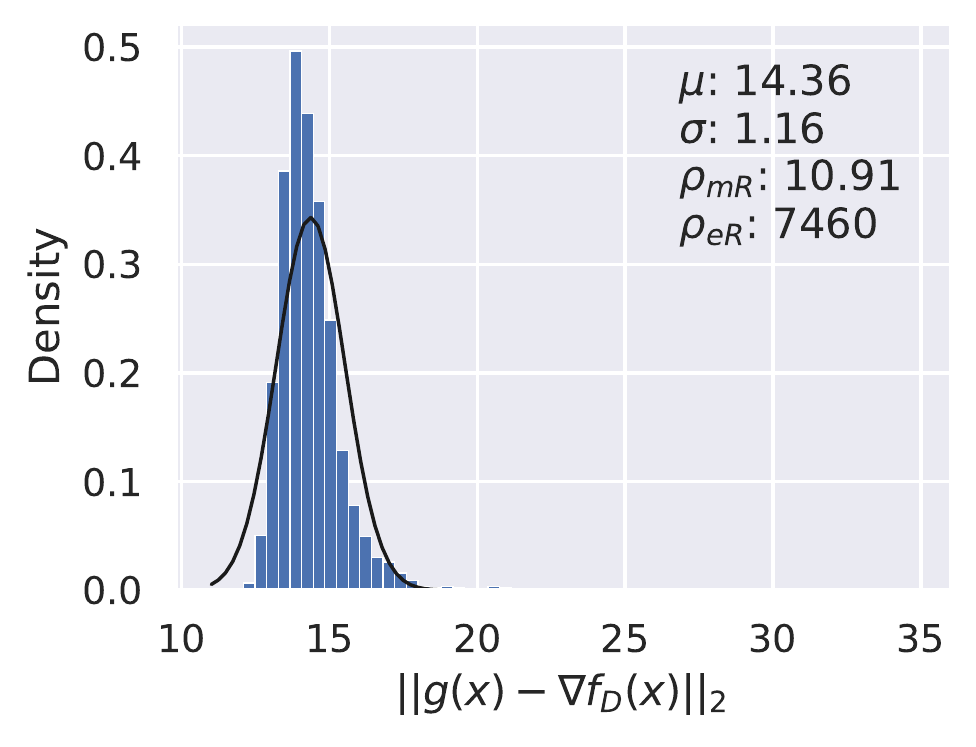} \\
        & {20000 steps} & {40000 steps} & {80000 steps} & {100000 steps} \\
    
    \end{tabular}
    \caption{\small Evolution of gradient noise histograms for a WGAN-GP model. The top two rows are trained with \algname{SGDA}, and the bottom two rows are trained with \algname{clipped-SGDA}.}
    \label{fig:wgangp_evo}
\end{figure}

\begin{ack}
This work was partially supported by a grant for research centers in the field of artificial intelligence, provided by the Analytical Center for the Government of the Russian Federation in accordance with the subsidy agreement (agreement identifier 000000D730321P5Q0002) and the agreement with the Moscow Institute of Physics and Technology dated November 1, 2021 No. 70-2021-00138.
The work by P. Dvurechensky was funded by the Deutsche Forschungsgemeinschaft (DFG, German Research Foundation) under Germany's Excellence Strategy – The Berlin Mathematics Research Center MATH+ (EXC-2046/1, project ID: 390685689).
\end{ack}

\bibliography{refs}

\section*{Checklist}

\begin{enumerate}

\item For all authors...
\begin{enumerate}
  \item Do the main claims made in the abstract and introduction accurately reflect the paper's contributions and scope?
    \answerYes{}
  \item Did you describe the limitations of your work?
    \answerYes{We explicitly formulate all assumptions in \S~\ref{sec:techicalities}.}
  \item Did you discuss any potential negative societal impacts of your work?
    \answerNA{Our work is mainly theoretical.}
  \item Have you read the ethics review guidelines and ensured that your paper conforms to them?
    \answerYes{}
\end{enumerate}

\item If you are including theoretical results...
\begin{enumerate}
  \item Did you state the full set of assumptions of all theoretical results?
    \answerYes{We explicitly formulate all assumptions in \S~\ref{sec:techicalities}.}
        \item Did you include complete proofs of all theoretical results?
    \answerYes{Proofs are included in the Appendix.}
\end{enumerate}

\item If you ran experiments...
\begin{enumerate}
  \item Did you include the code, data, and instructions needed to reproduce the main experimental results (either in the supplemental material or as a URL)?
    \answerYes{}
  \item Did you specify all the training details (e.g., data splits, hyperparameters, how they were chosen)?
    \answerYes{}
        \item Did you report error bars (e.g., with respect to the random seed after running experiments multiple times)?
    \answerNo{Training GANs is computationally expensive and we did not aim at achieving SOTA FID.}
        \item Did you include the total amount of compute and the type of resources used (e.g., type of GPUs, internal cluster, or cloud provider)?
    \answerYes{}
\end{enumerate}

\item If you are using existing assets (e.g., code, data, models) or curating/releasing new assets...
\begin{enumerate}
  \item If your work uses existing assets, did you cite the creators?
    \answerYes{}
  \item Did you mention the license of the assets?
    \answerNA{}
  \item Did you include any new assets either in the supplemental material or as a URL?
    \answerNA{}
  \item Did you discuss whether and how consent was obtained from people whose data you're using/curating?
    \answerNA{}
  \item Did you discuss whether the data you are using/curating contains personally identifiable information or offensive content?
    \answerNA{}
\end{enumerate}

\item If you used crowdsourcing or conducted research with human subjects...
\begin{enumerate}
  \item Did you include the full text of instructions given to participants and screenshots, if applicable?
    \answerNA{}
  \item Did you describe any potential participant risks, with links to Institutional Review Board (IRB) approvals, if applicable?
    \answerNA{}
  \item Did you include the estimated hourly wage paid to participants and the total amount spent on participant compensation?
    \answerNA{}
\end{enumerate}

\end{enumerate}

\newpage

\appendix
\tableofcontents
\newpage

\section{Further Related Work}\label{app:extra_related_work}

\textbf{Convergence in expectation.} Convergence in expectation of stochastic methods for solving \ref{eq:main_problem}s is relatively well-studied in the literature. In particular, versions of \algname{SEG} are studied under bounded variance \citep{beznosikov2020distributed, hsieh2020explore}, smoothness of stochastic realizations \citep{mishchenko2020revisiting}, and more refined assumptions unifying previously used ones \citep{gorbunov2022stochastic}. Recent advances on the in-expectation convergence of \algname{SGDA} are obtained in \citep{loizou2021stochastic, beznosikov2022stochastic}.

\textbf{Gradient clipping.} In the context of solving minimization problems, gradient clipping \citep{pascanu2013difficulty} and normalization \citep{hazan2015beyond} are known to have a number of favorable properties such as practical robustness to the rapid changes of the loss function \citep{Goodfellow-et-al-2016}, provable convergence for structured non-smooth problems with polynomial growth \cite{zhang2020gradient, mai2021stability} and for the problems with heavy-tailed noise in convex  \citep{nazin2019algorithms, gorbunov2020stochastic, gorbunov2021near} and non-convex cases \citep{zhang2020adaptive, cutkosky2021high}. Our work makes a further step towards a better understanding of gradient clipping and is the first to study the theoretical convergence of clipped first-order stochastic methods for \ref{eq:main_problem}s.

\textbf{Structured non-monotonicity.} There is a noticeable growing interest of the community in studying the theoretical convergence guarantees of deterministic methods for solving \ref{eq:main_problem} with non-monotone operators $F(x)$ having a certain structure, e.g., negative comonotonicty \citep{diakonikolas2021efficient, lee2021fast, bohm2022solving}, quasi-strong monotonicity \citep{song2020optimistic, mertikopoulos2019learning} and/or star-cocoercivity \citep{loizou2021stochastic, gorbunov2022extragradient, gorbunov2022stochastic, beznosikov2022stochastic}. In the context of stochastic \ref{eq:main_problem}s, \algname{SEG} (with different extrapolation and update stepsizes) is analyzed under negative comonotonicity by \citet{diakonikolas2021efficient} and under quasi-strong monotonicity by \citet{gorbunov2022stochastic}, while \algname{SGDA} is studied under quasi-strong monotonicity and/or star-cocoercivity by \citep{loizou2021stochastic, beznosikov2022stochastic}. These results establish in-expectation convergence rates. Our paper continues this line of works and provides the first high-probability analysis of stochastic methods for solving \ref{eq:main_problem}s with structured non-monotonicity.

\newpage

\section{Auxiliary Results}

\paragraph{Useful inequalities.} For all $a,b \in \R^d$ and $\alpha > 0$ the following relations hold:
\begin{eqnarray}
    2\langle a, b \rangle &=& \|a\|^2 + \|b\|^2 - \|a - b\|^2, \label{eq:inner_product_representation}\\
    \|a + b\|^2 &\leq& 2\|a\|^2 + 2\|b\|^2, \label{eq:a_plus_b}\\
    -\|a-b\|^2 &\leq& -\frac{1}{2}\|a\|^2 + \|b\|^2. \label{eq:a_minus_b} 
\end{eqnarray}

\paragraph{Bernstein inequality.} In our proofs, we rely on the following lemma known as {\it Bernstein inequality for martingale differences} \citep{bennett1962probability,dzhaparidze2001bernstein,freedman1975tail}.
\begin{lemma}\label{lem:Bernstein_ineq}
    Let the sequence of random variables $\{X_i\}_{i\ge 1}$ form a martingale difference sequence, i.e.\ $\EE\left[X_i\mid X_{i-1},\ldots, X_1\right] = 0$ for all $i \ge 1$. Assume that conditional variances $\sigma_i^2\eqdef\EE\left[X_i^2\mid X_{i-1},\ldots, X_1\right]$ exist and are bounded and assume also that there exists deterministic constant $c>0$ such that $|X_i| \le c$ almost surely for all $i\ge 1$. Then for all $b > 0$, $G > 0$ and $n\ge 1$
    \begin{equation}
        \PP\left\{\Big|\sum\limits_{i=1}^nX_i\Big| > b \text{ and } \sum\limits_{i=1}^n\sigma_i^2 \le G\right\} \le 2\exp\left(-\frac{b^2}{2G + \nicefrac{2cb}{3}}\right).
    \end{equation}
\end{lemma}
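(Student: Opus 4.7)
I would prove this via the classical Freedman-type exponential supermartingale argument. Let $\mathcal{F}_i = \sigma(X_1,\ldots,X_i)$, $S_n = \sum_{i=1}^n X_i$, and $V_n = \sum_{i=1}^n \sigma_i^2$. The goal is to construct, for each $\lambda \in (0, 3/c)$, a non-negative supermartingale of the form $M_n(\lambda) = \exp\!\bigl(\lambda S_n - \phi(\lambda) V_n\bigr)$ with $\EE[M_n(\lambda)] \le 1$, and then apply Markov's inequality on the event $\{V_n \le G\}$ to control the deterministic exponent.

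\textbf{Step 1: MGF bound for a single increment.} The key lemma is that for any real $X$ with $|X|\le c$ a.s., $\EE[X\mid \mathcal{F}]=0$, and $\EE[X^2\mid\mathcal{F}]=\sigma^2$, one has
$$\EE\!\left[e^{\lambda X}\mid \mathcal{F}\right] \le \exp\!\left(\sigma^2 \phi(\lambda)\right), \qquad \phi(\lambda) := \frac{e^{\lambda c} - 1 - \lambda c}{c^{2}},$$
for any $\lambda>0$. This follows from the pointwise bound $e^{\lambda x} \le 1 + \lambda x + \phi(\lambda)\, x^2$ on $[-c,c]$ (a convexity/Taylor estimate, since $(e^{\lambda x}-1-\lambda x)/x^2$ is non-decreasing in $x$), taking conditional expectation, and using $1+u\le e^u$.

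\textbf{Step 2: Supermartingale and Markov.} Applied with $\mathcal{F}=\mathcal{F}_{i-1}$ to $X_i$, this shows $\EE[M_i(\lambda)\mid\mathcal{F}_{i-1}]\le M_{i-1}(\lambda)$, hence $\EE[M_n(\lambda)]\le 1$. On the event $\{V_n\le G\}$ we have $M_n(\lambda)\ge \exp(\lambda S_n - \phi(\lambda)G)$, so for any $b>0$,
$$\PP\{S_n>b,\ V_n\le G\} \le \PP\{M_n(\lambda) > e^{\lambda b - \phi(\lambda)G}\} \le \exp\!\bigl(-\lambda b + \phi(\lambda) G\bigr)$$
by Markov. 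Applying the same argument to $(-X_i)$, which is also a bounded martingale difference sequence with the same conditional variances, handles the lower tail and yields the factor $2$ at the end.

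\textbf{Step 3: Optimizing $\lambda$ and simplifying to the Bernstein form.} Minimizing the exponent over $\lambda>0$ gives $\lambda c = \ln(1+bc/G)$ and the Bennett-type bound $\exp\!\bigl(-\tfrac{G}{c^2}h(bc/G)\bigr)$ with $h(u)=(1+u)\ln(1+u)-u$. To reach the stated Bernstein form, use the elementary inequality $h(u)\ge \tfrac{u^{2}}{2+2u/3}$ for $u\ge 0$, which yields $\tfrac{G}{c^2}h(bc/G)\ge \tfrac{b^2}{2G+2cb/3}$. Combining with the two-sided tail bound gives the claim. I expect the only mildly technical step is verifying the one-dimensional inequality $h(u)\ge u^2/(2+2u/3)$; this is standard and can be done by checking that the function $h(u)(2+2u/3)-u^2$ vanishes at $u=0$ together with its first derivative and has non-negative second derivative on $[0,\infty)$. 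Rather than explicitly optimizing $\lambda$, one can equivalently obtain the Bernstein bound directly by first establishing $\phi(\lambda)\le \tfrac{\lambda^{2}/2}{1-\lambda c/3}$ for $0<\lambda<3/c$ and choosing $\lambda = b/(G+cb/3)$, which is the shortcut I would actually use in the writeup.
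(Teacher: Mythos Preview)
Your argument is correct and is precisely the standard Freedman/Bennett--Bernstein derivation. However, the paper does not actually prove this lemma: it states the result with citations to \citep{bennett1962probability,dzhaparidze2001bernstein,freedman1975tail} and uses it as a black box throughout the high-probability analyses. So there is no ``paper's own proof'' to compare against; your writeup simply supplies the classical proof that the paper defers to the literature. The shortcut you mention at the end (bounding $\phi(\lambda)\le \tfrac{\lambda^2/2}{1-\lambda c/3}$ and plugging $\lambda=b/(G+cb/3)$) is indeed the cleanest route and lands exactly on the stated exponent $b^2/(2G+2cb/3)$.
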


\paragraph{Bias and variance of clipped stochastic vector.} We also use the following properties of clipped stochastic estimators from \citep{gorbunov2020stochastic}.
\begin{lemma}[Simplified version of Lemma F.5 from \citep{gorbunov2020stochastic}]\label{lem:bias_variance}
    Let $X$ be a random vector in $\R^d$ and $\tX = \clip(X,\lambda)$. Then,
    \begin{equation}
        \left\|\tX - \EE[\tX]\right\| \leq 2\lambda. \label{eq:bound_X}
    \end{equation} 
    Moreover, if for some $\sigma \geq 0$
    \begin{equation}
        \EE[X] = x\in\R^d,\quad \EE[\|X - x\|^2] \leq \sigma^2 \label{eq:UBV_X}
    \end{equation}
    and $x \leq \nicefrac{\lambda}{2}$, then
    \begin{eqnarray}
        \left\|\EE[\tX] - x\right\| &\leq& \frac{4\sigma^2}{\lambda}, \label{eq:bias_X}\\
        \EE\left[\left\|\tX - x\right\|^2\right] &\leq& 18\sigma^2, \label{eq:distortion_X}\\
        \EE\left[\left\|\tX - \EE[\tX]\right\|^2\right] &\leq& 18\sigma^2. \label{eq:variance_X}
    \end{eqnarray}
\end{lemma}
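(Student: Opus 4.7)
The plan is to handle the four assertions in order, using only boundedness of the clipped vector and a single Markov-type truncation argument. Throughout, write $p := \mathbb{P}(\|X\| > \lambda)$ and use the identity $\tilde X = X$ on $\{\|X\| \le \lambda\}$ and $\tilde X = \lambda X/\|X\|$ on $\{\|X\| > \lambda\}$; in particular $\|\tilde X\| \le \lambda$ almost surely.

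First, for \eqref{eq:bound_X}, I would simply note that $\|\tilde X\| \le \lambda$ a.s., so by Jensen $\|\mathbb{E}[\tilde X]\| \le \lambda$, and the triangle inequality yields $\|\tilde X - \mathbb{E}[\tilde X]\| \le 2\lambda$. Under the additional hypotheses \eqref{eq:UBV_X} and $\|x\| \le \lambda/2$, the crucial geometric observation is that on $\{\|X\| > \lambda\}$ we have $\|X - x\| \ge \|X\| - \|x\| \ge \lambda/2$; hence by Markov,
\begin{equation*}
    p = \mathbb{P}(\|X\| > \lambda) \le \mathbb{P}\bigl(\|X - x\| \ge \lambda/2\bigr) \le \frac{4\sigma^2}{\lambda^2}.
\end{equation*}

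Second, for the bias bound \eqref{eq:bias_X}, write $\mathbb{E}[\tilde X] - x = \mathbb{E}[\tilde X - X]$ and note that $\tilde X - X$ is supported on $\{\|X\| > \lambda\}$ with $\|\tilde X - X\| = \|X\| - \lambda \le \|X\|$. On this event $\|X\| \le \|X-x\| + \|x\| \le \|X-x\| + \lambda/2 \le 2\|X-x\|$. Therefore
\begin{equation*}
    \|\mathbb{E}[\tilde X] - x\| \le \mathbb{E}\bigl[\|X\|\,\mathbf{1}_{\|X\|>\lambda}\bigr] \le 2\,\mathbb{E}\bigl[\|X-x\|\,\mathbf{1}_{\|X-x\|\ge \lambda/2}\bigr] \le \frac{4}{\lambda}\,\mathbb{E}[\|X-x\|^2] \le \frac{4\sigma^2}{\lambda},
\end{equation*}
where the penultimate step uses $\mathbf{1}_{\|X-x\|\ge \lambda/2} \le 2\|X-x\|/\lambda$.

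Third, for \eqref{eq:distortion_X}, split the expectation over the two events. On $\{\|X\| \le \lambda\}$ we have $\tilde X = X$, contributing at most $\mathbb{E}[\|X-x\|^2] \le \sigma^2$. On $\{\|X\| > \lambda\}$, use $\|\tilde X - x\| \le \|\tilde X\| + \|x\| \le 3\lambda/2$, so the contribution is at most $(9\lambda^2/4)\cdot p \le (9\lambda^2/4)(4\sigma^2/\lambda^2) = 9\sigma^2$. Summing gives $\mathbb{E}[\|\tilde X - x\|^2] \le 10\sigma^2 \le 18\sigma^2$ (the slack of $18$ leaves room for a looser intermediate step if one prefers to bound $\|\tilde X - x\|^2 \le 2\|\tilde X\|^2 + 2\|x\|^2$ or similar). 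Finally, \eqref{eq:variance_X} is immediate from the variational property of the mean: $\mathbb{E}[\tilde X]$ minimizes $y \mapsto \mathbb{E}[\|\tilde X - y\|^2]$, so $\mathbb{E}[\|\tilde X - \mathbb{E}[\tilde X]\|^2] \le \mathbb{E}[\|\tilde X - x\|^2] \le 18\sigma^2$.

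There is no serious obstacle here; the only mildly delicate point is the bias proof, where one must exploit that $\|x\| \le \lambda/2$ both to control $p$ by Markov and to pass from $\|X\|$ to $\|X-x\|$ on the truncation event. The choice of threshold $\lambda/2$ (as opposed to, say, $\lambda$) is what makes the Markov bound consume only one factor of $\sigma$ while leaving another to produce the $\sigma^2/\lambda$ decay in the bias.
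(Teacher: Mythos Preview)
Your argument is correct. The paper itself does not give a self-contained proof of this lemma; it simply states that the proof is identical to that of Lemma~F.5 in \cite{gorbunov2020stochastic} and records which hypotheses are used for which conclusions. Your write-up supplies exactly those details via the standard truncation/Markov argument, and in fact your bound $\EE[\|\tilde X - x\|^2]\le 10\sigma^2$ is sharper than the stated $18\sigma^2$; the paper's constant is inherited from the cited reference and is not tight.
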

\begin{proof}
    The proof of this lemma is identical to the original one, since \citet{gorbunov2020stochastic} rely only on $\tX = \clip(X,\lambda)$ to derive \eqref{eq:bound_X}, and to prove \eqref{eq:bias_X}-\eqref{eq:variance_X} they use only \eqref{eq:UBV_X}, $\tX = \clip(X,\lambda)$ and $x \leq \nicefrac{\lambda}{2}$
\end{proof}

\newpage

\section{Clipped Stochastic Extragradient: Missing Proofs and Details}\label{app:clipped_SEG_proofs}

\subsection{Monotone Case}

\begin{lemma}\label{lem:optimization_lemma_gap_SEG}
    Let Assumptions~\ref{as:UBV}, \ref{as:lipschitzness}, \ref{as:monotonicity} hold for $Q = B_{4R}(x^*)$, where $R \geq R_0 \eqdef \|x^0 - x^*\|$, and $\gamma_1 = \gamma_2 = \gamma$, $0 < \gamma \leq \nicefrac{1}{\sqrt{2}L}$. If $x^k$ and $\tx^k$ lie in $B_{4R}(x^*)$ for all $k = 0,1,\ldots, K$ for some $K\geq 0$, then for all $u \in B_{4R}(x^*)$ the iterates produced by \ref{eq:clipped_SEG} satisfy
    \begin{eqnarray}
        \langle F(u), \tx^K_{\avg} - u\rangle &\leq& \frac{\|x^0 - u\|^2 - \|x^{K+1} - u\|^2}{2\gamma(K+1)} + \frac{\gamma}{2(K+1)}\sum\limits_{k=0}^K\left(\|\theta_k\|^2 + 2\|\omega_k\|^2\right)\notag\\
        &&\quad + \frac{1}{K+1}\sum\limits_{k=0}^K\langle x^k - u - \gamma F(\tx^k), \theta_k\rangle, \label{eq:optimization_lemma_SEG}\\
        \tx^K_{\avg} &\eqdef& \frac{1}{K+1}\sum\limits_{k=0}^{K}\tx^k, \label{eq:tx_avg_SEG}\\
        \theta_k &\eqdef& F(\tx^k) - \tF_{\Bxi_2^k}(\tx^k), \label{eq:theta_k_SEG}\\
        \omega_k &\eqdef& F(x^k) - \tF_{\Bxi_1^k}(x^k). \label{eq:omega_k_SEG}
    \end{eqnarray}
\end{lemma}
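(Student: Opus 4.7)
The plan is to follow the standard extragradient one-step analysis, but carefully grouping the noise terms so that the martingale-difference piece takes the particular form $\langle x^k - u - \gamma F(\tx^k), \theta_k\rangle$ demanded by the statement. I would start from the update $x^{k+1}=x^k-\gamma\tF_{\Bxi_2^k}(\tx^k)$, expand $\|x^{k+1}-u\|^2$, and perform the extragradient identity
\[
\gamma^2\|\tF_{\Bxi_2^k}(\tx^k)\|^2-2\gamma^2\langle \tF_{\Bxi_1^k}(x^k),\tF_{\Bxi_2^k}(\tx^k)\rangle=\gamma^2\|\tF_{\Bxi_2^k}(\tx^k)-\tF_{\Bxi_1^k}(x^k)\|^2-\gamma^2\|\tF_{\Bxi_1^k}(x^k)\|^2,
\]
using that $x^k-\tx^k=\gamma\tF_{\Bxi_1^k}(x^k)$. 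Inserting the decompositions $\tF_{\Bxi_2^k}(\tx^k)=F(\tx^k)-\theta_k$ and $\tF_{\Bxi_1^k}(x^k)=F(x^k)-\omega_k$, and applying monotonicity in the form $\langle F(\tx^k)-F(u),\tx^k-u\rangle\ge 0$, gives a per-iteration inequality
\[
\|x^{k+1}-u\|^2-\|x^k-u\|^2+2\gamma\langle F(u),\tx^k-u\rangle\le 2\gamma\langle \tx^k-u,\theta_k\rangle+\gamma^2\|\tF_{\Bxi_2^k}(\tx^k)-\tF_{\Bxi_1^k}(x^k)\|^2-\gamma^2\|\tF_{\Bxi_1^k}(x^k)\|^2.
\]

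The delicate bookkeeping step, and the main obstacle, is producing the exact inner product $\langle x^k-u-\gamma F(\tx^k),\theta_k\rangle$ on the right-hand side. For this I would write $\tx^k-u=(x^k-u-\gamma F(\tx^k))+(\tx^k-x^k+\gamma F(\tx^k))$ and note that $\tx^k-x^k+\gamma F(\tx^k)=\gamma(F(\tx^k)-F(x^k))+\gamma\omega_k$, turning $2\gamma\langle \tx^k-u,\theta_k\rangle$ into $2\gamma\langle x^k-u-\gamma F(\tx^k),\theta_k\rangle+2\gamma^2\langle F(\tx^k)-F(x^k),\theta_k\rangle+2\gamma^2\langle\omega_k,\theta_k\rangle$. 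Expanding $\|F(\tx^k)-F(x^k)+\omega_k-\theta_k\|^2$ in full, the cross terms $\langle F(\tx^k)-F(x^k),\theta_k\rangle$ and $\langle\omega_k,\theta_k\rangle$ cancel exactly, leaving precisely $\|F(\tx^k)-F(x^k)+\omega_k\|^2+\|\theta_k\|^2$.

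From there the remaining work is routine. I would apply $\|a+b\|^2\le 2\|a\|^2+2\|b\|^2$ to bound $\|F(\tx^k)-F(x^k)+\omega_k\|^2\le 2\|F(\tx^k)-F(x^k)\|^2+2\|\omega_k\|^2$, invoke Lipschitzness (which holds because $x^k,\tx^k\in B_{4R}(x^*)=Q$ by assumption) to estimate $\|F(\tx^k)-F(x^k)\|^2\le L^2\gamma^2\|\tF_{\Bxi_1^k}(x^k)\|^2=L^2\gamma^2\|F(x^k)-\omega_k\|^2$, and combine with the $-\gamma^2\|\tF_{\Bxi_1^k}(x^k)\|^2$ term. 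The coefficient of $\|F(x^k)-\omega_k\|^2$ becomes $\gamma^2(2L^2\gamma^2-1)$, which is nonpositive exactly under the stepsize assumption $\gamma\le 1/(\sqrt{2}L)$. Dropping this nonpositive term leaves the clean bound $\gamma^2(\|\theta_k\|^2+2\|\omega_k\|^2)$.

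Finally, I would sum over $k=0,\ldots,K$, telescope the distance terms, apply Jensen to replace $\frac{1}{K+1}\sum_k \langle F(u),\tx^k-u\rangle$ by $\langle F(u),\tx^K_{\avg}-u\rangle$, and divide by $2\gamma(K+1)$ to obtain exactly \eqref{eq:optimization_lemma_SEG}. Everything above is valid whenever $x^k,\tx^k\in B_{4R}(x^*)$ for all $k\le K$, which is precisely the hypothesis of the lemma (Lipschitzness and monotonicity are only used at these points).
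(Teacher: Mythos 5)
Your proposal is correct and follows essentially the same route as the paper's proof: the same one-step expansion of $\|x^{k+1}-u\|^2$, monotonicity applied at $\tx^k$, the polarization identity exploiting $x^k-\tx^k=\gamma\tF_{\Bxi_1^k}(x^k)$, Lipschitzness combined with the stepsize condition $2\gamma^2L^2\le 1$ to drop the $\|\tF_{\Bxi_1^k}(x^k)\|^2$ term, and telescoping plus averaging at the end. The only difference --- applying the identity directly to the two clipped estimators and letting the cross terms with $\theta_k$ cancel, rather than first substituting $\tF_{\Bxi_2^k}(\tx^k)=F(\tx^k)-\theta_k$ as the paper does --- is purely cosmetic and produces the identical per-iteration bound.
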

\begin{proof}
    Using the update rule of \ref{eq:clipped_SEG}, for all $u \in B_{4R}(x^*)$ we obtain
    \begin{eqnarray*}
        \|x^{k+1} - u\|^2 &=& \|x^k - u\|^2 - 2\gamma \langle x^k - u,  \tF_{\Bxi_2^k}(\tx^k)\rangle + \gamma^2\|\tF_{\Bxi_2^k}(\tx^k)\|^2\\
        &=& \|x^k - u\|^2 -2\gamma \langle x^k - u, F(\tx^k) \rangle + 2\gamma \langle x^k - u, \theta_k \rangle\\
        &&\quad + \gamma^2\|F(\tx^k)\|^2 - 2\gamma^2 \langle F(\tx^k), \theta_k \rangle + \gamma^2\|\theta_k\|^2\\
        &=& \|x^k - u\|^2 -2\gamma \langle \tx^k - u, F(\tx^k) \rangle - 2\gamma \langle x^k - \tx^k, F(\tx^k) \rangle\\
        &&\quad + 2\gamma \langle x^k - u - \gamma F(\tx^k), \theta_k \rangle + \gamma^2\|F(\tx^k)\|^2 + \gamma^2\|\theta_k\|^2\\
        &\overset{\eqref{eq:monotonicity}}{\leq}& \|x^k - u\|^2 -2\gamma \langle \tx^k - u, F(u) \rangle - 2\gamma^2\langle \tF_{\Bxi_1^k}(x^k), F(\tx^k) \rangle\\
        &&\quad + 2\gamma \langle x^k - u - \gamma F(\tx^k), \theta_k \rangle + \gamma^2\|F(\tx^k)\|^2 + \gamma^2\|\theta_k\|^2\\
        &\overset{\eqref{eq:inner_product_representation}}{=}& \|x^k - u\|^2 -2\gamma \langle \tx^k - u, F(u) \rangle\\
        &&\quad + \gamma^2\| \tF_{\Bxi_1^k}(x^k) - F(\tx^k) \|^2 - \gamma^2 \|F(\tx^k)\|^2 - \gamma^2 \|\tF_{\Bxi_1^k}(x^k)\|^2\\
        &&\quad + 2\gamma \langle x^k - u - \gamma F(\tx^k), \theta_k \rangle + \gamma^2\|F(\tx^k)\|^2 + \gamma^2\|\theta_k\|^2\\
        &\overset{\eqref{eq:a_plus_b}}{\leq}& \|x^k - u\|^2 -2\gamma \langle \tx^k - u, F(u) \rangle\\
        &&\quad+ 2\gamma^2\|\omega_k\|^2 + 2\gamma^2\|F(x^k) - F(\tx^k)\|^2 - \gamma^2 \|\tF_{\Bxi_1^k}(x^k)\|^2\\
        &&\quad + 2\gamma \langle x^k - u - \gamma F(\tx^k), \theta_k \rangle + \gamma^2\|\theta_k\|^2\\
        &\overset{\eqref{eq:Lipschitzness}}{\leq}& \|x^k - u\|^2 -2\gamma \langle \tx^k - u, F(u) \rangle - \gamma^2\left(1 - 2\gamma^2L^2\right)\|\tF_{\Bxi_1^k}(x^k)\|^2 \\
        &&\quad + 2\gamma \langle x^k - u - \gamma F(\tx^k), \theta_k \rangle + \gamma^2\|\theta_k\|^2 + 2\gamma^2\|\omega_k\|^2
    \end{eqnarray*}
    where in the last step we additionally use $x^k - \tx^k = \gamma \tF_{\Bxi_1^k}(x^k)$ after the application of Lipschitzness of $F$. Since $\gamma \leq \nicefrac{1}{\sqrt{2}L}$, we have $\gamma^2\left(1 - 2\gamma^2L^2\right)\|\tF_{\Bxi_1^k}(x^k)\|^2 \geq 0$, implying
    \begin{eqnarray*}
        2\gamma \langle F(u), \tx^k - u \rangle &\leq& \|x^k - u\|^2 - \|x^{k+1} - u\|^2 + \gamma^2\left(\|\theta_k\|^2 + 2\|\omega_k\|^2\right)\\
        &&\quad + 2\gamma \langle x^k - u - \gamma F(\tx^k), \theta_k \rangle.
    \end{eqnarray*}
    Finally, we sum up the above inequality for $k = 0,1,\ldots, K$ and divide both sides of the result by $2\gamma(K+1)$:
    \begin{eqnarray*}
        \langle F(u), \tx^K_{\avg} - u\rangle &=& \frac{1}{K+1}\sum\limits_{k=0}^K \langle F(u), \tx^k - u \rangle\\
        &\leq& \frac{1}{2\gamma(K+1)}\sum\limits_{k=0}^K\left(\|x^k - u\|^2 - \|x^{k+1} - u\|^2\right) + \frac{\gamma}{2(K+1)}\sum\limits_{k=0}^K\|\theta_k\|^2\\
        &&\quad + \frac{1}{K+1}\sum\limits_{k=0}^K\langle x^k - u - \gamma F(\tx^k), \theta_k\rangle + \frac{\gamma}{K+1}\sum\limits_{k=0}^K\|\omega_k\|^2\\
        &=& \frac{\|x^0 - u\|^2 - \|x^{K+1} - u\|^2}{2\gamma(K+1)} + \frac{\gamma}{2(K+1)}\sum\limits_{k=0}^K\left(\|\theta_k\|^2 + 2\|\omega_k\|^2\right)\\
        &&\quad + \frac{1}{K+1}\sum\limits_{k=0}^K\langle x^k - u - \gamma F(\tx^k), \theta_k\rangle.
    \end{eqnarray*}
    This concludes the proof.
\end{proof}

\begin{theorem}\label{thm:main_result_gap_SEG}
    Let Assumptions~\ref{as:UBV}, \ref{as:lipschitzness}, \ref{as:monotonicity} hold for $Q = B_{4R}(x^*)$, where $R \geq R_0 \eqdef \|x^0 - x^*\|$, and\footnote{In this and further results, we have relatively large numerical constants in the conditions on step-sizes, batch-sizes, and clipping levels.  However, our main goal is deriving results in terms of $\cO(\cdot)$, where numerical constants are not taken into consideration. Although it is possible to significantly improve the dependence on numerical factors, we do not do it for the sake of proofs' simplicity.} $\gamma_1 = \gamma_2 = \gamma$,
    \begin{eqnarray}
        0< \gamma &\leq& \frac{1}{160L \ln \tfrac{6(K+1)}{\beta}}, \label{eq:gamma_SEG}\\
        \lambda_{1,k} = \lambda_{2,k} \equiv \lambda &=& \frac{R}{20\gamma \ln \tfrac{6(K+1)}{\beta}}, \label{eq:lambda_SEG}\\
        m_{1,k} = m_{2,k} \equiv m &\geq& \max\left\{1, \frac{10800 (K+1) \gamma^2\sigma^2 \ln\tfrac{6(K+1)}{\beta}}{R^2}\right\}, \label{eq:batch_SEG}
    \end{eqnarray}
    for some $K \geq 0$ and $\beta \in (0,1]$ such that $\ln \tfrac{6(K+1)}{\beta} \geq 1$. Then, after $K$ iterations the iterates produced by \ref{eq:clipped_SEG} with probability at least $1 - \beta$ satisfy 
    \begin{equation}
        \gap_R(\tx_{\avg}^K) \leq \frac{9R^2}{2\gamma(K+1)}, \label{eq:main_result}
    \end{equation}
    where $\tx_{\avg}^K$ is defined in \eqref{eq:tx_avg_SEG}.
\end{theorem}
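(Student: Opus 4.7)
The plan is a strong induction on the iteration count showing that, with high probability, the iterates $x^k, \tx^k$ never leave $B_{4R}(x^*)$, so that Lemma~\ref{lem:optimization_lemma_gap_SEG} applies throughout the run. Multiplying the conclusion of that lemma by $2\gamma(K+1)$, maximizing over $u \in B_R(x^*)$, and using $\|x^0 - u\|^2 \leq (R_0 + R)^2 \leq 4R^2$, the target bound $\gap_R(\tx_{\avg}^K) \leq \nicefrac{9R^2}{(2\gamma(K+1))}$ reduces to showing that
\[
\Delta_K := \max_{u \in B_R(x^*)} \left\{ \gamma^2 \sum_{k=0}^{K} (\|\theta_k\|^2 + 2\|\omega_k\|^2) + 2\gamma \sum_{k=0}^{K} \langle x^k - u - \gamma F(\tx^k), \theta_k \rangle \right\} \leq 5R^2
\]
with probability at least $1-\beta$.

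\textbf{Induction setup and clipping preliminaries.} I would define the event $E_t = \{x^k, \tx^k \in B_{4R}(x^*) \text{ for all } k \leq t\}$ and prove inductively that $\PP(E_t)$ decays by at most $\nicefrac{\beta}{(K+1)}$ per step. The base case holds since $\|x^0 - x^*\| \leq R$ and $\|\tx^0 - x^*\| \leq R + \gamma\lambda \leq (1+\nicefrac{1}{20})R$. For the step, I use Lemma~\ref{lem:optimization_lemma_gap_SEG} with $u = x^*$ (using $F(x^*) = 0$) together with the high-probability bound $\Delta_t \leq 5R^2$ to obtain $\|x^{t+1} - x^*\|^2 \leq R^2 + 5R^2 \leq 9R^2$, so $\|x^{t+1} - x^*\| \leq 3R$ and $\|\tx^{t+1} - x^*\| \leq 3R + \gamma\lambda \leq 4R$. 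The parameter choices $\gamma \leq \nicefrac{1}{(160 L A)}$ and $\lambda = \nicefrac{R}{(20\gamma A)}$ give $\lambda \geq 8LR$, so that for any $x \in B_{4R}(x^*)$, Lipschitzness yields $\|F(x)\| = \|F(x) - F(x^*)\| \leq 4LR \leq \nicefrac{\lambda}{2}$; this is precisely the condition needed to invoke Lemma~\ref{lem:bias_variance} on the mini-batched averages $\frac{1}{m}\sum_i F_{\xi^{i,k}}(\cdot)$, whose variance is bounded by $\nicefrac{\sigma^2}{m}$.

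\textbf{Bounding the noise terms.} I decompose $\theta_k = \theta_k^b + \theta_k^u$ where $\theta_k^b := F(\tx^k) - \EE[\tF_{\Bxi_2^k}(\tx^k) \mid \tx^k]$ is the bias and $\theta_k^u := \EE[\tF_{\Bxi_2^k}(\tx^k) \mid \tx^k] - \tF_{\Bxi_2^k}(\tx^k)$ is an unbiased martingale difference, and similarly for $\omega_k$. By Lemma~\ref{lem:bias_variance}, $\|\theta_k^b\|, \|\omega_k^b\| \leq \nicefrac{4\sigma^2}{(m\lambda)}$, $\|\theta_k^u\|, \|\omega_k^u\| \leq 2\lambda$, and conditional second moments are at most $\nicefrac{18\sigma^2}{m}$. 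I then split $\Delta_K$ using the identity $\langle x^k - u - \gamma F(\tx^k), \theta_k \rangle = \langle x^k - x^* - \gamma F(\tx^k), \theta_k \rangle - \langle u - x^*, \theta_k \rangle$ and $\max_{u \in B_R(x^*)}\langle x^* - u, \sum_k \theta_k\rangle = R\|\sum_k \theta_k\|$; this reduces the task to controlling (i) the bias-inner-product sums via Cauchy-Schwarz and deterministic bounds on $\|\theta_k^b\|$, (ii) the martingale inner-product sum $\sum_k \langle x^k - x^* - \gamma F(\tx^k), \theta_k^u\rangle$ via Bernstein (Lemma~\ref{lem:Bernstein_ineq}) with conditional variance control $\EE[|\langle \cdot, \theta_k^u\rangle|^2 \mid \cdot] \lesssim (5R)^2 \cdot \nicefrac{\sigma^2}{m}$ under $E_{k-1}$, (iii) the scalar sums $\sum_k \|\theta_k^u\|^2$ and $\sum_k\|\omega_k^u\|^2$ (subtracting conditional expectations, then applying Bernstein), and (iv) the vector quantity $\|\sum_k \theta_k^u\|^2 = \sum_k \|\theta_k^u\|^2 + 2\sum_k \langle \sum_{j<k}\theta_j^u, \theta_k^u\rangle$, where the second cross-term is yet another scalar martingale amenable to Bernstein, with its conditional variance controlled inductively by the previously established partial-sum bound. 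The batch size $m \geq \Omega(\nicefrac{(K+1)\gamma^2\sigma^2 A}{R^2})$ is exactly what makes each of these high-probability bounds collapse to at most a constant multiple of $R^2$, summing to $5R^2$.

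\textbf{Main obstacle.} The chief difficulty is the two-way coupling between the induction and the concentration bounds: Bernstein's inequality only applies when martingale differences are a.s.\ bounded and conditional variances controlled, both of which rely on the iterates staying in $B_{4R}(x^*)$, yet the latter is itself what we are trying to prove. I would resolve this in the standard fashion by working on the event $E_{t-1}$ (or, equivalently, by stopping-time truncation): each concentration event is intersected with the inductive event, and a union bound over $k = 0, \dots, K$ absorbs the factor $K+1$ into the logarithm $A = \ln \nicefrac{6(K+1)}{\beta}$. The cross-term $\sum_k \langle \sum_{j<k}\theta_j^u, \theta_k^u\rangle$ in the vector-martingale step is the most delicate: its conditional variance depends on $\|\sum_{j<k}\theta_j^u\|$, which must itself have been controlled at the previous step, so the Bernstein applications have to be chained in the correct order with the constants balanced so that all six or so high-probability failure probabilities sum to at most $\beta$.
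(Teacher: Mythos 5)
Your proposal is correct and follows essentially the same route as the paper's proof: the same reduction via the optimization lemma, the same bias/unbiased-martingale decomposition with Lemma~\ref{lem:bias_variance}, the same treatment of $\max_{u\in B_R(x^*)}$ through $R\|\sum_l\theta_l\|$ expanded into squared norms plus a cross-term martingale, and the same induction-plus-Bernstein argument with truncation (the paper implements your ``work on $E_{t-1}$'' idea via the truncated vectors $\eta_l$, $\zeta_l$ and by putting the partial-sum bound $\|\gamma\sum_{l<t}\theta_l\|\leq R$ directly into the induction hypothesis). The chaining issue you flag for the cross-term is exactly how the paper resolves it, so there is no gap.
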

\begin{proof}
    We introduce new notation: $R_k = \|x^k - x^*\|$ for all $k\geq 0$. The proof is based on deriving via induction that $R_k^2 \leq \tilde{C}R^2$ for some numerical constant $\tilde{C} > 0$. In particular, for each $k = 0,\ldots, K+1$ we define probability event $E_k$ as follows: inequalities
    \begin{gather}
        \underbrace{\max\limits_{u \in B_{R}(x^*)}\!\left\{\! \|x^0 - u\|^2 + 2\gamma \sum\limits_{l = 0}^{t-1} \langle x^l - u - \gamma F(\tx^l), \theta_l \rangle + \gamma^2 \sum\limits_{l=0}^{t-1}\left(\|\theta_l\|^2 + 2\|\omega_l\|^2\right)\!\right\}}_{A_t} \leq 9R^2, \label{eq:induction_inequality_1_SEG}\\
        \left\|\gamma\sum\limits_{l=0}^{t-1}\theta_l\right\| \leq R \label{eq:induction_inequality_2_SEG}
    \end{gather}
    hold for $t = 0,1,\ldots,k$ simultaneously. Our goal is to prove that $\PP\{E_k\} \geq  1 - \nicefrac{k\beta}{(K+1)}$ for all $k = 0,1,\ldots,K+1$. We use the induction to show this statement. For $k = 0$ the statement is trivial since $\|x^0 - u\|^2 \leq 2\|x^0 - x^*\|^2 + 2\|x^* - u\|^2 \leq 4R^2 \leq 9R^2$ and $\|\gamma\sum_{l=0}^{k-1}\theta_l\| = 0$ for any $u \in B_R(x^*)$. Next, assume that the statement holds for $k = T-1 \leq K$, i.e., we have $\PP\{E_{T-1}\} \geq 1 - \nicefrac{(T-1)\beta}{(K+1)}$. We need to prove that $\PP\{E_T\} \geq 1 - \nicefrac{T\beta}{(K+1)}$. First of all, we show that probability event $E_{T-1}$ implies $R_{t} \leq 3R$ for all $t = 0, 1, \ldots, T$. For $t = 0$ we already proved it. Next, assume that we have $R_{t} \leq 3R$ for all $t = 0,1,\ldots, t'$, where $t' < T$. Then, for all $t = 0, 1, \ldots, t'$ we have
    \begin{eqnarray}
        \|\tx^t - x^*\| &=& \|x^t - x^* - \gamma \tF_{\Bxi_1^t}(x^t)\| \leq \|x^t - x^*\| + \gamma \|\tF_{\Bxi_1^t}(x^t)\|\notag\\
        &\leq& \|x^t - x^*\| + \gamma\lambda \overset{\eqref{eq:lambda_SEG}}{\leq} 3R + \frac{R}{20\ln\frac{6(K+1)}{\beta}}  \leq 4R, \label{eq:gap_thm_SEG_technical_1}
    \end{eqnarray}
    i.e., $\tx^t \in B_{4R}(x^*)$. This means that the assumptions of Lemma~\ref{lem:optimization_lemma_gap_SEG} hold and we have that probability event $E_{T-1}$ implies
    \begin{eqnarray*}
        \max\limits_{u \in B_{R}(x^*)}\left\{2\gamma(t' + 1)\langle F(u), \tx^{t'}_{\avg} - u\rangle + \|x^{t'+1} - u\|^2\right\} &\\
        &\hspace{-3cm}\leq \max\limits_{u \in B_{R}(x^*)}\!\left\{\! \|x^0 - u\|^2 + 2\gamma \sum\limits_{l = 0}^{t'-1} \langle x^l - u - \gamma F(\tx^l), \theta_l \rangle\!\right\}\\
        & + \gamma^2 \sum\limits_{l=0}^{t'-1}\left(\|\theta_l\|^2 + 2\|\omega_l\|^2\right)\\
        &\hspace{-10.5cm} \overset{\eqref{eq:induction_inequality_1_SEG}}{\leq} 9R^2,
    \end{eqnarray*}
    meaning that
    \begin{eqnarray*}
        \|x^{t' + 1} - x^*\|^2 &\leq& \max\limits_{u \in B_{R}(x^*)}\left\{2\gamma(t' + 1)\langle F(u), \tx^{t'}_{\avg} - u\rangle + \|x^{t'+1} - u\|^2\right\}\\
        &\leq& 9R^2,
    \end{eqnarray*}
    i.e., $R_{t'+1} \leq 3R$. That is, we proved that probability event $E_{T-1}$ implies $R_{t} \leq 3R$ and
    \begin{equation}
        \max\limits_{u \in B_{R}(x^*)}\left\{2\gamma(t + 1)\langle F(u), \tx^{t}_{\avg} - u\rangle + \|x^{t+1} - u\|^2\right\} \leq 9R^2 \label{eq:gap_thm_SEG_technical_1_5}
    \end{equation}
    for all $t = 0, 1, \ldots, T$. Moreover, in view of \eqref{eq:gap_thm_SEG_technical_1} $E_{T-1}$ also implies that $\|\tx^t - x^*\| \leq 4R$ for all $t = 0, 1, \ldots, T$. Using this, we derive that $E_{T-1}$ implies
    \begin{eqnarray}
        \|x^t - x^* - \gamma F(\tx^t)\| &\leq& \|x^t - x^*\| + \gamma\|F(\tx^t)\| \overset{\eqref{eq:Lipschitzness}}{\leq} 3R + \gamma L\|\tx^t - x^*\|\notag\\
        &\overset{\eqref{eq:gap_thm_SEG_technical_1}}{\leq}& 3R + 4R\gamma L \overset{\eqref{eq:gamma_SEG}}{\leq} 5R, \label{eq:gap_thm_SEG_technical_2}
    \end{eqnarray}
    for all $t = 0, 1, \ldots, T$. Consider random vectors
    \begin{equation*}
        \eta_t = \begin{cases}x^t - x^* - \gamma F(\tx^t),& \text{if } \|x^t - x^* - \gamma F(\tx^t)\| \leq 5R,\\ 0,& \text{otherwise,} \end{cases}
    \end{equation*}
    for all $t = 0, 1, \ldots, T$. We notice that $\eta_t$ is bounded with probability $1$:
    \begin{equation}
        \|\eta_t\| \leq 5R  \label{eq:gap_thm_SEG_technical_3}
    \end{equation}
    for all $t = 0, 1, \ldots, T$. Moreover, in view of \eqref{eq:gap_thm_SEG_technical_2}, probability event $E_{T-1}$ implies $\eta_t = x^t - x^* - \gamma F(\tx^t)$ for all $t = 0, 1, \ldots, T$. Therefore, $E_{T-1}$ implies
    \begin{eqnarray}
        A_T &=& \max\limits_{u \in B_{R}(x^*)}\left\{ \|x^0 - u\|^2 + 2\gamma \sum\limits_{l = 0}^{T-1} \langle x^* - u, \theta_l \rangle \right\}\notag\\
        &&\quad + 2\gamma \sum\limits_{l = 0}^{T-1} \langle x^l - x^* - \gamma F(\tx^l), \theta_l \rangle + \gamma^2 \sum\limits_{l=0}^{T-1}\left(\|\theta_l\|^2 + 2\|\omega_l\|^2\right)\notag\\
        &\leq& 4R^2 + 2\gamma\max\limits_{u \in B_{R}(x^*)}\left\{\left\langle x^* - u, \sum\limits_{l = 0}^{T-1}\theta_l \right\rangle \right\}\notag\\
        &&\quad + 2\gamma \sum\limits_{l = 0}^{T-1} \langle \eta_l, \theta_l \rangle + \gamma^2 \sum\limits_{l=0}^{T-1}\left(\|\theta_l\|^2 + 2\|\omega_l\|^2\right)\notag\\
        &=& 4R^2 + 2\gamma R \left\|\sum\limits_{l = 0}^{T-1}\theta_l\right\| + 2\gamma \sum\limits_{l = 0}^{T-1} \langle \eta_l, \theta_l \rangle + \gamma^2 \sum\limits_{l=0}^{T-1}\left(\|\theta_l\|^2 + 2\|\omega_l\|^2\right), \notag
    \end{eqnarray}
    where $A_T$ is defined in \eqref{eq:induction_inequality_1_SEG}. To continue our derivation we introduce new notation:
    \begin{gather}
        \theta_l^u \eqdef \EE_{\Bxi_2^l}\left[\tF_{\Bxi_2^l}(\tx^l)\right] - \tF_{\Bxi_2^l}(\tx^l),\quad \theta_l^b \eqdef F(\tx^l) - \EE_{\Bxi_2^l}\left[\tF_{\Bxi_2^l}(\tx^l)\right], \label{eq:gap_thm_SEG_technical_4}\\
        \omega_l^u \eqdef \EE_{\Bxi_1^l}\left[\tF_{\Bxi_1^l}(x^l)\right] - \tF_{\Bxi_1^l}(x^l),\quad \omega_l^b \eqdef F(x^l) - \EE_{\Bxi_1^l}\left[\tF_{\Bxi_1^l}(x^l)\right], \label{eq:gap_thm_SEG_technical_5}
    \end{gather}
    for all $l = 0,\ldots, T-1$. By definition we have $\theta_l = \theta_l^u + \theta_l^b$, $\omega_l = \omega_l^u + \omega_l^b$ for all $l = 0,\ldots, T-1$. Using the introduced notation, we continue our derivation as follows: $E_{T-1}$ implies
    \begin{eqnarray}
        A_T &\overset{\eqref{eq:a_plus_b}}{\leq}& 4R^2 + 2\gamma R \left\|\sum\limits_{l = 0}^{T-1}\theta_l\right\| + \underbrace{2\gamma \sum\limits_{l = 0}^{T-1} \langle \eta_l, \theta_l^u \rangle}_{\circledOne} + \underbrace{2\gamma \sum\limits_{l = 0}^{T-1} \langle \eta_l, \theta_l^b \rangle}_{\circledTwo}\notag\\
        &&\quad + \underbrace{2\gamma^2 \sum\limits_{l=0}^{T-1}\left(\EE_{\Bxi_2^l}\left[\|\theta_l^u\|^2\right] + 2\EE_{\Bxi_1^l}\left[\|\omega_l^u\|^2\right]\right)}_{\circledThree} \notag\\
        &&\quad + \underbrace{2\gamma^2 \sum\limits_{l=0}^{T-1}\left(\|\theta_l^u\|^2 + 2\|\omega_l^u\|^2 - \EE_{\Bxi_2^l}\left[\|\theta_l^u\|^2\right] - 2\EE_{\Bxi_1^l}\left[\|\omega_l^u\|^2\right]\right)}_{\circledFour}\notag\\
        &&\quad + \underbrace{2\gamma^2 \sum\limits_{l=0}^{T-1}\left(\|\theta_l^b\|^2 + 2\|\omega_l^b\|^2\right)}_{\circledFive}\label{eq:gap_thm_SEG_technical_6}
    \end{eqnarray}
    
    The rest of the proof is based on deriving good enough upper bounds for $2\gamma R \left\|\sum_{l = 0}^{T-1}\theta_l\right\|, \circledOne, \circledTwo, \circledThree, \circledFour, \circledFive$, i.e., we want to prove that $2\gamma R \left\|\sum_{l = 0}^{T-1}\theta_l\right\| + \circledOne + \circledTwo + \circledThree + \circledFour + \circledFive \leq 5R^2$ with high probability.
    
    Before we move on, we need to derive some useful inequalities for operating with $\theta_l^u, \theta_l^b, \omega_l^u, \omega_l^b$. First of all, Lemma~\ref{lem:bias_variance} implies that
    \begin{equation}
        \|\theta_l^u\| \leq 2\lambda,\quad \|\omega_l^u\| \leq 2\lambda \label{eq:theta_omega_magnitude}
    \end{equation}
    for all $l = 0,1, \ldots, T-1$. Next, since $\{\xi_1^{i,l}\}_{i=1}^{m}$, $\{\xi_2^{i,l}\}_{i=1}^{m}$ are independently sampled from $\cD$, we have $\EE_{\Bxi_1^l}[F_{\Bxi_1^l}(x^l)] = F(x^l)$, $\EE_{\Bxi_2^l}[F_{\Bxi_2^l}(\tx^l)] = F(\tx^l)$, and 
    \begin{gather}
        \EE_{\Bxi_1^l}\left[\|F_{\Bxi_1^l}(x^l) - F(x^l)\|^2\right] = \frac{1}{m^2}\sum\limits_{i=1}^m \EE_{\xi_1^{i,l}}\left[\|F_{\xi_1^{i,l}}(x^l) - F(x^l)\|^2\right] \overset{\eqref{eq:UBV}}{\leq} \frac{\sigma^2}{m}, \notag\\
        \EE_{\Bxi_2^l}\left[\|F_{\Bxi_2^l}(\tx^l) - F(\tx^l)\|^2\right] = \frac{1}{m^2}\sum\limits_{i=1}^m \EE_{\xi_2^{i,l}}\left[\|F_{\xi_2^{i,l}}(\tx^l) - F(\tx^l)\|^2\right] \overset{\eqref{eq:UBV}}{\leq} \frac{\sigma^2}{m}, \notag
    \end{gather}
    for all $l = 0,1, \ldots, T-1$. Moreover, probability event $E_{T-1}$ implies
    \begin{gather*}
        \|F(x^l)\| \overset{\eqref{eq:Lipschitzness}}{\leq} L\|x^l - x^*\| \leq 3LR \overset{\eqref{eq:gamma_SEG}}{\leq} \frac{R}{40\gamma \ln\tfrac{6(K+1)}{\beta}} \overset{\eqref{eq:lambda_SEG}}{=} \frac{\lambda}{2},\\
        \|F(\tx^l)\| \overset{\eqref{eq:Lipschitzness}}{\leq} L\|\tx^l - x^*\| \overset{\eqref{eq:gap_thm_SEG_technical_1}}{\leq} 4LR \overset{\eqref{eq:gamma_SEG}}{\leq} \frac{R}{40\gamma \ln\tfrac{6(K+1)}{\beta}} \overset{\eqref{eq:lambda_SEG}}{=} \frac{\lambda}{2}
    \end{gather*}
    for all $l = 0,1, \ldots, T-1$. Therefore, in view of Lemma~\ref{lem:bias_variance}, $E_{T-1}$ implies that
    \begin{gather}
        \left\|\theta_l^b\right\| \leq \frac{4\sigma^2}{m\lambda},\quad \left\|\omega_l^b\right\| \leq \frac{4\sigma^2}{m\lambda}, \label{eq:bias_theta_omega}\\
        \EE_{\Bxi_2^l}\left[\left\|\theta_l\right\|^2\right] \leq \frac{18\sigma^2}{m},\quad \EE_{\Bxi_1^l}\left[\left\|\omega_l\right\|^2\right] \leq \frac{18\sigma^2}{m}, \label{eq:distortion_theta_omega}\\
        \EE_{\Bxi_2^l}\left[\left\|\theta_l^u\right\|^2\right] \leq \frac{18\sigma^2}{m},\quad \EE_{\Bxi_1^l}\left[\left\|\omega_l^u\right\|^2\right] \leq \frac{18\sigma^2}{m}, \label{eq:variance_theta_omega}
    \end{gather}
    for all $l = 0,1, \ldots, T-1$.
    
    \paragraph{Upper bound for $\circledOne$.} Since $\EE_{\Bxi_2^l}[\theta_l^u] = 0$, we have
    \begin{equation*}
        \EE_{\Bxi_2^l}\left[2\gamma \langle \eta_l, \theta_l^u \rangle\right] = 0.
    \end{equation*}
    Next, the summands in $\circledOne$ are bounded with probability $1$:
    \begin{equation}
        |2\gamma\langle \eta_l, \theta_l^u \rangle| \leq 2\gamma \|\eta_l\|\cdot \|\theta_l^u\| \overset{\eqref{eq:gap_thm_SEG_technical_3},\eqref{eq:theta_omega_magnitude}}{\leq} 20\gamma R\lambda \overset{\eqref{eq:lambda_SEG}}{=} \frac{R^2}{\ln\tfrac{6(K+1)}{\beta}} \eqdef c. \label{eq:gap_thm_SEG_technical_6_5}
    \end{equation}
    Moreover, these summands have bounded conditional variances $\sigma_l^2 \eqdef \EE_{\Bxi_2^l}\left[4\gamma^2 \langle \eta_l, \theta_l^u \rangle^2\right]$:
    \begin{equation}
        \sigma_l^2 \leq \EE_{\Bxi_2^l}\left[4\gamma^2 \|\eta_l\|^2\cdot \|\theta_l^u\|^2\right] \overset{\eqref{eq:gap_thm_SEG_technical_3}}{\leq} 100\gamma^2 R^2 \EE_{\Bxi_2^l}\left[\|\theta_l^u\|^2\right]. \label{eq:gap_thm_SEG_technical_7}
    \end{equation}
    That is, sequence $\{2\gamma \langle \eta_l, \theta_l^u \rangle\}_{l\geq 0}$ is a bounded martingale difference sequence having bounded conditional variances $\{\sigma_l^2\}_{l \geq 0}$. Applying the Bernstein's inequality (Lemma~\ref{lem:Bernstein_ineq}) with $X_l = 2\gamma \langle \eta_l, \theta_l^u \rangle$, $c$ defined in \eqref{eq:gap_thm_SEG_technical_6_5}, $b = R^2$, $G = \tfrac{R^4}{6\ln\tfrac{6(K+1)}{\beta}}$, we get that
    \begin{equation*}
        \PP\left\{|\circledOne| > R^2 \text{ and } \sum\limits_{l=0}^{T-1}\sigma_l^2 \leq \frac{R^4}{6\ln\tfrac{6(K+1)}{\beta}}\right\} \leq 2\exp\left(- \frac{b^2}{2G + \nicefrac{2cb}{3}}\right) = \frac{\beta}{3(K+1)}.
    \end{equation*}
    In other words, $\PP\{E_{\circledOne}\} \geq 1 - \tfrac{\beta}{3(K+1)}$, where probability event $E_{\circledOne}$ is defined as
    \begin{equation}
        E_{\circledOne} = \left\{\text{either} \quad \sum\limits_{l=0}^{T-1}\sigma_l^2 > \frac{R^4}{6\ln\tfrac{6(K+1)}{\beta}}\quad \text{or}\quad |\circledOne| \leq R^2\right\}. \label{eq:bound_1_gap_SEG}
    \end{equation}
    Moreover, we notice here that probability event $E_{T-1}$ implies that
    \begin{eqnarray}
        \sum\limits_{l=0}^{T-1}\sigma_l^2 \overset{\eqref{eq:gap_thm_SEG_technical_7}}{\leq} 100\gamma^2 R^2 \sum\limits_{l=0}^{T-1} \EE_{\Bxi_2^l}\left[\|\theta_l^u\|^2\right] \overset{\eqref{eq:variance_theta_omega}, T \leq K+1}{\leq} \frac{1800(K+1)\gamma^2 R^2 \sigma^2}{m} \overset{\eqref{eq:batch_SEG}}{\leq} \frac{R^4}{6\ln\tfrac{6(K+1)}{\beta}}. \label{eq:bound_1_variances_gap_SEG}
    \end{eqnarray}
    
    \paragraph{Upper bound for $\circledTwo$.} Probability event $E_{T-1}$ implies
    \begin{eqnarray}
        \circledTwo &\leq& 2\gamma \sum\limits_{l=0}^{T-1}\|\eta_l\| \cdot \|\theta_l^b\| \overset{\eqref{eq:gap_thm_SEG_technical_3}, \eqref{eq:bias_theta_omega}, T \leq K+1}{\leq} \frac{40(K+1)\gamma R\sigma^2}{m\lambda} \notag \\
        &\overset{\eqref{eq:lambda_SEG}}{=}& \frac{40(K+1)\gamma^2 \sigma^2 \ln\tfrac{6(K+1)}{\beta}}{m} \overset{\eqref{eq:batch_SEG}}{\leq} R^2. \label{eq:bound_2_variances_gap_SEG}
    \end{eqnarray}
    
    \paragraph{Upper bound for $\circledThree$.} Probability event $E_{T-1}$ implies
    \begin{eqnarray}
        2\gamma^2 \sum\limits_{l=0}^{T-1}\EE_{\Bxi_2^l}[\|\theta_l^u\|^2] &\overset{\eqref{eq:distortion_theta_omega}, T \leq K+1}{\leq}& \frac{36\gamma^2 (K+1)\sigma^2}{m} \overset{\eqref{eq:batch_SEG}}{\leq} \frac{1}{12} R^2, \label{eq:sum_theta_squared_bound_gap_SEG}\\
        4\gamma^2 \sum\limits_{l=0}^{T-1}\EE_{\Bxi_1^l}[\|\omega_l^u\|^2] &\overset{\eqref{eq:distortion_theta_omega}, T \leq K+1}{\leq}& \frac{72\gamma^2 (K+1)\sigma^2}{m} \overset{\eqref{eq:batch_SEG}}{\leq} \frac{1}{12} R^2, \label{eq:sum_omega_squared_bound_gap_SEG}\\
        \circledThree &\overset{\eqref{eq:sum_theta_squared_bound_gap_SEG}, \eqref{eq:sum_omega_squared_bound_gap_SEG}}{\leq}& \frac{1}{6}R^2. \label{eq:bound_3_variances_gap_SEG}
    \end{eqnarray}
    
    \paragraph{Upper bound for $\circledFour$.} First of all,
    \begin{equation*}
        2\gamma^2\EE_{\Bxi_1^l,\Bxi_2^l}\left[\|\theta_l^u\|^2 + 2\|\omega_l^u\|^2 - \EE_{\Bxi_2^l}\left[\|\theta_l^u\|^2\right] - 2\EE_{\Bxi_1^l}\left[\|\omega_l^u\|^2\right]\right] = 0.
    \end{equation*}
    Next, the summands in $\circledFour$ are bounded with probability $1$:
    \begin{eqnarray}
        2\gamma^2\left|\|\theta_l^u\|^2 + 2\|\omega_l^u\|^2 - \EE_{\Bxi_2^l}\left[\|\theta_l^u\|^2\right] - 2\EE_{\Bxi_1^l}\left[\|\omega_l^u\|^2\right] \right| &\leq& 2\gamma^2 \|\theta_l^u\|^2 + 2\gamma^2\EE_{\Bxi_2^l}\left[\|\theta_l^u\|^2\right]\notag\\
        &&\quad + 4\gamma^2 \|\omega_l^u\|^2 + 4\gamma^2\EE_{\Bxi_1^l}\left[\|\omega_l^u\|^2\right] \notag\\
        &\overset{\eqref{eq:theta_omega_magnitude}}{\leq}& 48\gamma^2 \lambda^2 \notag\\
        &\overset{\eqref{eq:lambda_SEG}}{\leq}& \frac{R^2}{6\ln\tfrac{6(K+1)}{\beta}} \eqdef c.\label{eq:gap_thm_SEG_technical_6_5_1}
    \end{eqnarray}
    Moreover, these summands have bounded conditional variances $\widetilde\sigma_l^2 \eqdef 4\gamma^4\EE_{\Bxi_1^l, \Bxi_2^l}\left[\left|\|\theta_l^u\|^2 + 2\|\omega_l^u\|^2 - \EE_{\Bxi_2^l}\left[\|\theta_l^u\|^2\right] - 2\EE_{\Bxi_1^l}\left[\|\omega_l^u\|^2\right] \right|^2\right]$:
    \begin{eqnarray}
        \widetilde\sigma_l^2 &\overset{\eqref{eq:gap_thm_SEG_technical_6_5_1}}{\leq}& \frac{\gamma^2R^2}{3\ln\tfrac{6(K+1)}{\beta}}\EE_{\Bxi_1^l,\Bxi_2^l}\left[\left|\|\theta_l^u\|^2 + 2\|\omega_l^u\|^2 - \EE_{\Bxi_2^l}\left[\|\theta_l^u\|^2\right] - 2\EE_{\Bxi_1^l}\left[\|\omega_l^u\|^2\right] \right|\right]\notag\\
        &\leq& \frac{2\gamma^2R^2}{3\ln\tfrac{6(K+1)}{\beta}} \EE_{\Bxi_1^l,\Bxi_2^l}\left[\|\theta_l^u\|^2 + 2\|\omega_l^u\|^2 \right]. \label{eq:gap_thm_SEG_technical_7_1}
    \end{eqnarray}
    That is, sequence $\left\{2\gamma^2\left(\|\theta_l^u\|^2 + 2\|\omega_l^u\|^2 - \EE_{\Bxi_2^l}\left[\|\theta_l^u\|^2\right] - 2\EE_{\Bxi_1^l}\left[\|\omega_l^u\|^2\right]\right)\right\}_{l\geq 0}$ is a bounded martingale difference sequence having bounded conditional variances $\{\widetilde\sigma_l^2\}_{l \geq 0}$. Applying the Bernstein's inequality (Lemma~\ref{lem:Bernstein_ineq}) with $X_l = 2\gamma^2\left(\|\theta_l^u\|^2 + 2\|\omega_l^u\|^2 - \EE_{\Bxi_2^l}\left[\|\theta_l^u\|^2\right] - 2\EE_{\Bxi_1^l}\left[\|\omega_l^u\|^2\right]\right)$, $c$ defined in \eqref{eq:gap_thm_SEG_technical_6_5_1}, $b = \tfrac{1}{6}R^2$, $G = \tfrac{R^4}{216\ln\tfrac{6(K+1)}{\beta}}$, we get that
    \begin{equation*}
        \PP\left\{|\circledFour| > \frac{1}{6}R^2 \text{ and } \sum\limits_{l=0}^{T-1}\widetilde\sigma_l^2 \leq \frac{R^4}{216\ln\tfrac{6(K+1)}{\beta}}\right\} \leq 2\exp\left(- \frac{b^2}{2G + \nicefrac{2cb}{3}}\right) = \frac{\beta}{3(K+1)}.
    \end{equation*}
    In other words, $\PP\{E_{\circledFour}\} \geq 1 - \tfrac{\beta}{3(K+1)}$, where probability event $E_{\circledFour}$ is defined as
    \begin{equation}
        E_{\circledFour} = \left\{\text{either} \quad \sum\limits_{l=0}^{T-1}\widetilde\sigma_l^2 > \frac{R^4}{216\ln\tfrac{6(K+1)}{\beta}}\quad \text{or}\quad |\circledFour| \leq \frac{1}{6}R^2\right\}. \label{eq:bound_1_gap_SEG_1}
    \end{equation}
     Moreover, we notice here that probability event $E_{T-1}$ implies that
    \begin{eqnarray}
        \sum\limits_{l=0}^{T-1}\widetilde\sigma_l^2 &\overset{\eqref{eq:gap_thm_SEG_technical_7_1}}{\leq}& \frac{2\gamma^2R^2}{3\ln\tfrac{6(K+1)}{\beta}} \sum\limits_{l=0}^{T-1} \EE_{\Bxi_1^l,\Bxi_2^l}\left[\|\theta_l^u\|^2 + 2\|\omega_l^u\|^2 \right]\notag\\
        &\overset{\eqref{eq:variance_theta_omega}, T \leq K+1}{\leq}& \frac{36(K+1)\gamma^2 R^2 \sigma^2}{m} \overset{\eqref{eq:batch_SEG}}{\leq} \frac{R^4}{216\ln\tfrac{6(K+1)}{\beta}}. \label{eq:bound_1_variances_gap_SEG_1}
    \end{eqnarray}
    
    \paragraph{Upper bound for $\circledFive$.} Probability event $E_{T-1}$ implies
    \begin{eqnarray}
        \circledFive &=& 2\gamma^2 \sum\limits_{l=0}^{T-1}\left(\|\theta_l^b\|^2 + 2\|\omega_l^b\|^2\right) \overset{\eqref{eq:bias_theta_omega}, T\leq K+1}{\leq} \frac{96\gamma^2\sigma^4 (K+1)}{m^2\lambda^2}  \notag\\
        &\overset{\eqref{eq:lambda_SEG}}{=}&  \frac{38400 \gamma^4 \sigma^4 (K+1) \ln^2\tfrac{6(K+1)}{\beta}}{m^2 R^2} \overset{\eqref{eq:batch_SEG}}{\leq} \frac{1}{6}R^2. \label{eq:nsjcbdjhcbfjdhfbj}
    \end{eqnarray}

    \paragraph{Upper bound for $2\gamma R \left\|\sum_{l=0}^{T-1} \theta_l\right\|$.} To handle this term, we introduce new notation:
    \begin{equation*}
        \zeta_l = \begin{cases} \gamma \sum\limits_{r=0}^{l-1}\theta_r,& \text{if } \left\|\gamma \sum\limits_{r=0}^{l-1}\theta_r\right\| \leq R,\\ 0, & \text{otherwise} \end{cases}
    \end{equation*}
    for $l = 1, 2, \ldots, T-1$. By definition, we have
    \begin{equation}
        \|\zeta_l\| \leq R.  \label{eq:gap_thm_SEG_technical_8}
    \end{equation}
    Therefore, in view of \eqref{eq:induction_inequality_2_SEG}, probability event $E_{T-1}$ implies
    \begin{eqnarray}
        2\gamma R\left\|\sum\limits_{l = 0}^{T-1}\theta_l\right\| &=& 2R\sqrt{\gamma^2\left\|\sum\limits_{l = 0}^{T-1}\theta_l\right\|^2}\notag\\
        &=& 2R\sqrt{\gamma^2\sum\limits_{l=0}^{T-1}\|\theta_l\|^2 + 2\gamma\sum\limits_{l=0}^{T-1}\left\langle \gamma\sum\limits_{r=0}^{l-1} \theta_r, \theta_l \right\rangle} \notag\\
        &=& 2R \sqrt{\gamma^2\sum\limits_{l=0}^{T-1}\|\theta_l\|^2 + 2\gamma \sum\limits_{l=0}^{T-1} \langle \zeta_l, \theta_l\rangle} \notag\\
        &\overset{\eqref{eq:gap_thm_SEG_technical_4}}{\leq}& 2R \sqrt{\circledThree + \circledFour + \circledFive + \underbrace{2\gamma \sum\limits_{l=0}^{T-1} \langle \zeta_l, \theta_l^u\rangle}_{\circledSix} + \underbrace{2\gamma \sum\limits_{l=0}^{T-1} \langle \zeta_l, \theta_l^b}_{\circledSeven}\rangle}. \label{eq:norm_sum_theta_bound_gap_SEG}
    \end{eqnarray}
    Following similar steps as before, we bound $\circledSix$ and $\circledSeven$.
    
    \paragraph{Upper bound for $\circledSix$.} Since $\EE_{\Bxi_2^l}[\theta_l^u] = 0$, we have
    \begin{equation*}
        \EE_{\Bxi_2^l}\left[2\gamma \langle \zeta_l, \theta_l^u \rangle\right] = 0.
    \end{equation*}
    Next, the summands in $\circledFour$ are bounded with probability $1$:
    \begin{equation}
        |2\gamma\langle \zeta_l, \theta_l^u \rangle| \leq 2\gamma \|\eta_l\|\cdot \|\theta_l^u\| \overset{\eqref{eq:gap_thm_SEG_technical_8},\eqref{eq:theta_omega_magnitude}}{\leq} 4\gamma R\lambda \overset{\eqref{eq:lambda_SEG}}{\leq} \frac{R^2}{4\ln\tfrac{6(K+1)}{\beta}} \eqdef c. \label{eq:gap_thm_SEG_technical_8_5}
    \end{equation}
    Moreover, these summands have bounded conditional variances $\hat\sigma_l^2 \eqdef \EE_{\Bxi_2^l}\left[4\gamma^2 \langle \zeta_l, \theta_l^u \rangle^2\right]$:
    \begin{equation}
        \hat\sigma_l^2 \leq \EE_{\Bxi_2^l}\left[4\gamma^2 \|\zeta_l\|^2\cdot \|\theta_l^u\|^2\right] \overset{\eqref{eq:gap_thm_SEG_technical_8}}{\leq} 4\gamma^2 R^2 \EE_{\Bxi_2^l}\left[\|\theta_l^u\|^2\right]. \label{eq:gap_thm_SEG_technical_9}
    \end{equation}
    That is, sequence $\{2\gamma \langle \zeta_l, \theta_l^u \rangle\}_{l\geq 0}$ is a bounded martingale difference sequence having bounded conditional variances $\{\hat\sigma_l^2\}_{l \geq 0}$. Applying Bernstein's inequality (Lemma~\ref{lem:Bernstein_ineq}) with $X_l = 2\gamma \langle \zeta_l, \theta_l^u \rangle$, $c$ defined in \eqref{eq:gap_thm_SEG_technical_6_5}, $b = \tfrac{R^2}{4}$, $G = \tfrac{R^4}{96\ln\tfrac{6(K+1)}{\beta}}$, we get that
    \begin{equation*}
        \PP\left\{|\circledFive| > \frac{1}{4}R^2 \text{ and } \sum\limits_{l=0}^{T-1}\hat\sigma_l^2 \leq \frac{R^4}{96\ln\tfrac{4(K+1)}{\beta}}\right\} \leq 2\exp\left(- \frac{b^2}{2G + \nicefrac{2cb}{3}}\right) = \frac{\beta}{3(K+1)}.
    \end{equation*}
    In other words, $\PP\{E_{\circledSix}\} \geq 1 - \tfrac{\beta}{3(K+1)}$, where probability event $E_{\circledSix}$ is defined as
    \begin{equation}
        E_{\circledSix} = \left\{ \text{either} \quad \sum\limits_{l=0}^{T-1}\hat\sigma_l^2 > \frac{R^4}{96\ln\tfrac{6(K+1)}{\beta}}\quad \text{or}\quad |\circledFive| \leq \frac{1}{4}R^2\right\}. \label{eq:bound_4_gap_SEG}
    \end{equation}
    Moreover, we notice here that probability event $E_{T-1}$ implies that
    \begin{eqnarray}
        \sum\limits_{l=0}^{T-1}\hat\sigma_l^2 \overset{\eqref{eq:gap_thm_SEG_technical_9}}{\leq} 4\gamma^2 R^2 \sum\limits_{l=0}^{T-1} \EE_{\Bxi_2^l}\left[\|\theta_l^u\|^2\right] \overset{\eqref{eq:variance_theta_omega}, T \leq K+1}{\leq} \frac{72(K+1)\gamma^2 R^2 \sigma^2}{m} \overset{\eqref{eq:batch_SEG}}{\leq} \frac{R^4}{96\ln\tfrac{6(K+1)}{\beta}}. \label{eq:bound_4_variances_gap_SEG}
    \end{eqnarray}
    
    \paragraph{Upper bound for $\circledSeven$.} Probability event $E_{T-1}$ implies
    \begin{eqnarray}
        \circledSeven &\leq& 2\gamma \sum\limits_{l=0}^{T-1}\|\zeta_l\| \cdot \|\theta_l^b\| \overset{\eqref{eq:gap_thm_SEG_technical_8}, \eqref{eq:bias_theta_omega}, T \leq K+1}{\leq} \frac{8(K+1)\gamma R\sigma^2}{m\lambda} \notag \\
        &\overset{\eqref{eq:lambda_SEG}}{=}& \frac{160(K+1)\gamma^2 \sigma^2 \ln\tfrac{6(K+1)}{\beta}}{m} \overset{\eqref{eq:batch_SEG}}{\leq} \frac{1}{4}R^2. \label{eq:bound_5_variances_gap_SEG}
    \end{eqnarray}
    
    \paragraph{Final derivation.} Putting all bounds together, we get that $E_{T-1}$ implies
    \begin{gather*}
        A_T \overset{\eqref{eq:gap_thm_SEG_technical_6}}{\leq} 4R^2 + 2\gamma R\left\|\sum\limits_{l=0}^{T-1} \theta_l\right\| + \circledOne + \circledTwo + \circledThree + \circledFour + \circledFive,\\
        2\gamma R\left\|\sum\limits_{l=0}^{T-1} \theta_l\right\| \overset{\eqref{eq:norm_sum_theta_bound_gap_SEG}}{\leq} 2R \sqrt{\circledThree + \circledFour + \circledFive + \circledSix + \circledSeven},\\
        \circledTwo \overset{\eqref{eq:bound_2_variances_gap_SEG}}{\leq} R^2,\quad \circledThree \overset{\eqref{eq:bound_3_variances_gap_SEG}}{\leq} \frac{1}{6}R^2,\quad \circledFive \overset{\eqref{eq:nsjcbdjhcbfjdhfbj}}{\leq} \frac{1}{6}R^2,\quad \circledSeven \overset{\eqref{eq:bound_5_variances_gap_SEG}}{\leq} \frac{1}{4}R^2,\\
        \sum\limits_{l=0}^{T-1}\sigma_l^2 \overset{\eqref{eq:bound_1_variances_gap_SEG}}{\leq}  \frac{R^4}{6\ln\tfrac{6(K+1)}{\beta}},\quad \sum\limits_{l=0}^{T-1}\widetilde\sigma_l^2 \overset{\eqref{eq:bound_1_variances_gap_SEG_1}}{\leq} \frac{R^4}{216\ln\tfrac{6(K+1)}{\beta}},\quad \sum\limits_{l=0}^{T-1}\hat\sigma_l^2 \overset{\eqref{eq:bound_4_variances_gap_SEG}}{\leq}  \frac{R^4}{96\ln\tfrac{6(K+1)}{\beta}}.
    \end{gather*}
    Moreover, in view of \eqref{eq:bound_1_gap_SEG}, \eqref{eq:bound_1_gap_SEG_1}, \eqref{eq:bound_4_gap_SEG}, and our induction assumption, we have
    \begin{gather*}
        \PP\{E_{T-1}\} \geq 1 - \frac{(T-1)\beta}{K+1},\\
        \PP\{E_{\circledOne}\} \geq 1 - \frac{\beta}{3(K+1)}, \quad \PP\{E_{\circledFour}\} \geq 1 - \frac{\beta}{3(K+1)}, \quad \PP\{E_{\circledSix}\} \geq 1 - \frac{\beta}{3(K+1)} ,
    \end{gather*}
    where probability events $E_{\circledOne}$, $E_{\circledFour}$, and $E_{\circledSix}$ are defined as
    \begin{eqnarray}
        E_{\circledOne}&=& \left\{\text{either} \quad \sum\limits_{l=0}^{T-1}\sigma_l^2 > \frac{R^4}{6\ln\tfrac{6(K+1)}{\beta}}\quad \text{or}\quad |\circledOne| \leq R^2\right\},\notag\\
        E_{\circledFour}&=& \left\{\text{either} \quad \sum\limits_{l=0}^{T-1}\widetilde\sigma_l^2 > \frac{R^4}{216\ln\tfrac{6(K+1)}{\beta}}\quad \text{or}\quad |\circledFour| \leq \frac{1}{6}R^2\right\},\notag\\
        E_{\circledSix}&=& \left\{\text{either} \quad \sum\limits_{l=0}^{T-1}\hat\sigma_l^2 > \frac{R^4}{96\ln\tfrac{6(K+1)}{\beta}}\quad \text{or}\quad |\circledSix| \leq \frac{1}{4}R^2\right\}.\notag
    \end{eqnarray}
    Putting all of these inequalities together, we obtain that probability event $E_{T-1} \cap E_{\circledOne} \cap E_{\circledFour} \cap E_{\circledSix}$ implies
    \begin{eqnarray}
        \left\|\gamma\sum\limits_{l=0}^{T-1} \theta_l\right\| &\leq& \sqrt{\frac{1}{6}R^2 + \frac{1}{6}R^2 + \frac{1}{6}R^2 + \frac{1}{4}R^2 + \frac{1}{4}R^2} = R, \label{eq:gap_thm_SEG_technical_10} \\
        A_T &\leq& 4R^2 + 2R\sqrt{\frac{1}{6}R^2 + \frac{1}{6}R^2 + \frac{1}{6}R^2 + \frac{1}{4}R^2 + \frac{1}{4}R^2}\notag\\
        &&\quad + R^2 + R^2 + \frac{1}{6}R^2 + \frac{1}{6}R^2 + \frac{1}{6}R^2\notag\\
        &\leq& 9R^2. \label{eq:gap_thm_SEG_technical_11}
    \end{eqnarray}
    Moreover, union bound for the probability events implies
    \begin{equation}
        \PP\{E_T\} \geq \PP\{E_{T-1} \cap E_{\circledOne} \cap E_{\circledFour} \cap E_{\circledSix}\} = 1 - \PP\{\overline{E}_{T-1} \cup \overline{E}_{\circledOne} \cup \overline{E}_{\circledFour} \cup \overline{E}_{\circledSix}\} \geq 1 - \frac{T\beta}{K+1}.
    \end{equation}
    
    This is exactly what we wanted to prove (see the paragraph after inequalities \eqref{eq:induction_inequality_1_SEG}, \eqref{eq:induction_inequality_2_SEG}). Therefore, for all $k = 0, 1, \ldots, K+1$ we have $\PP\{E_k\} \geq 1 - \nicefrac{k\beta}{(K+1)}$., i.e., for $k = K+1$ we have that with probability at least $1 - \beta$ inequality
    \begin{eqnarray*}
        \gap_{R}(\tx^{K}_{\avg}) &=& \max\limits_{u \in B_{R}(x^*)}\left\{\langle F(u), \tx^{K}_{\avg} - u\rangle\right\}\\
        &\leq& \frac{1}{2\gamma(K + 1)}\max\limits_{u \in B_{R}(x^*)}\left\{2\gamma(K + 1)\langle F(u), \tx^{t}_{\avg} - u\rangle + \|x^{K+1} - u\|^2\right\}\\
        &\overset{\eqref{eq:gap_thm_SEG_technical_1_5}}{\leq}& \frac{9R^2}{2\gamma(K+1)}
    \end{eqnarray*}
    holds. This concludes the proof.
\end{proof}

\begin{corollary}\label{cor:main_result_gap_SEG}
    Let the assumptions of Theorem~\ref{thm:main_result_gap_SEG} hold. Then, the following statements hold.
    \begin{enumerate}
        \item \textbf{Large stepsize/large batch.} The choice of stepsize and batchsize
        \begin{equation}
            \gamma = \frac{1}{160L \ln \tfrac{6(K+1)}{\beta}},\quad m = \max\left\{1, \frac{27 (K+1) \sigma^2}{64L^2R^2 \ln\tfrac{6(K+1)}{\beta}}\right\} \label{eq:gap_SEG_large_step_large_batch}
        \end{equation}
        satisfies conditions \eqref{eq:gamma_SEG} and \eqref{eq:batch_SEG}. With such choice of $\gamma, m$, and the choice of $\lambda$ as in \eqref{eq:lambda_SEG}, the iterates produced by \ref{eq:clipped_SEG} after $K$ iterations with probability at least $1-\beta$ satisfy
        \begin{equation}
            \gap_R(\tx_{\avg}^K) \leq \frac{720 LR^2 \ln \tfrac{6(K+1)}{\beta}}{K+1}. \label{eq:main_result_gap_SEG_large_batch}
        \end{equation}
        In particular, to guarantee $\gap_R(\tx_{\avg}^K) \leq \varepsilon$ with probability at least $1-\beta$ for some $\varepsilon > 0$ \ref{eq:clipped_SEG} requires,
        \begin{gather}
            \cO\left(\frac{LR^2}{\varepsilon} \ln\left(\frac{LR^2}{\varepsilon\beta}\right)\right) \text{ iterations}, \label{eq:gap_SEG_iteration_complexity_large_batch}\\
            \cO\left(\max\left\{\frac{LR^2}{\varepsilon}, \frac{\sigma^2R^2}{\varepsilon^2}\right\} \ln\left(\frac{LR^2}{\varepsilon\beta}\right)\right) \text{ oracle calls}. \label{eq:gap_SEG_oracle_complexity_large_batch} 
        \end{gather}
        
        \item \textbf{Small stepsize/small batch.} The choice of stepsize and batchsize
        \begin{equation}
            \gamma = \min\left\{\frac{1}{160L \ln \tfrac{6(K+1)}{\beta}}, \frac{R}{60\sigma \sqrt{3(K+1) \ln\tfrac{6(K+1)}{\beta}}}\right\},\quad m = 1 \label{eq:gap_SEG_small_step_small_batch}
        \end{equation}
        satisfies conditions \eqref{eq:gamma_SEG} and \eqref{eq:batch_SEG}. With such choice of $\gamma, m$, and the choice of $\lambda$ as in \eqref{eq:lambda_SEG}, the iterates produced by \ref{eq:clipped_SEG} after $K$ iterations with probability at least $1-\beta$ satisfy
        \begin{equation}
            \gap_R(\tx_{\avg}^K) \leq \max\left\{\frac{720 LR^2 \ln \tfrac{6(K+1)}{\beta}}{K+1}, \frac{270\sigma R \sqrt{\ln\tfrac{6(K+1)}{\beta}}}{\sqrt{K+1}}\right\}. \label{eq:main_result_gap_SEG_small_batch}
        \end{equation}
        In particular, to guarantee $\gap_R(\tx_{\avg}^K) \leq \varepsilon$ with probability at least $1-\beta$ for some $\varepsilon > 0$, \ref{eq:clipped_SEG} requires
        \begin{equation}
            \cO\left(\max\left\{\frac{LR^2}{\varepsilon} \ln\left(\frac{LR^2}{\varepsilon\beta}\right), \frac{\sigma^2R^2}{\varepsilon^2}\ln\left(\frac{\sigma^2R^2}{\varepsilon^2\beta}\right)\right\}\right) \text{ iterations/oracle calls}. \label{eq:gap_SEG_iteration_oracle_complexity_small_batch} 
        \end{equation}
    \end{enumerate}
\end{corollary}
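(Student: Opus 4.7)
The corollary is essentially a parameter-specialization of Theorem~\ref{thm:main_result_gap_SEG}, so my plan is to (i) verify the two proposed triples $(\gamma,\lambda,m)$ meet the constraints \eqref{eq:gamma_SEG}--\eqref{eq:batch_SEG} of the theorem, (ii) substitute into the generic bound $\gap_R(\tx_{\avg}^K)\leq \nicefrac{9R^2}{(2\gamma(K+1))}$, and (iii) invert the resulting high-probability guarantee to read off iteration and oracle complexities. Throughout, denote $A = \ln\tfrac{6(K+1)}{\beta}$, which by assumption satisfies $A\geq 1$.

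For the large stepsize/large batch regime \eqref{eq:gap_SEG_large_step_large_batch}, the choice $\gamma=\nicefrac{1}{(160LA)}$ matches the boundary of \eqref{eq:gamma_SEG} trivially. Substituting this $\gamma$ into the batch requirement \eqref{eq:batch_SEG} yields
\[
\frac{10800(K+1)\gamma^2\sigma^2 A}{R^2} = \frac{10800(K+1)\sigma^2}{160^2 L^2 R^2 A} = \frac{27(K+1)\sigma^2}{64 L^2 R^2 A},
\]
which is exactly the batch-size in \eqref{eq:gap_SEG_large_step_large_batch}. Theorem~\ref{thm:main_result_gap_SEG} then gives $\gap_R(\tx_{\avg}^K)\leq \nicefrac{9R^2}{(2\gamma(K+1))} = \nicefrac{720 LR^2 A}{(K+1)}$, which is \eqref{eq:main_result_gap_SEG_large_batch}. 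To produce \eqref{eq:gap_SEG_iteration_complexity_large_batch}, set the right-hand side $\leq \varepsilon$ and solve for $K+1$, absorbing the slowly-varying $\ln A$ factors in the $\widetilde\cO$ notation. The oracle complexity \eqref{eq:gap_SEG_oracle_complexity_large_batch} follows by multiplying $K$ by the per-iteration sample count $2m$ (two mini-batches per iteration): the $m=1$ case contributes $\cO(LR^2/\varepsilon)$ oracle calls, whereas the $m\sim\nicefrac{(K+1)\sigma^2}{(L^2 R^2 A)}$ case contributes $\cO(Km)\sim\cO(\sigma^2 R^2/\varepsilon^2)$ after substituting $K\sim LR^2 A/\varepsilon$.

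For the small stepsize/small batch regime \eqref{eq:gap_SEG_small_step_small_batch}, setting $m=1$ forces $\tfrac{10800(K+1)\gamma^2\sigma^2 A}{R^2}\leq 1$, which rearranges to $\gamma\leq \nicefrac{R}{(60\sigma\sqrt{3(K+1)A})}$; combining this with $\gamma\leq \nicefrac{1}{(160LA)}$ yields the $\min$ appearing in \eqref{eq:gap_SEG_small_step_small_batch}. Plugging this two-case $\gamma$ into $\nicefrac{9R^2}{(2\gamma(K+1))}$ produces the $\max$ of two terms in \eqref{eq:main_result_gap_SEG_small_batch}: the first is identical to the previous regime, and the second is $\nicefrac{9R^2\cdot 60\sigma\sqrt{3(K+1)A}}{(2R(K+1))} = \nicefrac{270\sigma R\sqrt{A}}{\sqrt{K+1}}$. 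Inverting each branch of this $\max$ against $\varepsilon$ gives the two terms inside the $\max$ in \eqref{eq:gap_SEG_iteration_oracle_complexity_small_batch}; since $m=1$, iteration complexity and oracle complexity coincide.

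There is really no obstacle here beyond careful arithmetic; the only mildly delicate point is the implicit $K$-dependence hidden inside $A=\ln\tfrac{6(K+1)}{\beta}$ when solving $\nicefrac{720 LR^2 A}{(K+1)}\leq \varepsilon$ or $\nicefrac{270\sigma R\sqrt{A}}{\sqrt{K+1}}\leq \varepsilon$ for $K$. This is handled by the standard trick of using the implicit inequality $K+1 \gtrsim \tfrac{LR^2}{\varepsilon}\ln\tfrac{LR^2}{\varepsilon\beta}$ (respectively its $\sigma^2$-analog), which one verifies by back-substitution and which is precisely why the final complexities carry the logarithmic factor $\ln\tfrac{LR^2}{\varepsilon\beta}$ (respectively $\ln\tfrac{\sigma^2 R^2}{\varepsilon^2\beta}$) absorbed in the $\widetilde\cO$ notation.
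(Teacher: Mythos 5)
Your proposal is correct and follows essentially the same route as the paper's own proof: verify that each parameter choice satisfies \eqref{eq:gamma_SEG}--\eqref{eq:batch_SEG}, substitute into the bound $\gap_R(\tx_{\avg}^K)\leq \nicefrac{9R^2}{2\gamma(K+1)}$ from Theorem~\ref{thm:main_result_gap_SEG}, and invert for $K$ and the oracle count $2m(K+1)$. The only difference is cosmetic bookkeeping of logarithmic and constant factors (e.g.\ the $\sqrt{3}$ absorbed into the small-step bound), which the paper treats the same way.
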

\begin{proof}
    \begin{enumerate}
        \item \textbf{Large stepsize/large batch.} First of all, we verify that the choice of $\gamma$ and $m$ from \eqref{eq:gap_SEG_large_step_large_batch} satisfies conditions \eqref{eq:gamma_SEG} and \eqref{eq:batch_SEG}: \eqref{eq:gamma_SEG} trivially holds and \eqref{eq:batch_SEG} holds since
        \begin{equation*}
            m = \max\left\{1, \frac{27 (K+1) \sigma^2}{64L^2R^2 \ln\tfrac{6(K+1)}{\beta}}\right\} = \max\left\{1, \frac{10800 (K+1) \gamma^2\sigma^2 \ln\tfrac{6(K+1)}{\beta}}{R^2}\right\}.
        \end{equation*}
        Therefore, applying Theorem~\ref{thm:main_result_gap_SEG}, we derive that with probability at least $1-\beta$
        \begin{equation*}
            \gap_R(\tx_{\avg}^K) \leq \frac{9R^2}{2\gamma(K+1)} \overset{\eqref{eq:gap_SEG_large_step_large_batch}}{=} \frac{720 LR^2 \ln \tfrac{4(K+1)}{\beta}}{K+1}.
        \end{equation*}
        To guarantee $\gap_R(\tx_{\avg}^K) \leq \varepsilon$, we choose $K$ in such a way that the right-hand side of the above inequality is smaller than $\varepsilon$ that gives
        \begin{eqnarray*}
            K = \cO\left(\frac{LR^2}{\varepsilon} \ln\left(\frac{LR^2}{\varepsilon\beta}\right)\right).
        \end{eqnarray*}
        The total number of oracle calls equals
        \begin{eqnarray*}
            2m(K+1) &\overset{\eqref{eq:gap_SEG_large_step_large_batch}}{=}&  2\max\left\{K+1, \frac{27 (K+1)^2 \sigma^2}{64L^2R^2 \ln\tfrac{6(K+1)}{\beta}}\right\}\\
            &=& \cO\left(\max\left\{\frac{LR^2}{\varepsilon}, \frac{\sigma^2R^2}{\varepsilon^2}\right\} \ln\left(\frac{LR^2}{\varepsilon\beta}\right)\right).
        \end{eqnarray*}
        
        \item \textbf{Small stepsize/small batch.} First of all, we verify that the choice of $\gamma$ and $m$ from \eqref{eq:gap_SEG_large_step_large_batch} satisfies conditions \eqref{eq:gamma_SEG} and \eqref{eq:batch_SEG}:
        \begin{eqnarray*}
            \gamma &=& \min\left\{\frac{1}{160L \ln \tfrac{6(K+1)}{\beta}}, \frac{R}{60\sigma \sqrt{3(K+1) \ln\tfrac{6(K+1)}{\beta}}}\right\} \leq \frac{1}{160L \ln \tfrac{6(K+1)}{\beta}},\\
            m &=& 1 \overset{\eqref{eq:gap_SEG_small_step_small_batch}}{\geq} \frac{10800 (K+1) \gamma^2\sigma^2 \ln\tfrac{6(K+1)}{\beta}}{R^2}.
        \end{eqnarray*}
        Therefore, applying Theorem~\ref{thm:main_result_gap_SEG}, we derive that with probability at least $1-\beta$
        \begin{eqnarray*}
            \gap_R(\tx_{\avg}^K) &\leq& \frac{9R^2}{2\gamma(K+1)}\\
            &\overset{\eqref{eq:gap_SEG_small_step_small_batch}}{=}& \max\left\{\frac{720 LR^2 \ln \tfrac{6(K+1)}{\beta}}{K+1}, \frac{270\sigma R \sqrt{\ln\tfrac{6(K+1)}{\beta}}}{\sqrt{K+1}}\right\}.
        \end{eqnarray*}
        To guarantee $\gap_R(\tx_{\avg}^K) \leq \varepsilon$, we choose $K$ in such a way that the right-hand side of the above inequality is smaller than $\varepsilon$ that gives
        \begin{eqnarray*}
            K = \cO\left(\max\left\{\frac{LR^2}{\varepsilon} \ln\left(\frac{LR^2}{\varepsilon\beta}\right), \frac{\sigma^2R^2}{\varepsilon^2}\ln\left(\frac{\sigma^2R^2}{\varepsilon^2\beta}\right)\right\}\right).
        \end{eqnarray*}
        The total number of oracle calls equals $2m(K+1) = 2(K+1)$.
    \end{enumerate}
\end{proof}

\subsection{Star-Negative Comonotone Case}

\begin{lemma}\label{lem:optimization_lemma_SEG}
    Let Assumptions~\ref{as:lipschitzness}, \ref{as:negative_comon} hold for $Q = B_{3R}(x^*) = \{x\in\R^d\mid \|x - x^*\| \leq 3R\}$, where $R \geq R_0 \eqdef \|x^0 - x^*\|$, and $\gamma_2 + 2\rho < \gamma_1 \leq \nicefrac{1}{(2L)}$. If $x^k$ and $\tx^k$ lie in $B_{3R_0}(x^*)$ for all $k = 0,1,\ldots, K$ for some $K\geq 0$, then the iterates produced by \ref{eq:clipped_SEG} satisfy
    \begin{eqnarray}
        \frac{\gamma_1\gamma_2}{4(K+1)}\sum\limits_{k=0}^K\|F(x^k)\|^2 &\leq& \frac{\|x^0 - x^*\|^2 - \|x^{K+1} - x^*\|^2}{K+1}\notag\\
        &&\quad + \frac{1}{K+1}\sum\limits_{k=0}^K\left(\gamma_2^2\|\omega_k\|^2 + 3\gamma_1\gamma_2\|\omega_k\|^2\right)\notag\\
        &&\quad + \frac{2\gamma_2}{K+1}\sum\limits_{k=0}^K\langle x^k - x^* - \gamma_2 F(\tx^k), \theta_k\rangle\label{eq:optimization_lemma_norm_SEG}
    \end{eqnarray}
    where $\theta_k,\omega_k$ are defined in \eqref{eq:theta_k_SEG}, \eqref{eq:omega_k_SEG}.
\end{lemma}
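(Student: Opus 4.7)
The plan is to follow the deterministic Extragradient analysis for problems satisfying a weak Minty (star-negatively comonotone) condition, while carefully isolating the stochastic contributions from the two clipped estimators. I would start by writing the update as
\[
x^{k+1} - x^* = \bigl(x^k - x^* - \gamma_2 F(\tx^k)\bigr) + \gamma_2 \theta_k
\]
and squaring; this cleanly produces a linear ``noise'' term $2\gamma_2 \langle x^k - x^* - \gamma_2 F(\tx^k), \theta_k \rangle$ that will eventually match the right-hand side of \eqref{eq:optimization_lemma_norm_SEG}, together with a quadratic remainder, and leaves a deterministic-looking $\|x^k - x^* - \gamma_2 F(\tx^k)\|^2$ to analyse.

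Expanding the latter and inserting the decomposition $x^k - x^* = (\tx^k - x^*) + \gamma_1\bigl(F(x^k) - \omega_k\bigr)$, which follows directly from the extrapolation step $\tx^k = x^k - \gamma_1 \tF_{\Bxi_1^k}(x^k)$, splits the cross term $-2\gamma_2 \langle x^k - x^*, F(\tx^k) \rangle$ into three pieces. The first, $-2\gamma_2 \langle \tx^k - x^*, F(\tx^k) \rangle$, is bounded by $2\gamma_2 \rho \|F(\tx^k)\|^2$ via Assumption~\ref{as:negative_comon} applied at $\tx^k \in B_{3R}(x^*)$. The second, $-2\gamma_1\gamma_2 \langle F(x^k), F(\tx^k)\rangle$, is handled by the polarization identity $-2\langle a,b\rangle = -\|a\|^2 - \|b\|^2 + \|a-b\|^2$ combined with Lipschitzness on $B_{3R}(x^*)$: since $x^k - \tx^k = \gamma_1(F(x^k) - \omega_k)$, I get $\|F(x^k) - F(\tx^k)\|^2 \leq 2L^2\gamma_1^2\|F(x^k)\|^2 + 2L^2\gamma_1^2\|\omega_k\|^2$, which the stepsize bound $\gamma_1 \leq 1/(2L)$ turns into $\tfrac12\|F(x^k)\|^2 + \tfrac12\|\omega_k\|^2$.

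The third and most delicate piece is $2\gamma_1\gamma_2 \langle \omega_k, F(\tx^k)\rangle$. The weak-Minty slack $\gamma_1\gamma_2 - \gamma_2^2 - 2\gamma_2\rho$ may be arbitrarily small under the hypothesis $\gamma_2 + 2\rho < \gamma_1$, so we cannot afford to produce any additional $\|F(\tx^k)\|^2$ here. I would split $F(\tx^k) = F(x^k) + (F(\tx^k) - F(x^k))$ and apply Cauchy--Schwarz/Young separately to each resulting inner product, bounding $\|F(\tx^k) - F(x^k)\|$ via Lipschitzness as above; every term produced involves only $\|F(x^k)\|$ and $\|\omega_k\|$, leaving the star-negative comonotonicity margin free to absorb the $(2\gamma_2\rho + \gamma_2^2 - \gamma_1\gamma_2)\|F(\tx^k)\|^2 \leq 0$ contribution. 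Choosing the Young weights so the leakage onto $\|F(x^k)\|^2$ is at most $\gamma_1\gamma_2/4$, and combining with the $-\gamma_1\gamma_2\|F(x^k)\|^2$ from polarization and the $+\gamma_1\gamma_2/2$ from the Lipschitz step, the net coefficient of $\|F(x^k)\|^2$ is $-\gamma_1\gamma_2/4$, while the $\|\omega_k\|^2$ contributions collect to $3\gamma_1\gamma_2$. Rearranging, summing over $k = 0, \ldots, K$, and dividing by $K+1$ then yields \eqref{eq:optimization_lemma_norm_SEG}.

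The main technical hurdle is precisely this constant-juggling on the $\langle \omega_k, F(\tx^k)\rangle$ term: the naive use of Young directly on that inner product would spill a $\|F(\tx^k)\|^2$ contribution that the weak-Minty slack need not be large enough to absorb. The split-before-Young trick, exploiting $\|F(\tx^k) - F(x^k)\| \leq L\gamma_1\|F(x^k) - \omega_k\|$, routes all such leakage through $\|F(x^k)\|$ and $\|\omega_k\|$ instead, which is what makes the inequality close with the coefficients stated in the lemma.
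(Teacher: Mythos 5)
Your setup coincides with the paper's: the same decomposition of the update isolating $2\gamma_2\langle x^k-x^*-\gamma_2 F(\tx^k),\theta_k\rangle+\gamma_2^2\|\theta_k\|^2$, the same use of \eqref{eq:negative_comon} at $\tx^k$, and the same observation that the $\|F(\tx^k)\|^2$ terms carry the total coefficient $\gamma_2(\gamma_2+2\rho-\gamma_1)\le 0$. The genuine gap is in your treatment of the ``delicate'' term and the resulting constant bookkeeping. By splitting $x^k-x^*=(\tx^k-x^*)+\gamma_1(F(x^k)-\omega_k)$ \emph{before} polarizing, you create the explicit cross term $2\gamma_1\gamma_2\langle\omega_k,F(\tx^k)\rangle$, and your claim that Young's inequality can route it so that the leakage onto $\|F(x^k)\|^2$ is at most $\nicefrac{\gamma_1\gamma_2}{4}$ while the $\|\omega_k\|^2$ contributions ``collect to $3\gamma_1\gamma_2$'' is not substantiated and in fact cannot hold. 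After your polarization of $-2\gamma_1\gamma_2\langle F(x^k),F(\tx^k)\rangle$ and the Lipschitz bound with $\gamma_1 L\le\nicefrac12$, the available negative budget on $\|F(x^k)\|^2$ is $-\gamma_1\gamma_2(1-2\gamma_1^2L^2)\ge$-wise only $-\nicefrac{\gamma_1\gamma_2}{2}$ in the worst case $\gamma_1=\nicefrac{1}{2L}$. Your split of $F(\tx^k)$ produces cross terms totalling at least $3\gamma_1\gamma_2\|\omega_k\|\,\|F(x^k)\|$ (coefficient $2$ from $\langle\omega_k,F(x^k)\rangle$ plus at least $1$ from $\langle\omega_k,F(\tx^k)-F(x^k)\rangle$ after Lipschitzness), and to keep a residual $-\nicefrac{\gamma_1\gamma_2}{4}\|F(x^k)\|^2$ you may spend only $\nicefrac{\gamma_1\gamma_2}{4}$ of that budget; Young then forces a payment of at least $9\gamma_1\gamma_2\|\omega_k\|^2$, which together with the other $\|\omega_k\|^2$ contributions gives a coefficient of order $10\gamma_1\gamma_2$ rather than $3\gamma_1\gamma_2$. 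So your route proves only a weaker inequality than \eqref{eq:optimization_lemma_norm_SEG}, and the downstream batch-size conditions in Theorem~\ref{thm:main_result_avg_sq_norm_SEG} are calibrated to the stated constant.

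The fix is to not separate the noise from the extrapolation direction before polarizing: keep $\tF_{\Bxi_1^k}(x^k)$ intact and write $-2\gamma_1\gamma_2\langle \tF_{\Bxi_1^k}(x^k),F(\tx^k)\rangle=\gamma_1\gamma_2\|\tF_{\Bxi_1^k}(x^k)-F(\tx^k)\|^2-\gamma_1\gamma_2\|\tF_{\Bxi_1^k}(x^k)\|^2-\gamma_1\gamma_2\|F(\tx^k)\|^2$, as in the paper's proof. Then $\|\tF_{\Bxi_1^k}(x^k)-F(\tx^k)\|^2\le 2\|\omega_k\|^2+2L^2\gamma_1^2\|\tF_{\Bxi_1^k}(x^k)\|^2$ leaves $-\gamma_1\gamma_2(1-2\gamma_1^2L^2)\|\tF_{\Bxi_1^k}(x^k)\|^2$, and only at the very end does one use \eqref{eq:a_minus_b} in the form $-\|F(x^k)-\omega_k\|^2\le-\tfrac12\|F(x^k)\|^2+\|\omega_k\|^2$; the cross term $\langle\omega_k,F(x^k)\rangle$ is thus absorbed with the \emph{full} negative square $-\|F(x^k)\|^2$ present in the same expansion, which is exactly what yields the coefficients $\nicefrac{\gamma_1\gamma_2}{4}$ on $\|F(x^k)\|^2$ and $2\gamma_1\gamma_2\left(\tfrac32-\gamma_1^2L^2\right)\le 3\gamma_1\gamma_2$ on $\|\omega_k\|^2$ stated in the lemma.
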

\begin{proof}
Using the update rule of \ref{eq:clipped_SEG}, we obtain
    \begin{eqnarray*}
        \|x^{k+1} - x^*\|^2 &=& \|x^k - x^*\|^2 - 2\gamma_2 \langle x^k - x^*,  \tF_{\Bxi_2^k}(\tx^k)\rangle + \gamma_2^2\|\tF_{\Bxi_2^k}(\tx^k)\|^2\\
        &=& \|x^k - x^*\|^2 -2\gamma_2 \langle x^k - x^*, F(\tx^k) \rangle + 2\gamma_2 \langle x^k - x^*, \theta_k \rangle\\
        &&\quad + \gamma_2^2\|F(\tx^k)\|^2 - 2\gamma_2^2 \langle F(\tx^k), \theta_k \rangle + \gamma_2^2\|\theta_k\|^2\\
        &=& \|x^k - x^*\|^2 -2\gamma_2 \langle \tx^k - x^*, F(\tx^k) \rangle - 2\gamma_2 \langle x^k - \tx^k, F(\tx^k) \rangle\\
        &&\quad + 2\gamma_2 \langle x^k - x^* - \gamma_2 F(\tx^k), \theta_k \rangle + \gamma_2^2\|F(\tx^k)\|^2 + \gamma^2\|\theta_k\|^2\\
        &\overset{\eqref{eq:negative_comon}}{\leq}& \|x^k - x^*\|^2 + 2\gamma_2\rho\|\tF(\tx^k)\|^2 - 2\gamma_1\gamma_2\langle \tF_{\Bxi_1^k}(x^k), F(\tx^k) \rangle\\
        &&\quad + 2\gamma_2 \langle x^k - x^* - \gamma_2 F(\tx^k), \theta_k \rangle + \gamma_2^2\|F(\tx^k)\|^2 + \gamma_2^2\|\theta_k\|^2\\
        &\overset{\eqref{eq:inner_product_representation}}{=}& \|x^k - x^*\|^2 + \gamma_1\gamma_2\| \tF_{\Bxi_1^k}(x^k) - F(\tx^k) \|^2 - \gamma_1\gamma_2 \|F(\tx^k)\|^2 - \gamma_1\gamma_2 \|\tF_{\Bxi_1^k}(x^k)\|^2\\
        &&\quad + 2\gamma_2 \langle x^k - x^* - \gamma_2 F(\tx^k), \theta_k \rangle + \gamma_2\left(2\rho + \gamma_2\right)\|F(\tx^k)\|^2 + \gamma_2^2\|\theta_k\|^2\\
        &\overset{\eqref{eq:a_plus_b}}{\leq}& \|x^k - x^*\|^2 + 2\gamma_1\gamma_2\|\omega_k\|^2 + 2\gamma_1\gamma_2\|F(x^k) - F(\tx^k)\|^2 - \gamma_1\gamma_2 \|\tF_{\Bxi_1^k}(x^k)\|^2\\
        &&\quad + 2\gamma_2 \langle x^k - x^* - \gamma_2 F(\tx^k), \theta_k \rangle + \gamma_2\left(2\rho + \gamma_2 - \gamma_1\right)\|F(\tx^k)\|^2 + \gamma_2^2\|\theta_k\|^2\\
        &\overset{\eqref{eq:Lipschitzness}}{\leq}& \|x^k - x^*\|^2 - \gamma_1\gamma_2\left(1 - 2\gamma_1^2L^2\right)\|\tF_{\Bxi_1^k}(x^k)\|^2 \\
        &&\quad + 2\gamma_2 \langle x^k - x^* - \gamma_2 F(\tx^k), \theta_k \rangle + \gamma_2^2\|\theta_k\|^2 + 2\gamma_1\gamma_2\|\omega_k\|^2,
    \end{eqnarray*}
    where in the last step we additionally use $x^k - \tx^k = \gamma_1 \tF_{\Bxi_1^k}(x^k)$ after the application of Lipschitzness of $F$ and we use our assumption on $\gamma_1, \gamma_2, \rho$: $\gamma_2 + 2\rho \leq \gamma_1$. Since $\gamma_1 \leq \nicefrac{1}{(2L)}$, we have $\gamma_1\gamma_2\left(1 - 2\gamma_1^2L^2\right)\|\tF_{\Bxi_1^k}(x^k)\|^2 \geq 0$ and, using \eqref{eq:a_minus_b} with $\alpha = 1$, we derive
    \begin{eqnarray*}
        \|x^{k+1} - x^*\|^2 &\leq& \|x^k - x^*\|^2 - \frac{\gamma_1\gamma_2}{2}\left(1 - 2\gamma_1^2L^2\right)\|F(x^k)\|^2 \\
        &&\quad + 2\gamma_2 \langle x^k - x^* - \gamma_2 F(\tx^k), \theta_k \rangle + \gamma_2^2\|\theta_k\|^2 + 2\gamma_1\gamma_2\left(\frac{3}{2} - \gamma_1^2 L^2\right)\|\omega_k\|^2.
    \end{eqnarray*}
    Rearranging the terms and using $\tfrac{3}{2} - \gamma_1^2 L^2 \leq \tfrac{3}{2}$, $1 - 2\gamma_1^2 L^2 \geq \nicefrac{1}{2}$, we derive
    \begin{eqnarray*}
        \frac{\gamma_1\gamma_2}{4}\|F(x^k)\|^2 &\leq& \|x^k - x^*\|^2 - \|x^{k+1} - x^*\|^2  + \left(\gamma_2^2\|\theta_k\|^2 + 3\gamma_1\gamma_2\|\omega_k\|^2\right)\\
        &&\quad + 2\gamma_2 \langle x^k - x^* - 2\gamma_2 F(\tx^k),  \theta_k\rangle.
    \end{eqnarray*}
    Finally, we sum up the above inequality for $k = 0,1,\ldots, K$ and divide both sides of the result by $(K+1)$:
    \begin{eqnarray*}
         \frac{\gamma_1\gamma_2}{4(K+1)}\sum\limits_{k=0}^K\|F(x^k)\|^2 &\leq& \frac{1}{K+1}\sum\limits_{k=0}^K\left(\|x^k - x^*\|^2 - \|x^{k+1} - x^*\|^2\right) + \frac{\gamma_2^2}{K+1}\sum\limits_{k=0}^K\|\theta_k\|^2\\
        &&\quad + \frac{2\gamma_2}{K+1}\sum\limits_{k=0}^K\langle x^k - x^* - \gamma_2 F(\tx^k), \theta_k\rangle + \frac{3\gamma_1\gamma_2}{K+1}\sum\limits_{k=0}^K\|\omega_k\|^2\\
        &=& \frac{\|x^0 - x^*\|^2 - \|x^{K+1} - x^*\|^2}{K+1}\\
        &&\quad + \frac{1}{K+1}\sum\limits_{k=0}^K\left(\gamma_2^2\|\theta_k\|^2 + 3\gamma_1\gamma_2\|\omega_k\|^2\right)\\
        &&\quad + \frac{2\gamma_2}{K+1}\sum\limits_{k=0}^K\langle x^k - x^* - \gamma_2 F(\tx^k), \theta_k\rangle.
    \end{eqnarray*}
    This finishes the proof.
\end{proof}

\begin{theorem}\label{thm:main_result_avg_sq_norm_SEG}
    Let Assumptions~\ref{as:UBV}, \ref{as:lipschitzness}, \ref{as:negative_comon} hold for $Q = B_{3R}(x^*)$, where $R \geq R_0 \eqdef \|x^0 - x^*\|$, and
    \begin{gather}
        \gamma_2 + 2\rho \leq \gamma_1 \leq \frac{1}{160L \ln \tfrac{6(K+1)}{\beta}}, \label{eq:gamma_SEG_neg_mon}\\
        \lambda_{1,k} \equiv \lambda_1 = \frac{R}{20\gamma_1 \ln \tfrac{6(K+1)}{\beta}},\quad \lambda_{1,k} \equiv \lambda_2 = \frac{R}{20\gamma_2 \ln \tfrac{6(K+1)}{\beta}},  \label{eq:lambda_SEG_neg_mon}\\
        m_{1,k} \equiv m_1 \geq \max\left\{1, \frac{216 \max\{\gamma_1\gamma_2(K+1), \sqrt{\gamma_1^3\gamma_2(K+1)}\ln\tfrac{6(K+1)}{\beta}\}\sigma^2}{R^2}\right\}, \label{eq:batch_1_SEG_neg_mon}\\
        m_{2,k} \equiv m_2 \geq \max\left\{1, \frac{3240 (K+1) \gamma_2^2\sigma^2 \ln\tfrac{6(K+1)}{\beta}}{R^2}\right\}, \label{eq:batch_2_SEG_neg_mon}
    \end{gather}
    for some $K \geq 0$ and $\beta \in (0,1]$ such that $\ln \tfrac{6(K+1)}{\beta} \geq 1$. Then, after $K$ iterations the iterates produced by \ref{eq:clipped_SEG} with probability at least $1 - \beta$ satisfy 
    \begin{equation}
        \frac{1}{K+1}\sum\limits_{k=0}^{K}\|F(x^k)\|^2 \leq \frac{36 R^2}{\gamma_1 \gamma_2 (K+1)}. \label{eq:main_result_neg_mon}
    \end{equation}
\end{theorem}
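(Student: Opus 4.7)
The plan is to mirror the proof of Theorem~\ref{thm:main_result_gap_SEG}, with Lemma~\ref{lem:optimization_lemma_SEG} replacing Lemma~\ref{lem:optimization_lemma_gap_SEG}. I set up an induction over $T \in \{0,1,\ldots,K+1\}$: define the probability event $E_T$ asserting that for all $t \leq T$ both
\[
A_t \;:=\; \|x^0-x^*\|^2 + 2\gamma_2\!\sum_{l=0}^{t-1}\!\langle x^l - x^* - \gamma_2 F(\tx^l), \theta_l\rangle + \sum_{l=0}^{t-1}\bigl(\gamma_2^2\|\theta_l\|^2 + 3\gamma_1\gamma_2\|\omega_l\|^2\bigr) \,\leq\, 9R^2
\]
and $\bigl\|\gamma_2\sum_{l=0}^{t-1}\theta_l\bigr\| \leq R$ hold. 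The target is $\PP\{E_T\} \geq 1 - T\beta/(K+1)$. The base case $T=0$ is trivial, and the target bound \eqref{eq:main_result_neg_mon} follows from $E_{K+1}$ by rearranging Lemma~\ref{lem:optimization_lemma_SEG}.

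First I would show that $E_{T-1}$ implies $x^t, \tx^t \in B_{3R}(x^*)$ for all $t \leq T$: the iterate containment follows by induction on $t$ from Lemma~\ref{lem:optimization_lemma_SEG} (since $A_t\le 9R^2$ dominates $\|x^t-x^*\|^2$), and then $\|\tx^t - x^*\| \leq \|x^t - x^*\| + \gamma_1\lambda_1 \leq 3R$ by the choice of $\lambda_1$ in \eqref{eq:lambda_SEG_neg_mon}. This containment together with Lipschitzness yields $\|F(x^l)\| \leq \lambda_1/2$ and $\|F(\tx^l)\| \leq \lambda_2/2$, which are the hypotheses of Lemma~\ref{lem:bias_variance} needed to split $\theta_l = \theta_l^u + \theta_l^b$ and $\omega_l = \omega_l^u + \omega_l^b$ with $\|\theta_l^u\| \leq 2\lambda_2$, $\|\omega_l^u\| \leq 2\lambda_1$, $\|\theta_l^b\| \leq 4\sigma^2/(m_2\lambda_2)$, $\|\omega_l^b\| \leq 4\sigma^2/(m_1\lambda_1)$ and corresponding variance bounds $\sigma^2/m_i$.

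The heart of the argument is bounding $A_T - \|x^0-x^*\|^2$ under $E_{T-1}$. Using the truncation trick $\eta_l := (x^l - x^* - \gamma_2 F(\tx^l))\mathbbm{1}\{\|\cdot\| \le 5R\}$ (which equals $x^l-x^*-\gamma_2 F(\tx^l)$ on $E_{T-1}$), I decompose the cross term into a martingale difference $\sum 2\gamma_2\langle\eta_l,\theta_l^u\rangle$ and a bias piece $\sum 2\gamma_2\langle\eta_l,\theta_l^b\rangle$; similarly, I rewrite $\gamma_2^2\|\theta_l\|^2 + 3\gamma_1\gamma_2\|\omega_l\|^2$ as (variance expectation) $+$ (centered squared-norm martingale) $+$ (bias-squared remainder) after applying $\|a+b\|^2 \leq 2\|a\|^2+2\|b\|^2$. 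The two martingale pieces are controlled by Bernstein's inequality (Lemma~\ref{lem:Bernstein_ineq}) with the same quantile $2\exp(-\cdot) = \beta/(3(K+1))$ as in the monotone proof. The auxiliary estimate $\|\gamma_2\sum\theta_l\| \leq R$ is obtained exactly as in the monotone case by a third Bernstein argument applied to $\sum 2\gamma_2\langle\zeta_l,\theta_l^u\rangle$ for a truncation $\zeta_l$ of the running sum. A final union bound over these three Bernstein events (plus $E_{T-1}$) yields $\PP\{E_T\} \geq 1 - T\beta/(K+1)$.

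The main obstacle, and the reason for the somewhat unusual batchsize requirement \eqref{eq:batch_1_SEG_neg_mon}, is reconciling the $\omega$-contribution with the target. The variance part of $\sum 3\gamma_1\gamma_2\EE[\|\omega_l^u\|^2]$ scales like $(K+1)\gamma_1\gamma_2\sigma^2/m_1$, forcing $m_1 \gtrsim (K+1)\gamma_1\gamma_2\sigma^2 A/R^2$. More delicately, the bias-squared remainder $\sum 3\gamma_1\gamma_2\|\omega_l^b\|^2 \lesssim (K+1)\gamma_1\gamma_2\sigma^4/(m_1\lambda_1)^2$ becomes, after substituting $\lambda_1 = \Theta(R/(\gamma_1 A))$, of order $(K+1)\gamma_1^3\gamma_2\sigma^4 A^2/(m_1^2 R^2)$, which forces $m_1 \gtrsim \sqrt{\gamma_1^3\gamma_2(K+1)}\sigma^2 A/R^2$; the $\max$ in \eqref{eq:batch_1_SEG_neg_mon} is then precisely the combination of these two regimes. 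The extrapolation batchsize $m_1$ must therefore be larger than the update batchsize $m_2$ (which satisfies the monotone-style bound \eqref{eq:batch_2_SEG_neg_mon}), and the asymmetry of stepsizes $\gamma_2 + 2\rho \leq \gamma_1$ imposed by the weak Minty condition is what gives Lemma~\ref{lem:optimization_lemma_SEG} enough slack to absorb $2\gamma_2\rho\|F(\tx^k)\|^2$ into the telescoping term — a structural complication absent from the monotone proof.
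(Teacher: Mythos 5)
Your overall plan does follow the paper's strategy (induction over per-iteration high-probability events, bias/variance splitting of the clipped estimators via Lemma~\ref{lem:bias_variance}, Bernstein's inequality for the martingale parts, and the correct reading of why $m_1$ must satisfy \eqref{eq:batch_1_SEG_neg_mon}), but your containment step fails as written. Your induction only yields $\|x^t-x^*\|\le 3R$, i.e.\ $x^t$ may sit on the boundary of $Q=B_{3R}(x^*)$, and then $\|\tx^t-x^*\|\le \|x^t-x^*\|+\gamma_1\lambda_1 \le 3R + \tfrac{R}{20\ln\frac{6(K+1)}{\beta}}$, which exceeds $3R$; the claimed ``$\le 3R$'' is false. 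This is not cosmetic: Assumptions~\ref{as:UBV}, \ref{as:lipschitzness}, \ref{as:negative_comon} are only posited on $B_{3R}(x^*)$, and both Lemma~\ref{lem:optimization_lemma_SEG} (which applies \eqref{eq:negative_comon} at $\tx^k$) and the bounds $\|F(\tx^l)\|\le\nicefrac{\lambda_2}{2}$ and $\EE[\|F_\xi(\tx^l)-F(\tx^l)\|^2]\le\sigma^2$ that feed Lemma~\ref{lem:bias_variance} all require $\tx^l\in B_{3R}(x^*)$. The paper avoids this by inducting on the plain distance, event \eqref{eq:induction_inequality_neg_mon_SEG}, which keeps $x^t\in B_{2R}(x^*)$, and then showing $\|\tx^t-x^*\|^2\le 2(1+2\gamma_1^2L^2)\|x^t-x^*\|^2+4\gamma_1^2\|\omega_t\|^2\le 9R^2$ as in \eqref{eq:tilde_x_distance_SEG_neg_mon}: the extrapolated point stays inside $B_{3R}(x^*)$ only because the iterate itself is kept a full radius inside $Q$. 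To repair your version you must run the induction with a strictly smaller threshold on the distance component than the $9R^2$ budget you allot to $A_t$, and then re-check that all the per-term stochastic bounds (each of which you calibrated to the $9R^2$ budget) still close the induction with the constants \eqref{eq:gamma_SEG_neg_mon}--\eqref{eq:batch_2_SEG_neg_mon}.

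Beyond that gap, your route carries over machinery from the monotone/gap case that is unnecessary here: since the anchor in Lemma~\ref{lem:optimization_lemma_SEG} is the fixed point $x^*$ (there is no $\max_{u\in B_R(x^*)}$), the extra component $\|\gamma_2\sum_l\theta_l\|\le R$ of your event and the third Bernstein argument with the truncated running sums $\zeta_l$ serve no purpose, and the paper indeed drops them. The paper still uses three Bernstein applications, but of a different kind: the cross term $\sum_l\langle\eta_l,\theta_l^u\rangle$ plus two separate centered squared-norm martingales (one in $\theta_l^u$, one in $\omega_l^u$), whereas you fold the latter two into a single martingale as in the monotone proof — a harmless cosmetic difference. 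Your derivation of the final bound $\nicefrac{36R^2}{\gamma_1\gamma_2(K+1)}$ from the event at $T=K+1$, and your explanation of the two regimes in \eqref{eq:batch_1_SEG_neg_mon} (variance of $\omega_l^u$ versus squared bias of $\omega_l^b$ under $\lambda_1=\Theta(\nicefrac{R}{\gamma_1 A})$), match the paper's argument.
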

\begin{proof}
    As in the proof of Theorem~\ref{thm:main_result_gap_SEG}, we use the following notation: $R_k = \|x^k - x^*\|^2$, $k \geq 0$. We will derive \eqref{eq:main_result_neg_mon} by induction. In particular, for each $k = 0,\ldots, K+1$ we define probability event $E_k$ as follows: inequalities
    \begin{equation}
        R_t^2 \leq 4R^2 \label{eq:induction_inequality_neg_mon_SEG}
    \end{equation}
    hold for $t = 0,1,\ldots,k$ simultaneously. Our goal is to prove that $\PP\{E_k\} \geq  1 - \nicefrac{k\beta}{(K+1)}$ for all $k = 0,1,\ldots,K+1$. We use the induction to show this statement. For $k = 0$ the statement is trivial since $R_0^2 \leq 4R^2$ by definition. Next, assume that the statement holds for $k = T-1 \leq K$, i.e., we have $\PP\{E_{T-1}\} \geq 1 - \nicefrac{(T-1)\beta}{(K+1)}$. We need to prove that $\PP\{E_T\} \geq 1 - \nicefrac{T\beta}{(K+1)}$. First of all, since $R_t^2 \leq 4 R^2$, we have $x^t \in B_{2R}(x^*)$. Operator $F$ is $L$-Lipschitz on $B_{3R}(x^*)$. Therefore, probability event $E_{T-1}$ implies
    \begin{eqnarray}
        \|F(x^t)\| &\leq& L\|x^t - x^*\| \overset{\eqref{eq:induction_inequality_neg_mon_SEG}}{\leq} 2LR \overset{\eqref{eq:gamma_SEG_neg_mon},\eqref{eq:lambda_SEG_neg_mon}}{\leq} \frac{\lambda_1}{2}. \label{eq:operator_bound_x_t_SEG_neg_mon}
    \end{eqnarray}
    and
    \begin{eqnarray}
        \|\omega_t\|^2 &\overset{\eqref{eq:a_plus_b}}{\leq}& 2\|\tF_{\Bxi_1}(x^t)\|^2 + 2\|F(x^t)\|^2 \overset{\eqref{eq:operator_bound_x_t_SEG_neg_mon}}{\leq} \frac{5}{2}\lambda_1^2 \overset{\eqref{eq:lambda_SEG_neg_mon}}{\leq} \frac{R^2}{4\gamma_1^2} \label{eq:omega_bound_x_t_SEG_neg_mon}
    \end{eqnarray}
    for all $t = 0, 1, \ldots, T-1$.
    
    Next, we show that probability event $E_{T-1}$ implies $\|\tx^t - x^*\| \leq 3R$ and derive useful inequalities related to $\theta_t$ for all $t = 0, 1, \ldots, T-1$. Indeed, due to Lipschitzness of $F$ probability event $E_{T-1}$ implies
    \begin{eqnarray}
        \|\tx^t - x^*\|^2 &=& \|x^t - x^* - \gamma_1 \tF_{\Bxi_1}(x^t)\|^2 \overset{\eqref{eq:a_plus_b}}{\leq}  2\|x^t - x^*\|^2 + 2\gamma_1^2\|\tF_{\Bxi_1}(x^t)\|^2 \notag \\
        &\overset{\eqref{eq:a_plus_b}}{\leq}& 2R_t^2 + 4\gamma_1^2\|F(x^t)\|^2 + 4\gamma_1^2\|\omega_t\|^2 \notag \\
        &\overset{\eqref{eq:Lipschitzness}}{\leq}& 2(1 + 2\gamma_1^2 L^2)R_t^2 + 4\gamma_1^2\|\omega_t\|^2 \notag \\
        &\overset{\eqref{eq:gamma_SEG_neg_mon},\eqref{eq:omega_bound_x_t_SEG_neg_mon}}{\leq}& 7R^2 \leq 9R^2 \label{eq:tilde_x_distance_SEG_neg_mon}
    \end{eqnarray}
    and
    \begin{eqnarray}
        \|F(\tx^t)\| &\leq& L\|\tx^t - x^*\| \leq \sqrt{7}LR \overset{\eqref{eq:gamma_SEG_neg_mon},\eqref{eq:lambda_SEG_neg_mon}}{\leq} \frac{\lambda_2}{2} \label{eq:operator_bound_tx_t_SEG_neg_mon}
    \end{eqnarray}
    for all $t = 0, 1, \ldots, T-1$.
    
    That is, $E_{T-1}$ implies that $x^t, \tx^t \in B_{3R}(x^*)$ for all $t = 0, 1, \ldots, T-1$. Applying Lemma~\ref{lem:optimization_lemma_SEG}, we get that probability event $E_{T-1}$ implies
    \begin{eqnarray}
        \frac{\gamma_1\gamma_2}{4T} \sum\limits_{l=0}^{T-1}\|F(x^l)\|^2 &\leq& \frac{R^2 - R_T^2}{T} + \frac{2\gamma_2}{T}\sum\limits_{l=0}^{T-1}\langle x^l - x^* - \gamma_2 F(\tx^l), \theta_l\rangle\notag\\
        &&\quad + \frac{1}{T}\sum\limits_{l=0}^{T-1}\left(\gamma_2^2\|\theta_l\|^2 + 3\gamma_1\gamma_2\|\omega_l\|^2\right) \label{eq:SEG_neg_mon_useful_inequality_for_the_end} \\
        R_{T}^2 &\leq& R^2 + 2\gamma_2\sum\limits_{l=0}^{T-1}\langle x^l - x^* - \gamma_2 F(\tx^l), \theta_l\rangle  + \sum\limits_{l=0}^{T-1}\left(\gamma_2^2\|\theta_l\|^2 + 3\gamma_1\gamma_2\|\omega_l\|^2\right). \notag
    \end{eqnarray}
    To estimate the sums in the right-hand side, we introduce new vectors:
    \begin{gather}
        \eta_t = \begin{cases} x^t - x^* - \gamma_2 F(\tx^t),& \text{if } \|x^t - x^* - \gamma_2 F(\tx^t)\| \leq \sqrt{7}(1 + \gamma_2 L)R,\\ 0,& \text{otherwise}, \end{cases} \label{eq:eta_t_SEG_neg_mon}
    \end{gather}
    for $t = 0, 1, \ldots, T-1$. First of all, we point out that vectors $\zeta_t$ and $\eta_t$ are bounded with probability $1$, i.e., with probability $1$
    \begin{equation}
        \|\eta_t\| \leq \sqrt{7}(1 + \gamma_2 L)R \label{eq:eta_t_bound_SEG_neg_mon} 
    \end{equation}
    for all $t = 0, 1, \ldots, T-1$. Next, we notice that $E_{T-1}$ implies
    \begin{eqnarray*}
        \|x^t - x^* - \gamma_2 F(\tx^t)\| &\leq& \|x^t - x^*\| + \gamma_2 \|F(\tx^t)\|\\
        &\overset{\eqref{eq:tilde_x_distance_SEG_neg_mon},\eqref{eq:operator_bound_tx_t_SEG_neg_mon}}{\leq}& \sqrt{7}(1 + \gamma_2 L)R
    \end{eqnarray*}
    for $t = 0, 1, \ldots, T-1$, i.e., probability event $E_{T-1}$ implies $\eta_t = x^t - x^* - \gamma_2 F(\tx^t)$ for all $t = 0,1,\ldots,T-1$. Therefore, $E_{T-1}$ implies
    \begin{eqnarray}
        R_{T}^2 &\leq& R^2 + 2\gamma_2\sum\limits_{l=0}^{T-1}\langle \eta_l, \theta_l\rangle  + \sum\limits_{l=0}^{T-1}\left(\gamma_2^2\|\theta_l\|^2 + 3\gamma_1\gamma_2\|\omega_l\|^2\right). \notag
    \end{eqnarray}
    As in the monotone case, to continue the derivation, we introduce vectors $\theta_l^u, \theta_l^b, \omega_l^u, \omega_l^b$ defined as
    \begin{gather}
        \theta_l^u \eqdef \EE_{\Bxi_2^l}\left[\tF_{\Bxi_2^l}(\tx^l)\right] - \tF_{\Bxi_2^l}(\tx^l),\quad \theta_l^b \eqdef F(\tx^l) - \EE_{\Bxi_2^l}\left[\tF_{\Bxi_2^l}(\tx^l)\right], \label{eq:theta_unbias_bias_SEG_neg_mon}\\
        \omega_l^u \eqdef \EE_{\Bxi_1^l}\left[\tF_{\Bxi_1^l}(x^l)\right] - \tF_{\Bxi_1^l}(x^l),\quad \theta_l^b \eqdef F(x^l) - \EE_{\Bxi_1^l}\left[\tF_{\Bxi_1^l}(x^l)\right], \label{eq:omega_unbias_bias_SEG_neg_mon}
    \end{gather}
    for all $l = 0,\ldots, T-1$. By definition we have $\theta_l = \theta_l^u + \theta_l^b$, $\omega_l = \omega_l^u + \omega_l^b$ for all $l = 0,\ldots, T-1$. Using the introduced notation, we continue our derivation as follows: $E_{T-1}$ implies
    \begin{eqnarray}
        R_T^2 &\overset{\eqref{eq:a_plus_b}}{\leq}& R^2 + \underbrace{2\gamma_2\sum\limits_{l=0}^{T-1}\langle \eta_l, \theta_l^u\rangle}_{\circledOne} + \underbrace{2\gamma_2\sum\limits_{l=0}^{T-1}\langle \eta_l, \theta_l^b\rangle}_{\circledTwo} + \underbrace{2\gamma_2^2 \sum\limits_{l = 0}^{T-1}\EE_{\Bxi_{2}^l}\left[\|\theta_l^u\|^2\right]}_{\circledThree} \notag\\
        &&\quad + \underbrace{2\gamma_2^2 \sum\limits_{l = 0}^{T-1}\left(\|\theta_l^u\|^2 - \EE_{\Bxi_{2}^l}\left[\|\theta_l^u\|^2\right]\right)}_{\circledFour}  + \underbrace{2\gamma_2^2 \sum\limits_{l = 0}^{T-1}\|\theta_l^b\|^2}_{\circledFive} + \underbrace{6\gamma_1\gamma_2 \sum\limits_{l = 0}^{T-1}\EE_{\Bxi_{1}^l}\left[\|\omega_l^u\|^2\right]}_{\circledSix} \notag\\
        &&\quad + \underbrace{6\gamma_1\gamma_2 \sum\limits_{l = 0}^{T-1}\left(\|\omega_l^u\|^2 - \EE_{\Bxi_{1}^l}\left[\|\omega_l^u\|^2\right]\right)}_{\circledSeven}  + \underbrace{6\gamma_1\gamma_2 \sum\limits_{l = 0}^{T-1}\|\omega_l^b\|^2}_{\circledEight}. \label{eq:SEG_neg_mon_12345678_bound}
    \end{eqnarray}
    The rest of the proof is based on deriving good enough upper bounds for $\circledOne, \circledTwo, \circledThree, \circledFour, \circledFive, \circledSix, \circledSeven, \circledEight$, i.e., we want to prove that $\circledOne + \circledTwo + \circledThree + \circledFour + \circledFive + \circledSix + \circledSeven + \circledEight \leq 8R^2$ with high probability.
    
    Before we move on, we need to derive some useful inequalities for operating with $\theta_l^u, \theta_l^b, \omega_l^u, \omega_l^b$. First of all, Lemma~\ref{lem:bias_variance} implies that
    \begin{equation}
        \|\theta_l^u\| \leq 2\lambda_2,\quad \|\omega_l^u\| \leq 2\lambda_1 \label{eq:theta_omega_magnitude_neg_mon}
    \end{equation}
    for all $l = 0,1, \ldots, T-1$. Next, since $\{\xi_1^{i,l}\}_{i=1}^{m_1}$, $\{\xi_2^{i,l}\}_{i=1}^{m_2}$ are independently sampled from $\cD$, we have $\EE_{\Bxi_1^l}[F_{\Bxi_1^l}(x^l)] = F(x^l)$, $\EE_{\Bxi_2^l}[F_{\Bxi_2^l}(\tx^l)] = F(\tx^l)$, and 
    \begin{gather}
        \EE_{\Bxi_1^l}\left[\|F_{\Bxi_1^l}(x^l) - F(x^l)\|^2\right] = \frac{1}{m_1^2}\sum\limits_{i=1}^{m_1} \EE_{\xi_1^{i,l}}\left[\|F_{\xi_1^{i,l}}(x^l) - F(x^l)\|^2\right] \overset{\eqref{eq:UBV}}{\leq} \frac{\sigma^2}{m_1}, \notag\\
        \EE_{\Bxi_2^l}\left[\|F_{\Bxi_2^l}(\tx^l) - F(\tx^l)\|^2\right] = \frac{1}{m_2^2}\sum\limits_{i=1}^{m_2} \EE_{\xi_2^{i,l}}\left[\|F_{\xi_2^{i,l}}(\tx^l) - F(\tx^l)\|^2\right] \overset{\eqref{eq:UBV}}{\leq} \frac{\sigma^2}{m_2}, \notag
    \end{gather}
    for all $l = 0,1, \ldots, T-1$. Moreover, as we already derived, probability event $E_{T-1}$ implies that $\|F(x^l)\| \leq \nicefrac{\lambda_l}{2}$ and $\|F(\tx^l)\| \leq \nicefrac{\lambda_l}{2}$ for all $l = 0,1, \ldots, T-1$ (see \eqref{eq:operator_bound_x_t_SEG_neg_mon} and \eqref{eq:operator_bound_tx_t_SEG_neg_mon}). Therefore, in view of Lemma~\ref{lem:bias_variance}, $E_{T-1}$ implies that
    \begin{gather}
        \left\|\theta_l^b\right\| \leq \frac{4\sigma^2}{m_2\lambda_2},\quad \left\|\omega_l^b\right\| \leq \frac{4\sigma^2}{m_1\lambda_1}, \label{eq:bias_theta_omega_neg_mon}\\
        \EE_{\Bxi_2^l}\left[\left\|\theta_l\right\|^2\right] \leq \frac{18\sigma^2}{m_2},\quad \EE_{\Bxi_1^l}\left[\left\|\omega_l\right\|^2\right] \leq \frac{18\sigma^2}{m_1}, \label{eq:distortion_theta_omega_neg_mon}\\
        \EE_{\Bxi_2^l}\left[\left\|\theta_l^u\right\|^2\right] \leq \frac{18\sigma^2}{m_2},\quad \EE_{\Bxi_1^l}\left[\left\|\omega_l^u\right\|^2\right] \leq \frac{18\sigma^2}{m_1}, \label{eq:variance_theta_omega_neg_mon}
    \end{gather}
    for all $l = 0,1, \ldots, T-1$.
    
    \paragraph{Upper bound for $\circledOne$.} Since $\EE_{\Bxi_2^l}[\theta_l^u] = 0$, we have
    \begin{equation*}
        \EE_{\Bxi_2^l}\left[2\gamma_2\langle \eta_l, \theta_l^u \rangle\right] = 0.
    \end{equation*}
    Next, the summands in $\circledOne$ are bounded with probability $1$:
    \begin{eqnarray}
        |2\gamma_2\langle \eta_l, \theta_l^u \rangle | \leq 2\gamma_2  \|\eta_l\|\cdot \|\theta_l^u\| 
        \overset{\eqref{eq:eta_t_bound_SEG_neg_mon},\eqref{eq:theta_omega_magnitude_neg_mon}}{\leq} 4\sqrt{7}\gamma_2 (1 + \gamma_2 L)  R \lambda_l \overset{\eqref{eq:gamma_SEG_neg_mon},\eqref{eq:lambda_SEG_neg_mon}}{\leq} \frac{R^2}{\ln\tfrac{6(K+1)}{\beta}} \eqdef c. \label{eq:SEG_neg_mon_technical_1_1}
    \end{eqnarray}
    Moreover, these summands have bounded conditional variances $\sigma_l^2 \eqdef \EE_{\Bxi_2^l}\left[4\gamma_2^2 \langle \eta_l, \theta_l^u \rangle^2\right]$:
    \begin{eqnarray}
        \sigma_l^2 \leq \EE_{\Bxi_2^l}\left[4\gamma_2^2 \|\eta_l\|^2\cdot \|\theta_l^u\|^2\right] \overset{\eqref{eq:eta_t_bound_SEG_neg_mon}}{\leq} 28\gamma_2^2 (1 + \gamma_2 L)^2R^2 \EE_{\Bxi_2^l}\left[\|\theta_l^u\|^2\right] \overset{\eqref{eq:gamma_SEG_neg_mon}}{\leq} 30\gamma_2^2R^2 \EE_{\Bxi_2^l}\left[\|\theta_l^u\|^2\right]. \label{eq:SEG_neg_mon_technical_1_2}
    \end{eqnarray}
    That is, sequence $\{2\gamma_2 \langle \eta_l, \theta_l^u \rangle\}_{l\geq 0}$ is a bounded martingale difference sequence having bounded conditional variances $\{\sigma_l^2\}_{l \geq 0}$. Applying Bernstein's inequality (Lemma~\ref{lem:Bernstein_ineq}) with $X_l = 2\gamma_2 \langle \eta_l, \theta_l^u \rangle$, $c$ defined in \eqref{eq:SEG_neg_mon_technical_1_1}, $b = R^2$, $G = \tfrac{R^4}{6\ln\frac{6(K+1)}{\beta}}$, we get that
    \begin{equation*}
        \PP\left\{|\circledOne| > R^2 \text{ and } \sum\limits_{l=0}^{T-1}\sigma_l^2 \leq \frac{R^4}{6\ln\tfrac{6(K+1)}{\beta}}\right\} \leq 2\exp\left(- \frac{b^2}{2G + \nicefrac{2cb}{3}}\right) = \frac{\beta}{3(K+1)}.
    \end{equation*}
    In other words, $\PP\{E_{\circledOne}\} \geq 1 - \tfrac{\beta}{3(K+1)}$, where probability event $E_{\circledOne}$ is defined as
    \begin{equation}
        E_{\circledOne} = \left\{\text{either} \quad \sum\limits_{l=0}^{T-1}\sigma_l^2 > \frac{R^4}{6\ln\tfrac{6(K+1)}{\beta}}\quad \text{or}\quad |\circledOne| \leq R^2\right\}. \label{eq:bound_1_SEG_neg_mon}
    \end{equation}
    Moreover, we notice here that probability event $E_{T-1}$ implies that
    \begin{eqnarray}
        \sum\limits_{l=0}^{T-1}\sigma_l^2 &\overset{\eqref{eq:SEG_neg_mon_technical_1_2}}{\leq}& 30\gamma_2^2R^2\sum\limits_{l=0}^{T-1} \EE_{\Bxi_2^l}\left[\|\theta_l^u\|^2\right] \overset{\eqref{eq:variance_theta_omega_neg_mon}, T \leq K+1}{\leq} \frac{540\gamma_2^2 R^2 \sigma^2 (K+1)}{m_2} \overset{\eqref{eq:batch_2_SEG_neg_mon}}{\leq} \frac{R^4}{6\ln\tfrac{6(K+1)}{\beta}}. \label{eq:bound_1_variances_SEG_neg_mon}
    \end{eqnarray}

    \paragraph{Upper bound for $\circledTwo$.} Probability event $E_{T-1}$ implies
    \begin{eqnarray}
        \circledTwo &\leq& 2\gamma_2 \sum\limits_{l=0}^{T-1}\|\eta_l\| \cdot \|\theta_l^b\| \overset{\eqref{eq:eta_t_bound_SEG_neg_mon},\eqref{eq:bias_theta_omega_neg_mon}, T \leq K+1}{\leq} \frac{8\sqrt{7}\gamma_2(1+\gamma_2 L)\sigma^2 R(K+1)}{m_2\lambda_2}\notag\\
        &\overset{\eqref{eq:gamma_SEG_neg_mon},\eqref{eq:lambda_SEG_neg_mon}}{=}& \frac{161\sqrt{7}\gamma_2^2\sigma^2(K+1) \ln \frac{6(K+1)}{\beta}}{m_2} \overset{\eqref{eq:batch_2_SEG_neg_mon}}{\leq} R^2. \label{eq:bound_2_SEG_neg_mon}
    \end{eqnarray}

    \paragraph{Upper bound for $\circledThree$.} Probability event $E_{T-1}$ implies
    \begin{eqnarray}
        \circledThree =  2\gamma_2^2 \sum\limits_{l = 0}^{T-1}\EE_{\Bxi_{2}^l}\left[\|\theta_l^u\|^2\right] \overset{\eqref{eq:variance_theta_omega_neg_mon}, T \leq K+1}{\leq} \frac{36\gamma_2^2\sigma^2 (K+1)}{m_2} \overset{\eqref{eq:batch_2_SEG_neg_mon}}{\leq} R^2. \label{eq:bound_3_SEG_neg_mon}
    \end{eqnarray}

    \paragraph{Upper bound for $\circledFour$.} We have
    \begin{equation*}
        2\gamma_2^2\EE_{\Bxi_2^l}\left[\|\theta_l^u\|^2 - \EE_{\Bxi_{2}^l}\left[\|\theta_l^u\|^2\right]\right] = 0.
    \end{equation*}
    Next, the summands in $\circledFour$ are bounded with probability $1$:
    \begin{eqnarray}
        2\gamma_2^2 \left|\|\theta_l^u\|^2 - \EE_{\Bxi_{2}^l}\left[\|\theta_l^u\|^2\right] \right| &\leq& 2\gamma_2^2\left( \|\theta_l^u\|^2 + \EE_{\Bxi_{2}^l}\left[\|\theta_l^u\|^2\right] \right) 
        \overset{\eqref{eq:theta_omega_magnitude_neg_mon}}{\leq} 16\gamma_2^2 \lambda_2^2 \notag\\
        &\overset{\eqref{eq:lambda_SEG_neg_mon}}{\leq}& \frac{R^2}{\ln\tfrac{6(K+1)}{\beta}} \eqdef c. \label{eq:SEG_neg_mon_technical_4_1}
    \end{eqnarray}
    Moreover, these summands have bounded conditional variances $\widetilde\sigma_l^2 \eqdef 4\gamma_2^4\EE_{\Bxi_2^l}\left[\left(\|\theta_l^u\|^2 - \EE_{\Bxi_{2}^l}\left[\|\theta_l^u\|^2\right]\right)^2\right]$:
    \begin{eqnarray}
        \widetilde\sigma_l^2 \overset{\eqref{eq:SEG_neg_mon_technical_4_1}}{\leq} \frac{2\gamma_2^2 R^2}{\ln \frac{6(K+1)}{\beta}} \EE_{\Bxi_2^l}\left[\left| \|\theta_l^u\|^2 - \EE_{\Bxi_{2}^l}\left[\|\theta_l^u\|^2\right] \right|\right]  \leq \frac{4\gamma_2^2 R^2}{\ln \frac{6(K+1)}{\beta}} \EE_{\Bxi_2^l}\left[\|\theta_l^u\|^2\right] \label{eq:SEG_neg_mon_technical_4_2}
    \end{eqnarray}
    That is, sequence $\{\|\theta_l^u\|^2 - \EE_{\Bxi_{2}^l}[\|\theta_l^u\|^2]\}_{l\geq 0}$ is a bounded martingale difference sequence having bounded conditional variances $\{\widetilde\sigma_l^2\}_{l \geq 0}$. Applying Bernstein's inequality (Lemma~\ref{lem:Bernstein_ineq}) with $X_l = \|\theta_l^u\|^2 - \EE_{\Bxi_{2}^l}[\|\theta_l^u\|^2]$, $c$ defined in \eqref{eq:SEG_neg_mon_technical_4_1}, $b = R^2$, $G = \tfrac{R^4}{6\ln\frac{6(K+1)}{\beta}}$, we get that
    \begin{equation*}
        \PP\left\{|\circledFour| > R^2 \text{ and } \sum\limits_{l=0}^{T-1}\widetilde\sigma_l^2 \leq \frac{R^4}{6\ln\tfrac{6(K+1)}{\beta}}\right\} \leq 2\exp\left(- \frac{b^2}{2G + \nicefrac{2cb}{3}}\right) = \frac{\beta}{3(K+1)}.
    \end{equation*}
    In other words, $\PP\{E_{\circledFour}\} \geq 1 - \tfrac{\beta}{3(K+1)}$, where probability event $E_{\circledFour}$ is defined as
    \begin{equation}
        E_{\circledFour} = \left\{\text{either} \quad \sum\limits_{l=0}^{T-1}\widetilde\sigma_l^2 > \frac{R^4}{6\ln\tfrac{6(K+1)}{\beta}}\quad \text{or}\quad |\circledFour| \leq R^2\right\}. \label{eq:bound_4_SEG_neg_mon}
    \end{equation}
    Moreover, we notice here that probability event $E_{T-1}$ implies that
    \begin{eqnarray}
        \sum\limits_{l=0}^{T-1}\widetilde\sigma_l^2 &\overset{\eqref{eq:SEG_neg_mon_technical_4_2}}{\leq}& \frac{4\gamma_2^2 R^2}{\ln \frac{6(K+1)}{\beta}}\sum\limits_{l=0}^{T-1} \EE_{\Bxi_2^l}\left[\|\theta_l^u\|^2\right] \overset{\eqref{eq:variance_theta_omega_neg_mon}, T \leq K+1}{\leq} \frac{72\gamma_2^2 R^2 \sigma^2 (K+1)}{m_2 \ln\tfrac{6(K+1)}{\beta}} \notag\\
        &\overset{\eqref{eq:batch_2_SEG_neg_mon}}{\leq}& \frac{R^4}{6\ln\tfrac{6(K+1)}{\beta}}. \label{eq:bound_4_variances_SEG_neg_mon}
    \end{eqnarray}

    \paragraph{Upper bound for $\circledFive$.} Probability event $E_{T-1}$ implies
    \begin{eqnarray}
        \circledFive =  2\gamma_2^2 \sum\limits_{l = 0}^{T-1}\|\theta_l^b\|^2  \overset{\eqref{eq:bias_theta_omega_neg_mon}, T \leq K+1}{\leq} \frac{32\gamma_2^2 \sigma^4 (K+1)}{m_2^2 \lambda_2^2} \overset{\eqref{eq:lambda_SEG_neg_mon}}{=}  \frac{12800 \gamma_2^4 \sigma^4 (K+1) \ln^2 \frac{6(K+1)}{\beta}}{m_2^2 R^2}  \overset{\eqref{eq:batch_2_SEG_neg_mon}}{\leq} R^2. \label{eq:bound_5_SEG_neg_mon}
    \end{eqnarray}

    \paragraph{Upper bound for $\circledSix$.} Probability event $E_{T-1}$ implies
    \begin{eqnarray}
        \circledSix =  6\gamma_1\gamma_2 \sum\limits_{l = 0}^{T-1}\EE_{\Bxi_{1}^l}\left[\|\omega_l^u\|^2\right] \overset{\eqref{eq:variance_theta_omega_neg_mon}, T \leq K+1}{\leq} \frac{108\gamma_1\gamma_2\sigma^2 (K+1)}{m_1} \overset{\eqref{eq:batch_1_SEG_neg_mon}}{\leq} R^2. \label{eq:bound_6_SEG_neg_mon}
    \end{eqnarray}

    \paragraph{Upper bound for $\circledSeven$.} We have
    \begin{equation*}
        6\gamma_1\gamma_2\EE_{\Bxi_1^l}\left[\|\omega_l^u\|^2 - \EE_{\Bxi_{1}^l}\left[\|\omega_l^u\|^2\right]\right] = 0.
    \end{equation*}
    Next, the summands in $\circledSeven$ are bounded with probability $1$:
    \begin{eqnarray}
        6\gamma_1\gamma_2 \left|\|\omega_l^u\|^2 - \EE_{\Bxi_{1}^l}\left[\|\omega_l^u\|^2\right] \right| &\leq& 6\gamma_1\gamma_2\left( \|\omega_l^u\|^2 + \EE_{\Bxi_{1}^l}\left[\|\omega_l^u\|^2\right] \right) 
        \overset{\eqref{eq:theta_omega_magnitude_neg_mon}}{\leq} 48\gamma_1\gamma_2 \lambda_1^2 \notag\\
        &\overset{\eqref{eq:lambda_SEG_neg_mon}}{\leq}& \frac{\gamma_2R^2}{\gamma_1\ln\tfrac{6(K+1)}{\beta}} \overset{\gamma_2 \leq \gamma_1}{\leq} \frac{R^2}{\ln\tfrac{6(K+1)}{\beta}} \eqdef c. \label{eq:SEG_neg_mon_technical_7_1}
    \end{eqnarray}
    Moreover, these summands have bounded conditional variances $\widehat\sigma_l^2 \eqdef 36\gamma_1^2\gamma_2^2\EE_{\Bxi_1^l}\left[\left(\|\omega_l^u\|^2 - \EE_{\Bxi_{1}^l}\left[\|\omega_l^u\|^2\right]\right)^2\right]$:
    \begin{eqnarray}
        \widehat\sigma_l^2 \overset{\eqref{eq:SEG_neg_mon_technical_7_1}}{\leq} \frac{6\gamma_2^2 R^2}{\ln \frac{6(K+1)}{\beta}} \EE_{\Bxi_1^l}\left[\left| \|\omega_l^u\|^2 - \EE_{\Bxi_{1}^l}\left[\|\omega_l^u\|^2\right] \right|\right]  \leq \frac{12\gamma_2^2 R^2}{\ln \frac{6(K+1)}{\beta}} \EE_{\Bxi_1^l}\left[\|\omega_l^u\|^2\right] \label{eq:SEG_neg_mon_technical_7_2}
    \end{eqnarray}
    That is, sequence $\{\|\omega_l^u\|^2 - \EE_{\Bxi_{1}^l}[\|\omega_l^u\|^2]\}_{l\geq 0}$ is a bounded martingale difference sequence having bounded conditional variances $\{\widehat\sigma_l^2\}_{l \geq 0}$. Applying Bernstein's inequality (Lemma~\ref{lem:Bernstein_ineq}) with $X_l = \|\omega_l^u\|^2 - \EE_{\Bxi_{1}^l}[\|\omega_l^u\|^2]$, $c$ defined in \eqref{eq:SEG_neg_mon_technical_7_1}, $b = R^2$, $G = \tfrac{R^4}{6\ln\frac{6(K+1)}{\beta}}$, we get that
    \begin{equation*}
        \PP\left\{|\circledSeven| > R^2 \text{ and } \sum\limits_{l=0}^{T-1}\widehat\sigma_l^2 \leq \frac{R^4}{6\ln\tfrac{6(K+1)}{\beta}}\right\} \leq 2\exp\left(- \frac{b^2}{2G + \nicefrac{2cb}{3}}\right) = \frac{\beta}{3(K+1)}.
    \end{equation*}
    In other words, $\PP\{E_{\circledSeven}\} \geq 1 - \tfrac{\beta}{3(K+1)}$, where probability event $E_{\circledSeven}$ is defined as
    \begin{equation}
        E_{\circledSeven} = \left\{\text{either} \quad \sum\limits_{l=0}^{T-1}\widehat\sigma_l^2 > \frac{R^4}{6\ln\tfrac{6(K+1)}{\beta}}\quad \text{or}\quad |\circledSeven| \leq R^2\right\}. \label{eq:bound_7_SEG_neg_mon}
    \end{equation}
    Moreover, we notice here that probability event $E_{T-1}$ implies that
    \begin{eqnarray}
        \sum\limits_{l=0}^{T-1}\widehat\sigma_l^2 &\overset{\eqref{eq:SEG_neg_mon_technical_7_2}}{\leq}& \frac{12\gamma_2^2 R^2}{\ln \frac{6(K+1)}{\beta}}\sum\limits_{l=0}^{T-1} \EE_{\Bxi_1^l}\left[\|\omega_l^u\|^2\right] \overset{\eqref{eq:variance_theta_omega_neg_mon}, T \leq K+1}{\leq} \frac{216\gamma_2^2 R^2 \sigma^2 (K+1)}{m_1 \ln\tfrac{6(K+1)}{\beta}} \notag\\
        &\overset{\eqref{eq:batch_1_SEG_neg_mon}}{\leq}& \frac{R^4}{6\ln\tfrac{6(K+1)}{\beta}}. \label{eq:bound_7_variances_SEG_neg_mon}
    \end{eqnarray}

    \paragraph{Upper bound for $\circledEight$.} Probability event $E_{T-1}$ implies
    \begin{eqnarray}
        \circledEight &=&  6\gamma_1\gamma_2 \sum\limits_{l = 0}^{T-1}\|\omega_l^b\|^2  \overset{\eqref{eq:bias_theta_omega_neg_mon}, T \leq K+1}{\leq} \frac{96\gamma_1\gamma_2 \sigma^4 (K+1)}{m_1^2 \lambda_1^2} \notag\\
        &\overset{\eqref{eq:lambda_SEG_neg_mon}}{=}& \frac{38400 \gamma_1^3\gamma_2 \sigma^4 (K+1) \ln^2 \frac{6(K+1)}{\beta}}{m_1^2 R^2}  \overset{\eqref{eq:batch_1_SEG_neg_mon}}{\leq} R^2. \label{eq:bound_8_SEG_neg_mon}
    \end{eqnarray}

    \paragraph{Final derivation.} Putting all bounds together, we get that $E_{T-1}$ implies
    \begin{gather*}
        R_T^2 \overset{\eqref{eq:SEG_neg_mon_12345678_bound}}{\leq} R^2 + \circledOne + \circledTwo + \circledThree + \circledFour + \circledFive + \circledSix + \circledSeven + \circledEight,\\
        \circledTwo \overset{\eqref{eq:bound_2_SEG_neg_mon}}{\leq} R^2,\quad \circledThree \overset{\eqref{eq:bound_3_SEG_neg_mon}}{\leq} R^2,\quad \circledFive \overset{\eqref{eq:bound_5_SEG_neg_mon}}{\leq} R^2,\quad \circledSix \overset{\eqref{eq:bound_6_SEG_neg_mon}}{\leq} R^2,\quad \circledEight \overset{\eqref{eq:bound_8_SEG_neg_mon}}{\leq} R^2,\\
        \sum\limits_{l=0}^{T-1}\sigma_l^2 \overset{\eqref{eq:bound_1_variances_SEG_neg_mon}}{\leq}  \frac{R^4}{6\ln\tfrac{6(K+1)}{\beta}},\quad \sum\limits_{l=0}^{T-1}\widetilde\sigma_l^2 \overset{\eqref{eq:bound_4_variances_SEG_neg_mon}}{\leq} \frac{R^4}{6\ln\tfrac{6(K+1)}{\beta}},\quad \sum\limits_{l=0}^{T-1}\widehat\sigma_l^2 \overset{\eqref{eq:bound_7_variances_SEG_neg_mon}}{\leq}  \frac{R^4}{6\ln\tfrac{6(K+1)}{\beta}}.
    \end{gather*}
    Moreover, in view of \eqref{eq:bound_1_SEG_neg_mon}, \eqref{eq:bound_4_SEG_neg_mon}, \eqref{eq:bound_7_SEG_neg_mon}, and our induction assumption, we have
    \begin{gather*}
        \PP\{E_{T-1}\} \geq 1 - \frac{(T-1)\beta}{K+1},\\
        \PP\{E_{\circledOne}\} \geq 1 - \frac{\beta}{3(K+1)}, \quad \PP\{E_{\circledFour}\} \geq 1 - \frac{\beta}{3(K+1)}, \quad \PP\{E_{\circledSeven}\} \geq 1 - \frac{\beta}{3(K+1)} ,
    \end{gather*}
    where probability events $E_{\circledOne}$, $E_{\circledFour}$, and $E_{\circledSeven}$ are defined as
    \begin{eqnarray}
        E_{\circledOne}&=& \left\{\text{either} \quad \sum\limits_{l=0}^{T-1}\sigma_l^2 > \frac{R^4}{6\ln\tfrac{6(K+1)}{\beta}}\quad \text{or}\quad |\circledOne| \leq R^2\right\},\notag\\
        E_{\circledFour}&=& \left\{\text{either} \quad \sum\limits_{l=0}^{T-1}\widetilde\sigma_l^2 > \frac{R^4}{6\ln\tfrac{6(K+1)}{\beta}}\quad \text{or}\quad |\circledFour| \leq R^2\right\},\notag\\
        E_{\circledSeven}&=& \left\{\text{either} \quad \sum\limits_{l=0}^{T-1}\widehat\sigma_l^2 > \frac{R^4}{6\ln\tfrac{6(K+1)}{\beta}}\quad \text{or}\quad |\circledSeven| \leq R^2\right\}.\notag
    \end{eqnarray}
    Putting all of these inequalities together, we obtain that probability event $E_{T-1} \cap E_{\circledOne} \cap E_{\circledFour} \cap E_{\circledSeven}$ implies
    \begin{eqnarray*}
        R_T^2 &\overset{\eqref{eq:SEG_neg_mon_12345678_bound}}{\leq}& R^2 + \circledOne + \circledTwo + \circledThree + \circledFour + \circledFive + \circledSix + \circledSeven + \circledEight\\
        &\leq& 9R^2.
    \end{eqnarray*}
    Moreover, union bound for the probability events implies
    \begin{equation}
        \PP\{E_T\} \geq \PP\{E_{T-1} \cap E_{\circledOne} \cap E_{\circledFour} \cap E_{\circledSeven}\} = 1 - \PP\{\overline{E}_{T-1} \cup \overline{E}_{\circledOne} \cup \overline{E}_{\circledFour} \cup \overline{E}_{\circledSeven}\} \geq 1 - \frac{T\beta}{K+1}.
    \end{equation}
    This is exactly what we wanted to prove (see the paragraph after inequality \eqref{eq:induction_inequality_neg_mon_SEG}). In particular, $E_{K+1}$ implies
    \begin{eqnarray*}
        \frac{1}{K+1}\sum\limits_{k=0}^{K+1}\|F(x^k)\|^2 &\overset{\eqref{eq:SEG_neg_mon_useful_inequality_for_the_end}}{\leq}& \frac{4(R^2 - R_{K+1}^2)}{\gamma_1\gamma_2(K+1)}  + \frac{4\left(\circledOne + \circledTwo + \circledThree + \circledFour + \circledFive + \circledSix + \circledSeven + \circledEight\right)}{\gamma_1\gamma_2(K+1)}\\
        &\leq& \frac{36R^2}{\gamma_1\gamma_2 (K+1)}.
    \end{eqnarray*}
    This finishes the proof.
\end{proof}

\begin{corollary}\label{cor:main_result_avg_sq_norm_SEG}
    Let the assumptions of Theorem~\ref{thm:main_result_avg_sq_norm_SEG} hold and
    \begin{equation}
        \rho \leq \frac{1}{640L \ln\frac{6(K+1)}{\beta}}. \label{eq:rho_condition}
    \end{equation}
    Then, the choice of step-sizes and batch-sizes
    \begin{equation}
        2\gamma_2 = \gamma_1 = \frac{1}{160L \ln \tfrac{6(K+1)}{\beta}},\quad m_1 = m_2 = \max\left\{1, \frac{81 (K+1) \sigma^2}{640 L^2R^2 \ln\tfrac{6(K+1)}{\beta}}\right\} \label{eq:neg_mon_SEG_large_step_large_batch}
    \end{equation}
    satisfies conditions \eqref{eq:gamma_SEG_neg_mon}, \eqref{eq:batch_1_SEG_neg_mon}, \eqref{eq:batch_2_SEG_neg_mon}. With such choice of $\gamma, m_1, m_2$, and the choice of $\lambda_1,\lambda_2$ as in \eqref{eq:lambda_SEG_neg_mon}, the iterates produced by \ref{eq:clipped_SEG} after $K$ iterations with probability at least $1-\beta$ satisfy
    \begin{equation}
        \frac{1}{K+1}\sum\limits_{k=0}^{K}\|F(x^k)\|^2 \leq \frac{1 843 200 L^2R^2 \ln^2 \tfrac{6(K+1)}{\beta}}{K+1}. \label{eq:main_result_neg_mon_SEG_large_batch}
    \end{equation}
    In particular, to guarantee $\frac{1}{K+1}\sum_{k=0}^{K}\|F(x^k)\|^2 \leq \varepsilon$ with probability at least $1-\beta$ for some $\varepsilon > 0$ \ref{eq:clipped_SEG} requires,
    \begin{gather}
        \cO\left(\frac{L^2R^2}{\varepsilon} \ln^2\left(\frac{L^2R^2}{\varepsilon\beta}\right)\right) \text{ iterations}, \label{eq:gap_SEG_iteration_complexity_large_batch}\\
        \cO\left(\max\left\{\frac{L^2R^2}{\varepsilon}\ln^2\left(\frac{L^2R^2}{\varepsilon\beta}\right), \frac{L^2\sigma^2R^2}{\varepsilon^2}\ln^3\left(\frac{LR^2}{\varepsilon\beta}\right)\right\} \right) \text{ oracle calls}. \label{eq:neg_mon_SEG_oracle_complexity_large_batch} 
    \end{gather}
\end{corollary}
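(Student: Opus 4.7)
This corollary is a specialization of Theorem~\ref{thm:main_result_avg_sq_norm_SEG} to the ``large step-size / large batch-size'' regime, so the plan is purely algebraic: (i) verify that the proposed choices $2\gamma_2 = \gamma_1$, $m_1=m_2$, together with $\lambda_1,\lambda_2$ from \eqref{eq:lambda_SEG_neg_mon}, satisfy hypotheses \eqref{eq:gamma_SEG_neg_mon}, \eqref{eq:batch_1_SEG_neg_mon}, \eqref{eq:batch_2_SEG_neg_mon}; (ii) substitute into the rate \eqref{eq:main_result_neg_mon} to obtain \eqref{eq:main_result_neg_mon_SEG_large_batch}; and (iii) invert this rate to read off iteration and oracle complexities.

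For step (i), the step-size condition \eqref{eq:gamma_SEG_neg_mon} requires $\gamma_2 + 2\rho \leq \gamma_1 \leq \nicefrac{1}{(160L \ln\tfrac{6(K+1)}{\beta})}$. With $\gamma_2 = \gamma_1/2$, this reduces to $2\rho \leq \gamma_1/2 = \nicefrac{1}{(320 L \ln\tfrac{6(K+1)}{\beta})}$, i.e., to \eqref{eq:rho_condition}. Condition \eqref{eq:batch_2_SEG_neg_mon} becomes, after plugging $\gamma_2$, a bound of the form $m_2 \geq \nicefrac{c_1(K+1)\sigma^2}{L^2 R^2 \ln\tfrac{6(K+1)}{\beta}}$ for an explicit constant $c_1$, which is met by the proposed $m_2$ using $\ln\tfrac{6(K+1)}{\beta}\geq 1$. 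Condition \eqref{eq:batch_1_SEG_neg_mon} requires two checks because of the $\max\{\gamma_1\gamma_2(K+1),\sqrt{\gamma_1^3\gamma_2(K+1)}\ln\tfrac{6(K+1)}{\beta}\}$ term: the first piece is handled exactly as for $m_2$; the second piece gives a bound that scales like $\sqrt{K+1}$ rather than $(K+1)$, so it is dominated by the first for every $K\geq 0$ once the numerical constants are tracked.

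For step (ii), compute $\gamma_1\gamma_2 = \nicefrac{1}{(51200\, L^2 \ln^2\tfrac{6(K+1)}{\beta})}$ and substitute into $\tfrac{36R^2}{\gamma_1\gamma_2(K+1)}$ to obtain \eqref{eq:main_result_neg_mon_SEG_large_batch}. For step (iii), solving $\nicefrac{1\,843\,200\, L^2R^2 \ln^2\tfrac{6(K+1)}{\beta}}{K+1}\leq \varepsilon$ for $K$ (handling the implicit logarithm in the standard way, e.g., via $K\ln K \leq c \Rightarrow K = \widetilde{\cO}(c)$) yields the iteration count in \eqref{eq:gap_SEG_iteration_complexity_large_batch}. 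The oracle count is $(m_1 + m_2)(K+1) = 2m(K+1)$: when the $\max$ in $m$ is achieved by $1$, this equals $2(K+1)$ and contributes the $\nicefrac{L^2R^2}{\varepsilon}$ term; when it is achieved by the ratio, the product scales as $\nicefrac{(K+1)^2 \sigma^2}{L^2 R^2 \ln\tfrac{6(K+1)}{\beta}}$ and, after substituting the above $K$, gives the $\nicefrac{L^2\sigma^2R^2}{\varepsilon^2}\ln^3$ term of \eqref{eq:neg_mon_SEG_oracle_complexity_large_batch}.

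The only non-trivial point is the second piece of \eqref{eq:batch_1_SEG_neg_mon}: since $\gamma_1 \neq \gamma_2$, the condition involves $\sqrt{\gamma_1^3\gamma_2(K+1)}\ln\tfrac{6(K+1)}{\beta}$ rather than $\gamma_1\gamma_2(K+1)$, and one must verify that the numerical constants in the proposed $m_1$ still dominate this term uniformly in $K$. All other manipulations are routine substitutions, and the logarithmic factors in the final complexities arise solely from inverting $K+1 \gtrsim L^2R^2 \ln^2\tfrac{6(K+1)}{\beta}/\varepsilon$.
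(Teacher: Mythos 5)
Your proposal is correct and follows essentially the same route as the paper: check that \eqref{eq:rho_condition} makes $\gamma_2+2\rho\leq\gamma_1$ and that the proposed $m_1,m_2$ meet \eqref{eq:batch_1_SEG_neg_mon}--\eqref{eq:batch_2_SEG_neg_mon}, substitute $\gamma_1\gamma_2=\nicefrac{1}{(51200L^2\ln^2\tfrac{6(K+1)}{\beta})}$ into \eqref{eq:main_result_neg_mon}, and invert to get the iteration and oracle counts. One small wording caveat: the $\sqrt{\gamma_1^3\gamma_2(K+1)}\ln\tfrac{6(K+1)}{\beta}$ piece of \eqref{eq:batch_1_SEG_neg_mon} is not dominated by the $\gamma_1\gamma_2(K+1)$ piece for \emph{all} $K\geq 0$ (it is larger whenever $K+1<2\ln^2\tfrac{6(K+1)}{\beta}$); what actually closes the argument, and does hold uniformly in $K$, is that the proposed $m_1$ exceeds the requirement coming from \emph{both} pieces of the maximum.
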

\begin{proof}
    First of all, we verify that the choice of $\gamma_1, \gamma_2$ and $m_1, m_2$ from \eqref{eq:neg_mon_SEG_large_step_large_batch} satisfies conditions \eqref{eq:gamma_SEG_neg_mon}, \eqref{eq:batch_1_SEG_neg_mon}, \eqref{eq:batch_2_SEG_neg_mon}. Inequality \eqref{eq:gamma_SEG_neg_mon} holds since
    \begin{eqnarray*}
        \gamma_2 + 2\rho \overset{\eqref{eq:neg_mon_SEG_large_step_large_batch}}{=} \frac{1}{320 L \ln\frac{6(K+1)}{\beta}} + 2\rho \overset{\eqref{eq:rho_condition}}{\leq} \frac{1}{320 L \ln\frac{6(K+1)}{\beta}} + \frac{1}{320 L \ln\frac{6(K+1)}{\beta}} \overset{\eqref{eq:neg_mon_SEG_large_step_large_batch}}{=} \gamma_1
    \end{eqnarray*}
    and \eqref{eq:batch_1_SEG_neg_mon}, \eqref{eq:batch_2_SEG_neg_mon} are satisfied since
    \begin{eqnarray*}
        m_1 &=& \max\left\{1, \frac{81 (K+1) \sigma^2}{640 L^2R^2 \ln\tfrac{6(K+1)}{\beta}}\right\}\\
        &\geq& \max\left\{1, \frac{216 \max\{\gamma_1\gamma_2(K+1), \sqrt{\gamma_1^3\gamma_2(K+1)}\ln\tfrac{6(K+1)}{\beta}\}\sigma^2}{R^2}\right\},\\
        m_2 &=& \max\left\{1, \frac{81 (K+1) \sigma^2}{640 L^2R^2 \ln\tfrac{6(K+1)}{\beta}}\right\} \geq \max\left\{1, \frac{3240 (K+1) \gamma_2^2\sigma^2 \ln\tfrac{6(K+1)}{\beta}}{R^2}\right\}.
    \end{eqnarray*}
    Therefore, applying Theorem~\ref{thm:main_result_avg_sq_norm_SEG}, we derive that with probability at least $1-\beta$
    \begin{equation*}
        \frac{1}{K+1}\sum\limits_{k=0}^{K}\|F(x^k)\|^2 \leq \frac{36R^2}{\gamma_1\gamma_2(K+1)} \overset{\eqref{eq:neg_mon_SEG_large_step_large_batch}}{=} \frac{1 843 200 L^2R^2 \ln^2 \tfrac{6(K+1)}{\beta}}{K+1}.
    \end{equation*}
    To guarantee $\frac{1}{K+1}\sum_{k=0}^{K}\|F(x^k)\|^2 \leq \varepsilon$, we choose $K$ in such a way that the right-hand side of the above inequality is smaller than $\varepsilon$ that gives
    \begin{eqnarray*}
        K = \cO\left(\frac{L^2R^2}{\varepsilon} \ln^2\left(\frac{L^2R^2}{\varepsilon\beta}\right)\right).
    \end{eqnarray*}
    The total number of oracle calls equals
    \begin{eqnarray*}
        2m(K+1) &\overset{\eqref{eq:neg_mon_SEG_large_step_large_batch}}{=}&  2\max\left\{K+1, \frac{81 (K+1)^2 \sigma^2}{640L^2R^2 \ln\tfrac{6(K+1)}{\beta}}\right\}\\
        &=& \cO\left(\max\left\{\frac{L^2R^2}{\varepsilon}\ln^2\left(\frac{L^2R^2}{\varepsilon\beta}\right), \frac{L^2\sigma^2R^2}{\varepsilon^2}\ln^3\left(\frac{L^2R^2}{\varepsilon\beta}\right)\right\} \right).
    \end{eqnarray*}
\end{proof}

\subsection{Quasi-Strongly Monotone Case}

\begin{lemma}\label{lem:optimization_lemma_str_mon_SEG}
    Let Assumptions~\ref{as:lipschitzness}, \ref{as:str_monotonicity} hold for $Q = B_{3R}(x^*) = \{x\in\R^d\mid \|x - x^*\| \leq 3R\}$, where $R \geq R_0 \eqdef \|x^0 - x^*\|$, and $\gamma_1 = \gamma_2 = \gamma$, $0 < \gamma \leq \nicefrac{1}{2(L+2\mu)}$. If $x^k$ and $\tx^k \eqdef x^k - \gamma F(x^k)$ lie in $B_{3R}(x^*)$ for all $k = 0,1,\ldots, K$ for some $K\geq 0$, then the iterates produced by \ref{eq:clipped_SEG} satisfy
    \begin{eqnarray}
        \|x^{K+1} - x^*\|^2 &\leq& (1 - \gamma \mu)^{K+1}\|x^0 - x^*\|^2 - 4\gamma^3 \mu \sum\limits_{k=0}^K (1-\gamma\mu)^{K-k}\langle F(x^k), \omega_k \rangle\notag\\
        &&\quad + 2\gamma \sum\limits_{k=0}^K (1-\gamma\mu)^{K-k} \langle x^k - x^* - \gamma F(\tx^k), \theta_k \rangle\notag\\
        &&\quad + \gamma^2 \sum\limits_{k=0}^K (1-\gamma\mu)^{K-k} \left(\|\theta_k\|^2 + 4\|\omega_k\|^2\right), \label{eq:optimization_lemma_SEG_str_mon}
    \end{eqnarray}
    where $\theta_k, \omega_k$ are defined in \eqref{eq:theta_k_SEG}, \eqref{eq:omega_k_SEG}.
\end{lemma}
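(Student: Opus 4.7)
The plan is to establish the one-step recursion
\begin{equation*}
\|x^{k+1}-x^*\|^2 \;\le\; (1-\gamma\mu)\|x^k-x^*\|^2 \;-\; 4\gamma^{3}\mu\,\langle F(x^k),\omega_k\rangle \;+\; 2\gamma\,\langle x^k - x^* - \gamma F(\tx^k),\theta_k\rangle \;+\; \gamma^{2}\bigl(\|\theta_k\|^2 + 4\|\omega_k\|^2\bigr),
\end{equation*}
and then unroll it by induction in $k$, multiplying the bound at step $k$ by $(1-\gamma\mu)^{K-k}$, which produces the geometric factor $(1-\gamma\mu)^{K+1}$ on the initial distance and weighted sums on the right-hand side exactly as stated.

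To obtain the one-step inequality, I will start from the update rule, substituting $\tF_{\Bxi_2^k}(\tx^k)=F(\tx^k)-\theta_k$, and split
\begin{equation*}
\|x^{k+1}-x^*\|^2 \;=\; \|x^k - x^* - \gamma F(\tx^k)\|^2 \;+\; 2\gamma\,\langle x^k - x^* - \gamma F(\tx^k),\theta_k\rangle \;+\; \gamma^2\|\theta_k\|^2,
\end{equation*}
which isolates the $\theta_k$-contributions exactly in the desired form. The deterministic term $\|x^k - x^* - \gamma F(\tx^k)\|^2$ is then the object to bound.

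For that term, I will use the decomposition $x^k - x^* = (\tx^k - x^*) + \gamma(F(x^k)-\omega_k)$ together with Assumption~\ref{as:str_monotonicity} at $\tx^k\in Q$ to apply $\langle F(\tx^k),\tx^k-x^*\rangle \ge \mu\|\tx^k-x^*\|^2$, and complete the square via $\gamma^2\|F(\tx^k)\|^2 - 2\gamma^2\langle F(x^k)-\omega_k, F(\tx^k)\rangle = \gamma^2\|F(\tx^k)-F(x^k)+\omega_k\|^2 - \gamma^2\|F(x^k)-\omega_k\|^2$. Bounding the first square via $\|a+b\|^2\le 2\|a\|^2+2\|b\|^2$ and Lipschitzness (which applies on $Q$ since $x^k,\tx^k\in B_{3R}(x^*)$), and using \eqref{eq:a_minus_b} in the form $2\|\tx^k-x^*\|^2 \ge \|x^k-x^*\|^2 - 2\gamma^2\|F(x^k)-\omega_k\|^2$ to convert $-2\gamma\mu\|\tx^k-x^*\|^2$ into a $(1-\gamma\mu)$ contraction on $\|x^k-x^*\|^2$ plus a residual proportional to $\gamma^3\mu\|F(x^k)-\omega_k\|^2$, I will then expand $\|F(x^k)-\omega_k\|^2 = \|F(x^k)\|^2 - 2\langle F(x^k),\omega_k\rangle + \|\omega_k\|^2$; this expansion is what produces the $-4\gamma^3\mu\langle F(x^k),\omega_k\rangle$ cross term. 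The $\|F(x^k)\|^2$ contributions are absorbed into the $\gamma\mu\|x^k-x^*\|^2$ slack by bounding $\|F(x^k)\|\le L\|x^k-x^*\|$ (since $F(x^*)=0$ and $F$ is $L$-Lipschitz on $Q$), and the step-size restriction $\gamma\le 1/(2(L+2\mu))$ is tuned precisely so that every $\|F(x^k)\|^2$-coefficient and every unwanted $\|F(x^k)-\omega_k\|^2$ contribution is non-positive, leaving only the coefficient $4\gamma^2$ on $\|\omega_k\|^2$ after collecting terms.

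The main obstacle will be the coefficient bookkeeping in the last step: several Young-type and Lipschitz applications each contribute to the $\|\omega_k\|^2$, $\langle F(x^k),\omega_k\rangle$, and $\|F(x^k)-\omega_k\|^2$ coefficients, and the condition $\gamma\le 1/(2(L+2\mu))$ must be invoked carefully so that the combined coefficient on $\|F(x^k)\|^2$ is non-positive while the coefficient on $\langle F(x^k),\omega_k\rangle$ lands exactly at $-4\gamma^3\mu$ and on $\|\omega_k\|^2$ at $+4\gamma^2$. Once the per-step bound is established, the unrolling is routine: multiply by $(1-\gamma\mu)^{K-k}$ and sum, using that $(1-\gamma\mu)^{K-k}\cdot(1-\gamma\mu) = (1-\gamma\mu)^{K-k+1}$ telescopes through the contraction factor on $\|x^k-x^*\|^2$.
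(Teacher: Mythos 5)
Your overall route is the paper's: the exact expansion isolating the $\theta_k$-terms, quasi-strong monotonicity applied at $\tx^k$, the inequality \eqref{eq:a_minus_b} to trade $-2\gamma\mu\|\tx^k-x^*\|^2$ for $-\gamma\mu\|x^k-x^*\|^2+2\gamma^3\mu\|\tF_{\Bxi_1^k}(x^k)\|^2$ (whose expansion produces $-4\gamma^3\mu\langle F(x^k),\omega_k\rangle$), the Young-plus-Lipschitz bound on $\gamma^2\|F(\tx^k)-F(x^k)+\omega_k\|^2$, and the weighted unrolling. The gap sits exactly in the step you flag as the main obstacle: you propose to absorb the leftover $+2\gamma^3\mu\|F(x^k)\|^2$ ``into the $\gamma\mu\|x^k-x^*\|^2$ slack'' via $\|F(x^k)\|\le L\|x^k-x^*\|$. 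There is no such slack: after \eqref{eq:a_minus_b} the full $-\gamma\mu\|x^k-x^*\|^2$ is needed to produce the factor $(1-\gamma\mu)$, so this absorption would turn the contraction into $1-\gamma\mu+2\gamma^3\mu L^2>1-\gamma\mu$, and unrolling then yields at best a weakened factor of the form $(1-\nicefrac{\gamma\mu}{2})^{K+1}$ with correspondingly different weights on the noise sums --- not the stated $(1-\gamma\mu)^{K+1}$ and $(1-\gamma\mu)^{K-k}$. (It also conflicts with your own next clause that every $\|F(x^k)\|^2$-coefficient ends up non-positive.)

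The correct place to dump $2\gamma^3\mu\|F(x^k)\|^2$ is the large negative term your completion of the square already produced, $-\gamma^2\|\tF_{\Bxi_1^k}(x^k)\|^2$, of which the Lipschitz step consumes only $2\gamma^4L^2\|\tF_{\Bxi_1^k}(x^k)\|^2$, leaving $-\gamma^2(1-2\gamma^2L^2)\|\tF_{\Bxi_1^k}(x^k)\|^2$ with $1-2\gamma^2L^2\ge\nicefrac{1}{2}$. Apply \eqref{eq:a_minus_b} a second time to this term, with $a=F(x^k)$ and $b=\omega_k$, to get $-\tfrac{\gamma^2}{2}(1-2\gamma^2L^2)\|F(x^k)\|^2+\gamma^2(1-2\gamma^2L^2)\|\omega_k\|^2$. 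Then the total coefficient of $\|F(x^k)\|^2$ is $-\gamma^2\bigl(\tfrac12-\gamma^2L^2-2\gamma\mu\bigr)\le 0$, which is exactly where $\gamma\le\nicefrac{1}{2(L+2\mu)}$ enters, and the total coefficient of $\|\omega_k\|^2$ is $\gamma^2\bigl(3+2\gamma\mu-2\gamma^2L^2\bigr)\le 4\gamma^2$ since $2\gamma\mu\le 1$. With that correction your one-step bound matches \eqref{eq:optimization_lemma_SEG_str_mon} after the multiplication by $(1-\gamma\mu)^{K-k}$ and summation, which is indeed routine and is how the paper concludes.
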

\begin{proof}
    Using the update rule of \ref{eq:clipped_SEG}, we obtain
    \begin{eqnarray*}
        \|x^{k+1} - x^*\|^2 &=& \|x^k - x^*\|^2 - 2\gamma \langle x^k - x^*,  \tF_{\Bxi_2^k}(\tx^k)\rangle + \gamma^2\|\tF_{\Bxi_2^k}(\tx^k)\|^2\\
        &=& \|x^k - x^*\|^2 -2\gamma \langle x^k - x^*, F(\tx^k) \rangle + 2\gamma \langle x^k - x^*, \theta_k \rangle\\
        &&\quad + \gamma^2\|F(\tx^k)\|^2 - 2\gamma^2 \langle F(\tx^k), \theta_k \rangle + \gamma^2\|\theta_k\|^2\\
        &=& \|x^k - x^*\|^2 -2\gamma \langle \tx^k - x^*, F(\tx^k) \rangle - 2\gamma \langle x^k - \tx^k, F(\tx^k) \rangle\\
        &&\quad + 2\gamma \langle x^k - x^* - \gamma F(\tx^k), \theta_k \rangle + \gamma^2\|F(\tx^k)\|^2 + \gamma^2\|\theta_k\|^2.
    \end{eqnarray*}
    Since $F$ is $\mu$-quasi strongly monotone, we have
    \begin{eqnarray*}
        -2\gamma \langle \tx^k - x^*, F(\tx^k) \rangle &\leq& -2\gamma\mu \|\tx^k - x^*\|^2 \overset{\eqref{eq:a_minus_b}}{\leq} - \gamma \mu \|x^k - x^*\|^2 + 2\gamma\mu\|\tx^k - x^k\|^2\\
        &=& - \gamma \mu \|x^k - x^*\|^2 + 2\gamma^3\mu\|\tF_{\Bxi_1}(x^k)\|^2\\
        &=& - \gamma \mu \|x^k - x^*\|^2 + 2\gamma^3\mu\|F(x^k)\|^2 - 4\gamma^3 \mu \langle F(x^k), \omega_k \rangle + 2\gamma^3\mu \|\omega_k\|^2.
    \end{eqnarray*}
    Moreover, $- 2\gamma \langle x^k - \tx^k, F(\tx^k) \rangle$ can be rewritten as
    \begin{eqnarray*}
        - 2\gamma \langle x^k - \tx^k, F(\tx^k) \rangle &=& -2\gamma^2 \langle \tF_{\Bxi_1}(x^k), F(\tx^k) \rangle\\
        &=& \gamma^2 \|\tF_{\Bxi_1}(x^k) - F(x^k)\|^2 - \gamma^2 \|\tF_{\Bxi_1}(x^k)\|^2 - \gamma^2 \|F(\tx^k)\|^2.
    \end{eqnarray*}
    Putting all together, we get
    \begin{eqnarray*}
        \|x^{k+1} - x^*\|^2 &\leq& (1 - \gamma\mu)\|x^k - x^*\|^2 + 2\gamma^3\mu\|F(x^k)\|^2 - 4\gamma^3 \mu \langle F(x^k), \omega_k \rangle + 2\gamma^3\mu \|\omega_k\|^2\\
        &&\quad + \gamma^2 \|\tF_{\Bxi_1}(x^k) - F(x^k)\|^2 - \gamma^2 \|\tF_{\Bxi_1}(x^k)\|^2 - \gamma^2 \|F(\tx^k)\|^2\\
        &&\quad + 2\gamma \langle x^k - x^* - \gamma F(\tx^k), \theta_k \rangle + \gamma^2\|F(\tx^k)\|^2 + \gamma^2\|\theta_k\|^2\\
        &\overset{\eqref{eq:a_plus_b}}{\leq}& (1 - \gamma\mu)\|x^k - x^*\|^2 + 2\gamma^3\mu\|F(x^k)\|^2 - 4\gamma^3 \mu \langle F(x^k), \omega_k \rangle + 2\gamma^3\mu \|\omega_k\|^2\\
        &&\quad + 2\gamma^2 \|\omega_k\|^2 + 2\gamma^2\|F(x^k) - F(\tx^k)\|^2  - \gamma^2 \|\tF_{\Bxi_1}(x^k)\|^2\\
        &&\quad + 2\gamma \langle x^k - x^* - \gamma F(\tx^k), \theta_k \rangle + \gamma^2\|\theta_k\|^2\\
        &\overset{\eqref{eq:Lipschitzness}}{\leq}& (1 - \gamma\mu)\|x^k - x^*\|^2 + 2\gamma^3\mu\|F(x^k)\|^2 - 4\gamma^3 \mu \langle F(x^k), \omega_k \rangle\\
        &&\quad + 2\gamma^2(1 + \gamma\mu)\|\omega_k\|^2 - \gamma^2(1 - 2\gamma^2 L^2)\|\tF_{\Bxi_1}(x^k)\|^2\\
        &&\quad + 2\gamma \langle x^k - x^* - \gamma F(\tx^k), \theta_k \rangle + \gamma^2\|\theta_k\|^2\\
        &\overset{\eqref{eq:a_minus_b}}{\leq}& (1 - \gamma\mu)\|x^k - x^*\|^2 - \gamma^2\left(\frac{1}{2} - \gamma^2 L^2 - 2\gamma\mu\right)\|F(x^k)\|^2\\
        &&\quad - 4\gamma^3 \mu \langle F(x^k), \omega_k \rangle + \gamma^2(3 - 2\gamma^2 L^2 + 2\gamma \mu)\|\omega_k\|^2\\
        &&\quad + 2\gamma \langle x^k - x^* - \gamma F(\tx^k), \theta_k \rangle + \gamma^2\|\theta_k\|^2\\
        &\leq& (1 - \gamma\mu)\|x^k - x^*\|^2 - 4\gamma^3 \mu \langle F(x^k), \omega_k \rangle\\
        &&\quad + 2\gamma \langle x^k - x^* - \gamma F(\tx^k), \theta_k \rangle + \gamma^2\left(\|\theta_k\|^2 + 4\|\omega_k\|^2\right),
    \end{eqnarray*}
    where in the last step we apply $0 < \gamma \leq \nicefrac{1}{2(L+2\mu)}$. Unrolling the recurrence, we obtain \eqref{eq:optimization_lemma_SEG_str_mon}.
\end{proof}

\begin{theorem}\label{thm:main_result_str_mon_SEG}
    Let Assumptions~\ref{as:UBV}, \ref{as:lipschitzness}, \ref{as:str_monotonicity}, hold for $Q = B_{3R}(x^*) = \{x\in\R^d\mid \|x - x^*\| \leq 3R\}$, where $R \geq R_0 \eqdef \|x^0 - x^*\|$, and $\gamma_1 = \gamma_2 = \gamma$,
    \begin{eqnarray}
        0< \gamma &\leq& \frac{1}{650 L \ln \tfrac{6(K+1)}{\beta}}, \label{eq:gamma_SEG_str_mon}\\
        \lambda_{1,k} = \lambda_{2,k} = \lambda_k &=& \frac{\exp(-\gamma\mu(1 + \nicefrac{k}{2}))R}{120\gamma \ln \tfrac{6(K+1)}{\beta}}, \label{eq:lambda_SEG_str_mon}\\
        m_{1,k} = m_{2,k} = m_k &\geq& \max\left\{1, \frac{264600\gamma^2 (K+1) \sigma^2\ln \tfrac{6(K+1)}{\beta}}{\exp(-\gamma\mu k) R^2}\right\}, \label{eq:batch_SEG_str_mon}
    \end{eqnarray}
    for some $K \geq 0$ and $\beta \in (0,1]$ such that $\ln \tfrac{6(K+1)}{\beta} \geq 1$. Then, after $K$ iterations the iterates produced by \ref{eq:clipped_SEG} with probability at least $1 - \beta$ satisfy 
    \begin{equation}
        \|x^{K+1} - x^*\|^2 \leq 2\exp(-\gamma\mu(K+1))R^2. \label{eq:main_result_str_mon}
    \end{equation}
\end{theorem}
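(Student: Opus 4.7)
\textbf{Proof sketch for Theorem~\ref{thm:main_result_str_mon_SEG}.} The plan is to follow the same induction-over-iterates template used for Theorem~\ref{thm:main_result_gap_SEG} and Theorem~\ref{thm:main_result_avg_sq_norm_SEG}, but with an exponentially shrinking ``trust region'' that matches the linear convergence rate $(1-\gamma\mu)^k$ produced by \eqref{eq:str_monotonicity}. Specifically, I would define the event $E_t$ as: the bounds
\begin{equation*}
    \|x^s - x^*\|^2 \le 2(1-\gamma\mu)^s R^2, \qquad s = 0, 1, \ldots, t,
\end{equation*}
hold simultaneously, together with suitable auxiliary bounds on the partial ``noise sums'' (analogous to \eqref{eq:induction_inequality_2_SEG}, but each weighted by $(1-\gamma\mu)^{K-s}$). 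The base case $E_0$ is trivial. Assuming $\PP\{E_{T-1}\}\ge 1-(T-1)\beta/(K+1)$, the goal of the induction step is to verify $\PP\{E_T\}\ge 1-T\beta/(K+1)$. Under $E_{T-1}$ the iterate $x^t$ lies in a ball of radius $\sqrt{2}e^{-\gamma\mu t/2}R\le 3R$, and a short calculation using \eqref{eq:Lipschitzness}, the form of $\lambda_k$ in \eqref{eq:lambda_SEG_str_mon}, and the condition \eqref{eq:gamma_SEG_str_mon} on $\gamma$ gives $\|\tx^t-x^*\|\le 3R$ and, crucially, $\|F(x^t)\|\le \lambda_t/2$ and $\|F(\tx^t)\|\le\lambda_t/2$. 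This is the key invariant: the clipping level $\lambda_k$ was chosen to track the shrinking ball exactly so that Lemma~\ref{lem:bias_variance} applies at every iteration.

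Once the iterates and the extrapolation points are inside $B_{3R}(x^*)$, Lemma~\ref{lem:optimization_lemma_str_mon_SEG} yields
\begin{equation*}
    \|x^T - x^*\|^2 \le (1-\gamma\mu)^T R^2 + S_1 + S_2 + S_3 + S_4,
\end{equation*}
where $S_1=-4\gamma^3\mu\sum_{k=0}^{T-1}(1-\gamma\mu)^{T-1-k}\langle F(x^k),\omega_k\rangle$, $S_2=2\gamma\sum_{k=0}^{T-1}(1-\gamma\mu)^{T-1-k}\langle x^k-x^*-\gamma F(\tx^k),\theta_k\rangle$, and $S_3$, $S_4$ are the corresponding weighted sums of $\gamma^2\|\theta_k\|^2$ and $4\gamma^2\|\omega_k\|^2$. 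I would then split each $\theta_k=\theta_k^u+\theta_k^b$ and $\omega_k=\omega_k^u+\omega_k^b$ into their unbiased and bias parts as in \eqref{eq:gap_thm_SEG_technical_4}--\eqref{eq:gap_thm_SEG_technical_5} and estimate seven quantities by the same recipe as before: deterministic bounds coming from $\|\theta_k^b\|,\|\omega_k^b\|\le 4\sigma^2/(m_k\lambda_k)$ and Bernstein's inequality (Lemma~\ref{lem:Bernstein_ineq}) for the zero-mean martingale differences. To keep $\eta_k=x^k-x^*-\gamma F(\tx^k)$ bounded with probability one, I would truncate it (setting $\eta_k=0$ off the event of interest) exactly as in \eqref{eq:eta_t_SEG_neg_mon}; on $E_{T-1}$ the truncation is inactive and $\|\eta_k\|\lesssim e^{-\gamma\mu k/2}R$.

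The main technical point—and the reason the step-size, clipping level and batch size are all tied together via $e^{-\gamma\mu k}$—is that after applying Bernstein's inequality, the target bound for $|S_1|,|S_2|,|S_3|,|S_4|$ is of order $(1-\gamma\mu)^T R^2$ (not just $R^2$ as in the monotone case). Since the $k$-th summand of $S_2$ comes weighted by $(1-\gamma\mu)^{T-1-k}$ and scales like $\|\eta_k\|\cdot\lambda_k$, the choice $\lambda_k\propto e^{-\gamma\mu(1+k/2)}R/\gamma$ makes every summand uniformly $\cO(e^{-\gamma\mu T}R^2/\ln(\tfrac{6(K+1)}{\beta}))$, so the Bernstein constants $c$ and $G$ both pick up the factor $e^{-\gamma\mu T}$ needed for the right-hand side to be $(1-\gamma\mu)^T R^2$. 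The batch size $m_k\propto e^{\gamma\mu k}$ is then exactly what is needed to make the variance contributions $\sum_k\EE\|\theta_k^u\|^2$, $\sum_k\EE\|\omega_k^u\|^2$ small enough, and simultaneously suppresses the bias sums via the bound $\|\theta_k^b\|\le 4\sigma^2/(m_k\lambda_k)$. The extra $S_1$ term, which has no analogue in the monotone analysis, is handled the same way: the inner product $\langle F(x^k),\omega_k\rangle$ is split into unbiased plus bias using $\EE[\omega_k^u]=0$ and $\|F(x^k)\|\le\lambda_k/2$ forces the bias and variance to carry the correct $e^{-\gamma\mu k}$ decay.

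I expect the main obstacle to be bookkeeping: carefully choosing the Bernstein thresholds so that the six (or so) ``bad events'' each have probability at most $\beta/(c(K+1))$ for an appropriate constant $c$, and then, crucially, verifying that under $E_{T-1}$ the conditional-variance sums $\sum_k\sigma_k^2$, $\sum_k\widetilde\sigma_k^2$, $\sum_k\widehat\sigma_k^2$ are indeed dominated by the $G$ used in Lemma~\ref{lem:Bernstein_ineq}—this is where the exponentially growing $m_k$ and exponentially decaying $\lambda_k$ need to conspire. Once all four $S_i$ are bounded by $\tfrac{1}{4}(1-\gamma\mu)^T R^2$ on $E_{T-1}\cap E_{\circledOne}\cap E_{\circledFour}\cap E_{\circledSix}$, summing them yields $\|x^T-x^*\|^2\le 2(1-\gamma\mu)^T R^2$, closing the induction. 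A union bound over the $K+1$ steps gives the overall confidence $1-\beta$, and using $(1-\gamma\mu)^{K+1}\le\exp(-\gamma\mu(K+1))$ gives the stated bound \eqref{eq:main_result_str_mon}.
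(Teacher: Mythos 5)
Your sketch follows essentially the same route as the paper's proof: the same induction over iterates with the event $\|x^t-x^*\|^2\le 2e^{-\gamma\mu t}R^2$, the same use of Lemma~\ref{lem:optimization_lemma_str_mon_SEG}, the same unbiased/bias decomposition with truncated vectors and Bernstein's inequality, and the same role of the exponentially decaying $\lambda_k$ and exponentially growing $m_k$ (the paper splits the error into seven terms each bounded by $\tfrac{1}{7}e^{-\gamma\mu T}R^2$ rather than four, and its induction event does not need the auxiliary noise-sum bound you carry over from the monotone case, but these are bookkeeping differences only). The proposal is correct.
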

\begin{proof}
    As in the proof of Theorem~\ref{thm:main_result_gap_SEG}, we use the following notation: $R_k = \|x^k - x^*\|^2$, $k \geq 0$. We will derive \eqref{eq:main_result_str_mon} by induction. In particular, for each $k = 0,\ldots, K+1$ we define probability event $E_k$ as follows: inequalities
    \begin{equation}
        R_t^2 \leq 2 \exp(-\gamma\mu t) R^2 \label{eq:induction_inequality_str_mon_SEG}
    \end{equation}
    hold for $t = 0,1,\ldots,k$ simultaneously. Our goal is to prove that $\PP\{E_k\} \geq  1 - \nicefrac{k\beta}{(K+1)}$ for all $k = 0,1,\ldots,K+1$. We use the induction to show this statement. For $k = 0$ the statement is trivial since $R_0^2 \leq 2R^2$ by definition. Next, assume that the statement holds for $k = T-1 \leq K$, i.e., we have $\PP\{E_{T-1}\} \geq 1 - \nicefrac{(T-1)\beta}{(K+1)}$. We need to prove that $\PP\{E_T\} \geq 1 - \nicefrac{T\beta}{(K+1)}$. First of all, since $R_t^2 \leq 2\exp(-\gamma\mu t) R^2 \leq 9R^2$, we have $x^t \in B_{3R}(x^*)$. Operator $F$ is $L$-Lipschitz on $B_{3R}(x^*)$. Therefore, probability event $E_{T-1}$ implies
    \begin{eqnarray}
        \|F(x^t)\| &\leq& L\|x^t - x^*\| \overset{\eqref{eq:induction_inequality_str_mon_SEG}}{\leq} \sqrt{2}L\exp(- \nicefrac{\gamma\mu t}{2})R \overset{\eqref{eq:gamma_SEG_str_mon},\eqref{eq:lambda_SEG_str_mon}}{\leq} \frac{\lambda_t}{2}. \label{eq:operator_bound_x_t_SEG_str_mon}
    \end{eqnarray}
    and
    \begin{eqnarray}
        \|\omega_t\|^2 &\overset{\eqref{eq:a_plus_b}}{\leq}& 2\|\tF_{\Bxi_1}(x^t)\|^2 + 2\|F(x^t)\|^2 \overset{\eqref{eq:operator_bound_x_t_SEG_str_mon}}{\leq} \frac{5}{2}\lambda_t^2 \overset{\eqref{eq:lambda_SEG_str_mon}}{\leq} \frac{\exp(-\gamma\mu t)R^2}{4\gamma^2} \label{eq:omega_bound_x_t_SEG_str_mon}
    \end{eqnarray}
    for all $t = 0, 1, \ldots, T-1$. 
    
    Next, we show that probability event $E_{T-1}$ implies $\|\tx^t - x^*\| \leq 3R$ and derive useful inequalities related to $\theta_t$ for all $t = 0, 1, \ldots, T-1$. Indeed, due to Lipschitzness of $F$ probability event $E_{T-1}$ implies
    \begin{eqnarray}
        \|\tx^t - x^*\|^2 &=& \|x^t - x^* - \gamma \tF_{\Bxi_1}(x^t)\|^2 \overset{\eqref{eq:a_plus_b}}{\leq}  2\|x^t - x^*\|^2 + 2\gamma^2\|\tF_{\Bxi_1}(x^t)\|^2 \notag \\
        &\overset{\eqref{eq:a_plus_b}}{\leq}& 2R_t^2 + 4\gamma^2\|F(x^t)\|^2 + 4\gamma^2\|\omega_t\|^2 \notag \\
        &\overset{\eqref{eq:Lipschitzness}}{\leq}& 2(1 + 2\gamma^2 L^2)R_t^2 + 4\gamma^2\|\omega_t\|^2 \notag \\
        &\overset{\eqref{eq:gamma_SEG_str_mon},\eqref{eq:omega_bound_x_t_SEG_str_mon}}{\leq}& 7\exp(-\gamma\mu t) R^2 \leq 9R^2 \label{eq:tilde_x_distance_SEG_str_mon}
    \end{eqnarray}
    and
    \begin{eqnarray}
        \|F(\tx^t)\| &\leq& L\|\tx^t - x^*\| \leq \sqrt{7}L\exp(- \nicefrac{\gamma\mu t}{2})R \overset{\eqref{eq:gamma_SEG_str_mon},\eqref{eq:lambda_SEG_str_mon}}{\leq} \frac{\lambda_t}{2} \label{eq:operator_bound_tx_t_SEG_str_mon}
    \end{eqnarray}
    for all $t = 0, 1, \ldots, T-1$.
    
    That is, $E_{T-1}$ implies that $x^t, \tx^t \in B_{3R}(x^*)$ for all $t = 0, 1, \ldots, T-1$. Applying Lemma~\ref{lem:optimization_lemma_str_mon_SEG} and $(1 - \gamma\mu)^T \leq \exp(-\gamma\mu T)$, we get that probability event $E_{T-1}$ implies
    \begin{eqnarray}
        R_T^2 &\leq& \exp(-\gamma\mu T)R^2 - 4\gamma^3 \mu \sum\limits_{l=0}^{T-1} (1-\gamma\mu)^{T-1-l}\langle F(x^l), \omega_l \rangle\notag\\
        &&\quad + 2\gamma \sum\limits_{l=0}^{T-1} (1-\gamma\mu)^{T-1-l} \langle x^l - x^* - \gamma F(\tx^l), \theta_l \rangle\notag\\
        &&\quad + \gamma^2 \sum\limits_{l=0}^{T-1} (1-\gamma\mu)^{T-1-l} \left(\|\theta_l\|^2 + 4\|\omega_l\|^2\right). \notag
    \end{eqnarray}
    To estimate the sums in the right-hand side, we introduce new vectors:
    \begin{gather}
        \zeta_t = \begin{cases} F(x^t),& \text{if } \|F(x^t)\| \leq \sqrt{2}L\exp(-\nicefrac{\gamma\mu t}{2})R,\\ 0,& \text{otherwise}, \end{cases} \label{eq:zeta_t_SEG_str_mon}\\
        \eta_t = \begin{cases} x^t - x^* - \gamma F(\tx^t),& \text{if } \|x^t - x^* - \gamma F(\tx^t)\| \leq \sqrt{7}(1 + \gamma L) \exp(- \nicefrac{\gamma\mu t}{2})R,\\ 0,& \text{otherwise}, \end{cases} \label{eq:eta_t_SEG_str_mon}
    \end{gather}
    for $t = 0, 1, \ldots, T-1$. First of all, we point out that vectors $\zeta_t$ and $\eta_t$ are bounded with probability $1$, i.e., with probability $1$
    \begin{equation}
        \|\zeta_t\| \leq \sqrt{2}L\exp(-\nicefrac{\gamma\mu t}{2})R,\quad \|\eta_t\| \leq \sqrt{7}(1 + \gamma L)\exp(-\nicefrac{\gamma\mu t}{2})R \label{eq:zeta_t_eta_t_bound_SEG_str_mon} 
    \end{equation}
    for all $t = 0, 1, \ldots, T-1$. Next, we notice that $E_{T-1}$ implies $\|F(x^t)\| \leq \sqrt{2}L\exp(-\nicefrac{\gamma\mu t}{2})R$ (due to \eqref{eq:operator_bound_x_t_SEG_str_mon}) and
    \begin{eqnarray*}
        \|x^t - x^* - \gamma F(\tx^t)\| &\leq& \|x^t - x^*\| + \gamma \|F(\tx^t)\|\\
        &\overset{\eqref{eq:tilde_x_distance_SEG_str_mon},\eqref{eq:operator_bound_tx_t_SEG_str_mon}}{\leq}& \sqrt{7}(1 + \gamma L)\exp(-\nicefrac{\gamma\mu t}{2})R
    \end{eqnarray*}
    for $t = 0, 1, \ldots, T-1$, i.e., probability event $E_{T-1}$ implies $\zeta_t = F(x^t)$ and $\eta_t = x^t - x^* - \gamma F(\tx^t)$ for all $t = 0,1,\ldots,T-1$. Therefore, $E_{T-1}$ implies 
    \begin{eqnarray}
        R_T^2 &\leq& \exp(-\gamma\mu T)R^2 - 4\gamma^3 \mu \sum\limits_{l=0}^{T-1} (1-\gamma\mu)^{T-1-l}\langle \zeta_l, \omega_l \rangle\notag\\
        &&\quad + 2\gamma \sum\limits_{l=0}^{T-1} (1-\gamma\mu)^{T-1-l} \langle \eta_l, \theta_l \rangle + \gamma^2 \sum\limits_{l=0}^{T-1} (1-\gamma\mu)^{T-1-l} \left(\|\theta_l\|^2 + 4\|\omega_l\|^2\right). \notag
    \end{eqnarray}
    As in the monotone case, to continue the derivation, we introduce vectors $\theta_l^u, \theta_l^b, \omega_l^u, \omega_l^b$ defined as
    \begin{gather}
        \theta_l^u \eqdef \EE_{\Bxi_2^l}\left[\tF_{\Bxi_2^l}(\tx^l)\right] - \tF_{\Bxi_2^l}(\tx^l),\quad \theta_l^b \eqdef F(\tx^l) - \EE_{\Bxi_2^l}\left[\tF_{\Bxi_2^l}(\tx^l)\right], \label{eq:theta_unbias_bias_SEG_str_mon}\\
        \omega_l^u \eqdef \EE_{\Bxi_1^l}\left[\tF_{\Bxi_1^l}(x^l)\right] - \tF_{\Bxi_1^l}(x^l),\quad \omega_l^b \eqdef F(x^l) - \EE_{\Bxi_1^l}\left[\tF_{\Bxi_1^l}(x^l)\right], \label{eq:omega_unbias_bias_SEG_str_mon}
    \end{gather}
    for all $l = 0,\ldots, T-1$. By definition we have $\theta_l = \theta_l^u + \theta_l^b$, $\omega_l = \omega_l^u + \omega_l^b$ for all $l = 0,\ldots, T-1$. Using the introduced notation, we continue our derivation as follows: $E_{T-1}$ implies
    \begin{eqnarray}
        R_T^2 &\overset{\eqref{eq:a_plus_b}}{\leq}& \exp(-\gamma\mu T) R^2 \underbrace{- 4\gamma^3 \mu \sum\limits_{l=0}^{T-1} (1-\gamma\mu)^{T-1-l}\langle \zeta_l, \omega_l^u \rangle}_{\circledOne} \underbrace{- 4\gamma^3 \mu \sum\limits_{l=0}^{T-1} (1-\gamma\mu)^{T-1-l}\langle \zeta_l, \omega_l^b \rangle}_{\circledTwo}\notag\\
        &&\quad + \underbrace{2\gamma \sum\limits_{l=0}^{T-1} (1-\gamma\mu)^{T-1-l} \langle \eta_l, \theta_l^u \rangle}_{\circledThree} + \underbrace{2\gamma \sum\limits_{l=0}^{T-1} (1-\gamma\mu)^{T-1-l} \langle \eta_l, \theta_l^b \rangle}_{\circledFour} \notag\\
        &&\quad + \underbrace{2\gamma^2 \sum\limits_{l=0}^{T-1} (1-\gamma\mu)^{T-1-l} \left(\EE_{\Bxi_2^l}\left[\|\theta_l^u\|^2\right] + 4\EE_{\Bxi_1^l}\left[\|\omega_l^u\|^2\right]\right)}_{\circledFive} \notag\\
        &&\quad + \underbrace{2\gamma^2 \sum\limits_{l=0}^{T-1} (1-\gamma\mu)^{T-1-l} \left(\|\theta_l^u\|^2 + 4\|\omega_l^u\|^2 -\EE_{\Bxi_2^l}\left[\|\theta_l^u\|^2\right] - 4\EE_{\Bxi_1^l}\left[\|\omega_l^u\|^2\right]\right)}_{\circledSix}\notag\\
        &&\quad + \underbrace{2\gamma^2 \sum\limits_{l=0}^{T-1} (1-\gamma\mu)^{T-1-l} \left(\|\theta_l^b\|^2 + 4\|\omega_l^b\|^2\right)}_{\circledSeven}. \label{eq:SEG_str_mon_1234567_bound}
    \end{eqnarray}
    The rest of the proof is based on deriving good enough upper bounds for $\circledOne, \circledTwo, \circledThree, \circledFour, \circledFive, \circledSix, \circledSeven$, i.e., we want to prove that $\circledOne + \circledTwo + \circledThree + \circledFour + \circledFive + \circledSix + \circledSeven \leq \exp(-\gamma\mu T) R^2$ with high probability.
    
    Before we move on, we need to derive some useful inequalities for operating with $\theta_l^u, \theta_l^b, \omega_l^u, \omega_l^b$. First of all, Lemma~\ref{lem:bias_variance} implies that
    \begin{equation}
        \|\theta_l^u\| \leq 2\lambda_l,\quad \|\omega_l^u\| \leq 2\lambda_l \label{eq:theta_omega_magnitude_str_mon}
    \end{equation}
    for all $l = 0,1, \ldots, T-1$. Next, since $\{\xi_1^{i,l}\}_{i=1}^{m_l}$, $\{\xi_2^{i,l}\}_{i=1}^{m_l}$ are independently sampled from $\cD$, we have $\EE_{\Bxi_1^l}[F_{\Bxi_1^l}(x^l)] = F(x^l)$, $\EE_{\Bxi_2^l}[F_{\Bxi_2^l}(\tx^l)] = F(\tx^l)$, and 
    \begin{gather}
        \EE_{\Bxi_1^l}\left[\|F_{\Bxi_1^l}(x^l) - F(x^l)\|^2\right] = \frac{1}{m_l^2}\sum\limits_{i=1}^{m_l} \EE_{\xi_1^{i,l}}\left[\|F_{\xi_1^{i,l}}(x^l) - F(x^l)\|^2\right] \overset{\eqref{eq:UBV}}{\leq} \frac{\sigma^2}{m_l}, \notag\\
        \EE_{\Bxi_2^l}\left[\|F_{\Bxi_2^l}(\tx^l) - F(\tx^l)\|^2\right] = \frac{1}{m_l^2}\sum\limits_{i=1}^{m_l} \EE_{\xi_2^{i,l}}\left[\|F_{\xi_2^{i,l}}(\tx^l) - F(\tx^l)\|^2\right] \overset{\eqref{eq:UBV}}{\leq} \frac{\sigma^2}{m_l}, \notag
    \end{gather}
    for all $l = 0,1, \ldots, T-1$. Moreover, as we already derived, probability event $E_{T-1}$ implies that $\|F(x^l)\| \leq \nicefrac{\lambda_l}{2}$ and $\|F(\tx^l)\| \leq \nicefrac{\lambda_l}{2}$ for all $l = 0,1, \ldots, T-1$ (see \eqref{eq:operator_bound_x_t_SEG_str_mon} and \eqref{eq:operator_bound_tx_t_SEG_str_mon}). Therefore, in view of Lemma~\ref{lem:bias_variance}, $E_{T-1}$ implies that
    \begin{gather}
        \left\|\theta_l^b\right\| \leq \frac{4\sigma^2}{m_l\lambda_l},\quad \left\|\omega_l^b\right\| \leq \frac{4\sigma^2}{m_l\lambda_l}, \label{eq:bias_theta_omega_str_mon}\\
        \EE_{\Bxi_2^l}\left[\left\|\theta_l\right\|^2\right] \leq \frac{18\sigma^2}{m_l},\quad \EE_{\Bxi_1^l}\left[\left\|\omega_l\right\|^2\right] \leq \frac{18\sigma^2}{m_l}, \label{eq:distortion_theta_omega_str_mon}\\
        \EE_{\Bxi_2^l}\left[\left\|\theta_l^u\right\|^2\right] \leq \frac{18\sigma^2}{m_l},\quad \EE_{\Bxi_1^l}\left[\left\|\omega_l^u\right\|^2\right] \leq \frac{18\sigma^2}{m_l}, \label{eq:variance_theta_omega_str_mon}
    \end{gather}
    for all $l = 0,1, \ldots, T-1$.
    
    \paragraph{Upper bound for $\circledOne$.} Since $\EE_{\Bxi_1^l}[\omega_l^u] = 0$, we have
    \begin{equation*}
        \EE_{\Bxi_1^l}\left[-4\gamma^3\mu (1-\gamma\mu)^{T-1-l} \langle \zeta_l, \omega_l^u \rangle\right] = 0.
    \end{equation*}
    Next, the summands in $\circledOne$ are bounded with probability $1$:
    \begin{eqnarray}
        |-4\gamma^3\mu (1-\gamma\mu)^{T-1-l} \langle \zeta_l, \omega_l^u \rangle | &\leq& 4\gamma^3\mu \exp(-\gamma\mu (T - 1 - l)) \|\zeta_l\|\cdot \|\omega_l^u\|\notag\\
        &\overset{\eqref{eq:zeta_t_eta_t_bound_SEG_str_mon},\eqref{eq:theta_omega_magnitude_str_mon}}{\leq}& 8\sqrt{2}\gamma^3\mu L \exp(-\gamma\mu (T - 1 - \nicefrac{l}{2})) R \lambda_l\notag\\
        &\overset{\eqref{eq:gamma_SEG_str_mon},\eqref{eq:lambda_SEG_str_mon}}{\leq}& \frac{\exp(-\gamma\mu T)R^2}{7\ln\tfrac{6(K+1)}{\beta}}  \eqdef c. \label{eq:SEG_str_mon_technical_1_1}
    \end{eqnarray}
    Moreover, these summands have bounded conditional variances $\sigma_l^2 \eqdef \EE_{\Bxi_1^l}\left[16\gamma^6\mu^2 (1-\gamma\mu)^{2T-2-2l} \langle \zeta_l, \omega_l^u \rangle^2\right]$:
    \begin{eqnarray}
        \sigma_l^2 &\leq& \EE_{\Bxi_1^l}\left[16\gamma^6\mu^2 \exp(-\gamma\mu (2T - 2 - 2l)) \|\zeta_l\|^2\cdot \|\omega_l^u\|^2\right]\notag\\
        &\overset{\eqref{eq:zeta_t_eta_t_bound_SEG_str_mon}}{\leq}& 36\gamma^6 \mu^2L^2 \exp(-\gamma\mu (2T - 2 - l)) R^2 \EE_{\Bxi_1^l}\left[\|\omega_l^u\|^2\right]\notag\\
        &\overset{\eqref{eq:gamma_SEG_str_mon}}{\leq}& \frac{4\gamma^2\exp(-\gamma\mu(2T - l))R^2}{2809\ln\tfrac{6(K+1)}{\beta}} \EE_{\Bxi_1^l}\left[\|\omega_l^u\|^2\right]. \label{eq:SEG_str_mon_technical_1_2}
    \end{eqnarray}
    That is, sequence $\{-4\gamma^3\mu (1-\gamma\mu)^{T-1-l} \langle \zeta_l, \omega_l^u \rangle\}_{l\geq 0}$ is a bounded martingale difference sequence having bounded conditional variances $\{\sigma_l^2\}_{l \geq 0}$. Applying Bernstein's inequality (Lemma~\ref{lem:Bernstein_ineq}) with $X_l = -4\gamma^3\mu (1-\gamma\mu)^{T-1-l} \langle \zeta_l, \omega_l^u \rangle$, $c$ defined in \eqref{eq:SEG_str_mon_technical_1_1}, $b = \tfrac{1}{7}\exp(-\gamma\mu T) R^2$, $G = \tfrac{\exp(-2 \gamma\mu T) R^4}{294\ln\frac{6(K+1)}{\beta}}$, we get that
    \begin{equation*}
        \PP\left\{|\circledOne| > \frac{1}{7}\exp(-\gamma\mu T) R^2 \text{ and } \sum\limits_{l=0}^{T-1}\sigma_l^2 \leq \frac{\exp(-2\gamma\mu T) R^4}{294\ln\tfrac{6(K+1)}{\beta}}\right\} \leq 2\exp\left(- \frac{b^2}{2G + \nicefrac{2cb}{3}}\right) = \frac{\beta}{3(K+1)}.
    \end{equation*}
    In other words, $\PP\{E_{\circledOne}\} \geq 1 - \tfrac{\beta}{3(K+1)}$, where probability event $E_{\circledOne}$ is defined as
    \begin{equation}
        E_{\circledOne} = \left\{\text{either} \quad \sum\limits_{l=0}^{T-1}\sigma_l^2 > \frac{\exp(-2\gamma\mu T) R^4}{294\ln\tfrac{6(K+1)}{\beta}}\quad \text{or}\quad |\circledOne| \leq \frac{1}{7}\exp(-\gamma\mu T) R^2\right\}. \label{eq:bound_1_SEG_str_mon}
    \end{equation}
    Moreover, we notice here that probability event $E_{T-1}$ implies that
    \begin{eqnarray}
        \sum\limits_{l=0}^{T-1}\sigma_l^2 &\overset{\eqref{eq:SEG_str_mon_technical_1_2}}{\leq}& \frac{4\gamma^2\exp(-2\gamma\mu T)R^2}{2809 \ln\tfrac{6(K+1)}{\beta}} \sum\limits_{l=0}^{T-1} \frac{\EE_{\Bxi_1^l}\left[\|\omega_l^u\|^2\right]}{\exp(-\gamma\mu l)}\notag\\ &\overset{\eqref{eq:variance_theta_omega_str_mon}, T \leq K+1}{\leq}& \frac{72\gamma^2\exp(-2\gamma\mu T) R^2 \sigma^2}{2809 \ln\tfrac{6(K+1)}{\beta}} \sum\limits_{l=0}^{K} \frac{1}{m_l\exp(-\gamma\mu l)}\notag\\
        &\overset{\eqref{eq:batch_SEG_str_mon}}{\leq}& \frac{\exp(-2\gamma\mu T)R^4}{294\ln\tfrac{6(K+1)}{\beta}}. \label{eq:bound_1_variances_SEG_str_mon}
    \end{eqnarray}

    \paragraph{Upper bound for $\circledTwo$.} Probability event $E_{T-1}$ implies
    \begin{eqnarray}
        \circledTwo &\leq& 4\gamma^3 \mu \sum\limits_{l=0}^{T-1} \exp(-\gamma\mu (T-1-l)) \|\zeta_l\| \cdot \|\omega_l^b\| \notag\\
        &\overset{\eqref{eq:zeta_t_eta_t_bound_SEG_str_mon},\eqref{eq:bias_theta_omega_str_mon}}{\leq}& 16\sqrt{2} \exp(-\gamma\mu (T-1)) \gamma^3 \mu L R \sum\limits_{l=0}^{T-1} \frac{\sigma^2}{m_l \lambda_l \exp(-\nicefrac{\gamma\mu l}{2})} \notag\\
        &\overset{\eqref{eq:lambda_SEG_str_mon}}{=}& 1920\sqrt{2} \exp(-\gamma\mu (T-2)) \gamma^4 \mu L \sum\limits_{l=0}^{T-1} \frac{\sigma^2 \ln\tfrac{6(K+1)}{\beta}}{m_l\exp(-\gamma\mu l)} \notag\\
        &\overset{\eqref{eq:gamma_SEG_str_mon},\eqref{eq:batch_SEG_str_mon}, T \leq K+1}{\leq}& \frac{1}{7}\exp(-\gamma\mu T) R^2. \label{eq:bound_2_SEG_str_mon}
    \end{eqnarray}

    \paragraph{Upper bound for $\circledThree$.} Since $\EE_{\Bxi_2^l}[\theta_l^u] = 0$, we have
    \begin{equation*}
        \EE_{\Bxi_2^l}\left[2\gamma (1-\gamma\mu)^{T-1-l} \langle \eta_l, \theta_l^u \rangle\right] = 0.
    \end{equation*}
    Next, the summands in $\circledThree$ are bounded with probability $1$:
    \begin{eqnarray}
        |2\gamma (1-\gamma\mu)^{T-1-l} \langle \eta_l, \theta_l^u \rangle | &\leq& 2\gamma\exp(-\gamma\mu (T - 1 - l)) \|\eta_l\|\cdot \|\theta_l^u\|\notag\\
        &\overset{\eqref{eq:zeta_t_eta_t_bound_SEG_str_mon},\eqref{eq:theta_omega_magnitude_str_mon}}{\leq}& 4\sqrt{7}\gamma (1 + \gamma L) \exp(-\gamma\mu (T - 1 - \nicefrac{l}{2})) R \lambda_l\notag\\
        &\overset{\eqref{eq:gamma_SEG_str_mon},\eqref{eq:lambda_SEG_str_mon}}{\leq}& \frac{\exp(-\gamma\mu T)R^2}{7\ln\tfrac{6(K+1)}{\beta}} \eqdef c. \label{eq:SEG_str_mon_technical_3_1}
    \end{eqnarray}
    Moreover, these summands have bounded conditional variances $\widetilde\sigma_l^2 \eqdef \EE_{\Bxi_2^l}\left[4\gamma^2 (1-\gamma\mu)^{2T-2-2l} \langle \eta_l, \theta_l^u \rangle^2\right]$:
    \begin{eqnarray}
        \widetilde\sigma_l^2 &\leq& \EE_{\Bxi_2^l}\left[4\gamma^2\exp(-\gamma\mu (2T - 2 - 2l)) \|\eta_l\|^2\cdot \|\theta_l^u\|^2\right]\notag\\
        &\overset{\eqref{eq:zeta_t_eta_t_bound_SEG_str_mon}}{\leq}& 49\gamma^2 (1 + \gamma L)^2 \exp(-\gamma\mu (2T - 2 - l)) R^2 \EE_{\Bxi_2^l}\left[\|\theta_l^u\|^2\right]\notag\\
        &\overset{\eqref{eq:gamma_SEG_str_mon}}{\leq}& 50\gamma^2\exp(-\gamma\mu (2T - l))R^2 \EE_{\Bxi_2^l}\left[\|\theta_l^u\|^2\right]. \label{eq:SEG_str_mon_technical_3_2}
    \end{eqnarray}
    That is, sequence $\{2\gamma (1-\gamma\mu)^{T-1-l} \langle \eta_l, \theta_l^u \rangle\}_{l\geq 0}$ is a bounded martingale difference sequence having bounded conditional variances $\{\widetilde\sigma_l^2\}_{l \geq 0}$. Applying Bernstein's inequality (Lemma~\ref{lem:Bernstein_ineq}) with $X_l = 2\gamma (1-\gamma\mu)^{T-1-l} \langle \eta_l, \theta_l^u \rangle$, $c$ defined in \eqref{eq:SEG_str_mon_technical_3_1}, $b = \tfrac{1}{7}\exp(-\gamma\mu T) R^2$, $G = \tfrac{\exp(- 2\gamma\mu T) R^4}{294\ln\frac{6(K+1)}{\beta}}$, we get that
    \begin{equation*}
        \PP\left\{|\circledThree| > \frac{1}{7}\exp(-\gamma\mu T) R^2 \text{ and } \sum\limits_{l=0}^{T-1}\widetilde\sigma_l^2 \leq \frac{\exp(- 2\gamma\mu T) R^4}{294\ln\tfrac{6(K+1)}{\beta}}\right\} \leq 2\exp\left(- \frac{b^2}{2G + \nicefrac{2cb}{3}}\right) = \frac{\beta}{3(K+1)}.
    \end{equation*}
    In other words, $\PP\{E_{\circledThree}\} \geq 1 - \tfrac{\beta}{3(K+1)}$, where probability event $E_{\circledThree}$ is defined as
    \begin{equation}
        E_{\circledThree} = \left\{\text{either} \quad \sum\limits_{l=0}^{T-1}\widetilde\sigma_l^2 > \frac{\exp(- 2\gamma\mu T) R^4}{294\ln\tfrac{6(K+1)}{\beta}}\quad \text{or}\quad |\circledThree| \leq \frac{1}{7}\exp(-\gamma\mu T) R^2\right\}. \label{eq:bound_3_SEG_str_mon}
    \end{equation}
    Moreover, we notice here that probability event $E_{T-1}$ implies that
    \begin{eqnarray}
        \sum\limits_{l=0}^{T-1}\widetilde\sigma_l^2 &\overset{\eqref{eq:SEG_str_mon_technical_3_2}}{\leq}& 50\gamma^2\exp(- 2\gamma\mu T)R^2\sum\limits_{l=0}^{T-1} \frac{\EE_{\Bxi_2^l}\left[\|\theta_l^u\|^2\right]}{\exp(-\gamma\mu l)}\notag\\ &\overset{\eqref{eq:variance_theta_omega_str_mon}, T \leq K+1}{\leq}& 900\gamma^2\exp(-2\gamma\mu T) R^2 \sigma^2 \sum\limits_{l=0}^{K} \frac{1}{m_l\exp(-\gamma\mu l)}\notag\\
        &\overset{\eqref{eq:batch_SEG_str_mon}}{\leq}& \frac{\exp(-2\gamma\mu T)R^4}{294\ln\tfrac{6(K+1)}{\beta}}. \label{eq:bound_3_variances_SEG_str_mon}
    \end{eqnarray}

    \paragraph{Upper bound for $\circledFour$.} Probability event $E_{T-1}$ implies
    \begin{eqnarray}
        \circledFour &\leq& 2\gamma \exp(-\gamma\mu (T-1)) \sum\limits_{l=0}^{T-1} \frac{\|\eta_l\|\cdot \|\theta_l^b\|}{\exp(-\gamma\mu l)}\notag\\
        &\overset{\eqref{eq:zeta_t_eta_t_bound_SEG_str_mon}, \eqref{eq:bias_theta_omega_str_mon}}{\leq}& 8\sqrt{7} \gamma (1+\gamma L) \exp(-\gamma\mu (T-1)) R \sum\limits_{l=0}^{T-1} \frac{\sigma^2}{m_l \lambda_l \exp(-\nicefrac{\gamma\mu l}{2})}\notag\\
        &\overset{\eqref{eq:lambda_SEG_str_mon}}{\leq}& 960\sqrt{7} \gamma^2(1+\gamma L) \exp(-\gamma\mu (T-2)) \sum\limits_{l=0}^{T-1} \frac{\sigma^2 \ln\tfrac{6(K+1)}{\beta}}{m_l \exp(-\gamma\mu l)}\notag \\
        &\overset{\eqref{eq:batch_SEG_str_mon}, T \leq K+1}{\leq}& \frac{1}{7}\exp(-\gamma\mu T) R^2. \label{eq:bound_4_SEG_str_mon}
    \end{eqnarray}

    \paragraph{Upper bound for $\circledFive$.} Probability event $E_{T-1}$ implies
    \begin{eqnarray}
        \circledFive &=& 2\gamma^2 \exp(-\gamma\mu (T-1)) \sum\limits_{l=0}^{T-1} \frac{\EE_{\Bxi_2^l}\left[\|\theta_l^u\|^2\right] + 4\EE_{\Bxi_1^l}\left[\|\omega_l^u\|^2\right]}{\exp(-\gamma\mu l)} \notag\\
        &\overset{\eqref{eq:variance_theta_omega_str_mon}}{\leq}& 180\gamma^2\exp(-\gamma\mu (T-1)) \sum\limits_{l=0}^{T-1} \frac{\sigma^2}{m_l \exp(-\gamma\mu l)} \notag\\
        &\overset{\eqref{eq:batch_SEG_str_mon}, T \leq K+1}{\leq}& \frac{1}{7} \exp(-\gamma\mu T) R^2. \label{eq:bound_5_SEG_str_mon}
    \end{eqnarray}

    \paragraph{Upper bound for $\circledSix$.} First of all, we have
    \begin{equation*}
        2\gamma^2 (1-\gamma\mu)^{T-1-l}\EE_{\Bxi_1^l, \Bxi_2^l}\left[\|\theta_l^u\|^2 + 4\|\omega_l^u\|^2 -\EE_{\Bxi_2^l}\left[\|\theta_l^u\|^2\right] - 4\EE_{\Bxi_1^l}\left[\|\omega_l^u\|^2\right]\right] = 0.
    \end{equation*}
    Next, the summands in $\circledSix$ are bounded with probability $1$:
    \begin{eqnarray}
        2\gamma^2 (1-\gamma\mu)^{T-1-l}\left| \|\theta_l^u\|^2 + 4\|\omega_l^u\|^2 -\EE_{\Bxi_2^l}\left[\|\theta_l^u\|^2\right] - 4\EE_{\Bxi_1^l}\left[\|\omega_l^u\|^2\right] \right| 
        &\overset{\eqref{eq:theta_omega_magnitude_str_mon}}{\leq}& \frac{80\gamma^2 \exp(-\gamma\mu T) \lambda_l^2}{\exp(-\gamma\mu (1+l))}\notag\\
        &\overset{\eqref{eq:lambda_SEG_str_mon}}{\leq}& \frac{\exp(-\gamma\mu T)R^2}{7\ln\tfrac{6(K+1)}{\beta}}\notag\\
        &\eqdef& c. \label{eq:SEG_str_mon_technical_6_1}
    \end{eqnarray}
    Moreover, these summands have bounded conditional variances $\widehat\sigma_l^2 \eqdef \EE_{\Bxi_1^l,\Bxi_2^l}\left[4\gamma^4 (1-\gamma\mu)^{2T-2-2l} \left| \|\theta_l^u\|^2 + 4\|\omega_l^u\|^2 -\EE_{\Bxi_2^l}\left[\|\theta_l^u\|^2\right] - 4\EE_{\Bxi_1^l}\left[\|\omega_l^u\|^2\right] \right|^2\right]$:
    \begin{eqnarray}
        \widehat\sigma_l^2 &\overset{\eqref{eq:SEG_str_mon_technical_6_1}}{\leq}& \frac{2\gamma^2\exp(-2\gamma\mu T)R^2}{7\exp(-\gamma\mu (1+l))\ln\tfrac{6(K+1)}{\beta}} \EE_{\Bxi_1^l,\Bxi_2^l}\left[\left| \|\theta_l^u\|^2 + 4\|\omega_l^u\|^2 -\EE_{\Bxi_2^l}\left[\|\theta_l^u\|^2\right] - 4\EE_{\Bxi_1^l}\left[\|\omega_l^u\|^2\right] \right|\right]\notag\\
        &\leq& \frac{4\gamma^2\exp(-2\gamma\mu T)R^2}{7\exp(-\gamma\mu (1+l))\ln\tfrac{6(K+1)}{\beta}} \EE_{\Bxi_1^l,\Bxi_2^l}\left[\|\theta_l^u\|^2 + 4\|\omega_l^u\|^2\right]. \label{eq:SEG_str_mon_technical_6_2}
    \end{eqnarray}
    That is, sequence $\left\{2\gamma^2 (1-\gamma\mu)^{T-1-l}\left( \|\theta_l^u\|^2 + 4\|\omega_l^u\|^2 -\EE_{\Bxi_2^l}\left[\|\theta_l^u\|^2\right] - 4\EE_{\Bxi_1^l}\left[\|\omega_l^u\|^2\right]\right)\right\}_{l\geq 0}$ is a bounded martingale difference sequence having bounded conditional variances $\{\widehat\sigma_l^2\}_{l \geq 0}$. Applying Bernstein's inequality (Lemma~\ref{lem:Bernstein_ineq}) with $X_l = 2\gamma^2 (1-\gamma\mu)^{T-1-l}\left( \|\theta_l^u\|^2 + 4\|\omega_l^u\|^2 -\EE_{\Bxi_2^l}\left[\|\theta_l^u\|^2\right] - 4\EE_{\Bxi_1^l}\left[\|\omega_l^u\|^2\right]\right)$, $c$ defined in \eqref{eq:SEG_str_mon_technical_6_1}, $b = \tfrac{1}{7}\exp(-\gamma\mu T) R^2$, $G = \tfrac{\exp(-2\gamma\mu T) R^4}{294\ln\frac{6(K+1)}{\beta}}$, we get that
    \begin{equation*}
        \PP\left\{|\circledSix| > \frac{1}{7}\exp(-\gamma\mu T) R^2 \text{ and } \sum\limits_{l=0}^{T-1}\widehat\sigma_l^2 \leq \frac{\exp(-2\gamma\mu T) R^4}{294\ln\frac{6(K+1)}{\beta}}\right\} \leq 2\exp\left(- \frac{b^2}{2G + \nicefrac{2cb}{3}}\right) = \frac{\beta}{3(K+1)}.
    \end{equation*}
    In other words, $\PP\{E_{\circledSix}\} \geq 1 - \tfrac{\beta}{3(K+1)}$, where probability event $E_{\circledSix}$ is defined as
    \begin{equation}
        E_{\circledSix} = \left\{\text{either} \quad \sum\limits_{l=0}^{T-1}\widehat\sigma_l^2 > \frac{\exp(-2\gamma\mu T) R^4}{294\ln\tfrac{6(K+1)}{\beta}}\quad \text{or}\quad |\circledSix| \leq \frac{1}{7}\exp(-\gamma\mu T) R^2\right\}. \label{eq:bound_6_SEG_str_mon}
    \end{equation}
    Moreover, we notice here that probability event $E_{T-1}$ implies that
    \begin{eqnarray}
        \sum\limits_{l=0}^{T-1}\widehat\sigma_l^2 &\overset{\eqref{eq:SEG_str_mon_technical_6_2}}{\leq}& \frac{4\gamma^2\exp(-\gamma\mu (2T-1))R^2}{7\ln\tfrac{6(K+1)}{\beta}} \sum\limits_{l=0}^{T-1} \frac{\EE_{\Bxi_1^l,\Bxi_2^l}\left[\|\theta_l^u\|^2 + 4\|\omega_l^u\|^2\right]}{\exp(-\gamma\mu l)}\notag\\ &\overset{\eqref{eq:variance_theta_omega_str_mon}, T \leq K+1}{\leq}& \frac{360\gamma^2\exp(-\gamma\mu (2T-1)) R^2 \sigma^2}{7\ln\tfrac{6(K+1)}{\beta}} \sum\limits_{l=0}^{K} \frac{1}{m_l\exp(-\gamma\mu l)}\notag\\
        &\overset{\eqref{eq:batch_SEG_str_mon}}{\leq}& \frac{\exp(-2\gamma\mu T)R^4}{294\ln\tfrac{6(K+1)}{\beta}}. \label{eq:bound_6_variances_SEG_str_mon}
    \end{eqnarray}

    \paragraph{Upper bound for $\circledSeven$.} Probability event $E_{T-1}$ implies
    \begin{eqnarray}
        \circledSeven &=&  2\gamma^2 \sum\limits_{l=0}^{T-1} \exp(-\gamma\mu (T-1-l)) \left(\|\theta_l^b\|^2 + 4\|\omega_l^b\|^2\right)\notag\\
        &\overset{\eqref{eq:bias_theta_omega_str_mon}}{\leq}& 160\gamma^2 \exp(-\gamma\mu (T-1)) \sum\limits_{l=0}^{T-1} \frac{\sigma^4}{m_l^2 \lambda_l^2 \exp(-\gamma\mu l)} \notag\\
        &\overset{\eqref{eq:lambda_SEG_str_mon}}{=}& 2304000 \gamma^4 \exp(-\gamma\mu (T-3)) \sum\limits_{l=0}^{T-1} \frac{\sigma^4 \ln^2\tfrac{6(K+1)}{\beta}}{m_l^2R^2\exp(-2\gamma\mu l)} \notag\\
        &\overset{\eqref{eq:batch_SEG_str_mon}, T \leq K+1}{\leq}& \frac{1}{7}\exp(-\gamma\mu T) R^2. \label{eq:bound_7_SEG_str_mon}
    \end{eqnarray}

    \paragraph{Final derivation.} Putting all bounds together, we get that $E_{T-1}$ implies
    \begin{gather*}
        R_T^2 \overset{\eqref{eq:SEG_str_mon_1234567_bound}}{\leq} \exp(-\gamma\mu T) R^2 + \circledOne + \circledTwo + \circledThree + \circledFour + \circledFive + \circledSix + \circledSeven,\\
        \circledTwo \overset{\eqref{eq:bound_2_SEG_str_mon}}{\leq} \frac{1}{7}\exp(-\gamma\mu T)R^2,\quad \circledFour \overset{\eqref{eq:bound_4_SEG_str_mon}}{\leq} \frac{1}{7}\exp(-\gamma\mu T)R^2,\\ \circledFive \overset{\eqref{eq:bound_5_SEG_str_mon}}{\leq} \frac{1}{7}\exp(-\gamma\mu T)R^2,\quad \circledSeven \overset{\eqref{eq:bound_7_SEG_str_mon}}{\leq} \frac{1}{7}\exp(-\gamma\mu T)R^2,\\
        \sum\limits_{l=0}^{T-1}\sigma_l^2 \overset{\eqref{eq:bound_1_variances_SEG_str_mon}}{\leq}  \frac{\exp(-2\gamma\mu T)R^4}{294\ln\tfrac{6(K+1)}{\beta}},\quad \sum\limits_{l=0}^{T-1}\widetilde\sigma_l^2 \overset{\eqref{eq:bound_3_variances_SEG_str_mon}}{\leq} \frac{\exp(-2\gamma\mu T)R^4}{294\ln\tfrac{6(K+1)}{\beta}},\quad \sum\limits_{l=0}^{T-1}\widehat\sigma_l^2 \overset{\eqref{eq:bound_6_variances_SEG_str_mon}}{\leq}  \frac{\exp(-2\gamma\mu T)R^4}{294\ln\tfrac{6(K+1)}{\beta}}.
    \end{gather*}
    Moreover, in view of \eqref{eq:bound_1_SEG_str_mon}, \eqref{eq:bound_3_SEG_str_mon}, \eqref{eq:bound_6_SEG_str_mon}, and our induction assumption, we have
    \begin{gather*}
        \PP\{E_{T-1}\} \geq 1 - \frac{(T-1)\beta}{K+1},\\
        \PP\{E_{\circledOne}\} \geq 1 - \frac{\beta}{3(K+1)}, \quad \PP\{E_{\circledThree}\} \geq 1 - \frac{\beta}{3(K+1)}, \quad \PP\{E_{\circledSix}\} \geq 1 - \frac{\beta}{3(K+1)} ,
    \end{gather*}
    where probability events $E_{\circledOne}$, $E_{\circledThree}$, and $E_{\circledSix}$ are defined as
    \begin{eqnarray}
        E_{\circledOne}&=& \left\{\text{either} \quad \sum\limits_{l=0}^{T-1}\sigma_l^2 > \frac{\exp(-2\gamma\mu T) R^4}{294\ln\tfrac{6(K+1)}{\beta}}\quad \text{or}\quad |\circledOne| \leq \frac{1}{7}\exp(-\gamma\mu T) R^2\right\},\notag\\
        E_{\circledThree}&=& \left\{\text{either} \quad \sum\limits_{l=0}^{T-1}\widetilde\sigma_l^2 > \frac{\exp(-2\gamma\mu T) R^4}{294\ln\tfrac{6(K+1)}{\beta}}\quad \text{or}\quad |\circledThree| \leq \frac{1}{7}\exp(-\gamma\mu T) R^2\right\},\notag\\
        E_{\circledSix}&=& \left\{\text{either} \quad \sum\limits_{l=0}^{T-1}\widehat\sigma_l^2 > \frac{\exp(-2\gamma\mu T) R^4}{294\ln\tfrac{6(K+1)}{\beta}}\quad \text{or}\quad |\circledSix| \leq \frac{1}{7}\exp(-\gamma\mu T) R^2\right\}.\notag
    \end{eqnarray}
    Putting all of these inequalities together, we obtain that probability event $E_{T-1} \cap E_{\circledOne} \cap E_{\circledThree} \cap E_{\circledSix}$ implies
    \begin{eqnarray*}
        R_T^2 &\overset{\eqref{eq:SEG_str_mon_1234567_bound}}{\leq}& \exp(-\gamma\mu T) R^2 + \circledOne + \circledTwo + \circledThree + \circledFour + \circledFive + \circledSix + \circledSeven\\
        &\leq& 2\exp(-\gamma\mu T) R^2.
    \end{eqnarray*}
    Moreover, union bound for the probability events implies
    \begin{equation}
        \PP\{E_T\} \geq \PP\{E_{T-1} \cap E_{\circledOne} \cap E_{\circledThree} \cap E_{\circledSix}\} = 1 - \PP\{\overline{E}_{T-1} \cup \overline{E}_{\circledOne} \cup \overline{E}_{\circledThree} \cup \overline{E}_{\circledSix}\} \geq 1 - \frac{T\beta}{K+1}.
    \end{equation}
    This is exactly what we wanted to prove (see the paragraph after inequality \eqref{eq:induction_inequality_str_mon_SEG}). In particular, with probability at least $1 - \beta$ satisfy we have
    \begin{equation}
        \|x^{K+1} - x^*\|^2 \leq 2\exp(-\gamma\mu (K+1))R^2, \notag
    \end{equation}
    which finishes the proof.
\end{proof}

\begin{corollary}\label{cor:main_result_SEG_str_mon}
    Let the assumptions of Theorem~\ref{thm:main_result_str_mon_SEG} hold. Then, the following statements hold.
    \begin{enumerate}
        \item \textbf{Large stepsize/large batch.} The choice of stepsize and batchsize
        \begin{equation}
            \gamma = \frac{1}{650 L \ln \tfrac{6(K+1)}{\beta}},\quad m_k = \max\left\{1, \frac{264600\gamma^2 (K+1) \sigma^2\ln \tfrac{6(K+1)}{\beta}}{\exp(-\gamma\mu k) R^2}\right\} \label{eq:str_mon_SEG_large_step_large_batch}
        \end{equation}
        satisfies conditions \eqref{eq:gamma_SEG_str_mon} and \eqref{eq:batch_SEG_str_mon}. With such choice of $\gamma, m_k$, and the choice of $\lambda_k$ as in \eqref{eq:lambda_SEG_str_mon}, the iterates produced by \ref{eq:clipped_SEG} after $K$ iterations with probability at least $1-\beta$ satisfy
        \begin{equation}
            \|x^{K+1} - x^*\|^2 \leq 2\exp\left( - \frac{\mu(K+1)}{650 L \ln \tfrac{6(K+1)}{\beta}}\right)R^2. \label{eq:main_result_str_mon_SEG_large_batch}
        \end{equation}
        In particular, to guarantee $\|x^{K+1} - x^*\|^2 \leq \varepsilon$ with probability at least $1-\beta$ for some $\varepsilon > 0$ \ref{eq:clipped_SEG} requires
        \begin{gather}
            \cO\left(\frac{L}{\mu} \ln\left(\frac{R^2}{\varepsilon}\right)\ln\left(\frac{L}{\mu \beta}\ln\left(\frac{R^2}{\varepsilon}\right)\right)\right) \text{ iterations}, \label{eq:str_mon_SEG_iteration_complexity_large_batch}\\
            \cO\left(\max\left\{\frac{L}{\mu}, \frac{\sigma^2}{\mu^2 \varepsilon}\right\} \ln\left(\frac{R^2}{\varepsilon}\right) \ln\left(\frac{L}{\mu\beta}\ln\left(\frac{R^2}{\varepsilon}\right)\right)\right) \text{ oracle calls}. \label{eq:str_mon_SEG_oracle_complexity_large_batch} 
        \end{gather}
        
        \item \textbf{Small stepsize/small batch.} The choice of stepsize and batchsize
        \begin{equation}
            \gamma = \min\left\{\frac{1}{650 L \ln \tfrac{6(K+1)}{\beta}}, \frac{\ln\left(B_K\right)}{\mu (K+1)}\right\},\quad m_k \equiv 1 \label{eq:str_mon_SEG_small_step_small_batch}
        \end{equation}
        satisfies conditions \eqref{eq:gamma_SEG_str_mon} and \eqref{eq:batch_SEG_str_mon}, where $B_K = \max\left\{2, \frac{(K+1)\mu^2 R^2}{264600\sigma^2\ln\left(\frac{6(K+1)}{\beta}\right)\ln^2(B_K)}\right\} = \cO\left(\max\left\{2, \frac{(K+1)\mu^2 R^2}{264600\sigma^2\ln\left(\frac{6(K+1)}{\beta}\right)\ln^2\left(\max\left\{2, \frac{(K+1)\mu^2 R^2}{264600\sigma^2\ln\left(\frac{6(K+1)}{\beta}\right)}\right\}\right)}\right\}\right)$. With such choice of $\gamma, m_k$, and the choice of $\lambda_k$ as in \eqref{eq:lambda_SEG_str_mon}, the iterates produced by \ref{eq:clipped_SEG} after $K$ iterations with probability at least $1-\beta$ satisfy
        \begin{equation}
            \|x^{K+1} - x^*\|^2 \leq \max\left\{2\exp\left( - \frac{\mu(K+1)}{650 L \ln \tfrac{6(K+1)}{\beta}}\right)R^2, \frac{529200\sigma^2\ln\left(\frac{6(K+1)}{\beta}\right) \ln^2 (B_K)}{\mu^2(K+1)}\right\}. \label{eq:main_result_str_mon_SEG_small_batch}
        \end{equation}
        In particular, to guarantee $\|x^{K+1} - x^*\|^2 \leq \varepsilon$ with probability at least $1-\beta$ for some $\varepsilon > 0$ \ref{eq:clipped_SEG} requires
        \begin{equation}
            \cO\left(\max\left\{\frac{L}{\mu} \ln\left(\frac{R^2}{\varepsilon}\right)\ln\left(\frac{L}{\mu\beta}\ln\left(\frac{R^2}{\varepsilon}\right)\right), \frac{\sigma^2}{\mu^2 \varepsilon}\ln\left(\frac{\sigma^2}{\mu^2 \varepsilon\beta}\right)\ln^2\left(B_\varepsilon\right)\right\} \right) \label{eq:str_mon_SEG_iteration_oracle_complexity_small_batch} 
        \end{equation}
        iterations/oracle calls, where
        \begin{equation*}
            B_\varepsilon = \max\left\{2, \frac{R^2}{\varepsilon \ln\left(\frac{\sigma^2}{\mu^2 \varepsilon\beta}\right) \ln^2\left(\max\left\{2 , \frac{R^2}{\varepsilon \ln\left(\frac{\sigma^2}{\mu^2 \varepsilon\beta}\right)}\right\}\right)} \right\}.
        \end{equation*}
    \end{enumerate}
\end{corollary}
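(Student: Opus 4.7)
\textbf{Proof plan for Corollary~\ref{cor:main_result_SEG_str_mon}.} The plan is to verify, in each of the two cases, that the stated choices of $\gamma$ and $m_k$ satisfy the hypotheses \eqref{eq:gamma_SEG_str_mon}--\eqref{eq:batch_SEG_str_mon} of Theorem~\ref{thm:main_result_str_mon_SEG}, then substitute into the guarantee \eqref{eq:main_result_str_mon} and invert to obtain iteration and oracle complexities. The core inequality I will rely on throughout is $2\exp(-\gamma\mu(K+1))R^2 \leq \varepsilon$, which is equivalent to $K+1 \geq \tfrac{1}{\gamma\mu}\ln\tfrac{2R^2}{\varepsilon}$.

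For the \textbf{large stepsize/large batch} regime, condition \eqref{eq:gamma_SEG_str_mon} holds by the choice $\gamma = 1/(650L\ln\tfrac{6(K+1)}{\beta})$, and \eqref{eq:batch_SEG_str_mon} holds by the very definition of $m_k$ in \eqref{eq:str_mon_SEG_large_step_large_batch}. Substituting $\gamma$ into \eqref{eq:main_result_str_mon} yields \eqref{eq:main_result_str_mon_SEG_large_batch}. To derive \eqref{eq:str_mon_SEG_iteration_complexity_large_batch}, I set the right-hand side of \eqref{eq:main_result_str_mon_SEG_large_batch} to $\varepsilon$, obtaining $K+1 = \Theta\bigl(\tfrac{L}{\mu}\ln\tfrac{6(K+1)}{\beta}\cdot \ln\tfrac{2R^2}{\varepsilon}\bigr)$; resolving this implicit relation (using that $\ln(K+1)$ enters only logarithmically) gives the claimed $\widetilde\cO(L/\mu)$ bound. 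The oracle complexity \eqref{eq:str_mon_SEG_oracle_complexity_large_batch} follows by summing $2\sum_{k=0}^{K}m_k$, where the geometric factor $\exp(\gamma\mu k)$ in $m_k$ telescopes up to a constant multiple of $\exp(\gamma\mu K)$, so the dominant term scales like $(K+1)\cdot\gamma^2\sigma^2\exp(\gamma\mu K)/R^2$; using $\exp(\gamma\mu K) \leq 2R^2/\varepsilon$ from the target accuracy then produces the $\max\{L/\mu,\sigma^2/(\mu^2\varepsilon)\}$ factor.

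For the \textbf{small stepsize/small batch} regime, I first check that with $m_k\equiv 1$ the batchsize condition \eqref{eq:batch_SEG_str_mon} reduces to $\gamma^2 \leq \tfrac{\exp(-\gamma\mu k)R^2}{264600(K+1)\sigma^2\ln\tfrac{6(K+1)}{\beta}}$ for all $k\leq K$, equivalently (after bounding $\exp(-\gamma\mu k)\geq \exp(-\gamma\mu K)$) $\gamma^2\exp(\gamma\mu K)$ is bounded by the required quantity. Writing $\gamma\mu(K+1)=\ln B_K$ turns this into $\tfrac{\ln^2 B_K}{\mu^2(K+1)^2}\cdot B_K \leq \tfrac{R^2}{264600(K+1)\sigma^2\ln\tfrac{6(K+1)}{\beta}}$, which is exactly the fixed-point equation defining $B_K$ in \eqref{eq:str_mon_SEG_small_step_small_batch}; the $\min$ with $1/(650L\ln\tfrac{6(K+1)}{\beta})$ ensures \eqref{eq:gamma_SEG_str_mon}. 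Plugging this $\gamma$ into \eqref{eq:main_result_str_mon} and using $\exp(-\gamma\mu(K+1)) = 1/B_K$ whenever the second branch of the $\min$ is active gives, after simplification using the definition of $B_K$, the claimed bound \eqref{eq:main_result_str_mon_SEG_small_batch}.

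The main obstacle I anticipate is the last step: inverting \eqref{eq:main_result_str_mon_SEG_small_batch} to extract the complexity \eqref{eq:str_mon_SEG_iteration_oracle_complexity_small_batch}. The first term inside the max reproduces the large-batch iteration count as before, but the second term $\tfrac{529200\sigma^2\ln\tfrac{6(K+1)}{\beta}\ln^2 B_K}{\mu^2(K+1)}\leq \varepsilon$ must be solved for $K$ with $B_K$ itself depending implicitly on $K$. My approach is to substitute the target $K+1 = \Theta\bigl(\tfrac{\sigma^2}{\mu^2\varepsilon}\ln\tfrac{\sigma^2}{\mu^2\varepsilon\beta}\bigr)$ into the definition of $B_K$ to obtain $B_K \leq B_\varepsilon = \Theta\bigl(R^2/(\varepsilon\ln\tfrac{\sigma^2}{\mu^2\varepsilon\beta}\ln^2(\cdot))\bigr)$ as in the statement, then verify self-consistency; this back-and-forth between $K$ and $B_K$ is the technically delicate point, but it is routine given the explicit definitions. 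Since $m_k\equiv 1$, oracle and iteration counts coincide up to the factor of $2$ in \ref{eq:clipped_SEG}, which is absorbed into $\cO(\cdot)$.
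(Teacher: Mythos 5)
Your plan follows the paper's proof essentially verbatim: in each regime you verify \eqref{eq:gamma_SEG_str_mon} and \eqref{eq:batch_SEG_str_mon} for the prescribed $(\gamma,m_k)$ (in the small-batch case via $\gamma\mu(K+1)\le\ln B_K$, $e^{\gamma\mu(K+1)}\le B_K$ and the fixed-point definition of $B_K$), invoke Theorem~\ref{thm:main_result_str_mon_SEG}, solve the resulting bound for $K$, and count oracle calls as $2\sum_{k=0}^K m_k$ (large batch) or $2(K+1)$ (small batch), with the substitution $B_K\to B_\varepsilon$ handled exactly as in the paper. One bookkeeping point to correct when writing it out: the geometric sum satisfies $\sum_{k=0}^{K}e^{\gamma\mu k}=\frac{e^{\gamma\mu(K+1)}-1}{e^{\gamma\mu}-1}\le \frac{e^{\gamma\mu(K+1)}}{\gamma\mu}$, which is \emph{not} a constant multiple of $e^{\gamma\mu K}$ since $\nicefrac{1}{(\gamma\mu)}=\nicefrac{650L\ln\tfrac{6(K+1)}{\beta}}{\mu}$ is large; your literal intermediate expression $(K+1)\gamma^2\sigma^2 e^{\gamma\mu K}/R^2$ would give order $\nicefrac{\sigma^2}{(L\mu\varepsilon)}$, and it is precisely the extra $\nicefrac{1}{(\gamma\mu)}$ factor that yields the paper's $\gamma(K+1)e^{\gamma\mu(K+1)}\sigma^2\ln\tfrac{6(K+1)}{\beta}/(\mu R^2)$ and hence the claimed $\nicefrac{\sigma^2}{(\mu^2\varepsilon)}$ term in \eqref{eq:str_mon_SEG_oracle_complexity_large_batch}.
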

\begin{proof}
    \begin{enumerate}
        \item \textbf{Large stepsize/large batch.} First of all, it is easy to see that the choice of $\gamma$ and $m_k$ from \eqref{eq:str_mon_SEG_large_step_large_batch} satisfies conditions \eqref{eq:gamma_SEG_str_mon} and \eqref{eq:batch_SEG_str_mon}. Therefore, applying Theorem~\ref{thm:main_result_str_mon_SEG}, we derive that with probability at least $1-\beta$
        \begin{equation*}
            \|x^{K+1} - x^*\|^2 \leq 2\exp(-\gamma\mu(K+1))R^2 \overset{\eqref{eq:str_mon_SEG_large_step_large_batch}}{=} 2\exp\left(- \frac{\mu (K+1)}{650 L \ln \tfrac{6(K+1)}{\beta}}\right)R^2.
        \end{equation*}
        To guarantee $\|x^{K+1} - x^*\|^2 \leq \varepsilon$, we choose $K$ in such a way that the right-hand side of the above inequality is smaller than $\varepsilon$ that gives
        \begin{eqnarray*}
            K = \cO\left(\frac{L}{\mu} \ln\left(\frac{R^2}{\varepsilon}\right)\ln\left(\frac{L}{\mu \beta}\ln\left(\frac{R^2}{\varepsilon}\right)\right)\right).
        \end{eqnarray*}
        The total number of oracle calls equals
        \begin{eqnarray*}
            \sum\limits_{k=0}^{K}2m_k &\overset{\eqref{eq:str_mon_SEG_large_step_large_batch}}{=}&  2\sum\limits_{k=0}^{K}\max\left\{1, \frac{264600\gamma^2 (K+1) \sigma^2\ln \tfrac{6(K+1)}{\beta}}{\exp(-\gamma\mu k) R^2}\right\}\\
            &=& \cO\left(\max\left\{K,\frac{\gamma(K+1)\exp(\gamma\mu(K+1))\sigma^2\ln \tfrac{6(K+1)}{\beta}}{\mu R^2}\right\}\right)\\
            &=& \cO\left(\max\left\{\frac{L}{\mu}, \frac{\sigma^2}{\mu^2 \varepsilon}\right\} \ln\left(\frac{R^2}{\varepsilon}\right) \ln\left(\frac{L}{\mu\beta}\ln\left(\frac{R^2}{\varepsilon}\right)\right)\right).
        \end{eqnarray*}
        
        \item \textbf{Small stepsize/small batch.} First of all, we verify that the choice of $\gamma$ and $m_k$ from \eqref{eq:str_mon_SEG_small_step_small_batch} satisfies conditions \eqref{eq:gamma_SEG_str_mon} and \eqref{eq:batch_SEG_str_mon}: \eqref{eq:gamma_SEG_str_mon} trivially holds and \eqref{eq:batch_SEG_str_mon} holds since for all $k = 0,\ldots, K$
        \begin{eqnarray*}
            \frac{264600\gamma^2 (K+1) \sigma^2\ln \tfrac{6(K+1)}{\beta}}{\exp(-\gamma\mu k) R^2} &\leq& \frac{264600\gamma^2 (K+1) \sigma^2\ln \tfrac{6(K+1)}{\beta}}{\exp(-\gamma\mu (K+1))R^2} \\
            &\overset{\eqref{eq:str_mon_SEG_small_step_small_batch}}{\leq}& \frac{264600\ln^2\left(B_K\right) \exp(\gamma\mu(K+1)) \sigma^2\ln \tfrac{6(K+1)}{\beta}}{\mu^2 (K+1) R^2}\\
            &\overset{\eqref{eq:str_mon_SEG_small_step_small_batch}}{\leq}& 1.
        \end{eqnarray*}
        Therefore, applying Theorem~\ref{thm:main_result_str_mon_SEG}, we derive that with probability at least $1-\beta$
        \begin{eqnarray*}
            \|x^{K+1} - x^*\|^2 &\leq& 2\exp(-\gamma\mu(K+1))R^2\\
            &\overset{\eqref{eq:str_mon_SEG_small_step_small_batch}}{=}& \max\left\{2\exp\left(- \frac{\mu (K+1)}{650 L \ln \tfrac{6(K+1)}{\beta}}\right)R^2, \frac{2R^2}{B_K}\right\}\\
            &=& \max\left\{2\exp\left(- \frac{\mu (K+1)}{650 L \ln \tfrac{6(K+1)}{\beta}}\right)R^2, \frac{529200\sigma^2\ln\left(\frac{6(K+1)}{\beta}\right) \ln^2 (B_K)}{\mu^2(K+1)}\right\}.
        \end{eqnarray*}
        To guarantee $\|x^{K+1} - x^*\|^2 \leq \varepsilon$, we choose $K$ in such a way that the right-hand side of the above inequality is smaller than $\varepsilon$ that gives $K$ of the order
        \begin{eqnarray*}
            \cO\left(\max\left\{\frac{L}{\mu} \ln\left(\frac{R^2}{\varepsilon}\right)\ln\left(\frac{L}{\mu\beta}\ln\left(\frac{R^2}{\varepsilon}\right)\right), \frac{\sigma^2}{\mu^2 \varepsilon}\ln\left(\frac{\sigma^2}{\mu^2 \varepsilon\beta}\right)\ln^2\left(B_\varepsilon\right)\right\} \right),
        \end{eqnarray*}
        where
        \begin{equation*}
            B_\varepsilon = \max\left\{2, \frac{R^2}{\varepsilon \ln\left(\frac{\sigma^2}{\mu^2 \varepsilon\beta}\right) \ln^2\left(\max\left\{2 , \frac{R^2}{\varepsilon \ln\left(\frac{\sigma^2}{\mu^2 \varepsilon\beta}\right)}\right\}\right)} \right\}.
        \end{equation*}
        The total number of oracle calls equals $\sum_{k=0}^K 2m_k = 2(K+1)$.
    \end{enumerate}
\end{proof}

\newpage

\section{Clipped Stochastic Gradient Descent-Ascent: Missing Proofs and Details}\label{app:clipped_SGDA_proofs}

\subsection{Monotone Star-Cocoercive Case}

\begin{lemma}\label{lem:optimization_lemma_SGDA_gap}
    Let Assumption~\ref{as:monotonicity} hold for $Q = B_{2R}(x^*)$, where $R \geq R_0 \eqdef \|x^0 - x^*\|$ and $0 < \gamma \leq \nicefrac{2}{\ell}$. If $x^k$ lies in $B_{2R}(x^*)$ for all $k = 0,1,\ldots, K$ for some $K\geq 0$, then for all $u \in B_{3R}(x^*)$ the iterates produced by \ref{eq:clipped_SGDA} satisfy
    \begin{eqnarray}
        2\gamma\langle F(u), x^K_{\avg} - u\rangle &\leq& \frac{\|x^0 - u\|^2 - \|x^{K+1} - u\|^2}{K+1}\notag\\
        &&\quad + \frac{2\gamma}{K+1}\sum\limits_{k=0}^K\langle x^k - u - \gamma F(x^k), \omega_k\rangle\notag\\
        &&\quad + \frac{\gamma^2}{K+1}\sum\limits_{k=0}^K\left(\|F(x^k)\|^2 + \|\omega_k\|^2\right),  \label{eq:optimization_lemma_gap_SGDA}\\
        x_{\avg}^K &\eqdef& \frac{1}{K+1}\sum\limits_{k=0}^{K} x^k, \label{eq:x_avg_K_SGDA}\\
        \omega_k &\eqdef& F(x^k) - \tF_{\Bxi^k}(x^k). \label{eq:omega_k_SGDA}
    \end{eqnarray}
\end{lemma}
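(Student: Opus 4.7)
The plan is to mimic the descent-recursion analysis that underlies the SEG lemma, but tailored to the simpler one-step SGDA dynamics. First I would start from the update rule $x^{k+1} = x^k - \gamma \tF_{\Bxi^k}(x^k)$ and expand $\|x^{k+1} - u\|^2 = \|x^k - u\|^2 - 2\gamma\langle x^k - u, \tF_{\Bxi^k}(x^k)\rangle + \gamma^2\|\tF_{\Bxi^k}(x^k)\|^2$. Using the decomposition $\tF_{\Bxi^k}(x^k) = F(x^k) - \omega_k$, this becomes $\|x^k - u\|^2 - 2\gamma\langle x^k - u, F(x^k)\rangle + 2\gamma\langle x^k - u, \omega_k\rangle + \gamma^2\|F(x^k) - \omega_k\|^2$.

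Next I would apply monotonicity \eqref{eq:monotonicity} to convert $-\langle x^k - u, F(x^k)\rangle$ into $-\langle x^k - u, F(u)\rangle$; this is legal because both $x^k$ and $u$ lie in the region where \eqref{eq:monotonicity} is assumed. The key algebraic identity is that $\gamma^2\|F(x^k) - \omega_k\|^2 = \gamma^2\|F(x^k)\|^2 - 2\gamma^2\langle F(x^k), \omega_k\rangle + \gamma^2\|\omega_k\|^2$, so the stochastic cross-term $2\gamma\langle x^k - u, \omega_k\rangle$ combines exactly with $-2\gamma^2\langle F(x^k), \omega_k\rangle$ to produce the bias-aware form $2\gamma\langle x^k - u - \gamma F(x^k), \omega_k\rangle$ appearing in \eqref{eq:optimization_lemma_gap_SGDA}. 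This yields the one-step inequality $\|x^{k+1} - u\|^2 \leq \|x^k - u\|^2 - 2\gamma\langle F(u), x^k - u\rangle + 2\gamma\langle x^k - u - \gamma F(x^k), \omega_k\rangle + \gamma^2(\|F(x^k)\|^2 + \|\omega_k\|^2)$. Summing over $k = 0, \ldots, K$, telescoping the first two terms on the right, and dividing both sides by $K+1$ produces the bound on $2\gamma\langle F(u), x^K_{\avg} - u\rangle$ by convexity of the inner product in the averaged iterate.

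The main subtlety will not be the algebra, which is essentially linear and mechanical, but the domain bookkeeping: one must be careful that $x^k$ and $u$ both sit in the region where \eqref{eq:monotonicity} holds for every summand, a fact that follows from the hypothesis that $x^k \in B_{2R}(x^*)$ for all $k$ and will later be ensured inductively (with high probability) in the proof of Theorem~\ref{thm:SGDA_meta_theorem}, as in the SEG case. Note that, in contrast to Lemma~\ref{lem:optimization_lemma_gap_SEG}, we do \emph{not} discard the $\gamma^2\|F(x^k)\|^2$ term here; it is carried forward, to be controlled later via star-cocoercivity \eqref{eq:star_cocoercivity} together with the stepsize restriction $\gamma \leq \nicefrac{2}{\ell}$ when closing the induction in the main theorem.
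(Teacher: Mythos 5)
Your proposal is correct and follows essentially the same route as the paper's proof: expand $\|x^{k+1}-u\|^2$ via the update rule, use monotonicity to replace $F(x^k)$ by $F(u)$ in the inner product, merge the two stochastic cross-terms into $2\gamma\langle x^k - u - \gamma F(x^k), \omega_k\rangle$, then sum, telescope, and divide by $K+1$. Your remarks on the domain bookkeeping and on retaining the $\gamma^2\|F(x^k)\|^2$ term (to be handled later via \eqref{eq:star_cocoercivity}) also match how the paper uses this lemma.
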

\begin{proof}
    Using the update rule of \ref{eq:clipped_SGDA}, we obtain
    \begin{eqnarray*}
        \|x^{k+1} - u\|^2 &=& \|x^k - u\|^2 - 2\gamma \langle x^k - u,  \tF_{\Bxi^k}(x^k)\rangle + \gamma^2\|\tF_{\Bxi^k}(x^k)\|^2\\
        &=& \|x^k - u\|^2 -2\gamma \langle x^k - u, F(x^k) \rangle + 2\gamma \langle x^k - u, \omega_k \rangle\\
        &&\quad + \gamma^2\|F(x^k)\|^2 - 2\gamma^2 \langle F(x^k), \omega_k \rangle + \gamma^2\|\omega_k\|^2\\
         &\overset{\eqref{eq:monotonicity}}{\leq}& \|x^k - u\|^2 - 2\gamma \langle x^k - u, F(u) \rangle  + 2\gamma \langle x^k - u - \gamma F(x^k), \omega_k \rangle\\
        &&\quad + \gamma^2\left(\|F(x^k)\|^2 + \|\omega_k\|^2\right).
    \end{eqnarray*}
    Rearranging the terms, we derive
    \begin{eqnarray*}
        2\gamma \langle F(u), x^k - u \rangle &\leq& \|x^k - u\|^2 - \|x^{k+1} - u\|^2  + 2\gamma \langle x^k - u - \gamma F(x^k), \omega_k \rangle\\
        &&\quad + \gamma^2\left(\|F(x^k)\|^2 + \|\omega_k\|^2\right).
    \end{eqnarray*}
    Finally, we sum up the above inequality for $k = 0,1,\ldots, K$ and divide both sides of the result by $(K+1)$:
    \begin{eqnarray*}
          2\gamma\langle F(u), x^K_{\avg} - u\rangle &\leq& \frac{1}{K+1}\sum\limits_{k=0}^K\left(\|x^k - u\|^2 - \|x^{k+1} - u\|^2\right)\\
        &&\quad + \frac{2\gamma}{K+1}\sum\limits_{k=0}^K\langle x^k - u - \gamma F(x^k), \omega_k\rangle\\
        &&\quad + \frac{\gamma^2}{K+1}\sum\limits_{k=0}^K\left(\|F(x^k)\|^2 + \|\omega_k\|^2\right)\\
        &=& \frac{\|x^0 - u\|^2 - \|x^{K+1} - u\|^2}{K+1}\\
        &&\quad + \frac{2\gamma}{K+1}\sum\limits_{k=0}^K\langle x^k - u - \gamma F(x^k), \omega_k\rangle\\
        &&\quad + \frac{\gamma^2}{K+1}\sum\limits_{k=0}^K\left(\|F(x^k)\|^2 + \|\omega_k\|^2\right).
    \end{eqnarray*}
    This finishes the proof.
\end{proof}

We also derive the following lemma, which we use in the analysis of the star-cocoercive case as well.

\begin{lemma}\label{lem:optimization_lemma_SGDA}
    Let Assumption~\ref{as:star_cocoercivity} hold for $Q = B_{2R}(x^*)$, where $R \geq R_0 \eqdef \|x^0 - x^*\|$ and $0 < \gamma \leq \nicefrac{2}{\ell}$. If $x^k$ lies in $B_{2R}(x^*)$ for all $k = 0,1,\ldots, K$ for some $K\geq 0$, then the iterates produced by \ref{eq:clipped_SGDA} satisfy
    \begin{eqnarray}
        \frac{\gamma}{K+1}\left(\frac{2}{\ell} - \gamma\right)\sum\limits_{k=0}^K\|F(x^k)\|^2 &\leq& \frac{\|x^0 - x^*\|^2 - \|x^{K+1} - x^*\|^2}{K+1}\notag\\
        &&\quad + \frac{2\gamma}{K+1}\sum\limits_{k=0}^K\langle x^k - x^* - \gamma F(x^k), \omega_k\rangle\notag\\
        &&\quad + \frac{\gamma^2}{K+1}\sum\limits_{k=0}^K\|\omega_k\|^2,  \label{eq:optimization_lemma_norm_SGDA}
    \end{eqnarray}
    where $\omega_k$ is defined in \eqref{eq:omega_k_SGDA}.
\end{lemma}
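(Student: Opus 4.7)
The plan is to mirror the proof of Lemma~\ref{lem:optimization_lemma_SGDA_gap}, but take $u = x^*$ throughout and replace the appeal to monotonicity with star-cocoercivity. Starting from the update rule $x^{k+1} = x^k - \gamma \tF_{\Bxi^k}(x^k)$, I expand $\|x^{k+1} - x^*\|^2$, decompose $\tF_{\Bxi^k}(x^k) = F(x^k) - \omega_k$, and gather the two cross-terms containing $\omega_k$ into a single inner product. This yields the identity
\begin{equation*}
\|x^{k+1} - x^*\|^2 = \|x^k - x^*\|^2 - 2\gamma\langle x^k - x^*, F(x^k)\rangle + 2\gamma\langle x^k - x^* - \gamma F(x^k), \omega_k\rangle + \gamma^2\|F(x^k)\|^2 + \gamma^2\|\omega_k\|^2.
\end{equation*}

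Next I apply \eqref{eq:star_cocoercivity}, which is valid at each $x^k$ since $x^k \in B_{2R}(x^*) = Q$ by hypothesis. Rewriting \eqref{eq:star_cocoercivity} as $\langle F(x^k), x^k - x^*\rangle \geq \tfrac{1}{\ell}\|F(x^k)\|^2$ and plugging in gives $-2\gamma\langle x^k - x^*, F(x^k)\rangle \leq -\tfrac{2\gamma}{\ell}\|F(x^k)\|^2$, which combines with the $\gamma^2\|F(x^k)\|^2$ term to produce the single deterministic descent contribution $-\gamma\bigl(\tfrac{2}{\ell} - \gamma\bigr)\|F(x^k)\|^2$. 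Here the assumption $\gamma \leq \nicefrac{2}{\ell}$ guarantees this coefficient is nonnegative, so moving the $\|F(x^k)\|^2$ term to the left-hand side preserves the inequality direction.

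Finally, I rearrange the resulting one-step recursion to isolate $\gamma\bigl(\tfrac{2}{\ell} - \gamma\bigr)\|F(x^k)\|^2$, telescope over $k = 0, 1, \ldots, K$ so that the $\|x^k - x^*\|^2$ terms cancel except at the endpoints, and divide by $K+1$ to arrive at \eqref{eq:optimization_lemma_norm_SGDA}. Unlike the \ref{eq:clipped_SEG} analysis, there is no extrapolation step to manage, so no Lipschitz-based cancellation is required and the argument is strictly simpler than the proof of Lemma~\ref{lem:optimization_lemma_SEG}; the only subtle point is ensuring that the hypothesis $x^k \in B_{2R}(x^*)$ is invoked at the right place so that \eqref{eq:star_cocoercivity} applies, and I expect no real obstacle in the calculation.
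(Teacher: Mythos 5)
Your proposal is correct and follows essentially the same route as the paper's proof: expand $\|x^{k+1}-x^*\|^2$ via the update rule, apply \eqref{eq:star_cocoercivity} in the form $\langle F(x^k), x^k - x^*\rangle \geq \tfrac{1}{\ell}\|F(x^k)\|^2$ to absorb the $\gamma^2\|F(x^k)\|^2$ term into the coefficient $-\gamma\bigl(\tfrac{2}{\ell}-\gamma\bigr)$, then rearrange, telescope, and divide by $K+1$. The only cosmetic difference is that you merge the two $\omega_k$ cross-terms into $2\gamma\langle x^k - x^* - \gamma F(x^k), \omega_k\rangle$ before summing, whereas the paper keeps them separate until the final display, which changes nothing.
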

\begin{proof}
    Using the update rule of \ref{eq:clipped_SGDA}, we obtain
    \begin{eqnarray*}
        \|x^{k+1} - x^*\|^2 &=& \|x^k - x^*\|^2 - 2\gamma \langle x^k - x^*,  \tF_{\Bxi^k}(x^k)\rangle + \gamma^2\|\tF_{\Bxi^k}(x^k)\|^2\\
        &=& \|x^k - x^*\|^2 -2\gamma \langle x^k - x^*, F(x^k) \rangle + 2\gamma \langle x^k - x^*, \omega_k \rangle\\
        &&\quad + \gamma^2\|F(x^k)\|^2 - 2\gamma^2 \langle F(x^k), \omega_k \rangle + \gamma^2\|\omega_k\|^2\\
         &\overset{\eqref{eq:star_cocoercivity}}{\leq}& \|x^k - x^*\|^2  + 2\gamma \langle x^k - x^*, \omega_k \rangle - 2\gamma^2 \langle F(x^k), \omega_k \rangle\\
        &&\quad + \gamma \left(\gamma - \frac{2}{\ell}\right)\|F(x^k)\|^2 + \gamma^2\|\omega_k\|^2.
    \end{eqnarray*}
    Since $0 < \gamma \leq \nicefrac{2}{\ell}$, we have $\gamma\left(\nicefrac{2}{\ell} - \gamma\right)\|F(x^k)\|^2 \geq 0$ and, rearranging the terms, we derive
    \begin{eqnarray*}
        \gamma\left(\frac{2}{\ell} - \gamma\right)\|F(x^k)\|^2 &\leq& \|x^k - x^*\|^2 - \|x^{k+1} - x^*\|^2  + 2\gamma \langle x^k - x^*, \omega_k \rangle\\
        &&\quad -2 \gamma^2\langle F(x^k), \omega_k \rangle + \gamma^2\|\omega_k\|^2.
    \end{eqnarray*}
    Finally, we sum up the above inequality for $k = 0,1,\ldots, K$ and divide both sides of the result by $(K+1)$:
    \begin{eqnarray*}
         \frac{\gamma}{K+1}\left(\frac{2}{\ell} - \gamma\right)\sum\limits_{k=0}^K\|F(x^k)\|^2 &\leq& \frac{1}{K+1}\sum\limits_{k=0}^K\left(\|x^k - x^*\|^2 - \|x^{k+1} - x^*\|^2\right) + \frac{\gamma^2}{K+1}\sum\limits_{k=0}^K\|\omega_k\|^2\\
        &&\quad + \frac{2\gamma}{K+1}\sum\limits_{k=0}^K\langle x^k - x^*, \omega_k\rangle - \frac{2\gamma^2}{K+1}\sum\limits_{k=0}^K\langle F(x^k),\omega_k\rangle\\
        &=& \frac{\|x^0 - x^*\|^2 - \|x^{K+1} - x^*\|^2}{K+1} + \frac{\gamma^2}{K+1}\sum\limits_{k=0}^K\|\omega_k\|^2\\
        &&\quad + \frac{2\gamma}{K+1}\sum\limits_{k=0}^K\langle x^k - x^*, \omega_k\rangle - \frac{2\gamma^2}{K+1}\sum\limits_{k=0}^K\langle F(x^k),\omega_k\rangle.
    \end{eqnarray*}
    This finishes the proof.
\end{proof}

\begin{theorem}\label{thm:main_result_SGDA}
    Let Assumptions~\ref{as:UBV}, \ref{as:monotonicity}, \ref{as:star_cocoercivity}, hold for $Q = B_{2R}(x^*)$, where $R \geq R_0 \eqdef \|x^0 - x^*\|$, and
    \begin{gather}
        \gamma \leq \frac{1}{170 \ell \ln \tfrac{6(K+1)}{\beta}}, \label{eq:gamma_gap_SGDA}\\
        \lambda = \frac{R}{60\gamma \ln \tfrac{6(K+1)}{\beta}},  \label{eq:lambda_gap_SGDA}\\
        m \geq \max\left\{1, \frac{97200 (K+1) \gamma^2\sigma^2 \ln\tfrac{6(K+1)}{\beta}}{R^2}\right\}, \label{eq:batch_gap_SGDA}
    \end{gather}
    for some $K \geq 0$ and $\beta \in (0,1]$ such that $\ln \tfrac{6(K+1)}{\beta} \geq 1$. Then, after $K$ iterations the iterates produced by \ref{eq:clipped_SGDA} with probability at least $1 - \beta$ satisfy 
    \begin{equation}
        \gap_R(x_{\avg}^K) \leq \frac{9R^2}{2\gamma(K+1)}. \label{eq:main_result_gap_SGDA}
    \end{equation}
\end{theorem}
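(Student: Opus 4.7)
My plan is to mirror the induction-and-Bernstein strategy used for \ref{eq:clipped_SEG} in Theorem~\ref{thm:main_result_gap_SEG}, exploiting that \ref{eq:clipped_SGDA} only has a single stochastic oracle per step so the martingale decomposition is simpler. Set $R_k := \|x^k - x^*\|$ and, for each $T \in \{0,1,\ldots,K+1\}$, define the probability event $E_T$ by requiring that the two inequalities
\begin{gather*}
    A_t := \max_{u \in B_R(x^*)}\!\Big\{\|x^0 - u\|^2 + 2\gamma\sum_{l=0}^{t-1}\langle x^l - u - \gamma F(x^l),\omega_l\rangle + \gamma^2\sum_{l=0}^{t-1}\bigl(\|F(x^l)\|^2 + \|\omega_l\|^2\bigr)\Big\} \leq 9R^2, \\
    \Big\|\gamma\sum_{l=0}^{t-1}\omega_l\Big\| \leq R
\end{gather*}
hold simultaneously for every $t \leq T$, where $\omega_l$ is as in \eqref{eq:omega_k_SGDA}. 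The target is $\PP\{E_T\} \geq 1 - T\beta/(K+1)$, the case $T=0$ being trivial. Specializing the optimization Lemma~\ref{lem:optimization_lemma_SGDA_gap} at $u = x^*$ and using $F(x^*) = 0$, together with the definition of $A_t$, forces $R_t^2 \leq A_t \leq 9R^2$ so that the iterates remain in a bounded ball around $x^*$ where Assumptions~\ref{as:UBV}, \ref{as:monotonicity}, \ref{as:star_cocoercivity} are in force (with the ball radius matching the one used to state the theorem).

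Conditional on $E_{T-1}$, the key quantitative consequences are $\|F(x^l)\| \leq \lambda/2$ for all $l \leq T-1$: this follows from star-cocoercivity together with the induction bound $R_l \leq 3R$, which reads $\|F(x^l)\|^2 \leq \ell\langle F(x^l), x^l - x^*\rangle \leq 3\ell R\|F(x^l)\|$, hence $\|F(x^l)\| \leq 3\ell R \leq \lambda/2$ under \eqref{eq:gamma_gap_SGDA}--\eqref{eq:lambda_gap_SGDA}. This is precisely the hypothesis that unlocks Lemma~\ref{lem:bias_variance} to yield the standard bias estimate $\|\omega_l^b\| \leq 4\sigma^2/(m\lambda)$ and variance estimate $\EE_{\Bxi^l}[\|\omega_l^u\|^2] \leq 18\sigma^2/m$, where $\omega_l = \omega_l^u + \omega_l^b$ is the split into zero-mean and bias parts as in \eqref{eq:omega_unbias_bias_SEG_neg_mon}.

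The heart of the argument then becomes bounding $A_T$ with high probability, rewriting it as $4R^2$ (from $\|x^0 - u\|^2$) plus a ``linear in $u$'' piece equal to $2\gamma R\|\sum_{l} \omega_l\|$, plus the sum $2\gamma\sum\langle \eta_l,\omega_l\rangle + \gamma^2\sum(\|F(x^l)\|^2 + \|\omega_l\|^2)$, where $\eta_l$ truncates $x^l - x^* - \gamma F(x^l)$ to its deterministic bound $\leq 3R + \gamma L \cdot 3R \lesssim R$. Splitting each $\omega_l$ into unbiased/bias parts and each $\|\omega_l\|^2$ into expectation plus fluctuation gives four Bernstein-eligible martingale differences (for $\langle\eta_l,\omega_l^u\rangle$, for $\|\omega_l^u\|^2 - \EE[\|\omega_l^u\|^2]$, and the analogous pair inside $\|\gamma\sum\omega_l\|$ estimated by squaring and introducing a truncated prefix $\zeta_l = \gamma\sum_{r<l}\omega_r$) plus purely deterministic bias and $\EE[\|\omega_l^u\|^2]$ contributions. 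Applying Lemma~\ref{lem:Bernstein_ineq} to each martingale with failure probability $\beta/(3(K+1))$ — and using the choices of $\gamma, \lambda, m$ in \eqref{eq:gamma_gap_SGDA}--\eqref{eq:batch_gap_SGDA} to match numerical constants of roughly $R^2, R^2/4$, etc. — gives $A_T \leq 9R^2$ and $\|\gamma\sum\omega_l\| \leq R$ on the intersection of these good events, completing the induction via a union bound. The gap conclusion \eqref{eq:main_result_gap_SGDA} then follows by dividing $A_{K+1}$ by $2\gamma(K+1)$ and using $F(x^*) = 0$.

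\textbf{Main obstacle.} Unlike \algname{SEG}, the optimization lemma carries an extra deterministic term $\gamma^2\sum\|F(x^l)\|^2$ that cannot be cancelled by an extrapolation step. I expect the main bookkeeping challenge to be verifying that this term is genuinely absorbed by the $9R^2$ budget using only the clipping bound $\|F(x^l)\| \leq \lambda/2$ combined with $\gamma\lambda = \Theta(R/\ln\tfrac{6(K+1)}{\beta})$ so that $\gamma^2\sum_{l<T}\|F(x^l)\|^2 \lesssim (K+1)\gamma^2\lambda^2 \lesssim R^2$ — this is precisely what dictates the $170\ell$ constant in \eqref{eq:gamma_gap_SGDA} and the $60$ in \eqref{eq:lambda_gap_SGDA}, and requires a slightly tighter step-size/clip/batch interplay than in the \algname{SEG} analysis.
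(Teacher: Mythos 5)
Your overall architecture (induction on a high-probability event, the split $\omega_l = \omega_l^u + \omega_l^b$, Lemma~\ref{lem:bias_variance}, Bernstein for the martingale parts, and the truncated prefix $\zeta_l$ for $\|\gamma\sum_l\omega_l\|$) matches the paper, but the step you yourself flag as the "main obstacle" is resolved incorrectly, and this is a genuine gap. You propose to absorb the term $\gamma^2\sum_{l<T}\|F(x^l)\|^2$ via the clipping level, claiming $\gamma^2\sum_{l<T}\|F(x^l)\|^2\lesssim (K+1)\gamma^2\lambda^2\lesssim R^2$. With \eqref{eq:lambda_gap_SGDA} one has $\gamma\lambda = R/(60\ln\tfrac{6(K+1)}{\beta})$, so $(K+1)\gamma^2\lambda^2 = (K+1)R^2/(3600\ln^2\tfrac{6(K+1)}{\beta})$, which grows linearly in $K$ and is not $O(R^2)$; the same failure occurs if you instead use $\|F(x^l)\|\le \ell\|x^l-x^*\|\lesssim \ell R$ together with \eqref{eq:gamma_gap_SGDA}. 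A per-iterate bound on $\|F\|$ can never tame this sum. The paper's proof uses star-cocoercivity in a structurally different place: Lemma~\ref{lem:optimization_lemma_SGDA} (the energy inequality at $u=x^*$) yields $\tfrac{\gamma}{\ell}\sum_{t\le T}\|F(x^t)\|^2 \le \|x^0-x^*\|^2 - \|x^{T+1}-x^*\|^2 + 2\gamma\sum_t\langle \eta_t,\omega_t\rangle + \gamma^2\sum_t\|\omega_t\|^2$, i.e.\ a $K$-independent $O(R^2)$ bound up to the same stochastic sums already being controlled; since $\gamma\le 1/\ell$, this is then plugged into the monotone gap bound of Lemma~\ref{lem:optimization_lemma_SGDA_gap} (this is exactly how the paper arrives at \eqref{eq:thm_SGDA_technical_gap_5_1} with the extra $R^2$ and the doubled martingale terms). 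In your plan star-cocoercivity is only used to certify $\|F(x^l)\|\le\lambda/2$, so this essential cancellation is missing and the $9R^2$ budget cannot be met.

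A second, related mismatch: your induction invariant $A_t\le 9R^2$ only forces $R_t\le 3R$, but the theorem assumes \ref{as:UBV}, \ref{as:monotonicity}, \ref{as:star_cocoercivity} on $Q=B_{2R}(x^*)$ only, and Lemma~\ref{lem:optimization_lemma_SGDA_gap} itself requires the iterates to lie in $B_{2R}(x^*)$. The paper keeps the iterates inside $B_{\sqrt{2}R}(x^*)\subset B_{2R}(x^*)$ by running the distance recursion separately at $u=x^*$ through the star-cocoercive lemma, whose left-hand side has no $+\gamma^2\sum\|F\|^2$ term, so the distance budget is $R^2$ plus stochastic terms of at most $R^2$, i.e.\ $R_{t}^2\le 2R^2$; the $9R^2$ budget is reserved for the gap functional only. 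With your single $9R^2$ invariant you would need the assumptions on a ball of radius at least $3R$, so even after fixing the $\|F\|^2$ issue your argument proves a weaker statement than the one claimed. The fix is exactly the paper's route: maintain $\|x^t-x^*\|^2\le 2R^2$ and $\|\gamma\sum\omega_l\|\le R$ as the event, use Lemma~\ref{lem:optimization_lemma_SGDA} for both the iterate confinement and the bound on $\sum_t\|F(x^t)\|^2$, and use Lemma~\ref{lem:optimization_lemma_SGDA_gap} only to convert these into the gap bound.
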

\begin{proof}
    We introduce new notation: $R_k = \|x^k - x^*\|$ for all $k\geq 0$. The proof is based on the induction. In particular, for each $k = 0,\ldots, K+1$ we define the probability event $E_k$ as follows: inequalities
    \begin{gather}
        \|x^t - x^*\|^2 \leq 2R^2 \quad \text{and}\quad \gamma\left\|\sum\limits_{l=0}^{t-1} \omega_l\right\| \leq R \label{eq:induction_inequality_SGDA_gap}
    \end{gather}
    hold for $t = 0,1,\ldots,k$ simultaneously. Our goal is to prove that $\PP\{E_k\} \geq  1 - \nicefrac{k\beta}{(K+1)}$ for all $k = 0,1,\ldots,K+1$. We use the induction to show this statement. For $k = 0$ the statement is trivial since $R_0^2 \leq 2R^2$ by definition and $\sum_{l=0}^{-1} \omega_l = 0$. Next, assume that the statement holds for $k = T \leq K$, i.e., we have $\PP\{E_{T}\} \geq 1 - \nicefrac{T\beta}{(K+1)}$. We need to prove that $\PP\{E_{T+1}\} \geq 1 - \nicefrac{(T+1)\beta}{(K+1)}$. Let us notice that probability event $E_{T}$ implies $x^t \in B_{2R}(x^*)$ for all $t = 0, 1, \ldots, T$. This means that the assumptions of Lemma~\ref{lem:optimization_lemma_SGDA} hold and we have that probability event $E_{T}$ implies ($\gamma < \nicefrac{1}{\ell}$)
    \begin{eqnarray}
        \frac{\gamma}{\ell(T+1)}\sum\limits_{t=0}^T\|F(x^t)\|^2 &\leq& \frac{\|x^0 - x^*\|^2 - \|x^{T+1} - x^*\|^2}{T+1}\notag\\
        &&\quad + \frac{2\gamma}{T+1}\sum\limits_{t=0}^T\langle x^t - x^* - \gamma F(x^t), \omega_t\rangle\notag\\
        &&\quad + \frac{\gamma^2}{T+1}\sum\limits_{t=0}^T\|\omega_t\|^2
        \label{eq:thm_SGDA_technical_gap_0}
    \end{eqnarray}
    and
    \begin{eqnarray}
        \|F(x^t)\| &\overset{\eqref{eq:star_cocoercivity}}{\leq}& \ell\|x^t - x^*\| \overset{\eqref{eq:induction_inequality_SGDA_gap}}{\leq} \sqrt{2}\ell R \overset{\eqref{eq:gamma_gap_SGDA},\eqref{eq:lambda_gap_SGDA}}{\leq} \frac{\lambda}{2} \label{eq:operator_bound_tx_t_SGDA_gap}
    \end{eqnarray}
    for all $t = 0, 1, \ldots, T$.
From \eqref{eq:thm_SGDA_technical_gap_0} we have    
    \begin{eqnarray*}
       R_{T+1}^2 \leq R_0^2 + 2\gamma \sum\limits_{t=0}^T\langle x^t - x^* - \gamma F(x^t), \omega_t\rangle + \gamma^2\sum\limits_{t=0}^T\|\omega_t\|^2.
    \end{eqnarray*}
    Next, we notice that
    \begin{eqnarray}
        \|x^t - x^* - \gamma F(x^t)\| &\leq& \|x^t - x^*\| + \gamma\|F(x^t)\| \overset{\eqref{eq:star_cocoercivity},\eqref{eq:induction_inequality_SGDA_gap}}{\leq} 2R + \gamma \ell\|x^t - x^*\|\notag\\
        &\overset{\eqref{eq:induction_inequality_SGDA_gap}}{\leq}& 2R + 2R\gamma \ell \overset{\eqref{eq:gamma_gap_SGDA}}{\leq} 3R, \label{eq:thm_SGDA_technical_gap_2}
    \end{eqnarray}
    for all $t = 0, 1, \ldots, T$. Consider random vectors
    \begin{equation*}
        \eta_t = \begin{cases}x^t - x^* - \gamma F(x^t),& \text{if } \|x^t - x^* - \gamma F(x^t)\| \leq 3R,\\ 0,& \text{otherwise,} \end{cases}
    \end{equation*}
    for all $t = 0, 1, \ldots, T$. We notice that $\eta_t$ is bounded with probability $1$:
    \begin{equation}
        \|\eta_t\| \leq 3R  \label{eq:thm_SGDA_technical_gap_3}
    \end{equation}
    for all $t = 0, 1, \ldots, T$. Moreover, in view of \eqref{eq:thm_SGDA_technical_gap_2}, probability event $E_{T}$ implies $\eta_t = x^t - x^* - \gamma F(x^t)$ for all $t = 0, 1, \ldots, T$. Therefore, $E_{T}$ implies
    \begin{eqnarray*}
       R_{T+1}^2 \leq R^2 + 2\gamma\sum\limits_{t=0}^T\langle \eta_t, \omega_t\rangle + \gamma^2\sum\limits_{t=0}^T\|\omega_t\|^2.
    \end{eqnarray*}
    To continue our derivation we introduce new notation:
    \begin{gather}
        \omega_t^u \eqdef \EE_{\Bxi^t}\left[\tF_{\Bxi^t}(x^t)\right] - \tF_{\Bxi^t}(x^t),\quad \omega_t^b \eqdef F(x^t) - \EE_{\Bxi^t}\left[\tF_{\Bxi^t}(x^t)\right] \label{eq:thm_SGDA_technical_gap_4}
    \end{gather}
    By definition we have $\omega_t = \omega_t^u + \omega_t^b$ for all $t = 0,\ldots, T$. Using the introduced notation, we continue our derivation as follows: $E_{T}$ implies
    \begin{eqnarray}
        R_{T+1}^2 &\leq& R^2 + \underbrace{2\gamma \sum\limits_{t = 0}^{T} \langle \eta_t, \omega_t^u \rangle}_{\circledOne} + \underbrace{2\gamma \sum\limits_{t = 0}^{T} \langle \eta_t, \omega_t^b \rangle}_{\circledTwo} + \underbrace{2\gamma^2 \sum\limits_{t=0}^{T}\left(\EE_{\Bxi^t}\left[\|\omega_t^u\|^2\right] \right)}_{\circledThree} \notag\\
        &&\quad + \underbrace{2\gamma^2 \sum\limits_{t=0}^{T}\left(\|\omega_t^u\|^2 - \EE_{\Bxi^t}\left[\|\omega_t^u\|^2\right]\right)}_{\circledFour} + \underbrace{2\gamma^2 \sum\limits_{t=0}^{T}\left(\|\omega_t^b\|^2\right)}_{\circledFive}.\label{eq:thm_SGDA_technical_gap_5}
    \end{eqnarray}
    We emphasize that the above inequality does not rely on monotonicity of $F$.
    
    As we notice above, $E_T$ implies $x^t \in B_{2R}(x^*)$ for all $t = 0, 1, \ldots, T$. This means that the assumptions of Lemma~\ref{lem:optimization_lemma_SGDA_gap} hold and we have that probability event $E_{T}$ implies
    \begin{eqnarray}
        2\gamma(T+1)\gap_R(x^T_{\avg}) &\leq& \max\limits_{u\in B_R(x^*)}\left\{\|x^0 - u\|^2 + 2\gamma \sum\limits_{t=0}^T\langle x^t - u - \gamma F(x^t), \omega_t\rangle\right\}\notag\\
        &&\quad + \gamma^2\sum\limits_{t=0}^T\left(\|F(x^t)\|^2 + \|\omega_t\|^2\right),  \notag\\
        &=& \max\limits_{u\in B_R(x^*)}\left\{\|x^0 - u\|^2 + 2\gamma \sum\limits_{t=0}^T\langle x^* - u, \omega_t\rangle\right\}\notag\\
        &&\quad + 2\gamma \sum\limits_{t=0}^T\langle x^t - x^* - \gamma F(x^t), \omega_t\rangle\notag\\
        &&\quad + \gamma^2\sum\limits_{t=0}^T\left(\|F(x^t)\|^2 + \|\omega_t\|^2\right).  \notag
    \end{eqnarray}
    We notice that $E_T$ implies $\eta_t = x^t - x^* - \gamma F(x^t)$ for all $t = 0, 1, \ldots, T$ as well as \eqref{eq:thm_SGDA_technical_gap_0} and $\gamma < \nicefrac{1}{\ell}$. Therefore, probability event $E_T$ implies
    \begin{eqnarray}
        2\gamma(T+1)\gap_R(x^T_{\avg}) &\leq& \max\limits_{u\in B_R(x^*)}\left\{\|x^0 - u\|^2\right\} + 2\gamma \max\limits_{u\in B_R(x^*)}\left\{\sum\limits_{t=0}^T\langle x^* - u, \omega_t\rangle\right\}\notag\\
        &&\quad + 2\gamma \sum\limits_{t=0}^T\langle \eta_t, \omega_t\rangle + \frac{\gamma}{\ell}\sum\limits_{t=0}^T\|F(x^t)\|^2 + \gamma^2\sum\limits_{t=0}^T\|\omega_t\|^2 \notag\\
        &\leq& 4R^2 + 2\gamma \max\limits_{u\in B_R(x^*)}\left\{\left\langle x^* - u, \sum\limits_{t=0}^T\omega_t\right\rangle\right\} \notag\\
        &&\quad + R^2 + 4\gamma \sum\limits_{t=0}^T\langle \eta_t, \omega_t\rangle + 2\gamma^2\sum\limits_{t=0}^T\|\omega_t\|^2\notag\\
        &\leq& 5R^2 + 2\gamma R\left\|\sum\limits_{t=0}^T\omega_t\right\| + 2\cdot\left(\circledOne + \circledTwo + \circledThree + \circledFour + \circledFive\right),\label{eq:thm_SGDA_technical_gap_5_1}
    \end{eqnarray}
    where $\circledOne, \circledTwo, \circledThree, \circledFour, \circledFive$ are defined in \eqref{eq:thm_SGDA_technical_gap_5}.
    
    The rest of the proof is based on deriving good enough upper bounds for $\circledOne, \circledTwo, \circledThree, \circledFour, \circledFive$, i.e., we want to prove that $\circledOne + \circledTwo + \circledThree + \circledFour + \circledFive \leq R^2$ and $2\gamma R\left\|\sum_{t=0}^T\omega_t\right\| \leq 2R^2$ with high probability.
    
    Before we move on, we need to derive some useful inequalities for operating with $\omega_t^u, \omega_t^b$. First of all, Lemma~\ref{lem:bias_variance} implies that
    \begin{equation}
        \|\omega_t^u\| \leq 2\lambda \label{eq:theta_omega_magnitude_SGDA}
    \end{equation}
    for all $t = 0,1, \ldots, T$. Next, since $\{\xi^{i,t}\}_{i=1}^{m}$ are independently sampled from $\cD$, we have $\EE_{\Bxi^t}[F_{\Bxi^t}(x^t)] = F(x^t)$, and 
    \begin{gather}
        \EE_{\Bxi^t}\left[\|F_{\Bxi^t}(x^t) - F(x^t)\|^2\right] = \frac{1}{m^2}\sum\limits_{i=1}^m \EE_{\xi^{i,t}}\left[\|F_{\xi^{i,t}}(x^t) - F(x^t)\|^2\right] \overset{\eqref{eq:UBV}}{\leq} \frac{\sigma^2}{m}, \notag
    \end{gather}
    for all $l = 0,1, \ldots, T$. Therefore, in view of Lemma~\ref{lem:bias_variance}, $E_{T}$ implies that
    \begin{gather}
         \left\|\omega_t^b\right\| \leq \frac{4\sigma^2}{m\lambda}, \label{eq:bias_theta_omega_SGDA}\\
         \EE_{\Bxi^t}\left[\left\|\omega_t\right\|^2\right] \leq \frac{18\sigma^2}{m}, \label{eq:distortion_theta_omega_SGDA}\\
         \EE_{\Bxi^t}\left[\left\|\omega_t^u\right\|^2\right] \leq \frac{18\sigma^2}{m} \label{eq:variance_theta_omega_SGDA}
    \end{gather}
    for all $l = 0,1, \ldots, T$.
    
    \paragraph{Upper bound for $\circledOne$.} Since $\EE_{\Bxi^t}[\omega_t^u] = 0$, we have
    \begin{equation*}
        \EE_{\Bxi^t}\left[2\gamma\langle \eta_t, \omega_t^u \rangle\right] = 0.
    \end{equation*}
    Next, the summands in $\circledOne$ are bounded with probability $1$:
    \begin{eqnarray}
        |2\gamma\langle \eta_t, \omega_t^u \rangle | \leq 2\gamma  \|\eta_t\|\cdot \|\omega_t^u\| 
        \overset{\eqref{eq:thm_SGDA_technical_gap_3},\eqref{eq:theta_omega_magnitude_SGDA}}{\leq} 12 \gamma R \lambda \overset{\eqref{eq:lambda_gap_SGDA}}{\leq} \frac{R^2}{5\ln\tfrac{6(K+1)}{\beta}} \eqdef c. \label{eq:SGDA_neg_mon_technical_1_1}
    \end{eqnarray}
    Moreover, these summands have bounded conditional variances $\sigma_t^2 \eqdef \EE_{\Bxi^t}\left[4\gamma^2 \langle \eta_t, \omega_t^u \rangle^2\right]$:
    \begin{eqnarray}
        \sigma_t^2 \leq \EE_{\Bxi^t}\left[4\gamma^2 \|\eta_t\|^2\cdot \|\omega_t^u\|^2\right] \overset{\eqref{eq:thm_SGDA_technical_gap_3}}{\leq} 36\gamma^2 R^2 \EE_{\Bxi^t}\left[\|\omega_t^u\|^2\right]. \label{eq:SGDA_neg_mon_technical_1_2}
    \end{eqnarray}
    That is, sequence $\{2\gamma \langle \eta_t, \omega_t^u \rangle\}_{t\geq 0}$ is a bounded martingale difference sequence having bounded conditional variances $\{\sigma_t^2\}_{t \geq 0}$. Applying Bernstein's inequality (Lemma~\ref{lem:Bernstein_ineq}) with $X_t = 2\gamma \langle \eta_t, \omega_t^u \rangle$, $c$ defined in \eqref{eq:SGDA_neg_mon_technical_1_1}, $b = \frac{R^2}{5}$, $G = \tfrac{R^4}{150\ln\frac{6(K+1)}{\beta}}$, we get that
    \begin{equation*}
        \PP\left\{|\circledOne| > \frac{R^2}{5} \text{ and } \sum\limits_{t=0}^{T}\sigma_t^2 \leq \frac{R^4}{150\ln\tfrac{6(K+1)}{\beta}}\right\} \leq 2\exp\left(- \frac{b^2}{2G + \nicefrac{2cb}{3}}\right) = \frac{\beta}{3(K+1)}.
    \end{equation*}
    In other words, $\PP\{E_{\circledOne}\} \geq 1 - \tfrac{\beta}{3(K+1)}$, where probability event $E_{\circledOne}$ is defined as
    \begin{equation}
        E_{\circledOne} = \left\{\text{either} \quad \sum\limits_{t=0}^{T}\sigma_t^2 > \frac{R^4}{150\ln\tfrac{6(K+1)}{\beta}}\quad \text{or}\quad |\circledOne| \leq \frac{R^2}{5}\right\}. \label{eq:bound_1_SGDA_neg_mon}
    \end{equation}
    Moreover, we notice here that probability event $E_{T}$ implies that
    \begin{eqnarray}
        \sum\limits_{t=0}^{T}\sigma_t^2 \overset{\eqref{eq:SGDA_neg_mon_technical_1_2}}{\leq} 36\gamma^2R^2\sum\limits_{t=0}^{T} \EE_{\Bxi^t}\left[\|\omega_t^u\|^2\right] \overset{\eqref{eq:variance_theta_omega_SGDA}, T \leq K+1}{\leq} \frac{648\gamma^2 R^2 \sigma^2 (K+1)}{m} \overset{\eqref{eq:batch_gap_SGDA}}{\leq} \frac{R^4}{150\ln\tfrac{6(K+1)}{\beta}}. \label{eq:bound_1_variances_SGDA_neg_mon}
    \end{eqnarray}
    
    \paragraph{Upper bound for $\circledTwo$.} Probability event $E_{T}$ implies
    \begin{eqnarray}
        \circledTwo &\leq& 2\gamma \sum\limits_{t=0}^{T}\|\eta_l\| \cdot \|\omega_t^b\| \overset{\eqref{eq:thm_SGDA_technical_gap_3},\eqref{eq:bias_theta_omega_SGDA}, T \leq K+1}{\leq} \frac{24\gamma\sigma^2 R(K+1)}{m\lambda}\notag\\
        &\overset{\eqref{eq:lambda_gap_SGDA}}{=}& \frac{1440\gamma^2\sigma^2(K+1) \ln \frac{6(K+1)}{\beta}}{m} \overset{\eqref{eq:batch_gap_SGDA}}{\leq} \frac{R^2}{5}. \label{eq:bound_2_SGDA_neg_mon}
    \end{eqnarray}

    \paragraph{Upper bound for $\circledThree$.} Probability event $E_{T}$ implies
    \begin{eqnarray}
        \circledThree =  2\gamma^2 \sum\limits_{t = 0}^{T}\EE_{\Bxi^t}\left[\|\omega_t^u\|^2\right] \overset{\eqref{eq:variance_theta_omega_SGDA}, T \leq K+1}{\leq} \frac{36\gamma^2\sigma^2 (K+1)}{m} \overset{\eqref{eq:batch_gap_SGDA}}{\leq} \frac{R^2}{5}. \label{eq:bound_3_SGDA_neg_mon}
    \end{eqnarray}
    
    \paragraph{Upper bound for $\circledFour$.} We have
    \begin{equation*}
        2\gamma^2\EE_{\Bxi^t}\left[\|\omega_t^u\|^2 - \EE_{\Bxi^t}\left[\|\omega_t^u\|^2\right]\right] = 0.
    \end{equation*}
    Next, the summands in $\circledFour$ are bounded with probability $1$:
    \begin{eqnarray}
        2\gamma^2 \left|\|\omega_t^u\|^2 - \EE_{\Bxi^t}\left[\|\omega_t^u\|^2\right] \right| &\leq& 2\gamma^2\left( \|\omega_t^u\|^2 + \EE_{\Bxi^t}\left[\|\omega_t^u\|^2\right] \right) 
        \overset{\eqref{eq:theta_omega_magnitude_SGDA}}{\leq} 16\gamma^2 \lambda^2 \notag\\
        &\overset{\eqref{eq:lambda_gap_SGDA}}{\leq}& \frac{R^2}{225\ln\tfrac{6(K+1)}{\beta}} \leq \frac{R^2}{5\ln\tfrac{6(K+1)}{\beta}} \eqdef c. \label{eq:SGDA_neg_mon_technical_4_1}
    \end{eqnarray}
    Moreover, these summands have bounded conditional variances $\widetilde\sigma_t^2 \eqdef 4\gamma^4\EE_{\Bxi^t}\left[\left(\|\omega_t^u\|^2 - \EE_{\Bxi^t}\left[\|\omega_t^u\|^2\right]\right)^2\right]$:
    \begin{eqnarray}
        \widetilde\sigma_t^2 \overset{\eqref{eq:SGDA_neg_mon_technical_4_1}}{\leq} \frac{2\gamma^2 R^2}{225\ln \frac{6(K+1)}{\beta}} \EE_{\Bxi^t}\left[\left| \|\omega_t^u\|^2 - \EE_{\Bxi^t}\left[\|\omega_t^u\|^2\right] \right|\right]  \leq \frac{4\gamma^2 R^2}{225\ln \frac{6(K+1)}{\beta}} \EE_{\Bxi^t}\left[\|\omega_t^u\|^2\right]. \label{eq:SGDA_neg_mon_technical_4_2}
    \end{eqnarray}
    That is, sequence $\{\|\omega_t^u\|^2 - \EE_{\Bxi^t}[\|\omega_t^u\|^2]\}_{t\geq 0}$ is a bounded martingale difference sequence having bounded conditional variances $\{\widetilde\sigma_t^2\}_{t \geq 0}$. Applying Bernstein's inequality (Lemma~\ref{lem:Bernstein_ineq}) with $X_t = \|\omega_t^u\|^2 - \EE_{\Bxi^t}[\|\omega_t^u\|^2]$, $c$ defined in \eqref{eq:SGDA_neg_mon_technical_4_1}, $b = \frac{R^2}{5}$, $G = \tfrac{R^4}{150\ln\frac{6(K+1)}{\beta}}$, we get that
    \begin{equation*}
        \PP\left\{|\circledFour| > \frac{R^2}{5} \text{ and } \sum\limits_{t=0}^{T}\widetilde\sigma_t^2 \leq \frac{R^4}{150\ln\tfrac{6(K+1)}{\beta}}\right\} \leq 2\exp\left(- \frac{b^2}{2G + \nicefrac{2cb}{3}}\right) = \frac{\beta}{3(K+1)}.
    \end{equation*}
    In other words, $\PP\{E_{\circledFour}\} \geq 1 - \tfrac{\beta}{3(K+1)}$, where probability event $E_{\circledFour}$ is defined as
    \begin{equation}
        E_{\circledFour} = \left\{\text{either} \quad \sum\limits_{t=0}^{T}\widetilde\sigma_t^2 > \frac{R^4}{150\ln\tfrac{6(K+1)}{\beta}}\quad \text{or}\quad |\circledFour| \leq \frac{R^2}{5}\right\}. \label{eq:bound_4_SGDA_neg_mon}
    \end{equation}
    Moreover, we notice here that probability event $E_{T}$ implies that
    \begin{eqnarray}
        \sum\limits_{t=0}^{T}\widetilde\sigma_t^2 &\overset{\eqref{eq:SGDA_neg_mon_technical_4_2}}{\leq}& \frac{4\gamma^2 R^2}{225\ln \frac{6(K+1)}{\beta}}\sum\limits_{t=0}^{T} \EE_{\Bxi^t}\left[\|\omega_t^u\|^2\right] \overset{\eqref{eq:variance_theta_omega_SGDA}, T \leq K+1}{\leq} \frac{8\gamma^2 R^2 \sigma^2 (K+1)}{25m \ln\tfrac{6(K+1)}{\beta}} \notag\\
        &\overset{\eqref{eq:batch_gap_SGDA}}{\leq}& \frac{R^4}{150\ln\tfrac{6(K+1)}{\beta}}. \label{eq:bound_4_variances_SGDA_neg_mon}
    \end{eqnarray}

    \paragraph{Upper bound for $\circledFive$.} Probability event $E_{T}$ implies
    \begin{eqnarray}
        \circledFive &=&  2\gamma^2 \sum\limits_{t = 0}^{T}\|\omega_t^b\|^2  \overset{\eqref{eq:bias_theta_omega_SGDA}, T \leq K+1}{\leq} \frac{32\gamma^2 \sigma^4 (K+1)}{m^2 \lambda^2} \overset{\eqref{eq:lambda_gap_SGDA}}{=}  \frac{115200 \gamma^4 \sigma^4 (K+1) \ln^2 \frac{6(K+1)}{\beta}}{m^2 R^2} \notag\\ &\overset{\eqref{eq:batch_gap_SGDA}}{\leq}& \frac{R^2}{5}. \label{eq:bound_5_SGDA_neg_mon}
    \end{eqnarray}

    \paragraph{Upper bound for $\gamma \left\|\sum_{t=0}^T\omega_t\right\|$.} To handle this term, we introduce new notation:
    \begin{equation*}
        \zeta_l = \begin{cases} \gamma \sum\limits_{r=0}^{l-1}\omega_r,& \text{if } \left\|\gamma \sum\limits_{r=0}^{l-1}\omega_r\right\| \leq R,\\ 0, & \text{otherwise} \end{cases}
    \end{equation*}
    for $l = 1, 2, \ldots, T-1$. By definition, we have
    \begin{equation}
        \|\zeta_l\| \leq R.  \label{eq:gap_thm_SGDA_technical_8}
    \end{equation}
    Therefore, in view of \eqref{eq:induction_inequality_SGDA_gap}, probability event $E_{T}$ implies
    \begin{eqnarray}
        \gamma\left\|\sum\limits_{l = 0}^{T}\omega_l\right\| &=& \sqrt{\gamma^2\left\|\sum\limits_{l = 0}^{T}\omega_l\right\|^2}\notag\\
        &=& \sqrt{\gamma^2\sum\limits_{l=0}^{T}\|\omega_l\|^2 + 2\gamma\sum\limits_{l=0}^{T}\left\langle \gamma\sum\limits_{r=0}^{l-1} \omega_r, \omega_l \right\rangle} \notag\\
        &=& \sqrt{\gamma^2\sum\limits_{l=0}^{T}\|\omega_l\|^2 + 2\gamma \sum\limits_{l=0}^{T} \langle \zeta_l, \omega_l\rangle} \notag\\
        &\overset{\eqref{eq:thm_SGDA_technical_gap_5}}{\leq}& \sqrt{\circledThree + \circledFour + \circledFive + \underbrace{2\gamma \sum\limits_{l=0}^{T} \langle \zeta_l, \omega_l^u\rangle}_{\circledSix} + \underbrace{2\gamma \sum\limits_{l=0}^{T} \langle \zeta_l, \omega_l^b}_{\circledSeven}\rangle}. \label{eq:norm_sum_omega_bound_gap_SGDA}
    \end{eqnarray}
    Following similar steps as before, we bound $\circledSix$ and $\circledSeven$.

    \paragraph{Upper bound for $\circledSix$.} Since $\EE_{\Bxi^t}[\omega_t^u] = 0$, we have
    \begin{equation*}
        \EE_{\Bxi^t}\left[2\gamma\langle \zeta_t, \omega_t^u \rangle\right] = 0.
    \end{equation*}
    Next, the summands in $\circledSix$ are bounded with probability $1$:
    \begin{eqnarray}
        |2\gamma\langle \zeta_t, \omega_t^u \rangle | \leq 2\gamma  \|\zeta_t\|\cdot \|\omega_t^u\| 
        \overset{\eqref{eq:gap_thm_SGDA_technical_8},\eqref{eq:theta_omega_magnitude_SGDA}}{\leq} 4 \gamma R \lambda \overset{\eqref{eq:lambda_gap_SGDA}}{\leq} \frac{R^2}{5\ln\tfrac{6(K+1)}{\beta}} \eqdef c. \label{eq:SGDA_neg_mon_technical_6_1}
    \end{eqnarray}
    Moreover, these summands have bounded conditional variances $\widehat\sigma_t^2 \eqdef \EE_{\Bxi^t}\left[4\gamma^2 \langle \zeta_t, \omega_t^u \rangle^2\right]$:
    \begin{eqnarray}
        \widehat\sigma_t^2 \leq \EE_{\Bxi^t}\left[4\gamma^2 \|\zeta_t\|^2\cdot \|\omega_t^u\|^2\right] \overset{\eqref{eq:thm_SGDA_technical_gap_3}}{\leq} 4\gamma^2 R^2 \EE_{\Bxi^t}\left[\|\omega_t^u\|^2\right]. \label{eq:SGDA_neg_mon_technical_6_2}
    \end{eqnarray}
    That is, sequence $\{2\gamma \langle \zeta_t, \omega_t^u \rangle\}_{t\geq 0}$ is a bounded martingale difference sequence having bounded conditional variances $\{\widehat\sigma_t^2\}_{t \geq 0}$. Applying Bernstein's inequality (Lemma~\ref{lem:Bernstein_ineq}) with $X_t = 2\gamma \langle \zeta_t, \omega_t^u \rangle$, $c$ defined in \eqref{eq:SGDA_neg_mon_technical_1_1}, $b = \frac{R^2}{5}$, $G = \tfrac{R^4}{150\ln\frac{6(K+1)}{\beta}}$, we get that
    \begin{equation*}
        \PP\left\{|\circledSix| > \frac{R^2}{5} \text{ and } \sum\limits_{t=0}^{T}\widehat\sigma_t^2 \leq \frac{R^4}{150\ln\tfrac{6(K+1)}{\beta}}\right\} \leq 2\exp\left(- \frac{b^2}{2G + \nicefrac{2cb}{3}}\right) = \frac{\beta}{3(K+1)}.
    \end{equation*}
    In other words, $\PP\{E_{\circledSix}\} \geq 1 - \tfrac{\beta}{3(K+1)}$, where probability event $E_{\circledSix}$ is defined as
    \begin{equation}
        E_{\circledSix} = \left\{\text{either} \quad \sum\limits_{t=0}^{T}\widehat\sigma_t^2 > \frac{R^4}{150\ln\tfrac{6(K+1)}{\beta}}\quad \text{or}\quad |\circledSix| \leq \frac{R^2}{5}\right\}. \label{eq:bound_6_SGDA_neg_mon}
    \end{equation}
    Moreover, we notice here that probability event $E_{T}$ implies that
    \begin{eqnarray}
        \sum\limits_{t=0}^{T}\widehat\sigma_t^2 \overset{\eqref{eq:SGDA_neg_mon_technical_6_2}}{\leq} 4\gamma^2R^2\sum\limits_{t=0}^{T} \EE_{\Bxi^t}\left[\|\omega_t^u\|^2\right] \overset{\eqref{eq:variance_theta_omega_SGDA}, T \leq K+1}{\leq} \frac{72\gamma^2 R^2 \sigma^2 (K+1)}{m} \overset{\eqref{eq:batch_gap_SGDA}}{\leq} \frac{R^4}{150\ln\tfrac{6(K+1)}{\beta}}. \label{eq:bound_6_variances_SGDA_neg_mon}
    \end{eqnarray}
    
    \paragraph{Upper bound for $\circledSeven$.} Probability event $E_{T}$ implies
    \begin{eqnarray}
        \circledSeven &\leq& 2\gamma \sum\limits_{t=0}^{T}\|\zeta_t\| \cdot \|\omega_t^b\| \overset{\eqref{eq:gap_thm_SGDA_technical_8},\eqref{eq:bias_theta_omega_SGDA}, T \leq K+1}{\leq} \frac{8\gamma\sigma^2 R(K+1)}{m\lambda}\notag\\
        &\overset{\eqref{eq:lambda_gap_SGDA}}{=}& \frac{480\gamma^2\sigma^2(K+1) \ln \frac{6(K+1)}{\beta}}{m} \overset{\eqref{eq:batch_gap_SGDA}}{\leq} \frac{R^2}{5}. \label{eq:bound_7_SGDA_neg_mon}
    \end{eqnarray}

    \paragraph{Final derivation.} Putting all bounds together, we get that $E_{T}$ implies
    \begin{gather*}
        R_{T+1}^2 \overset{\eqref{eq:thm_SGDA_technical_gap_5}}{\leq} R^2 + \circledOne + \circledTwo + \circledThree + \circledFour + \circledFive ,\\
        2\gamma(T+1)\gap_R(x^T_{\avg}) \overset{\eqref{eq:thm_SGDA_technical_gap_5_1}}{\leq} 5R^2 + 2\gamma R\left\|\sum\limits_{t=0}^T\omega_t\right\| + 2\cdot\left(\circledOne + \circledTwo + \circledThree + \circledFour + \circledFive\right), \\
        \gamma\left\|\sum\limits_{l = 0}^{T}\omega_l\right\| \overset{\eqref{eq:norm_sum_omega_bound_gap_SGDA}}{\leq} \sqrt{\circledThree + \circledFour + \circledFive + \circledSix + \circledSeven},\\
        \circledTwo \overset{\eqref{eq:bound_2_SGDA_neg_mon}}{\leq} \frac{R^2}{5},\quad \circledThree \overset{\eqref{eq:bound_3_SGDA_neg_mon}}{\leq} \frac{R^2}{5},\quad \circledFive \overset{\eqref{eq:bound_5_SGDA_neg_mon}}{\leq} \frac{R^2}{5},\quad \circledSeven \overset{\eqref{eq:bound_7_SGDA_neg_mon}}{\leq} \frac{R^2}{5},\\
        \sum\limits_{t=0}^{T}\sigma_t^2 \overset{\eqref{eq:bound_1_variances_SGDA_neg_mon}}{\leq}  \frac{R^4}{150\ln\tfrac{6(K+1)}{\beta}},\quad \sum\limits_{t=0}^{T}\widetilde\sigma_t^2 \overset{\eqref{eq:bound_4_variances_SGDA_neg_mon}}{\leq} \frac{R^4}{150\ln\tfrac{6(K+1)}{\beta}},\quad \sum\limits_{t=0}^{T}\widehat\sigma_t^2 \overset{\eqref{eq:bound_6_variances_SGDA_neg_mon}}{\leq} \frac{R^4}{150\ln\tfrac{6(K+1)}{\beta}}.
    \end{gather*}
    Moreover, in view of \eqref{eq:bound_1_SGDA_neg_mon}, \eqref{eq:bound_4_SGDA_neg_mon}, \eqref{eq:bound_7_SGDA_neg_mon}, and our induction assumption, we have
    \begin{gather*}
        \PP\{E_{T}\} \geq 1 - \frac{T\beta}{K+1},\\
        \PP\{E_{\circledOne}\} \geq 1 - \frac{\beta}{3(K+1)}, \quad \PP\{E_{\circledFour}\} \geq 1 - \frac{\beta}{3(K+1)}, \quad \PP\{E_{\circledSix}\} \geq 1 - \frac{\beta}{3(K+1)},
    \end{gather*}
    where probability events $E_{\circledOne}$, $E_{\circledFour}$, and $E_{\circledSix}$ are defined as
    \begin{eqnarray}
        E_{\circledOne}&=& \left\{\text{either} \quad \sum\limits_{t=0}^{T}\sigma_t^2 > \frac{R^4}{150\ln\tfrac{6(K+1)}{\beta}}\quad \text{or}\quad |\circledOne| \leq \frac{R^2}{5}\right\},\notag\\
        E_{\circledFour}&=& \left\{\text{either} \quad \sum\limits_{t=0}^{T}\widetilde\sigma_t^2 > \frac{R^4}{150\ln\tfrac{6(K+1)}{\beta}}\quad \text{or}\quad |\circledFour| \leq \frac{R^2}{5}\right\},\notag\\
        E_{\circledSix}&=& \left\{\text{either} \quad \sum\limits_{t=0}^{T}\widehat\sigma_t^2 > \frac{R^4}{150\ln\tfrac{6(K+1)}{\beta}}\quad \text{or}\quad |\circledSix| \leq \frac{R^2}{5}\right\}.\notag
    \end{eqnarray}
    Putting all of these inequalities together, we obtain that probability event $E_{T} \cap E_{\circledOne} \cap E_{\circledFour} \cap E_{\circledSix}$ implies
    \begin{eqnarray*}
        R_{T+1}^2 &\leq& R^2 + \circledOne + \circledTwo + \circledThree + \circledFour + \circledFive \leq 2R^2,\\
        \gamma\left\|\sum\limits_{l = 0}^{T}\omega_l\right\| &\leq& \sqrt{\circledThree + \circledFour + \circledFive + \circledSix + \circledSeven} \leq R,\\
        2\gamma(T+1)\gap_R(x^T_{\avg}) &\leq& 5R^2 + 2\gamma R\left\|\sum\limits_{t=0}^T\omega_t\right\| + 2\cdot\left(\circledOne + \circledTwo + \circledThree + \circledFour + \circledFive\right)\\
        &\leq& 9R^2.
    \end{eqnarray*}
    Moreover, union bound for the probability events implies
    \begin{equation}
        \PP\{E_{T+1}\} \geq \PP\{E_{T} \cap E_{\circledOne} \cap E_{\circledFour} \cap E_{\circledSix}\} = 1 - \PP\{\overline{E}_{T} \cup \overline{E}_{\circledOne} \cup \overline{E}_{\circledFour} \cup \overline{E}_{\circledSix}\} \geq 1 - \frac{T\beta}{K+1}. \notag
    \end{equation}
    This is exactly what we wanted to prove (see the paragraph after inequality \eqref{eq:induction_inequality_SGDA_gap}). In particular, $E_{K}$ implies
    \begin{equation*}
        \gap_R(x_{\avg}^K) \leq \frac{9R^2}{2\gamma(K+1)},
    \end{equation*}
    which finishes the proof.
\end{proof}

\begin{corollary}\label{cor:main_result_SGDA}
    Let the assumptions of Theorem~\ref{thm:main_result_SGDA} hold. Then, the following statements hold.
    \begin{enumerate}
        \item \textbf{Large stepsize/large batch.} The choice of stepsize and batchsize
        \begin{equation}
            \gamma = \frac{1}{170\ell \ln \tfrac{6(K+1)}{\beta}},\quad m = \max\left\{1, \frac{972 (K+1) \sigma^2}{289\ell^2R^2 \ln\tfrac{6(K+1)}{\beta}}\right\} \label{eq:SGDA_large_step_large_batch}
        \end{equation}
        satisfies conditions \eqref{eq:gamma_gap_SGDA} and \eqref{eq:batch_gap_SGDA}. With such choice of $\gamma, m$, and the choice of $\lambda$ as in \eqref{eq:lambda_gap_SGDA}, the iterates produced by \ref{eq:clipped_SGDA} after $K$ iterations with probability at least $1-\beta$ satisfy
        \begin{equation}
            \gap(x_{\avg}^K) \leq \frac{765 \ell R^2 \ln \tfrac{6(K+1)}{\beta}}{K+1}. \label{eq:main_result_SGDA_large_batch}
        \end{equation}
        In particular, to guarantee $\gap(x_{\avg}^K) \leq \varepsilon$ with probability at least $1-\beta$ for some $\varepsilon > 0$ \ref{eq:clipped_SGDA} requires,
        \begin{gather}
            \cO\left(\frac{\ell R^2}{\varepsilon} \ln\left(\frac{\ell R^2}{\varepsilon\beta}\right)\right) \text{ iterations}, \label{eq:SGDA_iteration_complexity_large_batch}\\
            \cO\left(\max\left\{\frac{\ell R^2}{\varepsilon}, \frac{\sigma^2R^2}{\varepsilon^2}\right\} \ln\left(\frac{\ell R^2}{\varepsilon\beta}\right)\right) \text{ oracle calls}. \label{eq:SGDA_oracle_complexity_large_batch} 
        \end{gather}
        
        \item \textbf{Small stepsize/small batch.} The choice of stepsize and batchsize
        \begin{equation}
            \gamma = \min\left\{\frac{1}{170\ell \ln \tfrac{6(K+1)}{\beta}}, \frac{R}{180\sigma \sqrt{3(K+1) \ln\tfrac{6(K+1)}{\beta}}}\right\},\quad m = 1 \label{eq:SGDA_small_step_small_batch}
        \end{equation}
        satisfies conditions \eqref{eq:gamma_gap_SGDA} and \eqref{eq:batch_gap_SGDA}. With such choice of $\gamma, m$, and the choice of $\lambda$ as in \eqref{eq:lambda_gap_SGDA}, the iterates produced by \ref{eq:clipped_SGDA} after $K$ iterations with probability at least $1-\beta$ satisfy
        \begin{equation}
            \gap(x_{\avg}^K) \leq \max\left\{\frac{765 \ell R^2 \ln \tfrac{6(K+1)}{\beta}}{K+1}, \frac{810\sigma R \sqrt{3\ln\tfrac{6(K+1)}{\beta}}}{\sqrt{K+1}}\right\}. \label{eq:main_result_SGDA_small_batch}
        \end{equation}
        In particular, to guarantee $\gap(x_{\avg}^K) \leq \varepsilon$ with probability at least $1-\beta$ for some $\varepsilon > 0$, \ref{eq:clipped_SGDA} requires
        \begin{equation}
            \cO\left(\max\left\{\frac{\ell R^2}{\varepsilon} \ln\left(\frac{\ell R^2}{\varepsilon\beta}\right), \frac{\sigma^2R^2}{\varepsilon^2}\ln\left(\frac{\sigma^2R^2}{\varepsilon^2\beta}\right)\right\}\right) \text{ iterations/oracle calls}. \label{eq:SGDA_iteration_oracle_complexity_small_batch} 
        \end{equation}
    \end{enumerate}
\end{corollary}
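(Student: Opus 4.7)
The plan is to treat this as a direct corollary of Theorem~\ref{thm:main_result_SGDA}, mirroring the structure of Corollary~\ref{cor:main_result_gap_SEG} in the monotone \algname{SEG} case. For both parts, the proof has the same skeleton: (i) verify that the proposed $(\gamma,m)$ satisfies the theorem's hypotheses \eqref{eq:gamma_gap_SGDA} and \eqref{eq:batch_gap_SGDA}; (ii) plug $(\gamma,m)$ into the guarantee $\gap_R(x_{\avg}^K) \leq \nicefrac{9R^2}{(2\gamma(K+1))}$ from \eqref{eq:main_result_gap_SGDA}; (iii) invert the resulting bound in $\varepsilon$ to extract iteration and oracle complexities.

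For Part 1 (large stepsize/large batch), the choice $\gamma = \nicefrac{1}{(170\ell\ln\tfrac{6(K+1)}{\beta})}$ satisfies \eqref{eq:gamma_gap_SGDA} with equality, and substituting this $\gamma$ into the right-hand side of \eqref{eq:batch_gap_SGDA} produces exactly $\nicefrac{97200(K+1)\sigma^2}{(170^2\ell^2 R^2 \ln\tfrac{6(K+1)}{\beta})} = \nicefrac{972(K+1)\sigma^2}{(289\ell^2 R^2 \ln\tfrac{6(K+1)}{\beta})}$, matching the claimed $m$. Substituting into \eqref{eq:main_result_gap_SGDA} yields $\gap_R(x_{\avg}^K) \leq \nicefrac{765\ell R^2\ln\tfrac{6(K+1)}{\beta}}{(K+1)}$. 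To enforce this $\leq \varepsilon$, I will solve for $K$ using the standard argument that $K = \Theta(a\ln(a/\beta))$ suffices when $K\geq a\ln\tfrac{6(K+1)}{\beta}$ with $a = \nicefrac{765\ell R^2}{\varepsilon}$; this gives the iteration complexity \eqref{eq:SGDA_iteration_complexity_large_batch}. The oracle complexity follows by multiplying by $m$ and taking the max, exactly as in Corollary~\ref{cor:main_result_gap_SEG}.

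For Part 2 (small stepsize/small batch), $m=1$ forces the condition \eqref{eq:batch_gap_SGDA} to become $\gamma \leq \nicefrac{R}{\sqrt{97200(K+1)\sigma^2 \ln\tfrac{6(K+1)}{\beta}}}$, and the clean numerical identity $97200 = 180^2 \cdot 3$ rewrites this as $\gamma \leq \nicefrac{R}{(180\sigma\sqrt{3(K+1)\ln\tfrac{6(K+1)}{\beta}})}$, matching the second term in the min defining $\gamma$ in \eqref{eq:SGDA_small_step_small_batch}. Taking the min with $\nicefrac{1}{(170\ell\ln\tfrac{6(K+1)}{\beta})}$ ensures \eqref{eq:gamma_gap_SGDA} simultaneously. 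Substitution into \eqref{eq:main_result_gap_SGDA} then produces the two-regime bound \eqref{eq:main_result_SGDA_small_batch} as the max of the deterministic and stochastic terms. Inverting each term in $\varepsilon$ separately produces the two complexity branches combined in \eqref{eq:SGDA_iteration_oracle_complexity_small_batch}.

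There is no real obstacle here: the entire corollary is a mechanical substitution into Theorem~\ref{thm:main_result_SGDA} plus elementary inversion in $\varepsilon$. The only mildly delicate point is the self-referential step in inverting $K \geq C \ln\tfrac{6(K+1)}{\beta}$, which I will handle identically to the proof of Corollary~\ref{cor:main_result_gap_SEG}: bound $\ln(6(K+1)/\beta)$ by a logarithmic factor in $C/\beta$ after choosing $K$ a constant factor above $C\ln(C/\beta)$. All constant arithmetic ($170^2 = 28900$, $97200/28900 = 972/289$, $97200 = 180^2\cdot 3$) is routine.
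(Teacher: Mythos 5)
Your proposal is correct and follows essentially the same route as the paper's own proof: verify that the stated $(\gamma,m)$ satisfy \eqref{eq:gamma_gap_SGDA} and \eqref{eq:batch_gap_SGDA} (with the same constant arithmetic $97200/170^2 = 972/289$ and $97200 = 180^2\cdot 3$), substitute into the bound $\gap_R(x_{\avg}^K)\leq \nicefrac{9R^2}{2\gamma(K+1)}$ of Theorem~\ref{thm:main_result_SGDA}, and invert in $\varepsilon$, with oracle complexity obtained from $m(K+1)$ (one oracle call per iteration for \ref{eq:clipped_SGDA}, rather than the factor $2$ of the \algname{SEG} case, which does not affect the $\cO(\cdot)$ statements).
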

\begin{proof}
    \begin{enumerate}
        \item \textbf{Large stepsize/large batch.} First of all, we verify that the choice of $\gamma$ and $m$ from \eqref{eq:SGDA_large_step_large_batch} satisfies conditions \eqref{eq:gamma_gap_SGDA} and \eqref{eq:batch_gap_SGDA}: \eqref{eq:gamma_gap_SGDA} trivially holds and \eqref{eq:batch_gap_SGDA} holds since
        \begin{equation*}
            m = \max\left\{1, \frac{972 (K+1) \sigma^2}{289\ell^2R^2 \ln\tfrac{6(K+1)}{\beta}}\right\} = \max\left\{1, \frac{97200 (K+1) \gamma^2\sigma^2 \ln\tfrac{6(K+1)}{\beta}}{R^2}\right\}.
        \end{equation*}
        Therefore, applying Theorem~\ref{thm:main_result_SGDA}, we derive that with probability at least $1-\beta$
        \begin{equation*}
            \gap(x_{\avg}^K) \leq \frac{9 R^2}{2\gamma(K+1)} \overset{\eqref{eq:SGDA_large_step_large_batch}}{\leq} \frac{765 \ell R^2 \ln \tfrac{6(K+1)}{\beta}}{K+1}.
        \end{equation*}
        To guarantee $\gap(x_{\avg}^K) \leq \varepsilon$, we choose $K$ in such a way that the right-hand side of the above inequality is smaller than $\varepsilon$ that gives
        \begin{eqnarray*}
            K = \cO\left(\frac{\ell R^2}{\varepsilon} \ln\left(\frac{\ell R^2}{\varepsilon\beta}\right)\right).
        \end{eqnarray*}
        The total number of oracle calls equals
        \begin{eqnarray*}
            m(K+1) &\overset{\eqref{eq:SGDA_large_step_large_batch}}{=}&  \max\left\{K+1, \frac{972 (K+1)^2 \sigma^2}{289\ell^2R^2 \ln\tfrac{6(K+1)}{\beta}}\right\}\\
            &=& \cO\left(\max\left\{\frac{\ell R^2}{\varepsilon}, \frac{\sigma^2R^2}{\varepsilon^2}\right\} \ln\left(\frac{\ell R^2}{\varepsilon\beta}\right)\right).
        \end{eqnarray*}
        
        \item \textbf{Small stepsize/small batch.} First of all, we verify that the choice of $\gamma$ and $m$ from \eqref{eq:SGDA_small_step_small_batch} satisfies conditions \eqref{eq:gamma_gap_SGDA} and \eqref{eq:batch_gap_SGDA}:
        \begin{eqnarray*}
            \gamma &=& \min\left\{\frac{1}{170\ell \ln \tfrac{6(K+1)}{\beta}}, \frac{R}{180\sigma \sqrt{3(K+1) \ln\tfrac{6(K+1)}{\beta}}}\right\} \leq \frac{1}{170\ell \ln \tfrac{6(K+1)}{\beta}},\\
            m &=& 1 \overset{\eqref{eq:SGDA_small_step_small_batch}}{\geq} \frac{97200 (K+1) \gamma^2\sigma^2 \ln\tfrac{6(K+1)}{\beta}}{R^2}.
        \end{eqnarray*}
        Therefore, applying Theorem~\ref{thm:main_result_SGDA}, we derive that with probability at least $1-\beta$
        \begin{eqnarray*}
            \gap(x_{\avg}^K) &\leq& \frac{9 R^2}{2\gamma(K+1)}\\
            &\overset{\eqref{eq:SGDA_small_step_small_batch}}{=}& \max\left\{\frac{765 \ell R^2 \ln \tfrac{6(K+1)}{\beta}}{K+1}, \frac{810\sigma R \sqrt{3\ln\tfrac{6(K+1)}{\beta}}}{\sqrt{K+1}}\right\}.
        \end{eqnarray*}
        To guarantee $\gap(x_{\avg}^K) \leq \varepsilon$, we choose $K$ in such a way that the right-hand side of the above inequality is smaller than $\varepsilon$ that gives
        \begin{eqnarray*}
            K = \cO\left(\max\left\{\frac{\ell R^2}{\varepsilon} \ln\left(\frac{\ell R^2}{\varepsilon\beta}\right), \frac{\sigma^2R^2}{\varepsilon^2}\ln\left(\frac{\sigma^2R^2}{\varepsilon^2\beta}\right)\right\}\right).
        \end{eqnarray*}
        The total number of oracle calls equals $K+1$.
    \end{enumerate}
\end{proof}

\subsection{Star-Cocoercive Case}

\begin{theorem}\label{thm:main_result_SGDA_sq_norm}
    Let Assumptions~\ref{as:UBV}, \ref{as:star_cocoercivity}, hold for $Q = B_{2R}(x^*)$, where $R \geq R_0 \eqdef \|x^0 - x^*\|$, and
    \begin{gather}
        \gamma \leq \frac{1}{170 \ell \ln \tfrac{4(K+1)}{\beta}}, \label{eq:gamma_comon_SGDA}\\
        \lambda = \frac{R}{60\gamma \ln \tfrac{4(K+1)}{\beta}},  \label{eq:lambda_comon_SGDA}\\
        m \geq \max\left\{1, \frac{97200 (K+1) \gamma^2\sigma^2 \ln\tfrac{4(K+1)}{\beta}}{R^2}\right\}, \label{eq:batch_comon_SGDA}
    \end{gather}
    for some $K \geq 0$ and $\beta \in (0,1]$ such that $\ln \tfrac{4(K+1)}{\beta} \geq 1$. Then, after $K$ iterations the iterates produced by \ref{eq:clipped_SGDA} with probability at least $1 - \beta$ satisfy 
    \begin{equation}
        \frac{1}{K+1}\sum\limits_{k=0}^{K}\|F(x^k)\|^2 \leq \frac{2\ell R^2}{\gamma(K+1)}. \label{eq:main_result_comon_SGDA}
    \end{equation}
\end{theorem}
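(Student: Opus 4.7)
The strategy mirrors the proof of Theorem~\ref{thm:main_result_SGDA}, but is strictly simpler because we do not need to control $\gap_R(x^K_{\avg})$: it suffices to keep the iterates inside $B_{2R}(x^*)$ so that star-cocoercivity is available. Concretely, for each $k=0,\ldots,K+1$ define the probability event $E_k$ by requiring $\|x^t-x^*\|^2 \leq 2R^2$ for all $t=0,1,\ldots,k$ simultaneously, and prove by induction on $k$ that $\PP\{E_k\}\geq 1-\nicefrac{k\beta}{(K+1)}$. The base case $k=0$ is immediate. For the inductive step, assume $\PP\{E_T\}\geq 1-\nicefrac{T\beta}{(K+1)}$; since $E_T$ forces $x^t\in B_{2R}(x^*)$ for $t=0,\ldots,T$, Lemma~\ref{lem:optimization_lemma_SGDA} applies (note $\gamma\leq \nicefrac{1}{\ell}$ so that $\gamma(\nicefrac{2}{\ell}-\gamma)\geq \nicefrac{\gamma}{\ell}\geq 0$) and gives the recursion $\|x^{T+1}-x^*\|^2\leq R^2 + 2\gamma\sum_{t=0}^{T}\langle x^t-x^*-\gamma F(x^t),\omega_t\rangle + \gamma^2\sum_{t=0}^{T}\|\omega_t\|^2$.

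To control the noise terms, introduce the truncated vectors $\eta_t = x^t-x^*-\gamma F(x^t)$ if this norm is at most $3R$ and $\eta_t=0$ otherwise, so $\|\eta_t\|\leq 3R$ deterministically. Under $E_T$, star-cocoercivity yields $\|F(x^t)\|\leq \ell\|x^t-x^*\|\leq \sqrt{2}\ell R \leq \nicefrac{\lambda}{2}$ by \eqref{eq:gamma_comon_SGDA}-\eqref{eq:lambda_comon_SGDA}, hence $\|x^t-x^*-\gamma F(x^t)\|\leq (1+\gamma\ell)\sqrt{2}R\leq 3R$ and $\eta_t=x^t-x^*-\gamma F(x^t)$. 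Decompose $\omega_t=\omega_t^u+\omega_t^b$ into the unbiased and bias parts as in \eqref{eq:gap_thm_SEG_technical_5}, then split the recursion into five terms $\circledOne,\ldots,\circledFive$ exactly as in \eqref{eq:thm_SGDA_technical_gap_5}.

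The bias-controlled terms $\circledTwo,\circledThree,\circledFive$ are bounded deterministically under $E_T$ using \eqref{eq:bias_theta_omega_SGDA}-\eqref{eq:variance_theta_omega_SGDA} together with the batch-size condition \eqref{eq:batch_comon_SGDA}, each yielding $\leq \nicefrac{R^2}{5}$ (actually $\nicefrac{R^2}{4}$ suffices here since we only need four terms' worth of slack). The martingale terms $\circledOne=2\gamma\sum\langle \eta_t,\omega_t^u\rangle$ and $\circledFour=2\gamma^2\sum(\|\omega_t^u\|^2-\EE\|\omega_t^u\|^2)$ are handled by Bernstein's inequality (Lemma~\ref{lem:Bernstein_ineq}), using $\|\omega_t^u\|\leq 2\lambda$ for the almost-sure bound and \eqref{eq:variance_theta_omega_SGDA} for the conditional variance; with the choice of $\lambda$ in \eqref{eq:lambda_comon_SGDA}, each event fails with probability at most $\nicefrac{\beta}{2(K+1)}$. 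A union bound then gives $\PP\{E_{T+1}\}\geq 1-\nicefrac{(T+1)\beta}{(K+1)}$, closing the induction with $A=\ln\nicefrac{4(K+1)}{\beta}$.

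Finally, on $E_{K+1}$ (which holds with probability $\geq 1-\beta$), Lemma~\ref{lem:optimization_lemma_SGDA} gives $\frac{\gamma}{\ell(K+1)}\sum_{k=0}^K\|F(x^k)\|^2 \leq \frac{R^2 - R_{K+1}^2}{K+1} + \text{(the same noise terms)}\leq \frac{2R^2}{K+1}$, which rearranges to \eqref{eq:main_result_comon_SGDA}. The only subtle point (the ``hard part'') is the careful bookkeeping of numerical constants so that the five noise contributions plus $R^2$ sum to at most $2R^2$ under the prescribed choices of $\gamma,\lambda,m$; this is a direct adaptation of the constant tracking performed in the proof of Theorem~\ref{thm:main_result_SGDA} with one fewer martingale term to account for (no $\circledSix$), which is why $A=\ln\nicefrac{4(K+1)}{\beta}$ rather than $\ln\nicefrac{6(K+1)}{\beta}$.
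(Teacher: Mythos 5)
Your proposal is correct and follows essentially the same route as the paper: the same induction on the event $\|x^t-x^*\|^2\le 2R^2$, reuse of Lemma~\ref{lem:optimization_lemma_SGDA} together with the five-term decomposition \eqref{eq:thm_SGDA_technical_gap_5} from the monotone case, Bernstein bounds only for $\circledOne$ and $\circledFour$ (hence $\ln\tfrac{4(K+1)}{\beta}$ and failure probability $\nicefrac{\beta}{2(K+1)}$ each), and the final rearrangement using $\tfrac{2}{\ell}-\gamma\ge\tfrac{1}{\ell}$. The only (immaterial) slip is the parenthetical claim that $\nicefrac{R^2}{4}$ per term would suffice --- there are still five noise terms in the recursion, so roughly $\nicefrac{R^2}{5}$ each is needed to keep their sum at $R^2$ --- but your actual bounds use $\nicefrac{R^2}{5}$, exactly as in the paper.
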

\begin{proof}
    We introduce new notation: $R_k = \|x^k - x^*\|$ for all $k\geq 0$. The proof is based on deriving via induction that $R_k^2 \leq CR^2$ for some numerical constant $C > 0$. In particular, for each $k = 0,\ldots, K+1$ we define probability event $E_k$ as follows: inequalities
    \begin{gather}
        \|x^t - x^*\|^2 \leq 2R^2, \label{eq:induction_inequality_SGDA}
    \end{gather}
    hold for $t = 0,1,\ldots,k$ simultaneously. Our goal is to prove that $\PP\{E_k\} \geq  1 - \nicefrac{k\beta}{(K+1)}$ for all $k = 0,1,\ldots,K+1$. We notice that inequalities \eqref{eq:thm_SGDA_technical_gap_0} and \eqref{eq:thm_SGDA_technical_gap_5} are derived without assuming monotonicity of $F$. Therefore, following exactly the same step as in the proof of Theorem~\ref{thm:main_result_SGDA} (up to the replacement of $\ln \frac{6(K+1)}{\beta}$ by $\ln \frac{4(K+1)}{\beta}$), we get that
    \begin{gather*}
        R_{T+1}^2 \overset{\eqref{eq:thm_SGDA_technical_gap_5}}{\leq} R^2 + \circledOne + \circledTwo + \circledThree + \circledFour + \circledFive ,\\
        \circledTwo \overset{\eqref{eq:bound_2_SGDA_neg_mon}}{\leq} \frac{R^2}{5},\quad \circledThree \overset{\eqref{eq:bound_3_SGDA_neg_mon}}{\leq} \frac{R^2}{5},\quad \circledFive \overset{\eqref{eq:bound_5_SGDA_neg_mon}}{\leq} \frac{R^2}{5},\\
        \sum\limits_{t=0}^{T}\sigma_t^2 \overset{\eqref{eq:bound_1_variances_SGDA_neg_mon}}{\leq}  \frac{R^4}{150\ln\tfrac{4(K+1)}{\beta}},\quad \sum\limits_{t=0}^{T}\widetilde\sigma_t^2 \overset{\eqref{eq:bound_4_variances_SGDA_neg_mon}}{\leq} \frac{R^4}{150\ln\tfrac{4(K+1)}{\beta}}.
    \end{gather*}
    Moreover, in view of \eqref{eq:bound_1_SGDA_neg_mon}, \eqref{eq:bound_4_SGDA_neg_mon}, and our induction assumption, we have
    \begin{gather*}
        \PP\{E_{T}\} \geq 1 - \frac{T\beta}{K+1},\\
        \PP\{E_{\circledOne}\} \geq 1 - \frac{\beta}{2(K+1)}, \quad \PP\{E_{\circledFour}\} \geq 1 - \frac{\beta}{2(K+1)},
    \end{gather*}
    where probability events $E_{\circledOne}$, and $E_{\circledFour}$ are defined as
    \begin{eqnarray}
        E_{\circledOne}&=& \left\{\text{either} \quad \sum\limits_{t=0}^{T}\sigma_t^2 > \frac{R^4}{150\ln\tfrac{4(K+1)}{\beta}}\quad \text{or}\quad |\circledOne| \leq \frac{R^2}{5}\right\},\notag\\
        E_{\circledFour}&=& \left\{\text{either} \quad \sum\limits_{t=0}^{T}\widetilde\sigma_t^2 > \frac{R^4}{150\ln\tfrac{4(K+1)}{\beta}}\quad \text{or}\quad |\circledFour| \leq \frac{R^2}{5}\right\}.\notag
    \end{eqnarray}
    Putting all of these inequalities together, we obtain that probability event $E_{T-1} \cap E_{\circledOne} \cap E_{\circledFour}$ implies
    \begin{eqnarray*}
        R_{T+1}^2 \leq R^2 + \circledOne + \circledTwo + \circledThree + \circledFour + \circledFive \leq 2R^2.
    \end{eqnarray*}
    Moreover, union bound for the probability events implies
    \begin{equation}
        \PP\{E_{T+1}\} \geq \PP\{E_{T} \cap E_{\circledOne} \cap E_{\circledFour}\} = 1 - \PP\{\overline{E}_{T} \cup \overline{E}_{\circledOne} \cup \overline{E}_{\circledFour}\} \geq 1 - \frac{T\beta}{K+1}.
    \end{equation}
    This is exactly what we wanted to prove (see the paragraph after inequality \eqref{eq:induction_inequality_SGDA}). In particular, $E_{K}$ implies
    \begin{eqnarray*}
        \frac{1}{K+1}\sum\limits_{k=0}^{K}\|F(x^k)\|^2 &\overset{\eqref{eq:thm_SGDA_technical_gap_0}}{\leq}& \frac{\ell(R^2 - R_{K+1}^2)}{\gamma(K+1)}  + \frac{\ell(\circledOne + \circledTwo + \circledThree + \circledFour + \circledFive)}{\gamma(K+1)}\\
        &\leq& \frac{2\ell R^2}{\gamma (K+1)}.
    \end{eqnarray*}
    This finishes the proof.
\end{proof}

\begin{corollary}\label{cor:main_result_SGDA_sq_norm}
    Let the assumptions of Theorem~\ref{thm:main_result_SGDA_sq_norm} hold. Then, the following statements hold.
    \begin{enumerate}
        \item \textbf{Large stepsize/large batch.} The choice of stepsize and batchsize
        \begin{equation}
            \gamma = \frac{1}{170\ell \ln \tfrac{4(K+1)}{\beta}},\quad m = \max\left\{1, \frac{972 (K+1) \sigma^2}{289\ell^2R^2 \ln\tfrac{4(K+1)}{\beta}}\right\} \label{eq:sq_norm_SGDA_large_step_large_batch}
        \end{equation}
        satisfies conditions \eqref{eq:gamma_comon_SGDA} and \eqref{eq:batch_comon_SGDA}. With such choice of $\gamma, m$, and the choice of $\lambda$ as in \eqref{eq:lambda_comon_SGDA}, the iterates produced by \ref{eq:clipped_SGDA} after $K$ iterations with probability at least $1-\beta$ satisfy
        \begin{equation}
            \frac{1}{K+1}\sum\limits_{k=0}^K \|F(x^k)\|^2 \leq \frac{340 \ell^2 R^2 \ln \tfrac{4(K+1)}{\beta}}{K+1}. \label{eq:main_result_sq_norm_SGDA_large_batch}
        \end{equation}
        In particular, to guarantee $\frac{1}{K+1}\sum_{k=0}^K \|F(x^k)\|^2 \leq \varepsilon$ with probability at least $1-\beta$ for some $\varepsilon > 0$ \ref{eq:clipped_SGDA} requires,
        \begin{gather}
            \cO\left(\frac{\ell^2 R^2}{\varepsilon} \ln\left(\frac{\ell^2 R^2}{\varepsilon\beta}\right)\right) \text{ iterations}, \label{eq:sq_norm_SGDA_iteration_complexity_large_batch}\\
            \cO\left(\max\left\{\frac{\ell^2 R^2}{\varepsilon}, \frac{\ell^2\sigma^2R^2}{\varepsilon^2}\right\} \ln\left(\frac{\ell^2 R^2}{\varepsilon\beta}\right)\right) \text{ oracle calls}. \label{eq:sq_norm_SGDA_oracle_complexity_large_batch} 
        \end{gather}
        
        \item \textbf{Small stepsize/small batch.} The choice of stepsize and batchsize
        \begin{equation}
            \gamma = \min\left\{\frac{1}{170\ell \ln \tfrac{4(K+1)}{\beta}}, \frac{R}{180\sigma \sqrt{3(K+1) \ln\tfrac{4(K+1)}{\beta}}}\right\},\quad m = 1 \label{eq:sq_norm_SGDA_small_step_small_batch}
        \end{equation}
        satisfies conditions \eqref{eq:gamma_comon_SGDA} and \eqref{eq:batch_comon_SGDA}. With such choice of $\gamma, m$, and the choice of $\lambda$ as in \eqref{eq:lambda_comon_SGDA}, the iterates produced by \ref{eq:clipped_SGDA} after $K$ iterations with probability at least $1-\beta$ satisfy
        \begin{equation}
            \frac{1}{K+1}\sum\limits_{k=0}^K \|F(x^k)\|^2 \leq \max\left\{\frac{340 \ell^2 R^2 \ln \tfrac{4(K+1)}{\beta}}{K+1}, \frac{360\ell\sigma R \sqrt{3\ln\tfrac{4(K+1)}{\beta}}}{\sqrt{K+1}}\right\}. \label{eq:main_result_sq_norm_SGDA_small_batch}
        \end{equation}
        In particular, to guarantee $\frac{1}{K+1}\sum_{k=0}^K \|F(x^k)\|^2 \leq \varepsilon$ with probability at least $1-\beta$ for some $\varepsilon > 0$, \ref{eq:clipped_SGDA} requires
        \begin{equation}
            \cO\left(\max\left\{\frac{\ell^2 R^2}{\varepsilon} \ln\left(\frac{\ell^2 R^2}{\varepsilon\beta}\right), \frac{\ell^2\sigma^2R^2}{\varepsilon^2}\ln\left(\frac{\ell^2\sigma^2R^2}{\varepsilon^2\beta}\right)\right\}\right) \text{ iterations/oracle calls}. \label{eq:sq_norm_SGDA_iteration_oracle_complexity_small_batch} 
        \end{equation}
    \end{enumerate}
\end{corollary}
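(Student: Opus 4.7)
The plan is to mirror the structure of Corollaries~\ref{cor:main_result_gap_SEG} and \ref{cor:main_result_SGDA}, since Corollary~\ref{cor:main_result_SGDA_sq_norm} is obtained from Theorem~\ref{thm:main_result_SGDA_sq_norm} in exactly the same way those earlier corollaries were derived from their respective theorems. There are essentially two mechanical reductions to carry out, one per case, and no new probabilistic content.

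For the \emph{large stepsize/large batch} case, I would first verify that the specified $\gamma = \nicefrac{1}{(170\ell \ln\tfrac{4(K+1)}{\beta})}$ trivially satisfies \eqref{eq:gamma_comon_SGDA}, and that the algebraic identity
\begin{equation*}
\frac{972(K+1)\sigma^2}{289\ell^2R^2\ln\tfrac{4(K+1)}{\beta}} = \frac{97200(K+1)\gamma^2\sigma^2\ln\tfrac{4(K+1)}{\beta}}{R^2}
\end{equation*}
shows that $m$ as defined in \eqref{eq:sq_norm_SGDA_large_step_large_batch} meets condition \eqref{eq:batch_comon_SGDA}. Then Theorem~\ref{thm:main_result_SGDA_sq_norm} applies and substituting $\gamma$ into \eqref{eq:main_result_comon_SGDA} yields \eqref{eq:main_result_sq_norm_SGDA_large_batch}. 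To obtain the iteration complexity \eqref{eq:sq_norm_SGDA_iteration_complexity_large_batch}, I would demand the right-hand side of \eqref{eq:main_result_sq_norm_SGDA_large_batch} be at most $\varepsilon$ and invert, using the standard fact that the resulting transcendental inequality of the form $K \geq C\ln K$ has solution $K = \cO(C\ln C)$ up to absolute constants. The oracle complexity \eqref{eq:sq_norm_SGDA_oracle_complexity_large_batch} follows by multiplying by $m$ and taking the maximum of the two regimes.

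For the \emph{small stepsize/small batch} case, the verification of \eqref{eq:gamma_comon_SGDA} is immediate from the minimum structure of $\gamma$ in \eqref{eq:sq_norm_SGDA_small_step_small_batch}, and to check \eqref{eq:batch_comon_SGDA} with $m=1$ one plugs the $\gamma$-branch $\gamma \leq \nicefrac{R}{(180\sigma\sqrt{3(K+1)\ln\tfrac{4(K+1)}{\beta}})}$ into $\nicefrac{97200(K+1)\gamma^2\sigma^2\ln\tfrac{4(K+1)}{\beta}}{R^2}$ and confirms the result is at most $1$. Applying Theorem~\ref{thm:main_result_SGDA_sq_norm} and splitting on which branch of the minimum realizes $\gamma$ gives the two-term bound \eqref{eq:main_result_sq_norm_SGDA_small_batch}. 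Requiring each term to be below $\varepsilon$ and inverting (again via the $K \geq C\ln K \Rightarrow K = \cO(C\ln C)$ heuristic) yields \eqref{eq:sq_norm_SGDA_iteration_oracle_complexity_small_batch}; the oracle complexity equals the iteration complexity because $m=1$.

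There is no real obstacle here: every step is a constant-verification or a routine inversion of $K = \cO((\cdot)\ln(\cdot))$, entirely parallel to Corollary~\ref{cor:main_result_SGDA}. The only place requiring minor attention is tracking that the constants from Theorem~\ref{thm:main_result_SGDA_sq_norm} (namely $170$, $60$, $97200$, and the factor $2$ in front of $\ell R^2$) propagate correctly to give $340$ and $360\sqrt{3}$ in the final bounds, and that the $\ln\tfrac{4(K+1)}{\beta}$ (rather than $\ln\tfrac{6(K+1)}{\beta}$ as in Corollary~\ref{cor:main_result_SGDA}) is used throughout.
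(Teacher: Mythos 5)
Your proposal is correct and follows essentially the same route as the paper's own proof: verify that \eqref{eq:sq_norm_SGDA_large_step_large_batch} and \eqref{eq:sq_norm_SGDA_small_step_small_batch} satisfy \eqref{eq:gamma_comon_SGDA}--\eqref{eq:batch_comon_SGDA}, plug $\gamma$ into \eqref{eq:main_result_comon_SGDA} from Theorem~\ref{thm:main_result_SGDA_sq_norm}, and invert the resulting bound for $K$, with oracle complexity $m(K+1)$ (resp.\ $K+1$ when $m=1$). Your constant tracking ($2\ell R^2/\gamma$ giving $340$ and $360\sqrt{3}$, with $\ln\tfrac{4(K+1)}{\beta}$ throughout) matches the paper exactly.
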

\begin{proof}
    \begin{enumerate}
        \item \textbf{Large stepsize/large batch.} First of all, we verify that the choice of $\gamma$ and $m$ from \eqref{eq:sq_norm_SGDA_large_step_large_batch} satisfies conditions \eqref{eq:gamma_comon_SGDA} and \eqref{eq:batch_comon_SGDA}: \eqref{eq:gamma_comon_SGDA} trivially holds and \eqref{eq:batch_comon_SGDA} holds since
        \begin{equation*}
            m = \max\left\{1, \frac{972 (K+1) \sigma^2}{289\ell^2R^2 \ln\tfrac{4(K+1)}{\beta}}\right\} = \max\left\{1, \frac{97200 (K+1) \gamma^2\sigma^2 \ln\tfrac{4(K+1)}{\beta}}{R^2}\right\}.
        \end{equation*}
        Therefore, applying Theorem~\ref{thm:main_result_SGDA_sq_norm}, we derive that with probability at least $1-\beta$
        \begin{equation*}
            \frac{1}{K+1}\sum\limits_{k=0}^K \|F(x^k)\|^2 \leq \frac{2\ell R^2}{\gamma(K+1)} \overset{\eqref{eq:sq_norm_SGDA_large_step_large_batch}}{\leq} \frac{340 \ell^2 R^2 \ln \tfrac{4(K+1)}{\beta}}{K+1}.
        \end{equation*}
        To guarantee $\frac{1}{K+1}\sum_{k=0}^K \|F(x^k)\|^2 \leq \varepsilon$, we choose $K$ in such a way that the right-hand side of the above inequality is smaller than $\varepsilon$ that gives
        \begin{eqnarray*}
            K = \cO\left(\frac{\ell^2 R^2}{\varepsilon} \ln\left(\frac{\ell^2 R^2}{\varepsilon\beta}\right)\right).
        \end{eqnarray*}
        The total number of oracle calls equals
        \begin{eqnarray*}
            m(K+1) &\overset{\eqref{eq:sq_norm_SGDA_large_step_large_batch}}{=}&  \max\left\{K+1, \frac{972 (K+1)^2 \sigma^2}{289\ell^2R^2 \ln\tfrac{4(K+1)}{\beta}}\right\}\\
            &=& \cO\left(\max\left\{\frac{\ell^2 R^2}{\varepsilon}, \frac{\ell^2\sigma^2R^2}{\varepsilon^2}\right\} \ln\left(\frac{\ell^2 R^2}{\varepsilon\beta}\right)\right).
        \end{eqnarray*}
        
        \item \textbf{Small stepsize/small batch.} First of all, we verify that the choice of $\gamma$ and $m$ from \eqref{eq:sq_norm_SGDA_small_step_small_batch} satisfies conditions \eqref{eq:gamma_comon_SGDA} and \eqref{eq:batch_comon_SGDA}:
        \begin{eqnarray*}
            \gamma &=& \min\left\{\frac{1}{170\ell \ln \tfrac{4(K+1)}{\beta}}, \frac{R}{180\sigma \sqrt{3(K+1) \ln\tfrac{4(K+1)}{\beta}}}\right\} \leq \frac{1}{170\ell \ln \tfrac{4(K+1)}{\beta}},\\
            m &=& 1 \overset{\eqref{eq:sq_norm_SGDA_small_step_small_batch}}{\geq} \frac{97200 (K+1) \gamma^2\sigma^2 \ln\tfrac{4(K+1)}{\beta}}{R^2}.
        \end{eqnarray*}
        Therefore, applying Theorem~\ref{thm:main_result_SGDA_sq_norm}, we derive that with probability at least $1-\beta$
        \begin{eqnarray*}
            \frac{1}{K+1}\sum\limits_{k=0}^K \|F(x^k)\|^2 &\leq& \frac{2\ell R^2}{\gamma(K+1)}\\
            &\overset{\eqref{eq:sq_norm_SGDA_small_step_small_batch}}{=}& \max\left\{\frac{340 \ell^2 R^2 \ln \tfrac{4(K+1)}{\beta}}{K+1}, \frac{360\ell\sigma R \sqrt{3\ln\tfrac{4(K+1)}{\beta}}}{\sqrt{K+1}}\right\}.
        \end{eqnarray*}
        To guarantee $\frac{1}{K+1}\sum_{k=0}^K \|F(x^k)\|^2 \leq \varepsilon$, we choose $K$ in such a way that the right-hand side of the above inequality is smaller than $\varepsilon$ that gives
        \begin{eqnarray*}
            K = \cO\left(\max\left\{\frac{\ell^2 R^2}{\varepsilon} \ln\left(\frac{\ell^2 R^2}{\varepsilon\beta}\right), \frac{\ell^2\sigma^2R^2}{\varepsilon^2}\ln\left(\frac{\ell^2\sigma^2R^2}{\varepsilon^2\beta}\right)\right\}\right).
        \end{eqnarray*}
        The total number of oracle calls equals $K+1$.
    \end{enumerate}
\end{proof}

\subsection{Quasi-Strongly Monotone Star-Cocoercive Case}

\begin{lemma}\label{lem:optimization_lemma_str_mon_SGDA}
    Let Assumptions~\ref{as:str_monotonicity}, \ref{as:star_cocoercivity} hold for $Q = B_{2R}(x^*)$, where $R \geq R_0 \eqdef \|x^0 - x^*\|$, and $0 < \gamma \leq \nicefrac{1}{\ell}$. If $x^k$ lies in $B_{2R}(x^*)$ for all $k = 0,1,\ldots, K$ for some $K\geq 0$, then the iterates produced by \ref{eq:clipped_SGDA} satisfy
    \begin{eqnarray}
        \|x^{K+1} - x^*\|^2 &\leq& (1 - \gamma \mu)^{K+1}\|x^0 - x^*\|^2 + 2\gamma \sum\limits_{k=0}^K (1-\gamma\mu)^{K-k}\langle x^k - x^* - \gamma F(x^k), \omega_k \rangle\notag\\
        &&\quad + \gamma^2 \sum\limits_{k=0}^K (1-\gamma\mu)^{K-k} \|\omega_k\|^2, \label{eq:optimization_lemma_SGDA_str_mon}
    \end{eqnarray}
    where $\omega_k$ is defined in \eqref{eq:omega_k_SGDA}.
\end{lemma}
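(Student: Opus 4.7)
The plan is to mimic the one-step recursion developed for \algname{clipped-SEG} in Lemma~\ref{lem:optimization_lemma_str_mon_SEG}, but specialized to the simpler \ref{eq:clipped_SGDA} update where only one stochastic operator evaluation is performed per iteration. The target is a contraction of the form
\begin{equation*}
    \|x^{k+1}-x^*\|^2 \leq (1-\gamma\mu)\|x^k-x^*\|^2 + 2\gamma\langle x^k-x^*-\gamma F(x^k),\omega_k\rangle + \gamma^2\|\omega_k\|^2,
\end{equation*}
from which \eqref{eq:optimization_lemma_SGDA_str_mon} follows by an immediate induction (unrolling the geometric factor).

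I would start by expanding $\|x^{k+1}-x^*\|^2$ via the update rule $x^{k+1}=x^k-\gamma\tF_{\Bxi^k}(x^k)$ and writing $\tF_{\Bxi^k}(x^k)=F(x^k)-\omega_k$. This produces three groups of terms: the deterministic part $\|x^k-x^*\|^2 - 2\gamma\langle x^k-x^*,F(x^k)\rangle + \gamma^2\|F(x^k)\|^2$, the inner-product noise term $2\gamma\langle x^k-x^*-\gamma F(x^k),\omega_k\rangle$, and $\gamma^2\|\omega_k\|^2$, using the identity that separates the bias and variance contributions cleanly.

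The key step is handling the deterministic part. Since the iterates lie in $B_{2R}(x^*)\subseteq Q$, I may invoke \eqref{eq:star_cocoercivity} to bound $\|F(x^k)\|^2 \leq \ell \langle F(x^k), x^k-x^*\rangle$, which yields
\begin{equation*}
    -2\gamma\langle x^k-x^*,F(x^k)\rangle + \gamma^2\|F(x^k)\|^2 \leq -\gamma(2-\gamma\ell)\langle F(x^k),x^k-x^*\rangle.
\end{equation*}
Using $\gamma\leq 1/\ell$, the prefactor $(2-\gamma\ell)$ is at least $1$, and since \eqref{eq:str_monotonicity} guarantees $\langle F(x^k),x^k-x^*\rangle\geq\mu\|x^k-x^*\|^2\geq 0$, this expression is bounded above by $-\gamma\mu\|x^k-x^*\|^2$. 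Combining everything gives the one-step contraction.

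The final step is the routine unrolling: apply the recursion for $k=0,\dots,K$ and collect terms, which multiplies the $k$-th noise contribution by $(1-\gamma\mu)^{K-k}$ and the initial distance by $(1-\gamma\mu)^{K+1}$, yielding exactly \eqref{eq:optimization_lemma_SGDA_str_mon}. There is no real obstacle here; the only subtle point is verifying the compatibility of the step-size condition $\gamma\leq 1/\ell$ with the chain \eqref{eq:star_cocoercivity}$\Rightarrow$\eqref{eq:str_monotonicity}$\Rightarrow$ contraction, so that the prefactor coming out of star-cocoercivity does not degrade the $(1-\gamma\mu)$ rate.
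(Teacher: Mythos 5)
Your proposal is correct and follows essentially the same route as the paper's proof: expand the \ref{eq:clipped_SGDA} update, absorb $\gamma^2\|F(x^k)\|^2$ via \eqref{eq:star_cocoercivity} into the $-2\gamma\langle x^k-x^*,F(x^k)\rangle$ term, use \eqref{eq:str_monotonicity} together with $\gamma\leq\nicefrac{1}{\ell}$ to obtain the one-step contraction $(1-\gamma\mu)\|x^k-x^*\|^2$ plus the two noise terms, and unroll. The only cosmetic difference is that you apply the step-size bound before quasi-strong monotonicity while the paper does it in the opposite order, which is immaterial since the inner product is nonnegative.
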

\begin{proof}
     Using the update rule of \ref{eq:clipped_SGDA}, we obtain
     \begin{eqnarray*}
        \|x^{k+1} - x^*\|^2 &=& \|x^k - x^*\|^2 - 2\gamma \langle x^k - x^*,  \tF_{\Bxi^k}(x^k)\rangle + \gamma^2\|\tF_{\Bxi^k}(x^k)\|^2\\
        &=& \|x^k - x^*\|^2 -2\gamma \langle x^k - x^*, F(x^k) \rangle + 2\gamma \langle x^k - x^*, \omega_k \rangle\\
        &&\quad + \gamma^2\|F(x^k)\|^2 - 2\gamma^2 \langle F(x^k), \omega_k \rangle + \gamma^2\|\omega_k\|^2\\
        &=& \|x^k - x^*\|^2 + 2\gamma \langle x^k - x^* - \gamma F(x^k), \omega_k \rangle\\
        &&\quad  - 2\gamma \langle x^k - x^*, F(x^k) \rangle + \gamma^2\|F(x^k)\|^2 + \gamma^2\|\omega_k\|^2\\
        &\overset{\eqref{eq:star_cocoercivity}}{\leq}& \|x^k - x^*\|^2 + 2\gamma \langle x^k - x^* - \gamma F(x^k), \omega_k \rangle\\
        &&\quad  - 2\gamma \langle x^k - x^*, F(x^k) \rangle + \gamma^2\ell\langle x^k-x^*,F(x^k)\rangle + \gamma^2\|\omega_k\|^2\\
        &=& \|x^k - x^*\|^2 + 2\gamma \langle x^k - x^* - \gamma F(x^k), \omega_k \rangle\\
        &&\quad  - 2\gamma \left(1 - \frac{\gamma \ell}{2}\right) \langle x^k - x^*, F(x^k) \rangle + \gamma^2\|\omega_k\|^2\\
        &\overset{\eqref{eq:str_monotonicity}, \gamma \leq \frac{1}{\ell}}{\leq}&\|x^k - x^*\|^2 + 2\gamma \langle x^k - x^* - \gamma F(x^k), \omega_k \rangle\\
        &&\quad  - 2\gamma \mu \left(1 - \frac{\gamma \ell}{2}\right) \|x^k - x^*\|^2 + \gamma^2\|\omega_k\|^2\\
        &\overset{\gamma \leq \frac{1}{\ell}}{\leq}&(1-\gamma \mu)\|x^k - x^*\|^2 + 2\gamma \langle x^k - x^* - \gamma F(x^k), \omega_k \rangle + \gamma^2\|\omega_k\|^2.
    \end{eqnarray*}
    Unrolling the recurrence, we obtain \eqref{eq:optimization_lemma_SGDA_str_mon}.
\end{proof}

\begin{theorem}\label{thm:main_result_SGDA_str_mon}
    Let Assumptions~\ref{as:UBV}, \ref{as:str_monotonicity}, \ref{as:star_cocoercivity} hold for $Q = B_{2R}(x^*) = \{x\in\R^d\mid \|x - x^*\| \leq 2R\}$, where $R \geq R_0 \eqdef \|x^0 - x^*\|$, and 
    \begin{eqnarray}
        0< \gamma &\leq& \frac{1}{400 \ell \ln \tfrac{4(K+1)}{\beta}}, \label{eq:gamma_SGDA_str_mon}\\
        \lambda_k &=& \frac{\exp(-\gamma\mu(1 + \nicefrac{k}{2}))R}{120\gamma \ln \tfrac{4(K+1)}{\beta}}, \label{eq:lambda_SGDA_str_mon}\\
        m_k &\geq& \max\left\{1, \frac{27000\gamma^2 (K+1) \sigma^2\ln \tfrac{4(K+1)}{\beta}}{\exp(-\gamma\mu k) R^2}\right\}, \label{eq:batch_SGDA_str_mon}
    \end{eqnarray}
    for some $K \geq 0$ and $\beta \in (0,1]$ such that $\ln \tfrac{4(K+1)}{\beta} \geq 1$. Then, after $K$ iterations the iterates produced by \ref{eq:clipped_SGDA} with probability at least $1 - \beta$ satisfy 
    \begin{equation}
        \|x^{K+1} - x^*\|^2 \leq 2\exp(-\gamma\mu(K+1))R^2. \label{eq:main_result_str_mon_SGDA}
    \end{equation}
\end{theorem}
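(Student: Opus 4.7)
The argument mirrors the quasi-strongly monotone analysis of \ref{eq:clipped_SEG} (Theorem~\ref{thm:main_result_str_mon_SEG}), adapted to the single-step update of \ref{eq:clipped_SGDA}. Define $R_k = \|x^k - x^*\|$ and, for each $k = 0, 1, \ldots, K+1$, the event
\begin{equation*}
    E_k = \bigl\{ R_t^2 \leq 2\exp(-\gamma\mu t) R^2 \text{ for all } t = 0, 1, \ldots, k \bigr\}.
\end{equation*}
The plan is to prove by induction on $k$ that $\PP\{E_k\} \geq 1 - \nicefrac{k\beta}{(K+1)}$. The base case $k=0$ is immediate. For the inductive step, assuming $\PP\{E_T\} \geq 1 - \nicefrac{T\beta}{(K+1)}$, observe that $E_T$ forces $x^t \in B_{2R}(x^*)$ for all $t \leq T$, so Lemma~\ref{lem:optimization_lemma_str_mon_SGDA} is applicable and gives, using $(1 - \gamma\mu)^{T+1} \leq \exp(-\gamma\mu(T+1))$,
\begin{equation*}
    R_{T+1}^2 \leq \exp(-\gamma\mu (T+1)) R^2 + 2\gamma \sum_{l=0}^{T} (1-\gamma\mu)^{T-l} \langle x^l - x^* - \gamma F(x^l), \omega_l \rangle + \gamma^2 \sum_{l=0}^{T} (1-\gamma\mu)^{T-l} \|\omega_l\|^2.
\end{equation*}

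Next I would verify, exactly as in the derivations \eqref{eq:operator_bound_x_t_SEG_str_mon}--\eqref{eq:tilde_x_distance_SEG_str_mon}, that $E_T$ implies $\|F(x^l)\| \leq \sqrt{2}\ell \exp(-\nicefrac{\gamma\mu l}{2}) R \leq \lambda_l/2$ (via star-cocoercivity rather than Lipschitzness) and $\|x^l - x^* - \gamma F(x^l)\| \leq \sqrt{2}(1 + \gamma\ell)\exp(-\nicefrac{\gamma\mu l}{2})R$ for all $l \leq T$. Introducing the truncated vector $\eta_l$ that equals $x^l - x^* - \gamma F(x^l)$ whenever this bound holds (and $0$ otherwise), and decomposing $\omega_l = \omega_l^u + \omega_l^b$ with $\omega_l^u := \EE_{\Bxi^l}[\tF_{\Bxi^l}(x^l)] - \tF_{\Bxi^l}(x^l)$ and $\omega_l^b := F(x^l) - \EE_{\Bxi^l}[\tF_{\Bxi^l}(x^l)]$, one arrives (under $E_T$) at
\begin{equation*}
    R_{T+1}^2 \leq \exp(-\gamma\mu(T+1)) R^2 + \circledOne + \circledTwo + \circledThree + \circledFour + \circledFive,
\end{equation*}
where $\circledOne$ is the inner-product sum with $\omega_l^u$, $\circledTwo$ the inner-product sum with $\omega_l^b$, $\circledThree$ the conditional expectation of $\gamma^2(1-\gamma\mu)^{T-l}\|\omega_l^u\|^2$, $\circledFour$ its martingale-difference complement, and $\circledFive$ the contribution of $\|\omega_l^b\|^2$. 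The goal is to show that the sum of these five terms is at most $\exp(-\gamma\mu(T+1))R^2$ with probability at least $1 - \nicefrac{\beta}{(K+1)}$.

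Lemma~\ref{lem:bias_variance} provides the standard bias/variance bounds $\|\omega_l^b\| \leq \nicefrac{4\sigma^2}{(m_l\lambda_l)}$ and $\EE[\|\omega_l^u\|^2] \leq \nicefrac{18\sigma^2}{m_l}$, and $\|\omega_l^u\| \leq 2\lambda_l$ almost surely. Terms $\circledTwo$, $\circledThree$, $\circledFive$ can be controlled deterministically under $E_T$ by plugging in the parameter choices \eqref{eq:gamma_SGDA_str_mon}--\eqref{eq:batch_SGDA_str_mon}, with the geometric weight $\exp(-\gamma\mu(T - l))$ interacting with the growing batchsize $m_l \propto \exp(\gamma\mu l)$ to yield a bound of order $\exp(-\gamma\mu(T+1))R^2$ up to a small numerical factor (e.g.\ $\nicefrac{1}{5}$ of it each). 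For the two martingale sums $\circledOne$ and $\circledFour$ I would apply the Bernstein inequality (Lemma~\ref{lem:Bernstein_ineq}) with the almost-sure bounds $c = \cO(\exp(-\gamma\mu(T+1)) R^2 / \ln\tfrac{4(K+1)}{\beta})$ and conditional-variance sums $G = \cO(\exp(-2\gamma\mu(T+1)) R^4 / \ln\tfrac{4(K+1)}{\beta})$, which each yields a failure probability $\leq \nicefrac{\beta}{(2(K+1))}$. A union bound over these two events together with $E_T$ then gives $\PP\{E_{T+1}\} \geq 1 - \nicefrac{(T+1)\beta}{(K+1)}$, closing the induction. Taking $k = K+1$ yields \eqref{eq:main_result_str_mon_SGDA}.

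The main delicate point is the careful pairing of the clipping level $\lambda_l \propto \exp(-\gamma\mu l/2)R/(\gamma A)$ with the batchsize $m_l \propto \exp(\gamma\mu l)$ so that each of $\circledTwo$, $\circledThree$, $\circledFive$ and the variance sums for $\circledOne$, $\circledFour$ decay like $\exp(-\gamma\mu(T+1))R^2$ (respectively its square for variances) rather than accumulating without contraction; this is the same bookkeeping that appears in the SEG quasi-strongly monotone case, and the mechanical verification amounts to substituting the parameter choices \eqref{eq:gamma_SGDA_str_mon}--\eqref{eq:batch_SGDA_str_mon} into the estimates of Theorem~\ref{thm:main_result_str_mon_SEG} with $L$ replaced by $\ell$ and the single-step variance/bias constants rescaled accordingly.
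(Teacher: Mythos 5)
Your proposal is correct and follows essentially the same route as the paper's proof: the same induction on the events $\{R_t^2 \leq 2\exp(-\gamma\mu t)R^2\}$, the same use of Lemma~\ref{lem:optimization_lemma_str_mon_SGDA} together with star-cocoercivity to guarantee $\|F(x^l)\| \leq \nicefrac{\lambda_l}{2}$, the same five-term decomposition via $\omega_l = \omega_l^u + \omega_l^b$ and the truncated vectors $\eta_l$, deterministic bounds of $\tfrac{1}{5}\exp(-\gamma\mu T)R^2$ on the bias/variance terms, and Bernstein's inequality (Lemma~\ref{lem:Bernstein_ineq}) with failure probability $\nicefrac{\beta}{2(K+1)}$ for each of the two martingale sums. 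The only remaining work is the mechanical verification of the numerical constants under \eqref{eq:gamma_SGDA_str_mon}--\eqref{eq:batch_SGDA_str_mon}, exactly as the paper carries out.
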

\begin{proof}
     As in the proof of Theorem~\ref{thm:main_result_SGDA}, we use the following notation: $R_k = \|x^k - x^*\|^2$, $k \geq 0$. We will derive \eqref{eq:main_result_str_mon_SGDA} by induction. In particular, for each $k = 0,\ldots, K+1$ we define probability event $E_k$ as follows: inequalities
      \begin{equation}
        R_t^2 \leq 2 \exp(-\gamma\mu t) R^2 \label{eq:induction_inequality_str_mon_SGDA}
    \end{equation}
    hold for $t = 0,1,\ldots,k$ simultaneously. Our goal is to prove that $\PP\{E_k\} \geq  1 - \nicefrac{k\beta}{(K+1)}$ for all $k = 0,1,\ldots,K+1$. We use the induction to show this statement. For $k = 0$ the statement is trivial since $R_0^2 \leq 2R^2$ by definition. Next, assume that the statement holds for $k = T \leq K$, i.e., we have $\PP\{E_{T}\} \geq 1 - \nicefrac{T\beta}{(K+1)}$. We need to prove that $\PP\{E_{T+1}\} \geq 1 - \nicefrac{(T+1)\beta}{(K+1)}$. First of all, since $R_t^2 \leq 2\exp(-\gamma\mu t) R^2 \leq 2R^2$, we have $x^t \in B_{2R}(x^*)$. Operator $F$ is  $\ell$-star-cocoercive  on $B_{2R}(x^*)$. Therefore, probability event $E_{T}$ implies
    \begin{eqnarray}
        \|F(x^t)\| &\overset{\eqref{eq:star_cocoercivity}}{\leq}& \ell\|x^t - x^*\| \overset{\eqref{eq:induction_inequality_str_mon_SGDA}}{\leq} \sqrt{2}\ell\exp(- \nicefrac{\gamma\mu t}{2})R \overset{\eqref{eq:gamma_SGDA_str_mon},\eqref{eq:lambda_SGDA_str_mon}}{\leq} \frac{\lambda_t}{2}. \label{eq:operator_bound_x_t_SGDA_str_mon}
    \end{eqnarray}
    and
    \begin{eqnarray}
        \|\omega_t\|^2 &\overset{\eqref{eq:a_plus_b}}{\leq}& 2\|\tF_{\Bxi}(x^t)\|^2 + 2\|F(x^t)\|^2 \overset{\eqref{eq:operator_bound_x_t_SGDA_str_mon}}{\leq} \frac{5}{2}\lambda_t^2 \overset{\eqref{eq:lambda_SGDA_str_mon}}{\leq} \frac{\exp(-\gamma\mu t)R^2}{4\gamma^2} \label{eq:omega_bound_x_t_SGDA_str_mon}
    \end{eqnarray}
    for all $t = 0, 1, \ldots, T$.
    
    Applying Lemma~\ref{lem:optimization_lemma_str_mon_SGDA} and $(1 - \gamma\mu)^T \leq \exp(-\gamma\mu T)$, we get that probability event $E_{T}$ implies
    \begin{eqnarray}
        R_T^2 &\leq& \exp(-\gamma\mu T)R^2 + 2\gamma \sum\limits_{t=0}^{T} (1-\gamma\mu)^{T-t} \langle x^t - x^* - \gamma F(x^t), \omega_t \rangle\notag\\
        &&\quad + \gamma^2 \sum\limits_{t=0}^{T} (1-\gamma\mu)^{T-t} \|\omega_t\|^2. \notag
    \end{eqnarray}
    To estimate the sums in the right-hand side, we introduce new vectors:
    \begin{gather}
        \eta_t = \begin{cases} x^t - x^* - \gamma F(x^t),& \text{if } \|x^t - x^* - \gamma F(x^t)\| \leq \sqrt{2}(1 + \gamma \ell) \exp(- \nicefrac{\gamma\mu t}{2})R,\\ 0,& \text{otherwise}, \end{cases} \label{eq:eta_t_SEG_str_mon}
    \end{gather}
    for $t = 0, 1, \ldots, T$. First of all, we point out that vector $\eta_t$ is bounded with probability $1$, i.e., with probability $1$
    \begin{equation}
        \|\eta_t\| \leq \sqrt{2}(1 + \gamma \ell)\exp(-\nicefrac{\gamma\mu t}{2})R \label{eq:eta_t_bound_SGDA_str_mon} 
    \end{equation}
    for all $t = 0, 1, \ldots, T$. Next, we notice that $E_{T}$ implies $\|F(x^t)\| \leq \sqrt{2}\ell\exp(-\nicefrac{\gamma\mu t}{2})R$ (due to \eqref{eq:operator_bound_x_t_SGDA_str_mon}) for $t = 0, 1, \ldots, T$, i.e., probability event $E_{T}$ implies $\eta_t = x^t - x^* - \gamma F(x^t)$ for all $t = 0,1,\ldots,T$. Therefore, $E_{T}$ implies 
    \begin{eqnarray}
        R_T^2 &\leq& \exp(-\gamma\mu T)R^2 + 2\gamma \sum\limits_{t=0}^{T} (1-\gamma\mu)^{T-t} \langle \eta_t, \omega_t \rangle\notag\\
        &&\quad + \gamma^2 \sum\limits_{t=0}^{T} (1-\gamma\mu)^{T-t} \|\omega_t\|^2. \notag
    \end{eqnarray}
    As in the monotone case, to continue the derivation, we introduce vectors $\omega_t^u, \omega_t^b$ defined as
    \begin{gather}
        \omega_t^u \eqdef \EE_{\Bxi^t}\left[\tF_{\Bxi^t}(x^t)\right] - \tF_{\Bxi^t}(x^t),\quad \omega_t^b \eqdef F(x^t) - \EE_{\Bxi^t}\left[\tF_{\Bxi^t}(x^t)\right], \label{eq:omega_unbias_bias_SGDA_str_mon}
    \end{gather}
    for all $t = 0,\ldots, T$. By definition we have $\omega_t = \omega_t^u + \omega_t^b$ for all $t = 0,\ldots, T$. Using the introduced notation, we continue our derivation as follows: $E_{T}$ implies
    \begin{eqnarray}
        R_T^2 &\overset{\eqref{eq:a_plus_b}}{\leq}& \exp(-\gamma\mu T) R^2 + \underbrace{2\gamma \sum\limits_{t=0}^{T} (1-\gamma\mu)^{T-t} \langle \eta_t, \omega_t^u \rangle}_{\circledOne} + \underbrace{2\gamma \sum\limits_{t=0}^{T} (1-\gamma\mu)^{T-t} \langle \eta_t, \omega_t^b \rangle}_{\circledTwo} \notag\\
        &&\quad + \underbrace{2\gamma^2 \sum\limits_{t=0}^{T} (1-\gamma\mu)^{T-t} \EE_{\Bxi^t}\left[\|\omega_t^u\|^2\right] }_{\circledThree} + \underbrace{2\gamma^2 \sum\limits_{t=0}^{T} (1-\gamma\mu)^{T-t} \left(\|\omega_t^u\|^2 -\EE_{\Bxi^t}\left[\|\omega_t^u\|^2\right] \right)}_{\circledFour}\notag\\
        &&\quad + \underbrace{2\gamma^2 \sum\limits_{t=0}^{T} (1-\gamma\mu)^{T-t} \left(\|\omega_t^b\|^2\right)}_{\circledFive}. \label{eq:SGDA_str_mon_12345_bound}
    \end{eqnarray}
    The rest of the proof is based on deriving good enough upper bounds for $\circledOne, \circledTwo, \circledThree, \circledFour, \circledFive$, i.e., we want to prove that $\circledOne + \circledTwo + \circledThree + \circledFour + \circledFive \leq \exp(-\gamma\mu T) R^2$ with high probability.
    
    Before we move on, we need to derive some useful inequalities for operating with $\omega_t^u, \omega_t^b$. First of all, Lemma~\ref{lem:bias_variance} implies that
    \begin{equation}
         \|\omega_t^u\| \leq 2\lambda_t \label{eq:omega_magnitude_str_mon_SGDA}
    \end{equation}
    for all $t = 0,1, \ldots, T$. Next, since $\{\xi^{i,t}\}_{i=1}^{m_t}$ are independently sampled from $\cD$, we have $\EE_{\Bxi^t}[F_{\Bxi^t}(x^t)] = F(x^t)$, and 
    \begin{gather}
        \EE_{\Bxi^t}\left[\|F_{\Bxi^t}(x^t) - F(x^t)\|^2\right] = \frac{1}{m_t^2}\sum\limits_{i=1}^{m_t} \EE_{\xi^{i,t}}\left[\|F_{\xi^{i,t}}(x^t) - F(x^t)\|^2\right] \overset{\eqref{eq:UBV}}{\leq} \frac{\sigma^2}{m_t}, \notag
    \end{gather}
    for all $l = 0,1, \ldots, T$. Moreover, as we already derived, probability event $E_{T}$ implies that $\|F(x^t)\| \leq \nicefrac{\lambda_t}{2}$ for all $t = 0,1, \ldots, T$ (see \eqref{eq:operator_bound_x_t_SGDA_str_mon}). Therefore, in view of Lemma~\ref{lem:bias_variance}, $E_{T}$ implies that
    \begin{gather}
         \left\|\omega_t^b\right\| \leq \frac{4\sigma^2}{m_t\lambda_t}, \label{eq:bias_theta_omega_str_mon_SGDA}\\
         \EE_{\Bxi^t}\left[\left\|\omega_t\right\|^2\right] \leq \frac{18\sigma^2}{m_t}, \label{eq:distortion_theta_omega_str_mon_SGDA}\\
         \EE_{\Bxi^t}\left[\left\|\omega_t^u\right\|^2\right] \leq \frac{18\sigma^2}{m_t}, \label{eq:variance_theta_omega_str_mon_SGDA}
    \end{gather}
    for all $l = 0,1, \ldots, T$.
    
    \paragraph{Upper bound for $\circledOne$.} Since $\EE_{\Bxi^t}[\omega_t^u] = 0$, we have
    \begin{equation*}
        \EE_{\Bxi^t}\left[2\gamma (1-\gamma\mu)^{T-t} \langle \eta_t, \omega_t^u \rangle\right] = 0.
    \end{equation*}
    Next, the summands in $\circledOne$ are bounded with probability $1$:
    \begin{eqnarray}
        |2\gamma (1-\gamma\mu)^{T-t} \langle \eta_t, \omega_t^u \rangle | &\leq& 2\gamma\exp(-\gamma\mu (T - t)) \|\eta_t\|\cdot \|\omega_t^u\|\notag\\
        &\overset{\eqref{eq:eta_t_bound_SGDA_str_mon},\eqref{eq:omega_magnitude_str_mon_SGDA}}{\leq}& 4\sqrt{2}\gamma (1 + \gamma \ell) \exp(-\gamma\mu (T - \nicefrac{t}{2})) R \lambda_t\notag\\
        &\overset{\eqref{eq:gamma_SGDA_str_mon},\eqref{eq:lambda_SGDA_str_mon}}{\leq}& \frac{\exp(-\gamma\mu T)R^2}{5\ln\tfrac{4(K+1)}{\beta}} \eqdef c. \label{eq:SGDA_str_mon_technical_3_1}
    \end{eqnarray}
    Moreover, these summands have bounded conditional variances $\sigma_t^2 \eqdef \EE_{\Bxi^t}\left[4\gamma^2 (1-\gamma\mu)^{2T-2t} \langle \eta_t, \omega_t^u \rangle^2\right]$:
    \begin{eqnarray}
        \sigma_t^2 &\leq& \EE_{\Bxi^t}\left[4\gamma^2\exp(-\gamma\mu (2T - 2t)) \|\eta_t\|^2\cdot \|\omega_t^u\|^2\right]\notag\\
        &\overset{\eqref{eq:eta_t_bound_SGDA_str_mon}}{\leq}& 8\gamma^2 (1 + \gamma \ell)^2 \exp(-\gamma\mu (2T - t)) R^2 \EE_{\Bxi^t}\left[\|\omega_t^u\|^2\right]\notag\\
        &\overset{\eqref{eq:gamma_SGDA_str_mon}}{\leq}& 10\gamma^2\exp(-\gamma\mu (2T - t))R^2 \EE_{\Bxi^t}\left[\|\omega_t^u\|^2\right]. \label{eq:SGDA_str_mon_technical_3_2}
    \end{eqnarray}
    That is, sequence $\{2\gamma (1-\gamma\mu)^{T-t} \langle \eta_t, \omega_t^u \rangle\}_{t\geq 0}$ is a bounded martingale difference sequence having bounded conditional variances $\{\sigma_t^2\}_{t \geq 0}$. Applying Bernstein's inequality (Lemma~\ref{lem:Bernstein_ineq}) with $X_t = 2\gamma (1-\gamma\mu)^{T-t} \langle \eta_t, \omega_t^u \rangle$, $c$ defined in \eqref{eq:SGDA_str_mon_technical_3_1}, $b = \tfrac{1}{5}\exp(-\gamma\mu T) R^2$, $G = \tfrac{\exp(- 2\gamma\mu T) R^4}{150\ln\frac{4(K+1)}{\beta}}$, we get that
    \begin{equation*}
        \PP\left\{|\circledOne| > \frac{1}{5}\exp(-\gamma\mu T) R^2 \text{ and } \sum\limits_{t=0}^{T}\sigma_t^2 \leq \frac{\exp(- 2\gamma\mu T) R^4}{150\ln\tfrac{4(K+1)}{\beta}}\right\} \leq 2\exp\left(- \frac{b^2}{2G + \nicefrac{2cb}{3}}\right) = \frac{\beta}{2(K+1)}.
    \end{equation*}
    In other words, $\PP\{E_{\circledOne}\} \geq 1 - \tfrac{\beta}{2(K+1)}$, where probability event $E_{\circledOne}$ is defined as
    \begin{equation}
        E_{\circledOne} = \left\{\text{either} \quad \sum\limits_{t=0}^{T}\sigma_t^2 > \frac{\exp(- 2\gamma\mu T) R^4}{150\ln\tfrac{4(K+1)}{\beta}}\quad \text{or}\quad |\circledOne| \leq \frac{1}{5}\exp(-\gamma\mu T) R^2\right\}. \label{eq:bound_3_SGDA_str_mon}
    \end{equation}
    Moreover, we notice here that probability event $E_{T}$ implies that
    \begin{eqnarray}
        \sum\limits_{t=0}^{T}\sigma_t^2 &\overset{\eqref{eq:SGDA_str_mon_technical_3_2}}{\leq}& 10\gamma^2\exp(- 2\gamma\mu T)R^2\sum\limits_{t=0}^{T} \frac{\EE_{\Bxi^t}\left[\|\omega_t^u\|^2\right]}{\exp(-\gamma\mu t)}\notag\\ &\overset{\eqref{eq:variance_theta_omega_str_mon_SGDA}, T \leq K+1}{\leq}& 180\gamma^2\exp(-2\gamma\mu T) R^2 \sigma^2 \sum\limits_{t=0}^{K} \frac{1}{m_t\exp(-\gamma\mu t)}\notag\\
        &\overset{\eqref{eq:batch_SGDA_str_mon}}{\leq}& \frac{\exp(-2\gamma\mu T)R^4}{150\ln\tfrac{6(K+1)}{\beta}}. \label{eq:bound_3_variances_SGDA_str_mon}
    \end{eqnarray}
    
    \paragraph{Upper bound for $\circledTwo$.} Probability event $E_{T}$ implies
    \begin{eqnarray}
        \circledTwo &\leq& 2\gamma \exp(-\gamma\mu T) \sum\limits_{t=0}^{T} \frac{\|\eta_t\|\cdot \|\omega_t^b\|}{\exp(-\gamma\mu t)}\notag\\
        &\overset{\eqref{eq:eta_t_bound_SGDA_str_mon}, \eqref{eq:bias_theta_omega_str_mon_SGDA}}{\leq}& 8\sqrt{2} \gamma (1+\gamma \ell) \exp(-\gamma\mu T) R \sum\limits_{t=0}^{T} \frac{\sigma^2}{m_t \lambda_t \exp(-\nicefrac{\gamma\mu t}{2})}\notag\\
        &\overset{\eqref{eq:lambda_SGDA_str_mon}}{\leq}& 960\sqrt{2} \gamma^2(1+\gamma \ell) \exp(-\gamma\mu (T-1)) \sum\limits_{t=0}^{T} \frac{\sigma^2 \ln\tfrac{4(K+1)}{\beta}}{m_t \exp(-\gamma\mu t)}\notag \\
        &\overset{\eqref{eq:batch_SGDA_str_mon}, T \leq K+1}{\leq}& \frac{1}{5}\exp(-\gamma\mu T) R^2. \label{eq:bound_4_SGDA_str_mon}
    \end{eqnarray}
    
    \paragraph{Upper bound for $\circledThree$.} Probability event $E_{T}$ implies
    \begin{eqnarray}
        \circledThree &=& 2\gamma^2 \exp(-\gamma\mu T) \sum\limits_{t=0}^{T} \frac{\EE_{\Bxi^t}\left[\|\omega_t^u\|^2\right] }{\exp(-\gamma\mu t)} \notag\\
        &\overset{\eqref{eq:variance_theta_omega_str_mon_SGDA}}{\leq}& 36\gamma^2\exp(-\gamma\mu T) \sum\limits_{t=0}^{T} \frac{\sigma^2}{m_t \exp(-\gamma\mu t)} \notag\\
        &\overset{\eqref{eq:batch_SGDA_str_mon}, T \leq K+1}{\leq}& \frac{1}{5} \exp(-\gamma\mu T) R^2. \label{eq:bound_5_SGDA_str_mon}
    \end{eqnarray}
    
    \paragraph{Upper bound for $\circledFour$.} First of all, we have
    \begin{equation*}
        2\gamma^2 (1-\gamma\mu)^{T-t}\EE_{\Bxi^t}\left[\|\omega_t^u\|^2 - \EE_{\Bxi^t}\left[\|\omega_t^u\|^2\right]\right] = 0.
    \end{equation*}
    Next, the summands in $\circledFour$ are bounded with probability $1$:
    \begin{eqnarray}
        2\gamma^2 (1-\gamma\mu)^{T-t}\left| \|\omega_t^u\|^2 - \EE_{\Bxi^t}\left[\|\omega_t^u\|^2\right] \right| 
        &\overset{\eqref{eq:omega_magnitude_str_mon_SGDA}}{\leq}& \frac{16\gamma^2 \exp(-\gamma\mu T) \lambda_t^2}{\exp(-\gamma\mu t)}\notag\\
        &\overset{\eqref{eq:lambda_SGDA_str_mon}}{\leq}& \frac{\exp(-\gamma\mu T)R^2}{5\ln\tfrac{4(K+1)}{\beta}}\notag\\
        &\eqdef& c. \label{eq:SGDA_str_mon_technical_6_1}
    \end{eqnarray}
    Moreover, these summands have bounded conditional variances $\widetilde\sigma_t^2 \eqdef \EE_{\Bxi^t}\left[4\gamma^4 (1-\gamma\mu)^{2T-2t} \left| \|\omega_t^u\|^2 -\EE_{\Bxi^t}\left[\|\omega_t^u\|^2\right] \right|^2\right]$:
    \begin{eqnarray}
        \widetilde\sigma_t^2 &\overset{\eqref{eq:SGDA_str_mon_technical_6_1}}{\leq}& \frac{2\gamma^2\exp(-2\gamma\mu T)R^2}{5\exp(-\gamma\mu t)\ln\tfrac{4(K+1)}{\beta}} \EE_{\Bxi^t}\left[\left| \|\omega_t^u\|^2 -\EE_{\Bxi^t}\left[\|\omega_t^u\|^2\right] \right|\right]\notag\\
        &\leq& \frac{4\gamma^2\exp(-2\gamma\mu T)R^2}{5\exp(-\gamma\mu t)\ln\tfrac{4(K+1)}{\beta}} \EE_{\Bxi^t}\left[\|\omega_t^u\|^2\right]. \label{eq:SGDA_str_mon_technical_6_2}
    \end{eqnarray}
    That is, sequence $\left\{2\gamma^2 (1-\gamma\mu)^{T-t}\left( \|\omega_t^u\|^2 - \EE_{\Bxi^t}\left[\|\omega_t^u\|^2\right] \right)\right\}_{t\geq 0}$ is a bounded martingale difference sequence having bounded conditional variances $\{\widetilde\sigma_t^2\}_{t \geq 0}$. Applying Bernstein's inequality (Lemma~\ref{lem:Bernstein_ineq}) with $X_t = 2\gamma^2 (1-\gamma\mu)^{T-t}\left( \|\omega_t^u\|^2  -\EE_{\Bxi^t}\left[\|\omega_t^u\|^2\right] \right)$, $c$ defined in \eqref{eq:SGDA_str_mon_technical_6_1}, $b = \tfrac{1}{5}\exp(-\gamma\mu T) R^2$, $G = \tfrac{\exp(-2\gamma\mu T) R^4}{150\ln\frac{4(K+1)}{\beta}}$, we get that
    \begin{equation*}
        \PP\left\{|\circledFour| > \frac{1}{5}\exp(-\gamma\mu T) R^2 \text{ and } \sum\limits_{t=0}^{T}\widetilde\sigma_t^2 \leq \frac{\exp(-2\gamma\mu T) R^4}{150\ln\frac{4(K+1)}{\beta}}\right\} \leq 2\exp\left(- \frac{b^2}{2G + \nicefrac{2cb}{3}}\right) = \frac{\beta}{2(K+1)}.
    \end{equation*}
    In other words, $\PP\{E_{\circledFour}\} \geq 1 - \tfrac{\beta}{2(K+1)}$, where probability event $E_{\circledFour}$ is defined as
    \begin{equation}
        E_{\circledFour} = \left\{\text{either} \quad \sum\limits_{t=0}^{T}\widetilde\sigma_t^2 > \frac{\exp(-2\gamma\mu T) R^4}{150\ln\tfrac{4(K+1)}{\beta}}\quad \text{or}\quad |\circledFour| \leq \frac{1}{5}\exp(-\gamma\mu T) R^2\right\}. \label{eq:bound_6_SGDA_str_mon}
    \end{equation}
    Moreover, we notice here that probability event $E_{T}$ implies that
    \begin{eqnarray}
        \sum\limits_{t=0}^{T}\widetilde\sigma_t^2 &\overset{\eqref{eq:SGDA_str_mon_technical_6_2}}{\leq}& \frac{4\gamma^2\exp(-2\gamma\mu T)R^2}{5\ln\tfrac{4(K+1)}{\beta}} \sum\limits_{t=0}^{T} \frac{\EE_{\Bxi^t}\left[\|\omega_t^u\|^2 \right]}{\exp(-\gamma\mu t)}\notag\\ &\overset{\eqref{eq:variance_theta_omega_str_mon_SGDA}, T \leq K+1}{\leq}& \frac{72\gamma^2\exp(-2\gamma\mu T) R^2 \sigma^2}{5\ln\tfrac{4(K+1)}{\beta}} \sum\limits_{t=0}^{K} \frac{1}{m_t\exp(-\gamma\mu t)}\notag\\
        &\overset{\eqref{eq:batch_SGDA_str_mon}}{\leq}& \frac{\exp(-2\gamma\mu T)R^4}{150\ln\tfrac{4(K+1)}{\beta}}. \label{eq:bound_6_variances_SGDA_str_mon}
    \end{eqnarray}
    
    \paragraph{Upper bound for $\circledFive$.} Probability event $E_{T}$ implies
    \begin{eqnarray}
        \circledFive &=&  2\gamma^2 \sum\limits_{t=0}^{T} \exp(-\gamma\mu (T-t)) \left(\|\omega_t^b\|^2 \right)\notag\\
        &\overset{\eqref{eq:bias_theta_omega_str_mon_SGDA}}{\leq}& 32\gamma^2 \exp(-\gamma\mu T) \sum\limits_{t=0}^{T} \frac{\sigma^4}{m_t^2 \lambda_t^2 \exp(-\gamma\mu t)} \notag\\
        &\overset{\eqref{eq:lambda_SGDA_str_mon}}{=}& 460800 \gamma^4 \exp(-\gamma\mu (T-2)) \sum\limits_{t=0}^{T} \frac{\sigma^4 \ln^2\tfrac{4(K+1)}{\beta}}{m_t^2R^2\exp(-2\gamma\mu t)} \notag\\
        &\overset{\eqref{eq:batch_SGDA_str_mon}, T \leq K+1}{\leq}& \frac{1}{5}\exp(-\gamma\mu T) R^2. \label{eq:bound_7_SGDA_str_mon}
    \end{eqnarray}
    
    \paragraph{Final derivation.} Putting all bounds together, we get that $E_{T}$ implies
    \begin{gather*}
        R_T^2 \overset{\eqref{eq:SGDA_str_mon_12345_bound}}{\leq} \exp(-\gamma\mu T) R^2 + \circledOne + \circledTwo + \circledThree + \circledFour + \circledFive,\\
        \circledTwo \overset{\eqref{eq:bound_4_SGDA_str_mon}}{\leq} \frac{1}{5}\exp(-\gamma\mu T)R^2,\\ \circledThree \overset{\eqref{eq:bound_5_SGDA_str_mon}}{\leq} \frac{1}{5}\exp(-\gamma\mu T)R^2,\quad \circledFive \overset{\eqref{eq:bound_7_SGDA_str_mon}}{\leq} \frac{1}{5}\exp(-\gamma\mu T)R^2,\\
         \sum\limits_{t=0}^{T}\sigma_t^2 \overset{\eqref{eq:bound_3_variances_SGDA_str_mon}}{\leq} \frac{\exp(-2\gamma\mu T)R^4}{150\ln\tfrac{4(K+1)}{\beta}},\quad \sum\limits_{t=0}^{T}\widetilde\sigma_t^2 \overset{\eqref{eq:bound_6_variances_SGDA_str_mon}}{\leq}  \frac{\exp(-2\gamma\mu T)R^4}{150\ln\tfrac{4(K+1)}{\beta}}.
    \end{gather*}
    Moreover, in view of \eqref{eq:bound_3_SGDA_str_mon}, \eqref{eq:bound_6_SGDA_str_mon}, and our induction assumption, we have
    \begin{gather*}
        \PP\{E_{T}\} \geq 1 - \frac{T\beta}{K+1},\\
        \PP\{E_{\circledOne}\} \geq 1 - \frac{\beta}{2(K+1)}, \quad \PP\{E_{\circledFour}\} \geq 1 - \frac{\beta}{2(K+1)},
    \end{gather*}
    where probability events $E_{\circledOne}$, and $E_{\circledFour}$ are defined as
    \begin{eqnarray}
        E_{\circledOne}&=& \left\{\text{either} \quad \sum\limits_{t=0}^{T}\sigma_t^2 > \frac{\exp(-2\gamma\mu T) R^4}{150\ln\tfrac{4(K+1)}{\beta}}\quad \text{or}\quad |\circledOne| \leq \frac{1}{5}\exp(-\gamma\mu T) R^2\right\},\notag\\
        E_{\circledFour}&=& \left\{\text{either} \quad \sum\limits_{t=0}^{T}\widetilde\sigma_t^2 > \frac{\exp(-2\gamma\mu T) R^4}{150\ln\tfrac{4(K+1)}{\beta}}\quad \text{or}\quad |\circledFour| \leq \frac{1}{5}\exp(-\gamma\mu T) R^2\right\}.\notag
    \end{eqnarray}
    Putting all of these inequalities together, we obtain that probability event $E_{T} \cap E_{\circledOne} \cap E_{\circledFour}$ implies
    \begin{eqnarray*}
        R_T^2 &\overset{\eqref{eq:SGDA_str_mon_12345_bound}}{\leq}& \exp(-\gamma\mu T) R^2 + \circledOne + \circledTwo + \circledThree + \circledFour + \circledFive\\
        &\leq& 2\exp(-\gamma\mu T) R^2.
    \end{eqnarray*}
     Moreover, union bound for the probability events implies
    \begin{equation}
        \PP\{E_{T+1}\} \geq \PP\{E_{T} \cap E_{\circledOne} \cap E_{\circledFour}\} = 1 - \PP\{\overline{E}_{T} \cup \overline{E}_{\circledOne} \cup \overline{E}_{\circledFour}\} \geq 1 - \frac{(T+1)\beta}{K+1}.
    \end{equation}
    This is exactly what we wanted to prove (see the paragraph after inequality \eqref{eq:induction_inequality_str_mon_SGDA}). In particular, with probability at least $1 - \beta$ we have
    \begin{equation}
        \|x^{K+1} - x^*\|^2 \leq 2\exp(-\gamma\mu (K+1))R^2, \notag
    \end{equation}
    which finishes the proof.
\end{proof}

\begin{corollary}\label{cor:main_result_SGDA_str_mon}
    Let the assumptions of Theorem~\ref{thm:main_result_SGDA_str_mon} hold. Then, the following statements hold.
\begin{enumerate}
        \item \textbf{Large stepsize/large batch.} The choice of stepsize and batchsize
        \begin{equation}
            \gamma = \frac{1}{400 \ell \ln \tfrac{4(K+1)}{\beta}},\quad m_k = \max\left\{1, \frac{27000\gamma^2 (K+1) \sigma^2\ln \tfrac{4(K+1)}{\beta}}{\exp(-\gamma\mu k) R^2}\right\} \label{eq:str_mon_SGDA_large_step_large_batch}
        \end{equation}
        satisfies conditions \eqref{eq:gamma_SGDA_str_mon} and \eqref{eq:batch_SGDA_str_mon}. With such choice of $\gamma, m_k$, and the choice of $\lambda_k$ as in \eqref{eq:lambda_SGDA_str_mon}, the iterates produced by \ref{eq:clipped_SGDA} after $K$ iterations with probability at least $1-\beta$ satisfy
        \begin{equation}
            \|x^{K+1} - x^*\|^2 \leq 2\exp\left( - \frac{\mu(K+1)}{400 \ell \ln \tfrac{4(K+1)}{\beta}}\right)R^2. \label{eq:main_result_str_mon_SGDA_large_batch}
        \end{equation}
        In particular, to guarantee $\|x^{K+1} - x^*\|^2 \leq \varepsilon$ with probability at least $1-\beta$ for some $\varepsilon > 0$ \ref{eq:clipped_SGDA} requires
        \begin{gather}
            \cO\left(\frac{\ell}{\mu} \ln\left(\frac{R^2}{\varepsilon}\right)\ln\left(\frac{\ell}{\mu \beta}\ln\left(\frac{R^2}{\varepsilon}\right)\right)\right) \text{ iterations}, \label{eq:str_mon_SGDA_iteration_complexity_large_batch}\\
            \cO\left(\max\left\{\frac{\ell}{\mu}, \frac{\sigma^2}{\mu^2 \varepsilon}\right\} \ln\left(\frac{R^2}{\varepsilon}\right) \ln\left(\frac{\ell}{\mu\beta}\ln\left(\frac{R^2}{\varepsilon}\right)\right)\right) \text{ oracle calls}. \label{eq:str_mon_SGDA_oracle_complexity_large_batch} 
        \end{gather}
        
        \item \textbf{Small stepsize/small batch.} The choice of stepsize and batchsize
        \begin{equation}
            \gamma = \min\left\{\frac{1}{400 \ell \ln \tfrac{4(K+1)}{\beta}}, \frac{\ln\left(B_K\right)}{\mu (K+1)}\right\},\quad m_k \equiv 1 \label{eq:str_mon_SGDA_small_step_small_batch}
        \end{equation}
        satisfies conditions \eqref{eq:gamma_SGDA_str_mon} and \eqref{eq:batch_SGDA_str_mon}, where $B_K = \max\left\{2, \frac{(K+1)\mu^2 R^2}{27000\sigma^2\ln\left(\frac{4(K+1)}{\beta}\right)\ln^2(B_K)}\right\} = \cO\left(\max\left\{2, \frac{(K+1)\mu^2 R^2}{27000\sigma^2\ln\left(\frac{4(K+1)}{\beta}\right)\ln^2\left(\max\left\{2, \frac{(K+1)\mu^2 R^2}{27000\sigma^2\ln\left(\frac{4(K+1)}{\beta}\right)}\right\}\right)}\right\}\right)$. With such choice of $\gamma, m_k$, and the choice of $\lambda_k$ as in \eqref{eq:lambda_SGDA_str_mon}, the iterates produced by \ref{eq:clipped_SGDA} after $K$ iterations with probability at least $1-\beta$ satisfy
        \begin{equation}
            \|x^{K+1} - x^*\|^2 \leq \max\left\{2\exp\left( - \frac{\mu(K+1)}{400 \ell \ln \tfrac{4(K+1)}{\beta}}\right)R^2, \frac{54000\sigma^2\ln\left(\frac{4(K+1)}{\beta}\right) \ln^2 (B_K)}{\mu^2(K+1)}\right\}. \label{eq:main_result_str_mon_SGDA_small_batch}
        \end{equation}
        In particular, to guarantee $\|x^{K+1} - x^*\|^2 \leq \varepsilon$ with probability at least $1-\beta$ for some $\varepsilon > 0$ \ref{eq:clipped_SGDA} requires
        \begin{equation}
            \cO\left(\max\left\{\frac{\ell}{\mu} \ln\left(\frac{R^2}{\varepsilon}\right)\ln\left(\frac{\ell}{\mu\beta}\ln\left(\frac{R^2}{\varepsilon}\right)\right), \frac{\sigma^2}{\mu^2 \varepsilon}\ln\left(\frac{\sigma^2}{\mu^2 \varepsilon\beta}\right)\ln^2\left(B_\varepsilon\right)\right\} \right) \label{eq:str_mon_SGDA_iteration_oracle_complexity_small_batch} 
        \end{equation}
        iterations/oracle calls, where
        \begin{equation*}
            B_\varepsilon = \max\left\{2, \frac{R^2}{\varepsilon \ln\left(\frac{\sigma^2}{\mu^2 \varepsilon\beta}\right) \ln^2\left(\max\left\{2 , \frac{R^2}{\varepsilon \ln\left(\frac{\sigma^2}{\mu^2 \varepsilon\beta}\right)}\right\}\right)} \right\}.
        \end{equation*}
    \end{enumerate}
\end{corollary}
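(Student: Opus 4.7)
My plan is to mirror the argument used for Corollary~\ref{cor:main_result_SEG_str_mon} (the \ref{eq:clipped_SEG} analogue), adapting the numerical constants to the \ref{eq:clipped_SGDA} setting of Theorem~\ref{thm:main_result_SGDA_str_mon}. The proof splits into the two cases (large step/large batch and small step/small batch), and in each case proceeds by (i) verifying that the prescribed $(\gamma,m_k)$ satisfy the hypotheses \eqref{eq:gamma_SGDA_str_mon}, \eqref{eq:batch_SGDA_str_mon}, (ii) invoking Theorem~\ref{thm:main_result_SGDA_str_mon} to get the rate $\|x^{K+1}-x^*\|^2\le 2\exp(-\gamma\mu(K+1))R^2$, and (iii) inverting this inequality in $K$ to read off iteration and oracle complexities.

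For the large-step/large-batch case, the verification of \eqref{eq:gamma_SGDA_str_mon} is immediate from \eqref{eq:str_mon_SGDA_large_step_large_batch}, and \eqref{eq:batch_SGDA_str_mon} follows by substituting $\gamma=1/(400\ell \ln\tfrac{4(K+1)}{\beta})$ into the right-hand side of \eqref{eq:batch_SGDA_str_mon} and matching the constants. Applying Theorem~\ref{thm:main_result_SGDA_str_mon} yields \eqref{eq:main_result_str_mon_SGDA_large_batch}, and setting the right-hand side $\le\varepsilon$ and solving for $K$ (using the standard $\ln(x)/x$ inversion trick, swallowing iterated logarithms into the outer $\ln$) produces \eqref{eq:str_mon_SGDA_iteration_complexity_large_batch}. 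For the oracle count, I will sum $\sum_{k=0}^K m_k$; the dominant term is the geometric sum $\sum_k \exp(\gamma\mu k)$ which telescopes to $\cO(\exp(\gamma\mu(K+1))/(\gamma\mu))$, and substituting $\exp(-\gamma\mu(K+1))\asymp\varepsilon/R^2$ gives the $\cO(\sigma^2/(\mu^2\varepsilon))$ term in \eqref{eq:str_mon_SGDA_oracle_complexity_large_batch}.

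For the small-step/small-batch case, the key check is that $m_k\equiv 1$ suffices, i.e., that
\[
\frac{27000\gamma^2(K+1)\sigma^2\ln\tfrac{4(K+1)}{\beta}}{\exp(-\gamma\mu k)R^2}\le 1\qquad\text{for all }k=0,\ldots,K.
\]
Bounding the left-hand side by its value at $k=K$, plugging in $\gamma\le \ln(B_K)/(\mu(K+1))$, and using the defining implicit relation for $B_K$ reduces this to an identity; this is the main obstacle, but it is exactly the same computation performed in Corollary~\ref{cor:main_result_SEG_str_mon}. Once the hypotheses are verified, Theorem~\ref{thm:main_result_SGDA_str_mon} gives $\|x^{K+1}-x^*\|^2\le 2\exp(-\gamma\mu(K+1))R^2$, and using $\gamma\mu(K+1)=\min\{\mu(K+1)/(400\ell\ln\tfrac{4(K+1)}{\beta}),\ln B_K\}$ together with the definition of $B_K$ yields the two-term maximum in \eqref{eq:main_result_str_mon_SGDA_small_batch}.

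Finally, to derive \eqref{eq:str_mon_SGDA_iteration_oracle_complexity_small_batch} I will set each of the two terms of \eqref{eq:main_result_str_mon_SGDA_small_batch} $\le\varepsilon$ and solve for $K$. The first term gives the $\tfrac{\ell}{\mu}\ln(R^2/\varepsilon)\ln(\ldots)$ contribution exactly as in the large-batch case; the second gives $K\gtrsim\tfrac{\sigma^2}{\mu^2\varepsilon}\ln(\tfrac{4(K+1)}{\beta})\ln^2(B_K)$, which upon substituting the target accuracy produces the $B_\varepsilon$ expression stated. Because $m_k\equiv 1$, the oracle complexity coincides with the iteration complexity, giving the combined bound \eqref{eq:str_mon_SGDA_iteration_oracle_complexity_small_batch}. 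The entire argument is a direct analogue of the SEG quasi-strongly monotone corollary, with no new ideas required.
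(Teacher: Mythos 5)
Your proposal is correct and follows essentially the same route as the paper's own proof: verify \eqref{eq:gamma_SGDA_str_mon} and \eqref{eq:batch_SGDA_str_mon} for each parameter choice (in the small-batch case via $\exp(-\gamma\mu k)\geq\exp(-\gamma\mu(K+1))$, the bound $\gamma\leq\ln(B_K)/(\mu(K+1))$, and the implicit definition of $B_K$), invoke Theorem~\ref{thm:main_result_SGDA_str_mon}, invert the resulting rate in $K$, and sum the (geometric) batch sizes for the oracle count. No substantive differences from the paper's argument.
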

\begin{proof}
     \begin{enumerate}
        \item \textbf{Large stepsize/large batch.} First of all, it is easy to see that the choice of $\gamma$ and $m_k$ from \eqref{eq:str_mon_SGDA_large_step_large_batch} satisfies conditions \eqref{eq:gamma_SGDA_str_mon} and \eqref{eq:batch_SGDA_str_mon}. Therefore, applying Theorem~\ref{thm:main_result_SGDA_str_mon}, we derive that with probability at least $1-\beta$
        \begin{equation*}
            \|x^{K+1} - x^*\|^2 \leq 2\exp(-\gamma\mu(K+1))R^2 \overset{\eqref{eq:str_mon_SGDA_large_step_large_batch}}{=} 2\exp\left(- \frac{\mu (K+1)}{400 \ell \ln \tfrac{4(K+1)}{\beta}}\right)R^2.
        \end{equation*}
        To guarantee $\|x^{K+1} - x^*\|^2 \leq \varepsilon$, we choose $K$ in such a way that the right-hand side of the above inequality is smaller than $\varepsilon$ that gives
        \begin{eqnarray*}
            K = \cO\left(\frac{\ell}{\mu} \ln\left(\frac{R^2}{\varepsilon}\right)\ln\left(\frac{\ell}{\mu \beta}\ln\left(\frac{R^2}{\varepsilon}\right)\right)\right).
        \end{eqnarray*}
        The total number of oracle calls equals
        \begin{eqnarray*}
            \sum\limits_{k=0}^{K}m_k &\overset{\eqref{eq:str_mon_SGDA_large_step_large_batch}}{=}&  \sum\limits_{k=0}^{K}\max\left\{1, \frac{27000\gamma^2 (K+1) \sigma^2\ln \tfrac{4(K+1)}{\beta}}{\exp(-\gamma\mu k) R^2}\right\}\\
            &=& \cO\left(\max\left\{K,\frac{\gamma(K+1)\exp(\gamma\mu(K+1))\sigma^2\ln \tfrac{4(K+1)}{\beta}}{\mu R^2}\right\}\right)\\
            &=& \cO\left(\max\left\{\frac{\ell}{\mu}, \frac{\sigma^2}{\mu^2 \varepsilon}\right\} \ln\left(\frac{R^2}{\varepsilon}\right) \ln\left(\frac{\ell}{\mu\beta}\ln\left(\frac{R^2}{\varepsilon}\right)\right)\right).
        \end{eqnarray*}
        
        \item \textbf{Small stepsize/small batch.} First of all, we verify that the choice of $\gamma$ and $m_k$ from \eqref{eq:str_mon_SGDA_small_step_small_batch} satisfies conditions \eqref{eq:gamma_SGDA_str_mon} and \eqref{eq:batch_SGDA_str_mon}: \eqref{eq:gamma_SGDA_str_mon} trivially holds and \eqref{eq:batch_SGDA_str_mon} holds since for all $k = 0,\ldots, K$
        \begin{eqnarray*}
            \frac{27000\gamma^2 (K+1) \sigma^2\ln \tfrac{4(K+1)}{\beta}}{\exp(-\gamma\mu k) R^2} &\leq& \frac{27000\gamma^2 (K+1) \sigma^2\ln \tfrac{4(K+1)}{\beta}}{\exp(-\gamma\mu (K+1))R^2} \\
            &\overset{\eqref{eq:str_mon_SGDA_small_step_small_batch}}{\leq}& \frac{27000\ln^2\left(B_K\right) \exp(\gamma\mu(K+1)) \sigma^2\ln \tfrac{4(K+1)}{\beta}}{\mu^2 (K+1) R^2}\\
            &\overset{\eqref{eq:str_mon_SGDA_small_step_small_batch}}{\leq}& 1.
        \end{eqnarray*}
        Therefore, applying Theorem~\ref{thm:main_result_SGDA_str_mon}, we derive that with probability at least $1-\beta$
        \begin{eqnarray*}
            \|x^{K+1} - x^*\|^2 &\leq& 2\exp(-\gamma\mu(K+1))R^2\\
            &\overset{\eqref{eq:str_mon_SGDA_small_step_small_batch}}{=}& \max\left\{2\exp\left(- \frac{\mu (K+1)}{400 \ell \ln \tfrac{4(K+1)}{\beta}}\right)R^2, \frac{2R^2}{B_K}\right\}\\
            &=& \max\left\{2\exp\left(- \frac{\mu (K+1)}{400 \ell \ln \tfrac{4(K+1)}{\beta}}\right)R^2, \frac{54000\sigma^2\ln\left(\frac{4(K+1)}{\beta}\right) \ln^2 (B_K)}{\mu^2(K+1)}\right\}.
        \end{eqnarray*}
        To guarantee $\|x^{K+1} - x^*\|^2 \leq \varepsilon$, we choose $K$ in such a way that the right-hand side of the above inequality is smaller than $\varepsilon$ that gives $K$ of the order
        \begin{eqnarray*}
            \cO\left(\max\left\{\frac{\ell}{\mu} \ln\left(\frac{R^2}{\varepsilon}\right)\ln\left(\frac{\ell}{\mu\beta}\ln\left(\frac{R^2}{\varepsilon}\right)\right), \frac{\sigma^2}{\mu^2 \varepsilon}\ln\left(\frac{\sigma^2}{\mu^2 \varepsilon\beta}\right)\ln^2\left(B_\varepsilon\right)\right\} \right),
        \end{eqnarray*}
        where
        \begin{equation*}
            B_\varepsilon = \max\left\{2, \frac{R^2}{\varepsilon \ln\left(\frac{\sigma^2}{\mu^2 \varepsilon\beta}\right) \ln^2\left(\max\left\{2 , \frac{R^2}{\varepsilon \ln\left(\frac{\sigma^2}{\mu^2 \varepsilon\beta}\right)}\right\}\right)} \right\}.
        \end{equation*}
        The total number of oracle calls equals $\sum_{k=0}^K m_k = (K+1)$.
    \end{enumerate}
\end{proof}

\newpage

\section{Extra Experiments}\label{app:extra_exps}

In this section, we provide more details for the experiments done in \S~\ref{sec:experiments}, as well as additional tables, figures, and image samples from some of our trained models.

\subsection{WGAN-GP}

In all cases, everything in the experimental setup other than learning rates and clip values remained constant.
We use the same ResNet architectures and training parameters specified in \cite{gulrajani2017improved}: the gradient penalty coefficient $\lambda_{GP}=10$, $n_{dis}=5$ where $n_{dis}$ is the number of discriminator steps for every generator step, and a learning rate decayed linearly to 0 over 100k steps.
The only exception is we double the feature map of the generator from 128 to 256 dimensions.
For all stochastic extragradient (\algname{SEG}) methods, we use the \algname{ExtraSGD} implementation provided by \cite{gidel2019variational}.
We alternate between exploration and update steps and do not treat the exploration steps as ``free'' -- this means we only have 50k parameter updates as opposed to 100k for all \algname{SGDA} methods (we decay the learning rate twice as fast such that it still reaches 0 after 50k parameter updates).

All of the hyperparameter sweeps performed for \algname{SGDA}, \algname{clipped-SGDA}, \algname{clipped-SEG}, \algname{clipped-SGDA} (coordinate), and \algname{clipped-SEG} (coordinate), as well as the associated best FID score obtained within the first 35k training steps, can be found in
Tables \ref{tab:hparam_sgda}, \ref{tab:hparam_clipval_sgda}, \ref{tab:hparam_clipnorm_seg}, \ref{tab:hparam_clipval_sgda}, and \ref{tab:hparam_clipval_seg} respectively.
\textbf{Bold} rows denote the hyperparameters that were trained for the full 100k steps and are henceforth referred to as the \textit{``best models''}.
For each of the methods tested, additional samples for the best models trained can be found in Figures \ref{fig:wgangp/samples/sgd}, \ref{fig:wgangp/samples/sgd_norm}, \ref{fig:wgangp/samples/extrasgd_norm}, \ref{fig:wgangp/samples/sgd_val}, \& \ref{fig:wgangp/samples/extrasgd_val}.
We also plot the evolution of the gradient noise histograms in Figures \ref{fig:app/evo/sgda}, \ref{fig:app/evo/clipnorm-sgda}, \ref{fig:app/evo/clipnorm-SEG}, \ref{fig:app/evo/clipval-SGDA}, \& \ref{fig:app/evo/clipval-SEG}.
We emphasize that our goal is not to get the best possible FID score (e.g. are often able to obtain marginally better FIDs by training for longer), but rather to compare the systematic differences in performance between the various unclipped and clipped methods. 
Therefore, log-space hyperparameter sweeps are appropriate for our experiments and we do not tune further.

\subsection{StyleGAN2}

We train on FFHQ downsampled to $128\times128$ pixels, and use the recommended StyleGAN2 hyperparameter configuration for this resolution: batch~size~$=32$, $\gamma=0.1024$, map~depth~$=2$, and channel~multiplier~$=16384$.
For both \algname{SGDA} and \algname{clipped-SGDA}, we sweep over a (roughly) log-scale of learning rates and clipping values; a summary of the hyperparmaters and best FID scores obtained Table~\ref{tab:stylegan2/hparam/sgd} and Table~\ref{tab:stylegan2/hparam/clipval-sgd} respectively.

Based on the results in Table~\ref{tab:stylegan2/hparam/clipval-sgd}, the best hyperparameters are lr=$0.35$ and clip=$0.0025$ which we then used to train our \textit{``best model''}.
We trained for longer, and decayed the learning rate twice (by a factor of $\times10$) when the FID plateaued or worsened. 
The best schedule we found was to scale the learning rate by $\times 0.1$ after 6000 kimgs (thousands of real images shown to the discriminator), by another $\times 0.1$ after 3600 kimgs, and then train until the FID begins increasing (for another 8000 kimgs) -- for a total of 17600 kimgs.
We did not explore different scale factors or other schedules (such as cosine annealing).
Additional samples for this model can be found in Figure~\ref{fig:stylegan/clippedsgda/moresamples}.

In general, we observe that every \algname{SGDA}-trained model for the wide range of learning rates we tested failed to improve the FID, while models trained with \algname{clipped-SGDA} (with appropriately set hyperparameters) are generally able to learn some meaningful features and improve the FID.
We show this behaviour in Figure \ref{fig:stylegan/FID} -- the FID scores for \algname{SGDA}-trained models fluctuate around $320$ and only generate noise such as the samples shown in Figure~\ref{fig:stylegan/sgda/moresamples}, which is in contrast to models trained with \algname{clipped-SGDA}.
Note that the range of the hyperparameter sweep is fairly narrow and favourable for \algname{clipped-SGDA}, while being quite wide for \algname{SGDA}.
The purpose for these parameter ranges is not to directly compare the parameter sweeps (which would unfairly favour \algname{clipped-SGDA}), but to show that in general \algname{SGDA} fails, while \algname{clipped-SGDA} is capable of learning.

\newpage

\begin{table}[]
    \centering
    \begin{minipage}{.46\linewidth}
        \caption{\algname{SGDA} hyperparameter sweep, and the best FID score obtained in 35k training steps.}
        \label{tab:hparam_sgda}
        \centering
        \medskip
        \begin{tabular}{ccc}
            \toprule
              G-LR &   D-LR &   FID \\
            \midrule
             6e-06 &  6e-06 & 233.3 \\
             2e-05 &  2e-05 & 177.2 \\
             2e-05 &  4e-05 & 183.4 \\
             2e-05 &  8e-05 & 187.3 \\
            0.0002 & 0.0002 &  85.6 \\
            \textbf{0.0002} & \textbf{0.0004} &  \textbf{82.8} \\
            0.0002 & 0.0008 & \texttt{NaN} \\
             0.002 &  0.002 & \texttt{NaN} \\
              0.02 &   0.02 & \texttt{NaN} \\
               0.2 &    0.2 & \texttt{NaN} \\
            \bottomrule
        \end{tabular}
    \end{minipage}
    \hfill
    \begin{minipage}{.46\linewidth}
        \caption{\algname{SEG} hyperparameter sweep, and the best FID score obtained in 35k training steps.}
        \label{tab:hparam_seg}
        \centering
        \medskip
        %
        %
        \begin{tabular}{ccc}
            \toprule
              G-LR &   D-LR &   FID \\
            \midrule
             6e-06 &  6e-06 & 236.1 \\
             2e-05 &  2e-05 & 208.6 \\
             2e-05 &  4e-05 & 213.7 \\
             \textbf{4e-05} &  \textbf{4e-05} & \textbf{176.5} \\
             4e-05 & 0.0001 & \texttt{NaN} \\
            0.0002 & 0.0002 & \texttt{NaN} \\
            0.0002 & 0.0004 & \texttt{NaN} \\
            0.0002 & 0.0008 & \texttt{NaN} \\
             0.002 &  0.002 & \texttt{NaN} \\
              0.02 &   0.02 & \texttt{NaN} \\
               0.2 &    0.2 & \texttt{NaN} \\
                 2 &      2 & \texttt{NaN} \\
            \bottomrule
        \end{tabular}

    \end{minipage}
\end{table}

\begin{table}[]
    \begin{minipage}{.46\linewidth}
        \centering
        \caption{\algname{clipped-SGDA} (norm) hyperparameter sweep, and the best FID score obtained in 35k training steps.}
        \label{tab:hparam_clipnorm_sgda}
        \medskip
        %
        %
        \begin{tabular}{ccccc}
        \toprule
         G-LR &  D-LR &  G-clip &  D-clip &   FID \\
        \midrule
        0.002 & 0.002 &     0.1 &     0.1 & 257.6 \\
        0.002 & 0.002 &       1 &       1 & 121.6 \\
        0.002 & 0.002 &      10 &      10 & 145.4 \\
         0.02 &  0.02 &     0.1 &     0.1 & 115.4 \\
         0.02 &  0.02 &       1 &       1 & 141.8 \\
         0.02 &  0.02 &      10 &      10 &  27.4 \\
          0.2 &   0.2 &     0.1 &     0.1 & 133.0 \\
          \textbf{0.2} &   \textbf{0.2} &       \textbf{1} &       \textbf{1} &  \textbf{26.3} \\
            2 &     2 &     0.1 &     0.1 &  26.1 \\
        \bottomrule
        \end{tabular}
    \end{minipage}
    \hfill
    \begin{minipage}{.46\linewidth}
        \centering
        \caption{\algname{clipped-SEG} (norm) hyperparameter sweep, and the best FID score obtained in 35k training steps (17.5k parameter updates).}
        \label{tab:hparam_clipnorm_seg}
        \medskip
        %
        %
        \begin{tabular}{ccccc}
        \toprule
         G-LR &  D-LR &  G-clip &  D-clip &   FID \\
        \midrule
        0.002 & 0.002 &     0.1 &     0.1 & 232.5 \\
        0.002 & 0.002 &       1 &       1 & 150.5 \\
        0.002 & 0.002 &      10 &      10 & 192.7 \\
         0.02 &  0.02 &     0.1 &     0.1 & 161.0 \\
         0.02 &  0.02 &       1 &       1 & 160.3 \\
         0.02 &  0.02 &      10 &      10 &  39.3 \\
          0.2 &   0.2 &     0.1 &     0.1 & 160.0 \\
          \textbf{0.2} &   \textbf{0.2} &       \textbf{1} &       \textbf{1} &  \textbf{36.3} \\
            2 &     2 &    0.1 &     0.1 &  37.7 \\
        \bottomrule
        \end{tabular}
    \end{minipage} \\
\end{table}

\begin{table}[]
    \begin{minipage}{.46\linewidth}
        \centering
        \caption{\algname{clipped-SGDA} (coordinate) hyperparameter sweep, and the best FID score obtained in 35k training steps.}
        \label{tab:hparam_clipval_sgda}
        \medskip
        %
        %
        \begin{tabular}{ccccc}
        \toprule
          G-LR &   D-LR &  G-clip &  D-clip &   FID \\
        \midrule
        0.0002 & 0.0002 &   0.001 &   0.001 & 292.2 \\
        0.0002 & 0.0002 &    0.01 &    0.01 & 108.6 \\
        0.0002 & 0.0002 &     0.1 &     0.1 &  91.5 \\
         0.002 &  0.002 &   0.001 &   0.001 &  76.5 \\
         0.002 &  0.002 &    0.01 &    0.01 &  43.5 \\
         0.002 &  0.002 &     0.1 &     0.1 &  45.1 \\
          0.02 &   0.02 &   0.001 &   0.001 &  37.3 \\
          \textbf{0.02} &   \textbf{0.02} &    \textbf{0.01} &    \textbf{0.01} &  \textbf{26.7} \\
          0.02 &   0.02 &     0.1 &     0.1 &  34.7 \\
        \bottomrule
        \end{tabular}
    \end{minipage}
    \hfill
    \begin{minipage}{.46\linewidth}
        \centering
        \caption{\algname{clipped-SEG} (coordinate) hyperparameter sweep, and the best FID score obtained in 35k training steps (17.5k parameter updates).}
        \label{tab:hparam_clipval_seg}
        \medskip
        %
        %
        \begin{tabular}{ccccc}
        \toprule
          G-LR &   D-LR &  G-clip &  D-clip &   FID \\
        \midrule
        0.0002 & 0.0002 &   0.001 &   0.001 & 298.7 \\
        0.0002 & 0.0002 &    0.01 &    0.01 & 146.5 \\
        0.0002 & 0.0002 &     0.1 &     0.1 & 158.4 \\
         0.002 &  0.002 &   0.001 &   0.001 & 112.8 \\
         0.002 &  0.002 &    0.01 &    0.01 &  52.7 \\
         0.002 &  0.002 &     0.1 &     0.1 &  66.5 \\
          0.02 &   0.02 &   0.001 &   0.001 &  43.5 \\
         \textbf{0.02} &   \textbf{0.02} &    \textbf{0.01} &    \textbf{0.01} &  \textbf{36.2} \\
          0.02 &   0.02 &     0.1 &     0.1 &  75.3 \\
        \bottomrule
        \end{tabular}
    \end{minipage}
\end{table}

\newpage

\begin{figure}[]
    \centering
    \hspace*{-1.8cm}       
    \includegraphics[width=1.07\textwidth]{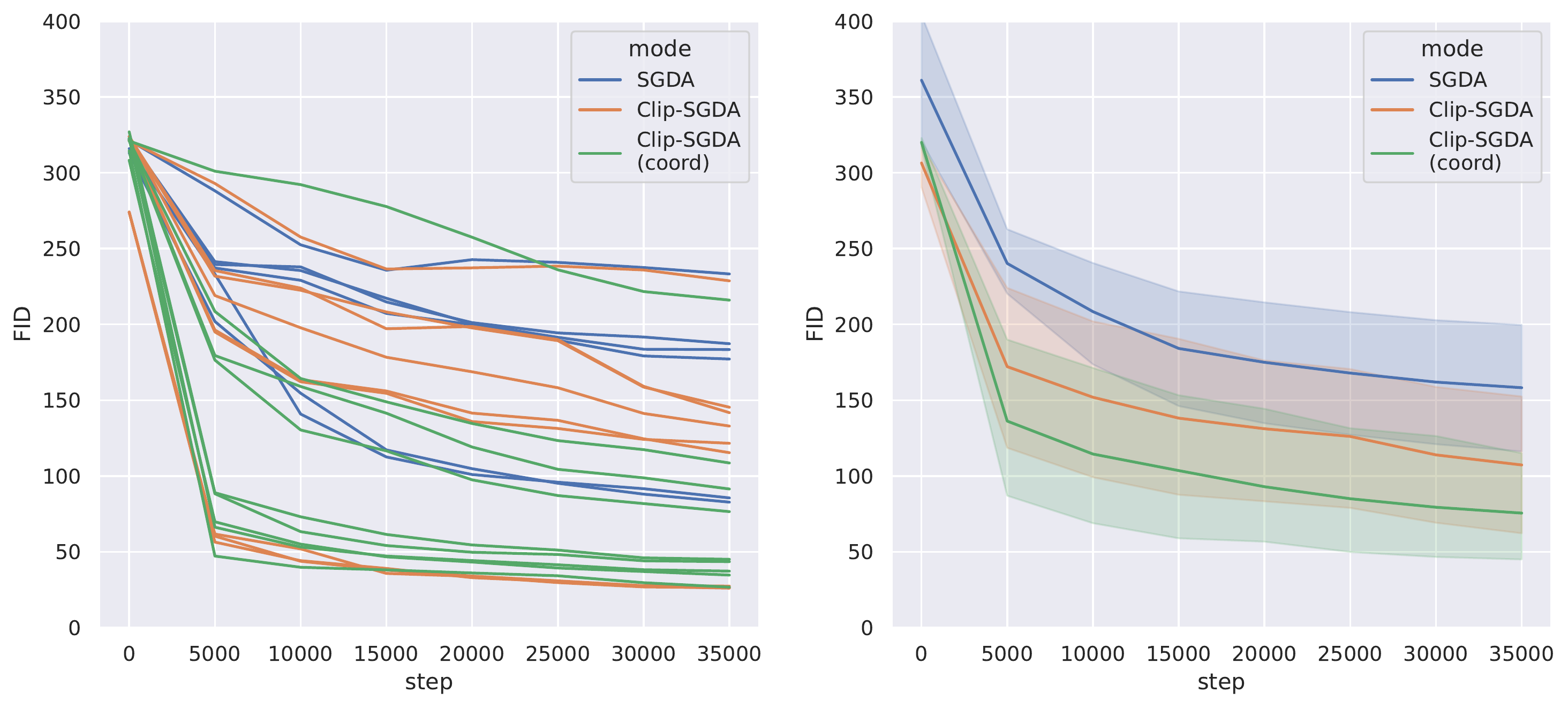}
    \caption{FID curves when training WGAN-GP for 35k steps with \algname{SGDA}, \algname{clipped-SGDA} (norm), and \algname{clipped-SGDA} (coordinate), corresponding to the hyperparameters in Tables~\ref{tab:hparam_sgda},~\ref{tab:hparam_clipnorm_sgda}~\&~\ref{tab:hparam_clipval_sgda} respectively.
    The left figure is the individual runs for each choice of hyperparameters, and the right is the mean and $95\%$ confidence interval of these runs.
    Note that 4 of 10 runs diverged (\texttt{NaN} loss) for \algname{SGDA}, which is not reflected in the mean FID for the right figure beyond the first step.
    }
    \label{fig:wgangp/sgda_FID}
\end{figure}

\begin{figure}[]
    \centering
    \hspace*{-1.8cm}       
    \includegraphics[width=1.07\textwidth]{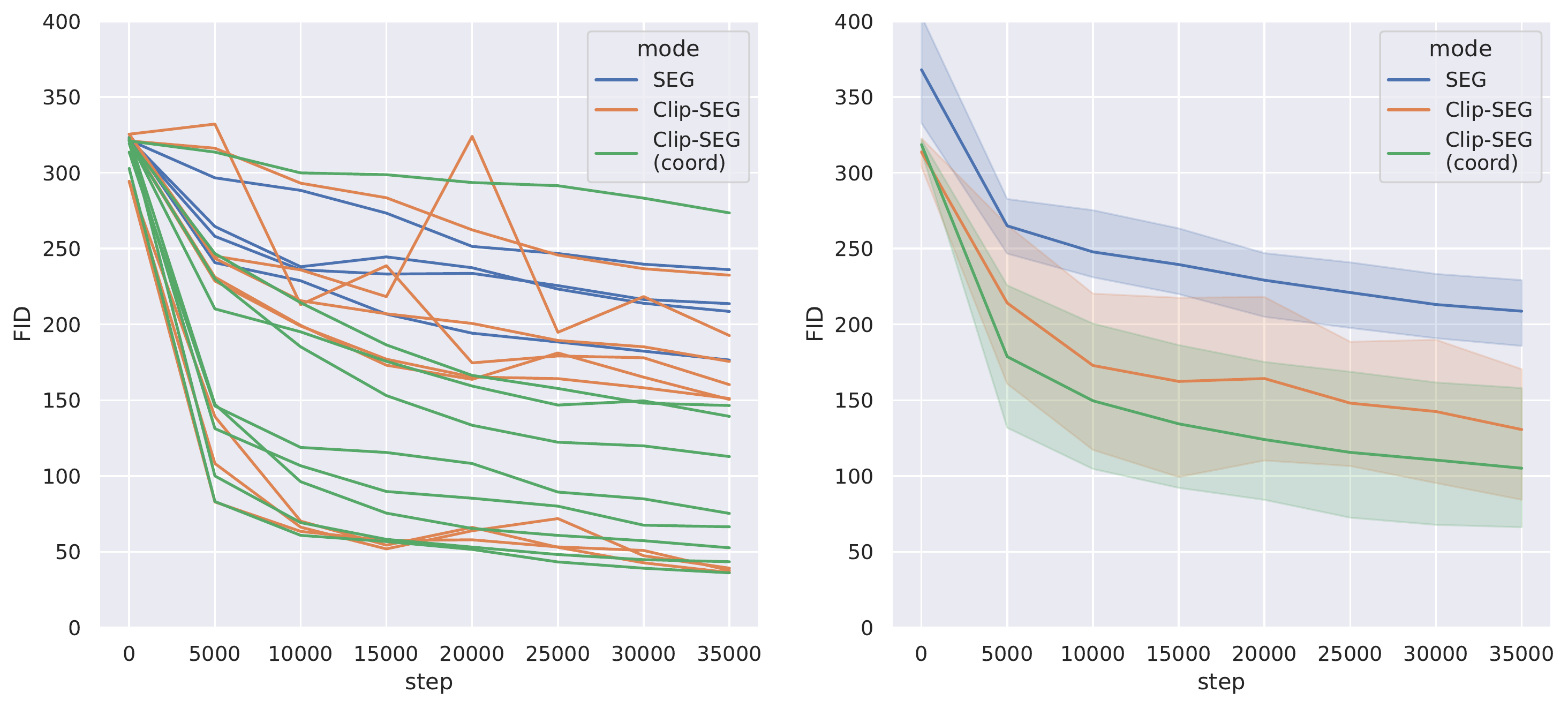}
    \caption{FID curves when training WGAN-GP for 35k steps with \algname{SEG}, \algname{clipped-SEG} (norm), and \algname{clipped-SEG} (coordinate), corresponding to the hyperparameters in Tables~\ref{tab:hparam_seg},~\ref{tab:hparam_clipnorm_seg}~\&~\ref{tab:hparam_clipval_seg} respectively.
    The left figure is the individual runs for each choice of hyperparameters, and the right is the mean and $95\%$ confidence interval of these runs.
    Note that 8 of 12 runs diverged (\texttt{NaN} loss) for \algname{SEG}, which is not reflected in the mean FID for the right figure beyond the first step.
    }
    \label{fig:wgangp/seg_FID}
\end{figure}

\newpage

\begin{figure}[]
    \centering
    \includegraphics[width=\textwidth]{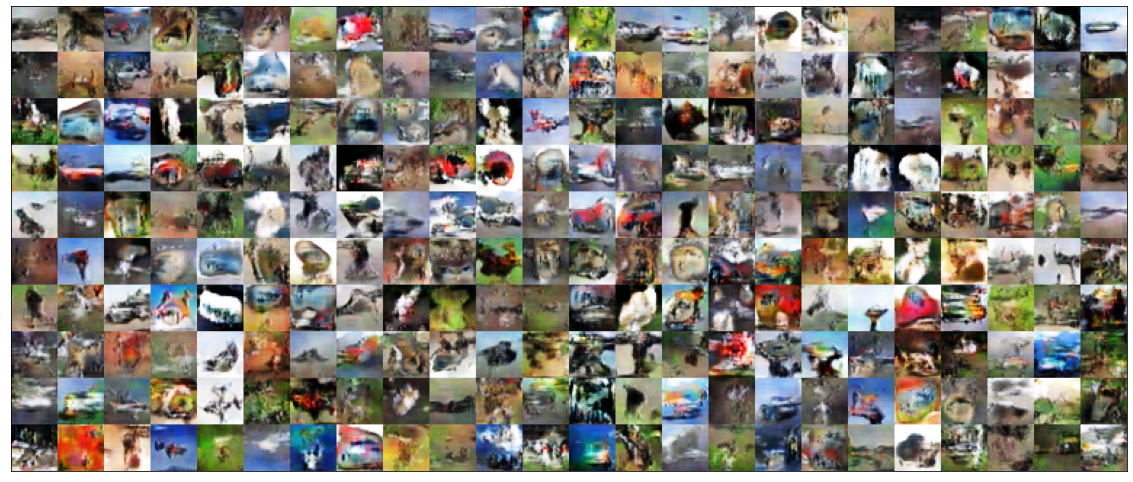}
    \caption{Samples generated from the best WGAN-GP model trained with \algname{SGDA}.}
    \label{fig:wgangp/samples/sgd}
\end{figure}
\begin{figure}[]
    \centering
    \includegraphics[width=\textwidth]{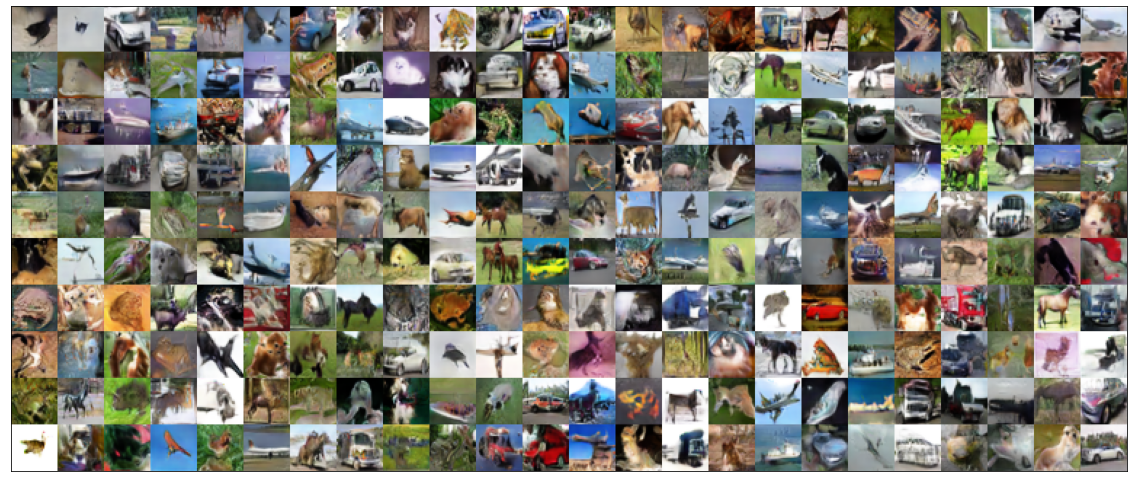}
    \caption{Samples generated from the best WGAN-GP model trained with \algname{clipped-SGDA}.}
    \label{fig:wgangp/samples/sgd_norm}
\end{figure}
\begin{figure}[]
    \centering
    \includegraphics[width=\textwidth]{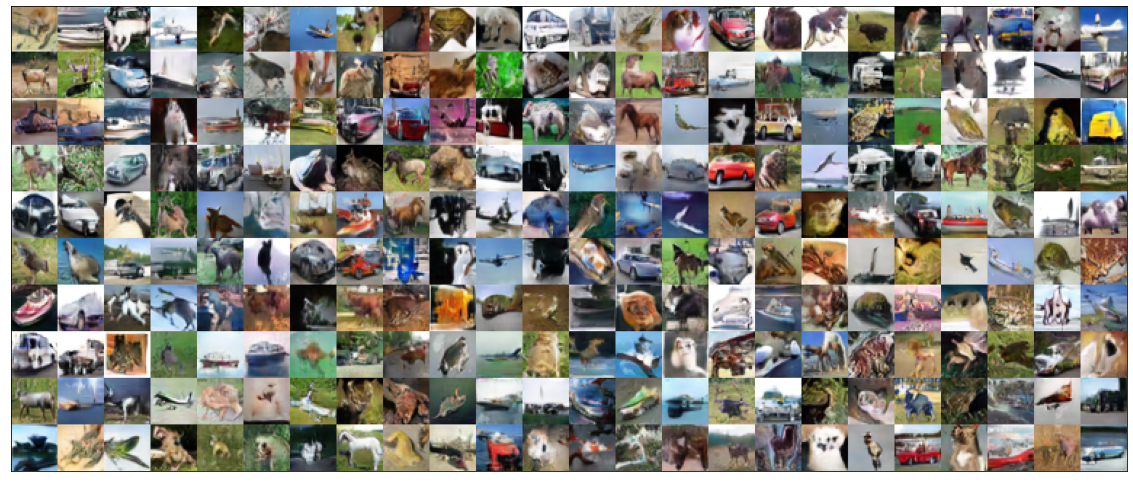}
    \caption{Samples generated from the best WGAN-GP model trained with \algname{clipped-SEG}.}
    \label{fig:wgangp/samples/extrasgd_norm}
\end{figure}
\begin{figure}[]
    \centering
    \includegraphics[width=\textwidth]{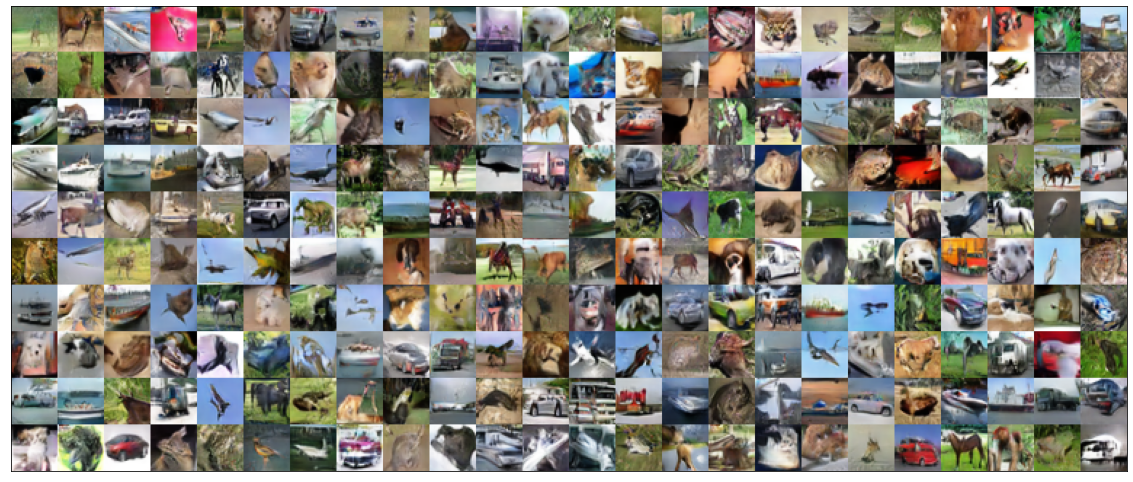}
    \caption{Samples generated from the best WGAN-GP model trained with \algname{clipped-SGDA} (coordinate clipping).}
    \label{fig:wgangp/samples/sgd_val}
\end{figure}
\begin{figure}[]
    \centering
    \includegraphics[width=\textwidth]{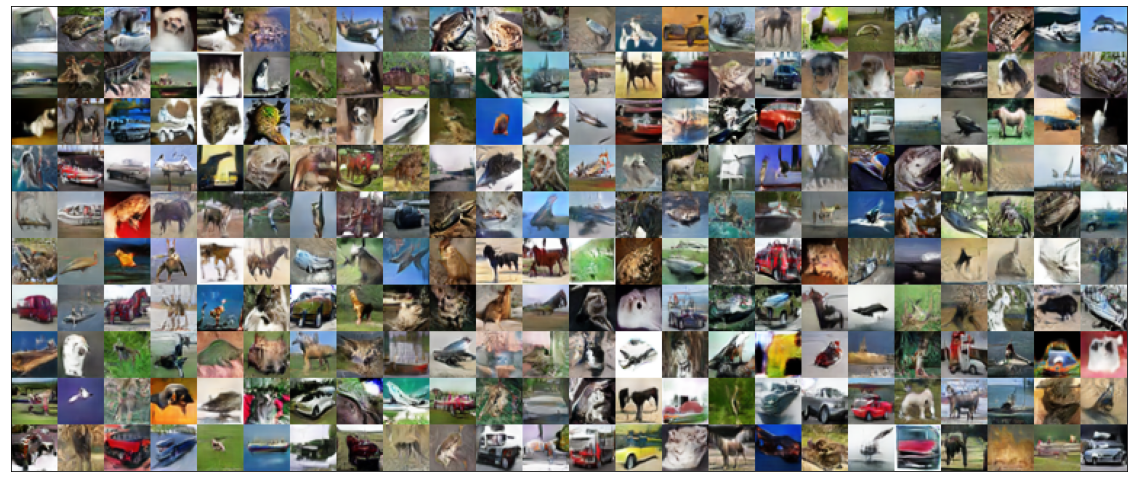}
    \caption{Samples generated from the best WGAN-GP model trained with \algname{clipped-SEG} (coordinate clipping).}
    \label{fig:wgangp/samples/extrasgd_val}
\end{figure}

\newpage

\begin{figure}[]
    \begin{tabular}{lcccc}
        \raisebox{3.5\normalbaselineskip}[0pt][0pt]{
            \rotatebox[origin=c]{90}{Generator}
        }\hspace{-7pt} & 
        \hspace{-7pt}
        \includegraphics[width=\evofigscale\textwidth]{plots/experiments/evolution/sgd/hist_G_sgd_20000-eps-converted-to.pdf} &   
        \hspace{-10pt}
        \includegraphics[width=\evofigscale\textwidth]{plots/experiments/evolution/sgd/hist_G_sgd_40000-eps-converted-to.pdf} &
        \hspace{-10pt}
        \includegraphics[width=\evofigscale\textwidth]{plots/experiments/evolution/sgd/hist_G_sgd_80000-eps-converted-to.pdf} &   
        \hspace{-10pt}
        \includegraphics[width=\evofigscale\textwidth]{plots/experiments/evolution/sgd/hist_G_sgd_100000-eps-converted-to.pdf} \\
    
        \raisebox{3.5\normalbaselineskip}[0pt][0pt]{
            \rotatebox[origin=c]{90}{Discriminator}
        }\hspace{-7pt} & 
        \hspace{-7pt}
        \includegraphics[width=\evofigscale\textwidth]{plots/experiments/evolution/sgd/hist_D_sgd_20000-eps-converted-to.pdf} &   
        \hspace{-10pt}
        \includegraphics[width=\evofigscale\textwidth]{plots/experiments/evolution/sgd/hist_D_sgd_40000-eps-converted-to.pdf} &
        \hspace{-10pt}
        \includegraphics[width=\evofigscale\textwidth]{plots/experiments/evolution/sgd/hist_D_sgd_80000-eps-converted-to.pdf} &   
        \hspace{-10pt}
        \includegraphics[width=\evofigscale\textwidth]{plots/experiments/evolution/sgd/hist_D_sgd_100000-eps-converted-to.pdf} \\
    \end{tabular}
    \caption{\small Evolution of gradient noise histograms for the best WGAN-GP model trained with \algname{SGDA}.}
    \label{fig:app/evo/sgda}
\end{figure}
\begin{figure}[]
    \begin{tabular}{lcccc}
        
        \raisebox{3.5\normalbaselineskip}[0pt][0pt]{
            \rotatebox[origin=c]{90}{Generator}
        }\hspace{-7pt} & 
        \hspace{-7pt}
        \includegraphics[width=\evofigscale\textwidth]{plots/experiments/evolution/sgd-norm/hist_G_sgd-norm_20000-eps-converted-to.pdf} &   
        \hspace{-10pt}
        \includegraphics[width=\evofigscale\textwidth]{plots/experiments/evolution/sgd-norm/hist_G_sgd-norm_40000-eps-converted-to.pdf} &
        \hspace{-10pt}
        \includegraphics[width=\evofigscale\textwidth]{plots/experiments/evolution/sgd-norm/hist_G_sgd-norm_80000-eps-converted-to.pdf} &   
        \hspace{-10pt}
        \includegraphics[width=\evofigscale\textwidth]{plots/experiments/evolution/sgd-norm/hist_G_sgd-norm_100000-eps-converted-to.pdf} \\

        \raisebox{3.5\normalbaselineskip}[0pt][0pt]{
            \rotatebox[origin=c]{90}{Discriminator}
        }\hspace{-7pt} & 
        \hspace{-7pt}
        \includegraphics[width=\evofigscale\textwidth]{plots/experiments/evolution/sgd-norm/hist_D_sgd-norm_20000-eps-converted-to.pdf} &   
        \hspace{-10pt}
        \includegraphics[width=\evofigscale\textwidth]{plots/experiments/evolution/sgd-norm/hist_D_sgd-norm_40000-eps-converted-to.pdf} &
        \hspace{-10pt}
        \includegraphics[width=\evofigscale\textwidth]{plots/experiments/evolution/sgd-norm/hist_D_sgd-norm_80000-eps-converted-to.pdf} &   
        \hspace{-10pt}
        \includegraphics[width=\evofigscale\textwidth]{plots/experiments/evolution/sgd-norm/hist_D_sgd-norm_100000-eps-converted-to.pdf} \\
        & {20000 steps} & {40000 steps} & {80000 steps} & {100000 steps} \\
    
    \end{tabular}
    \caption{\small Evolution of gradient noise histograms for the best WGAN-GP model trained with \algname{clipped-SGDA}.}
    \label{fig:app/evo/clipnorm-sgda}
\end{figure}
\begin{figure}[]
    \begin{tabular}{lcccc}
        
        \raisebox{3.5\normalbaselineskip}[0pt][0pt]{
            \rotatebox[origin=c]{90}{Generator}
        }\hspace{-7pt} & 
        \hspace{-7pt}
        \includegraphics[width=\evofigscale\textwidth]{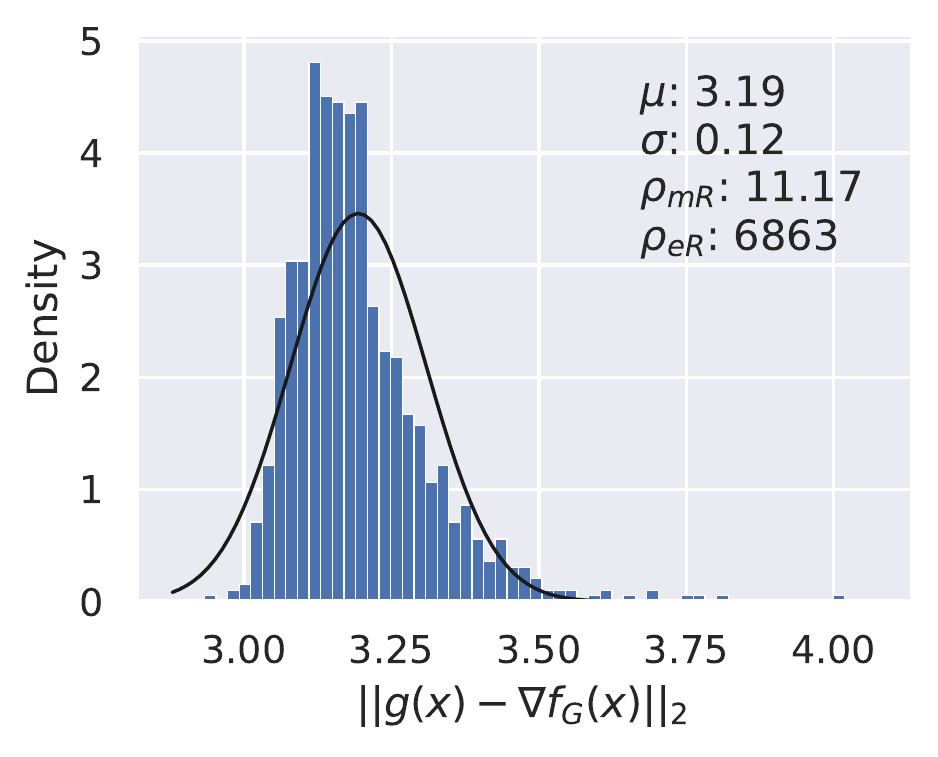} &   
        \hspace{-10pt}
        \includegraphics[width=\evofigscale\textwidth]{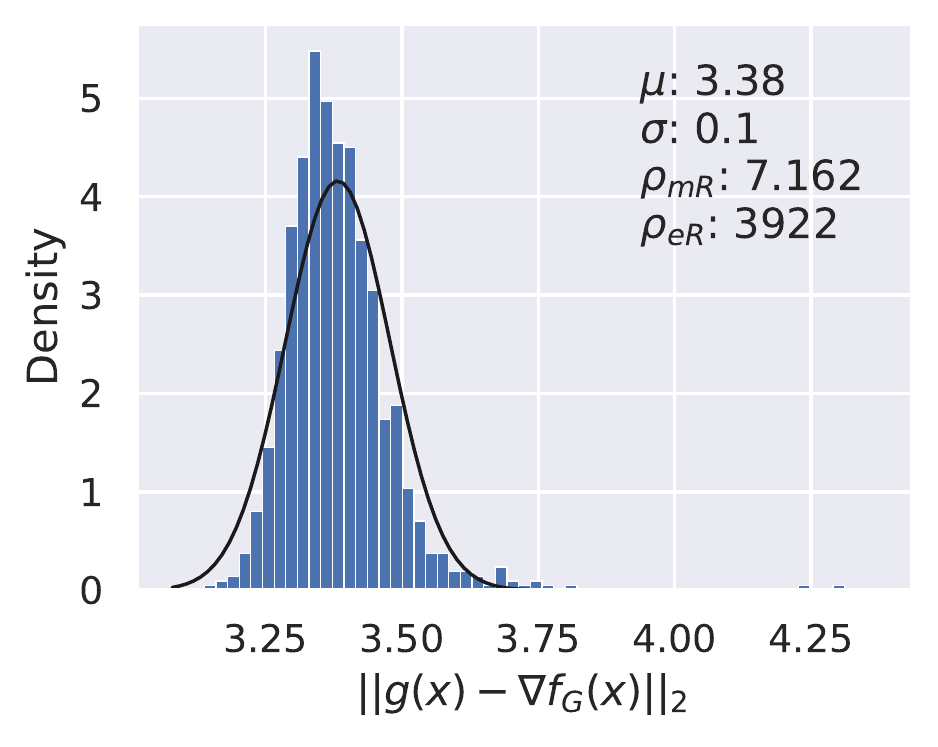} &
        \hspace{-10pt}
        \includegraphics[width=\evofigscale\textwidth]{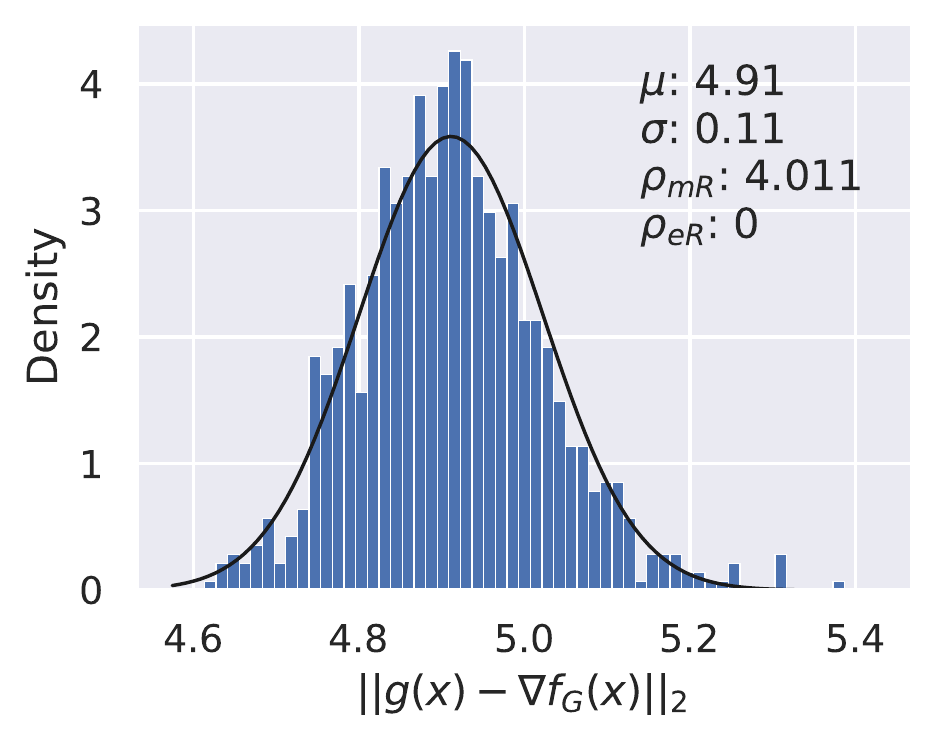} &   
        \hspace{-10pt}
        \includegraphics[width=\evofigscale\textwidth]{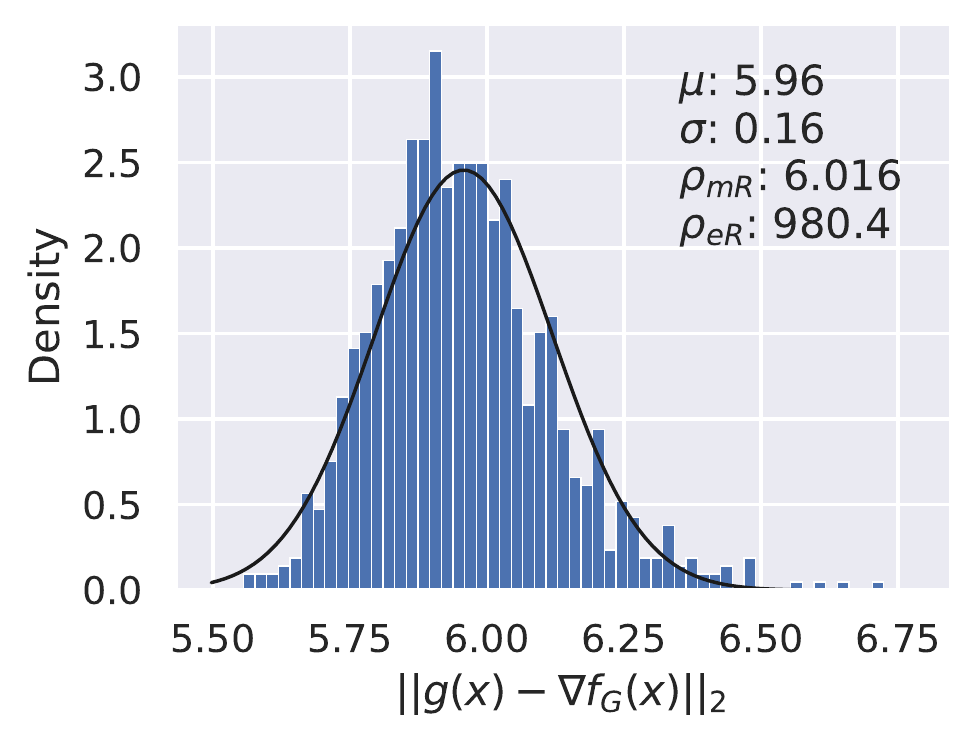} \\

        \raisebox{3.5\normalbaselineskip}[0pt][0pt]{
            \rotatebox[origin=c]{90}{Discriminator}
        }\hspace{-7pt} & 
        \hspace{-7pt}
        \includegraphics[width=\evofigscale\textwidth]{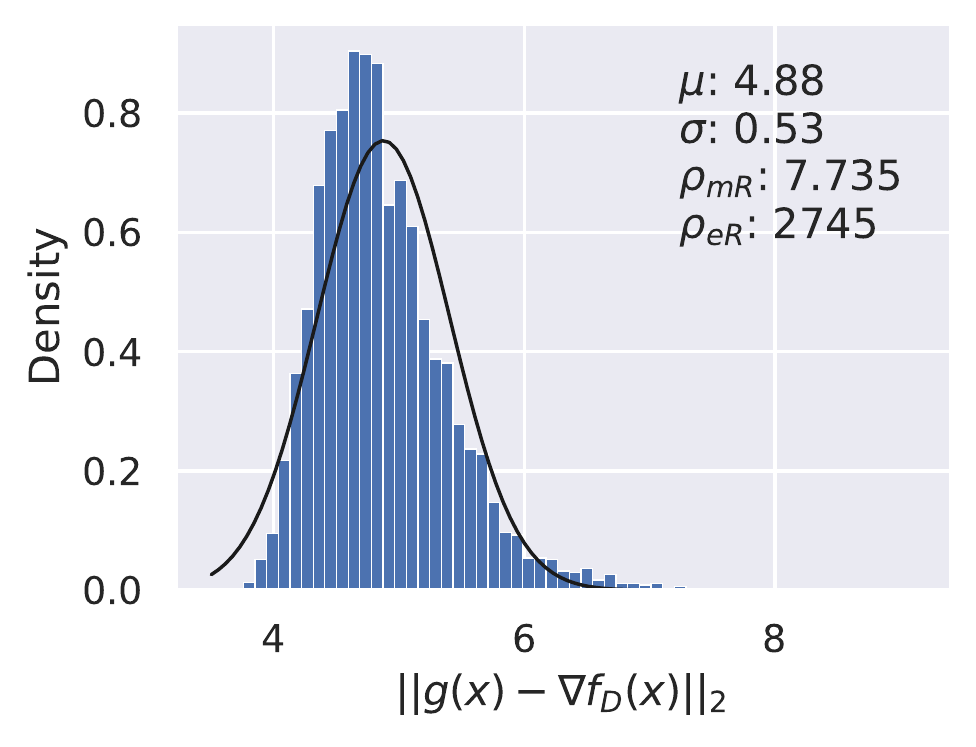} &   
        \hspace{-10pt}
        \includegraphics[width=\evofigscale\textwidth]{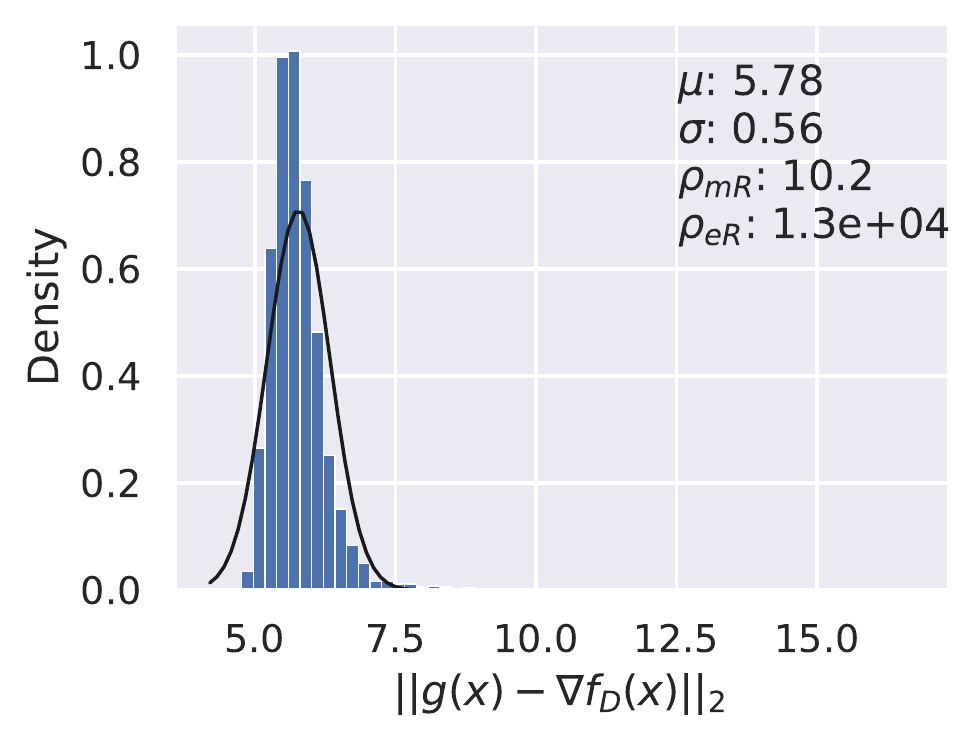} &
        \hspace{-10pt}
        \includegraphics[width=\evofigscale\textwidth]{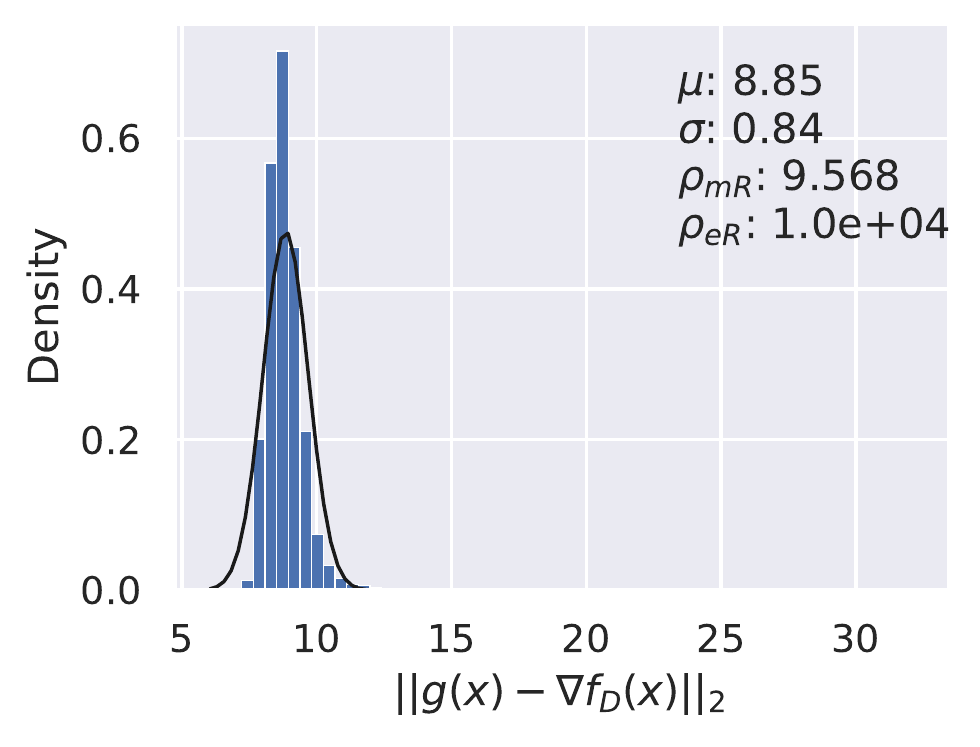} &   
        \hspace{-10pt}
        \includegraphics[width=\evofigscale\textwidth]{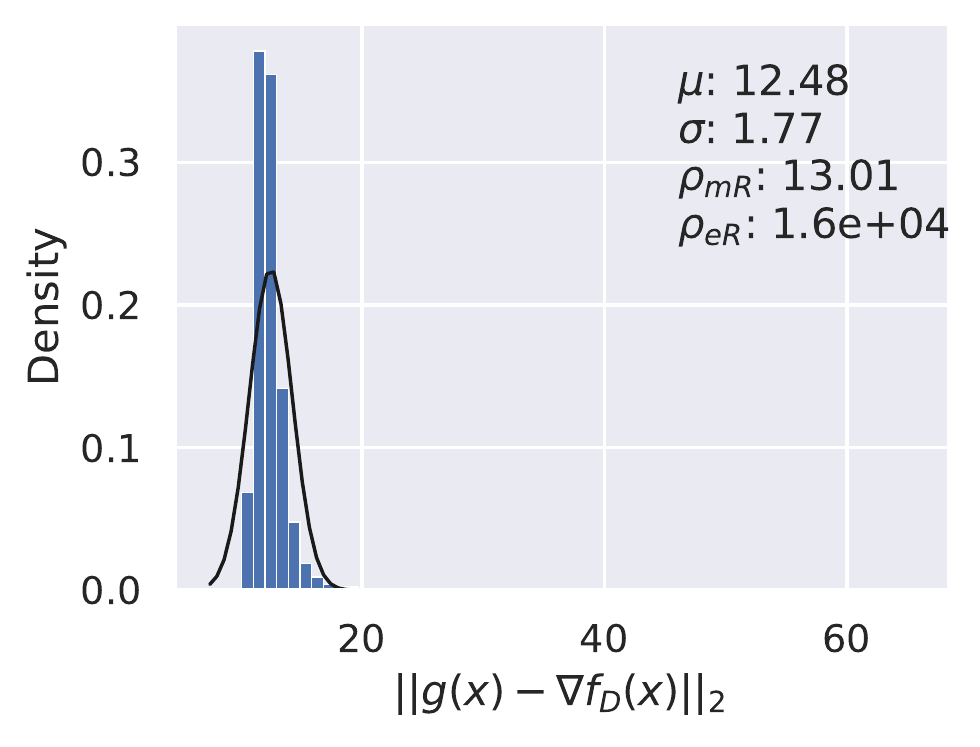} \\
        & {20000 steps} & {40000 steps} & {80000 steps} & {100000 steps} \\
    
    \end{tabular}
    \caption{\small Evolution of gradient noise histograms for the best WGAN-GP model trained with \algname{clipped-SEG}.}
    \label{fig:app/evo/clipnorm-SEG}
\end{figure}
\begin{figure}[]
    \begin{tabular}{lcccc}
        
        \raisebox{3.5\normalbaselineskip}[0pt][0pt]{
            \rotatebox[origin=c]{90}{Generator}
        }\hspace{-7pt} & 
        \hspace{-7pt}
        \includegraphics[width=\evofigscale\textwidth]{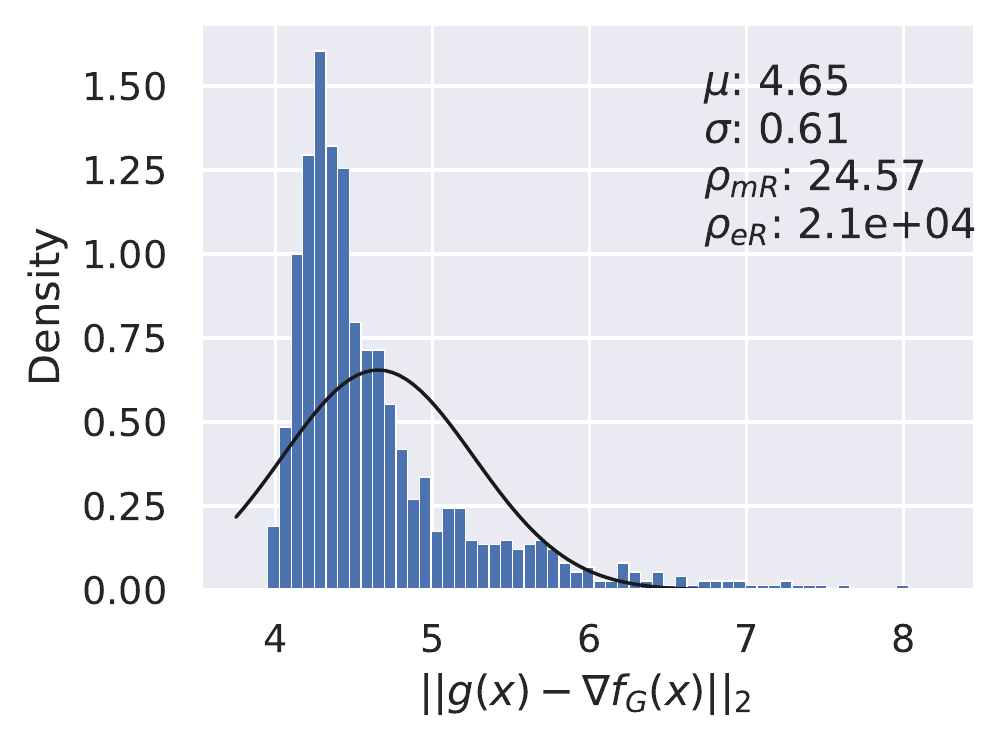} &   
        \hspace{-10pt}
        \includegraphics[width=\evofigscale\textwidth]{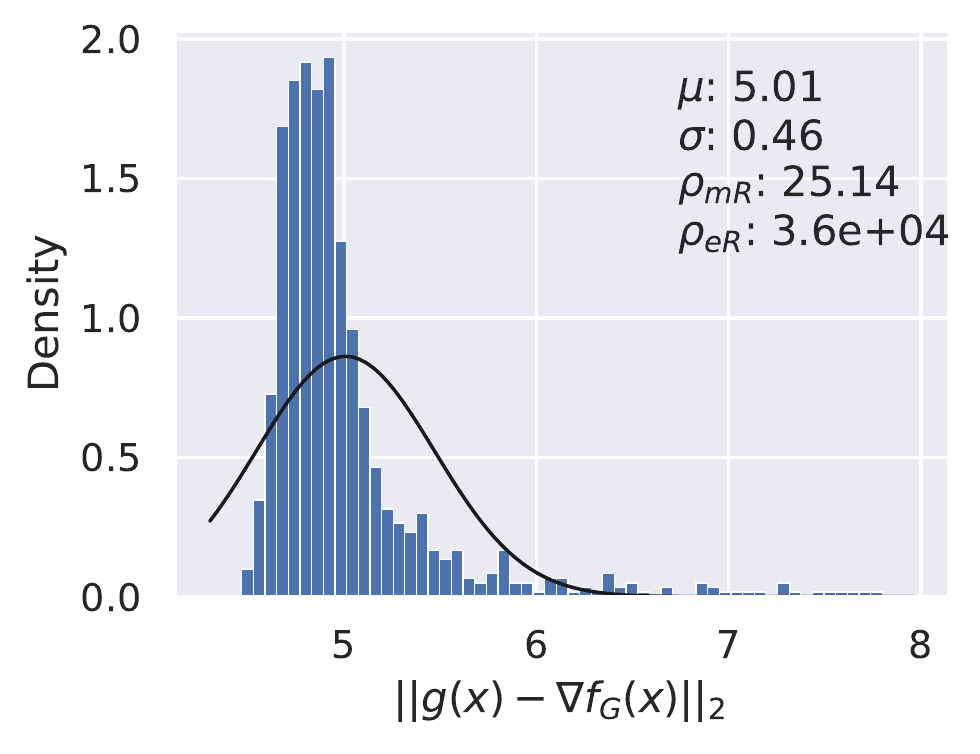} &
        \hspace{-10pt}
        \includegraphics[width=\evofigscale\textwidth]{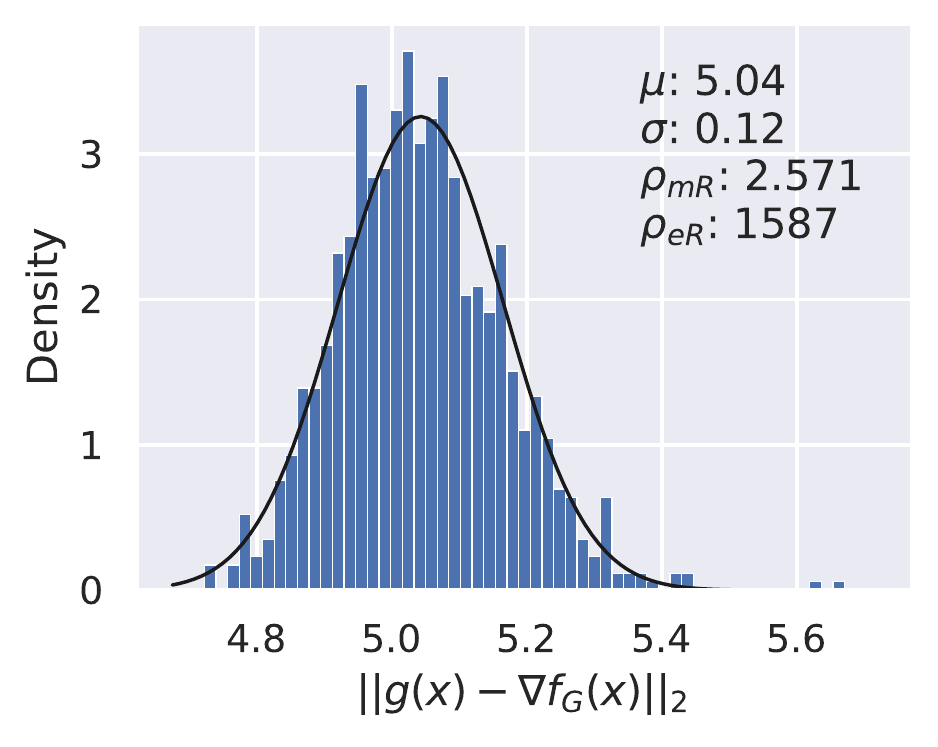} &   
        \hspace{-10pt}
        \includegraphics[width=\evofigscale\textwidth]{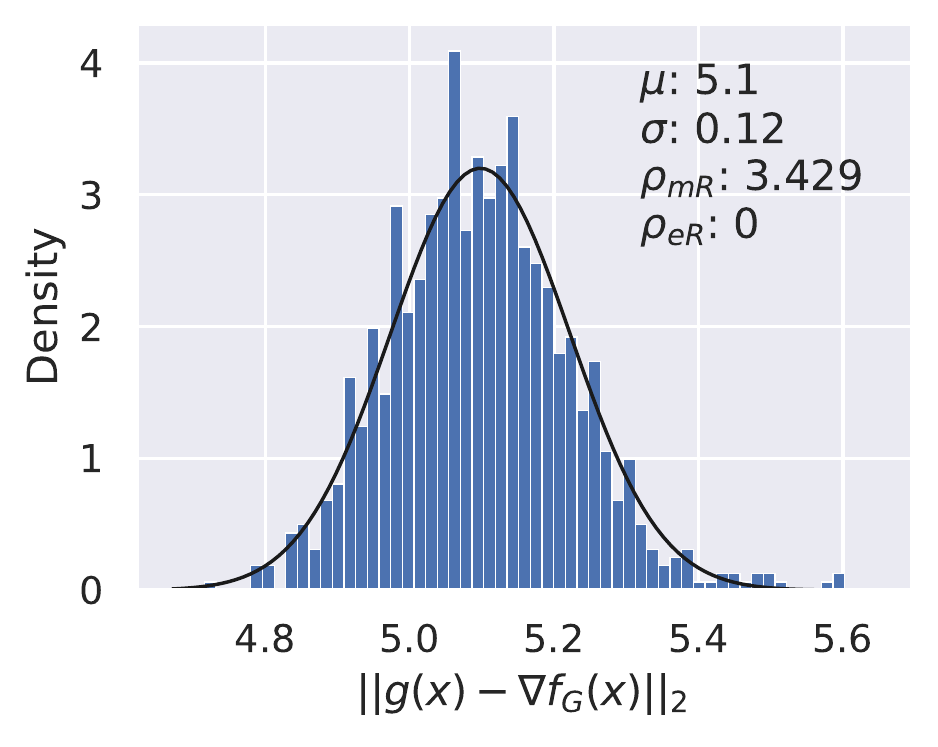} \\

        \raisebox{3.5\normalbaselineskip}[0pt][0pt]{
            \rotatebox[origin=c]{90}{Discriminator}
        }\hspace{-7pt} & 
        \hspace{-7pt}
        \includegraphics[width=\evofigscale\textwidth]{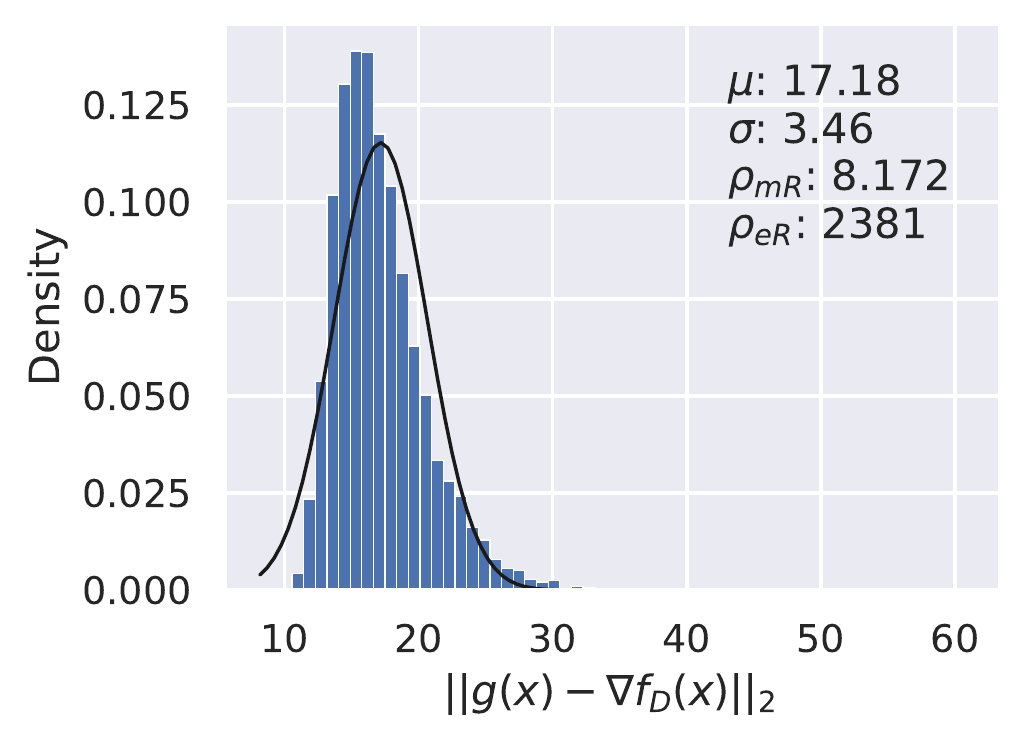} &   
        \hspace{-10pt}
        \includegraphics[width=\evofigscale\textwidth]{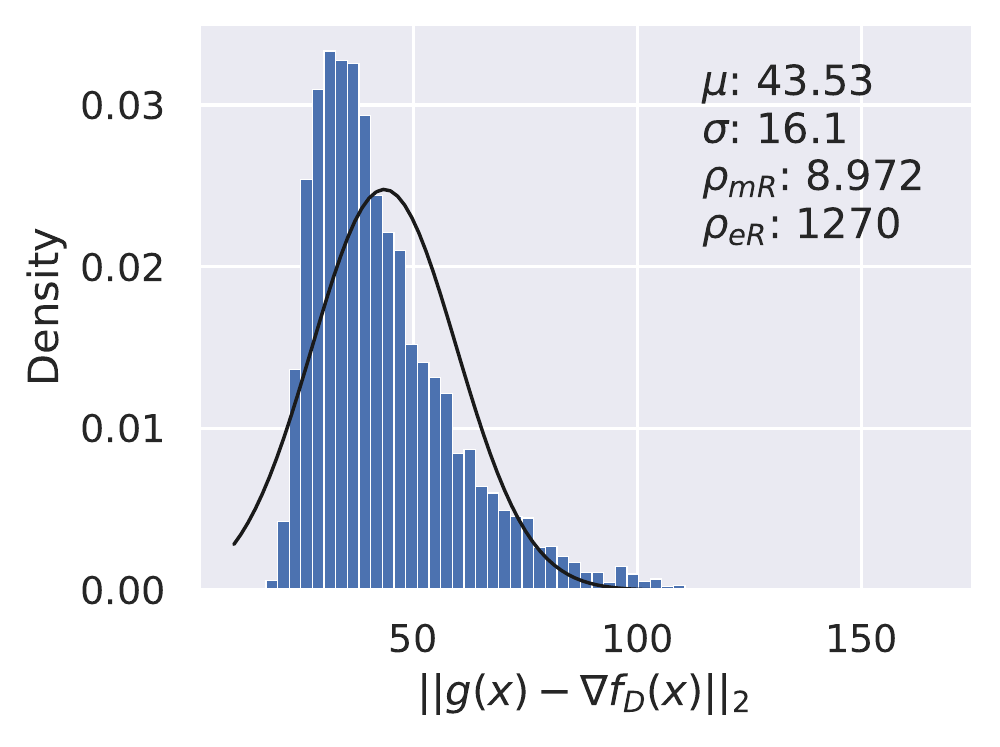} &
        \hspace{-10pt}
        \includegraphics[width=\evofigscale\textwidth]{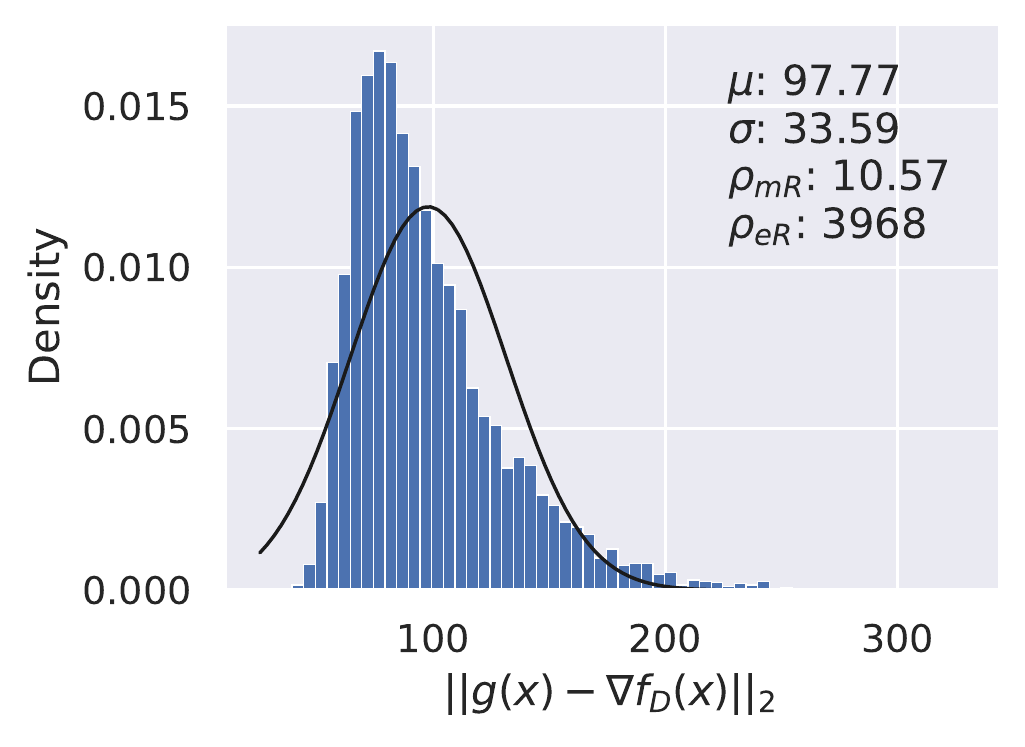} &   
        \hspace{-10pt}
        \includegraphics[width=\evofigscale\textwidth]{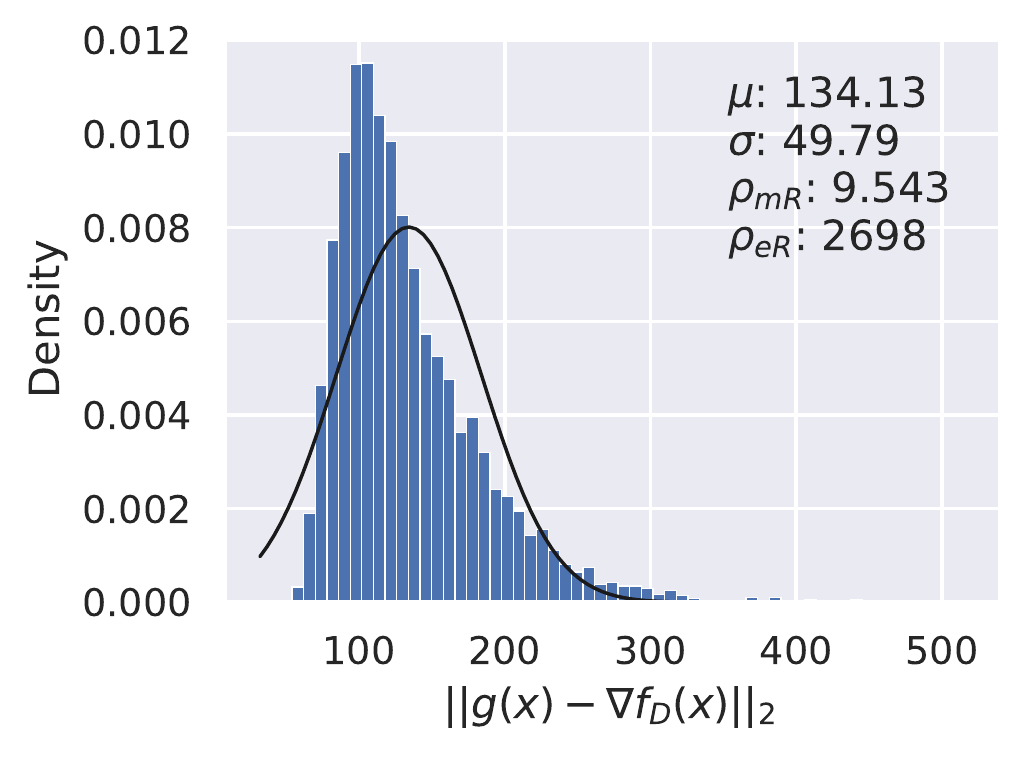} \\
        & {20000 steps} & {40000 steps} & {80000 steps} & {100000 steps} \\
    
    \end{tabular}
    \caption{\small Evolution of gradient noise histograms for the best WGAN-GP model trained with \algname{clipped-SGDA} (cordinate clipping).}
    \label{fig:app/evo/clipval-SGDA}
\end{figure}
\begin{figure}[]
    \begin{tabular}{lcccc}
        
        \raisebox{3.5\normalbaselineskip}[0pt][0pt]{
            \rotatebox[origin=c]{90}{Generator}
        }\hspace{-7pt} & 
        \hspace{-7pt}
        \includegraphics[width=\evofigscale\textwidth]{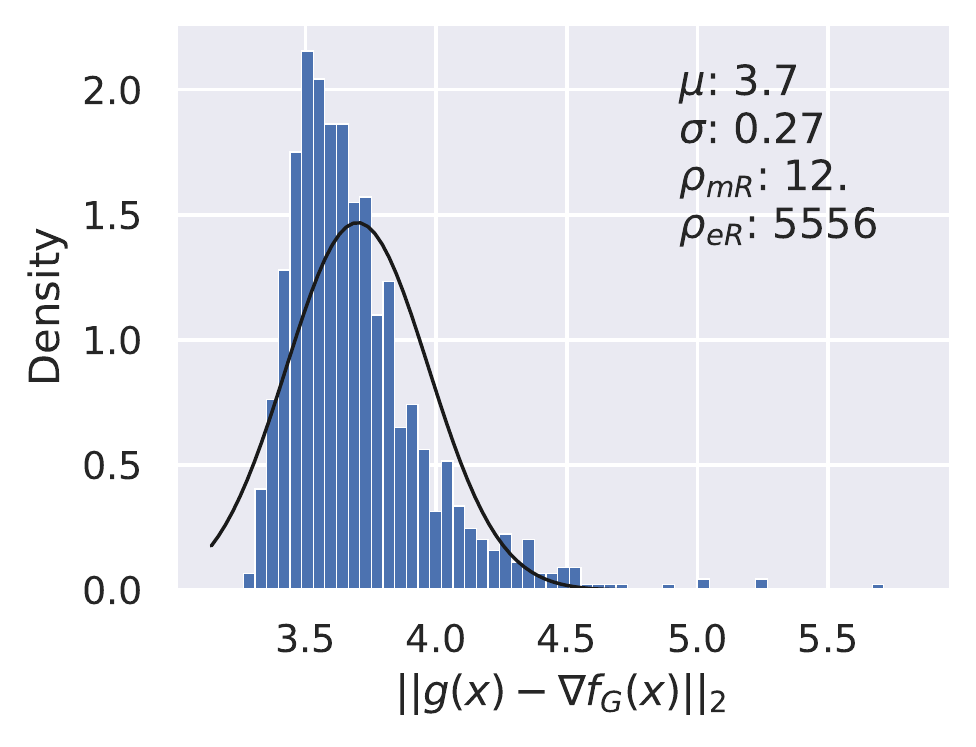} &   
        \hspace{-10pt}
        \includegraphics[width=\evofigscale\textwidth]{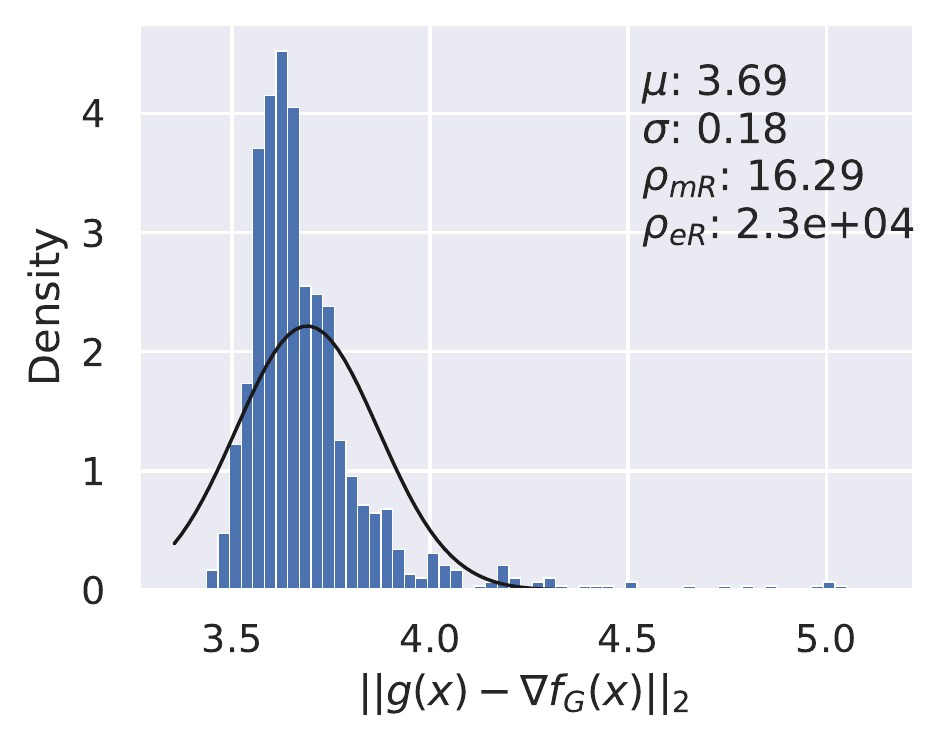} &
        \hspace{-10pt}
        \includegraphics[width=\evofigscale\textwidth]{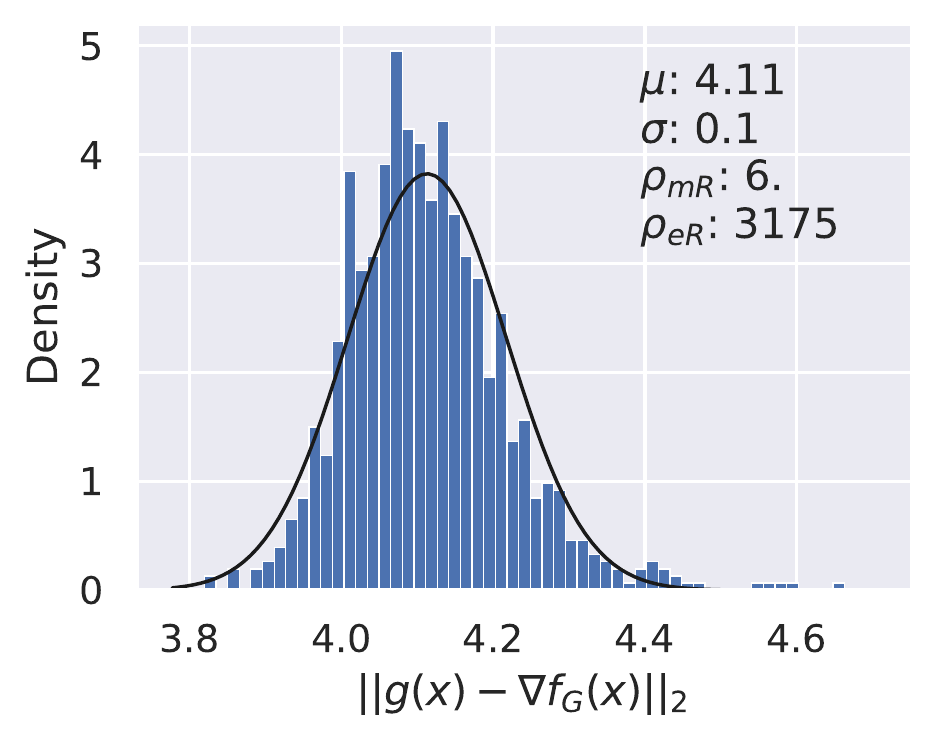} &   
        \hspace{-10pt}
        \includegraphics[width=\evofigscale\textwidth]{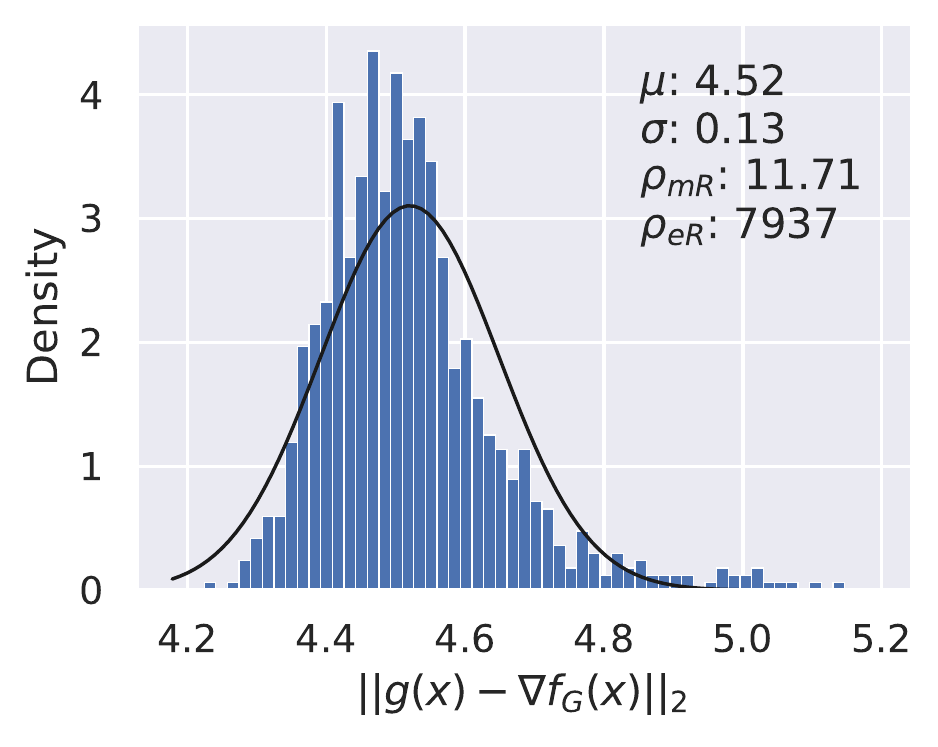} \\

        \raisebox{3.5\normalbaselineskip}[0pt][0pt]{
            \rotatebox[origin=c]{90}{Discriminator}
        }\hspace{-7pt} & 
        \hspace{-7pt}
        \includegraphics[width=\evofigscale\textwidth]{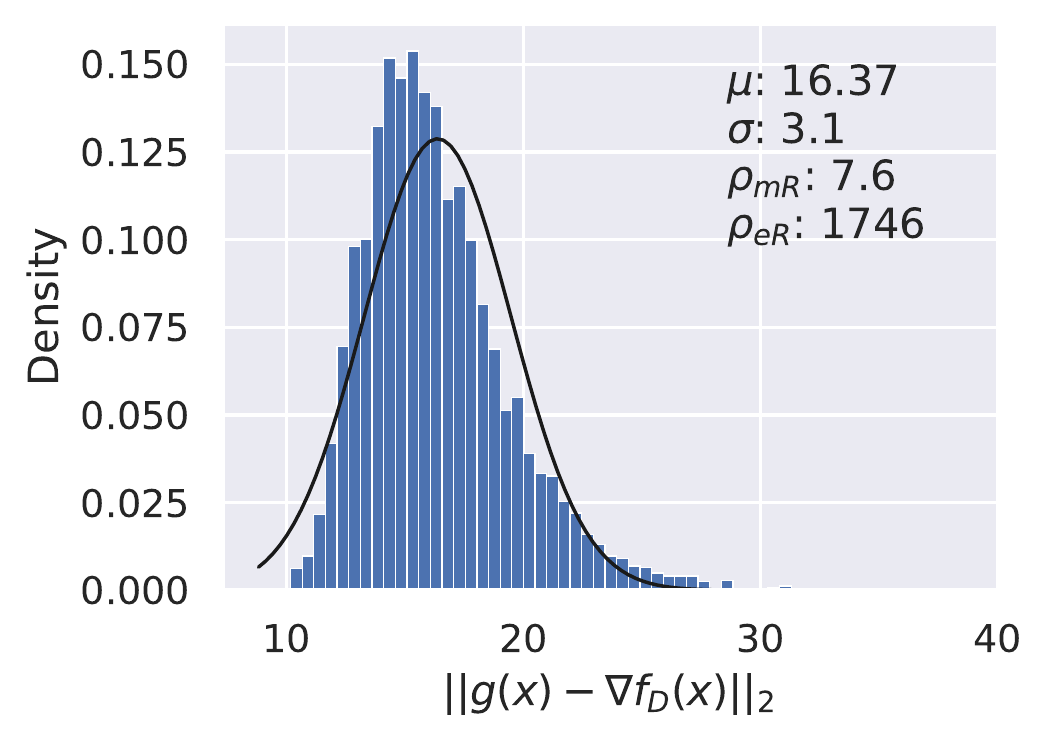} &   
        \hspace{-10pt}
        \includegraphics[width=\evofigscale\textwidth]{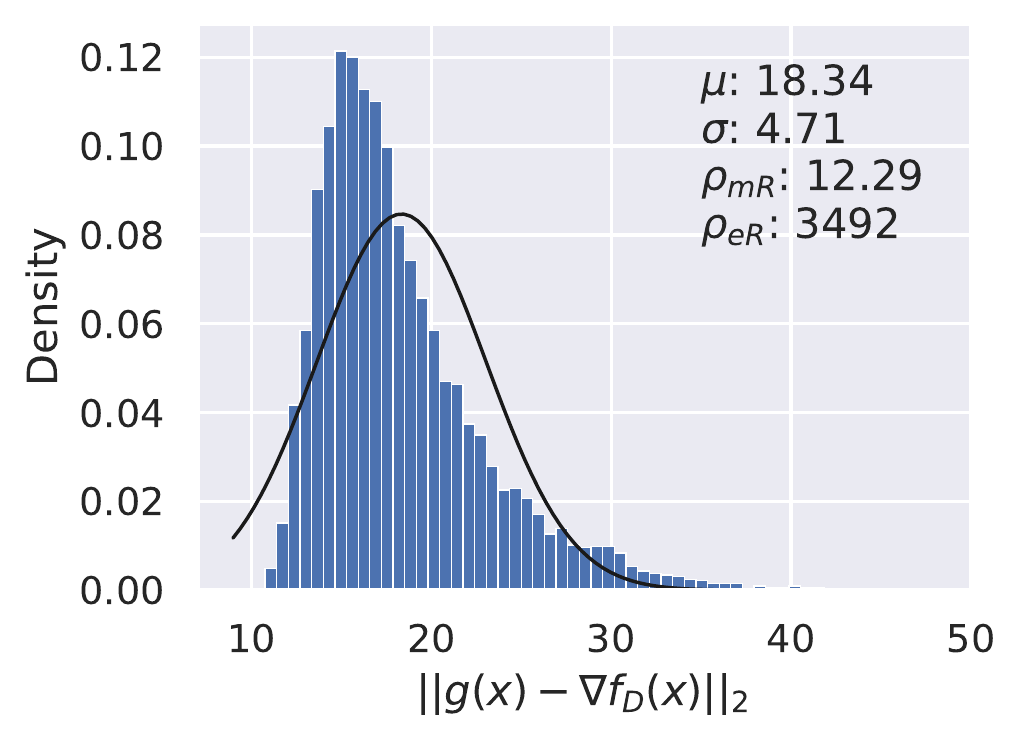} &
        \hspace{-10pt}
        \includegraphics[width=\evofigscale\textwidth]{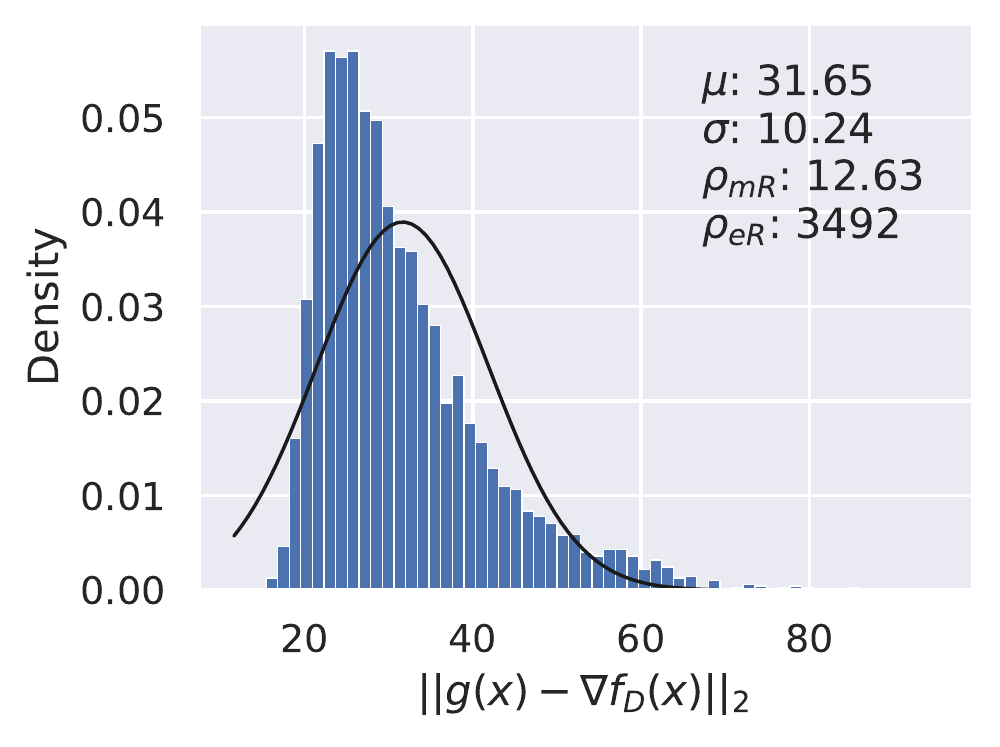} &   
        \hspace{-10pt}
        \includegraphics[width=\evofigscale\textwidth]{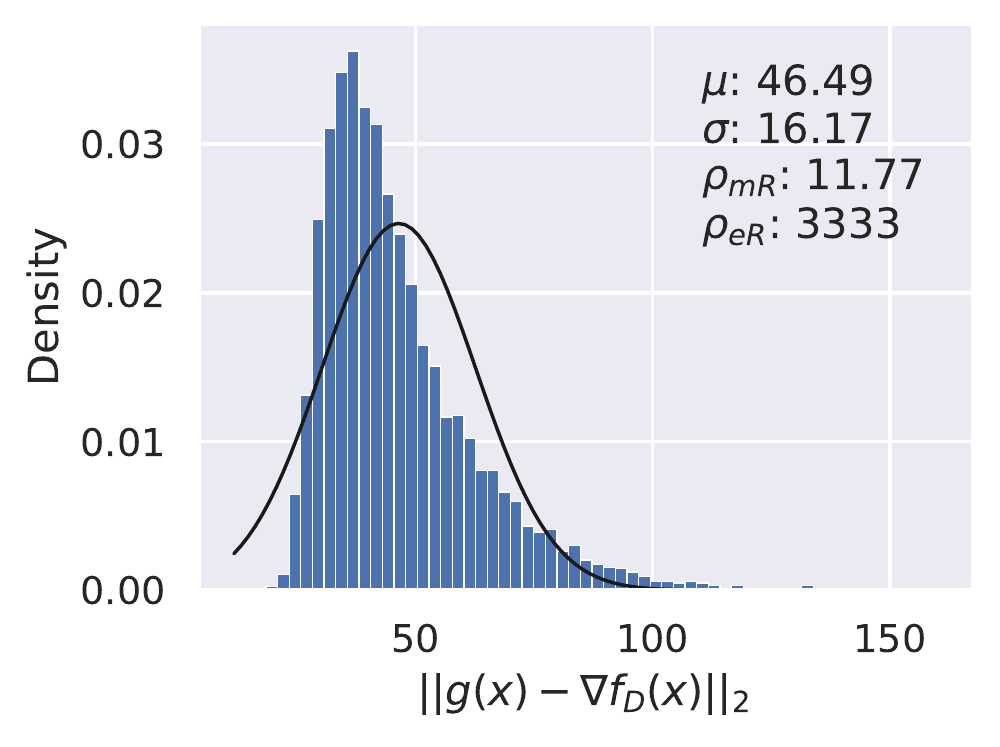} \\
        & {20000 steps} & {40000 steps} & {80000 steps} & {100000 steps} \\
    
    \end{tabular}
    \caption{\small Evolution of gradient noise histograms for the best WGAN-GP model trained with \algname{clipped-SEG} (cordinate clipping).}
    \label{fig:app/evo/clipval-SEG}
\end{figure}

\newpage

\begin{table}[]
    \begin{minipage}{.45\linewidth}
        \centering
        \caption{\small
        StyleGAN2 \algname{SGDA} hyperparameter sweep, and the best FID score obtained in 2600~kimgs.}
        \label{tab:stylegan2/hparam/sgd}
        \medskip
        %
        %
        \begin{tabular}{ccc}
        \toprule
          G-LR &   D-LR &   FID \\
        \midrule
         0.003 &  0.003 & 319.7 \\
        0.0075 & 0.0075 & 318.5 \\
          0.01 &   0.01 & 317.7 \\
         0.035 &  0.035 & 301.9 \\
          0.05 &   0.05 & 300.3 \\
         0.075 &  0.075 & 299.6 \\
           0.1 &    0.1 & 308.5 \\
          0.35 &   0.35 & 342.6 \\
           0.5 &    0.5 & 346.6 \\
        \bottomrule
        \end{tabular}
    \end{minipage}
    \hfill
    \begin{minipage}{.45\linewidth}
        \centering
        \caption{\small
        StyleGAN2 \algname{clipped-SGDA} (coordinate) hyperparameter sweep, and the best FID score obtained in 2600~kimgs. Bold row denotes the best run which was trained to convergence.}
        \label{tab:stylegan2/hparam/clipval-sgd}
        \medskip
        %
        %
        \begin{tabular}{ccccc}
        \toprule
         G-LR &  D-LR &  G-clip &  D-clip &   FID \\
        \midrule
          0.2 &   0.2 &   0.001 &   0.001 & 243.5 \\
          0.3 &   0.3 &   0.001 &   0.001 & 169.5 \\
         0.35 &  0.35 &  0.0005 &  0.0005 & 192.9 \\
         0.35 &  0.35 &   0.001 &   0.001 & 148.6 \\
         \textbf{0.35} &  \textbf{0.35} &  \textbf{0.0025} &  \textbf{0.0025} & \textbf{104.9} \\
         0.35 &  0.35 &   0.005 &   0.005 & 149.1 \\
         0.35 &   0.5 &    0.01 &    0.01 & 170.6 \\
          0.4 &   0.4 &   0.001 &   0.001 & 155.8 \\
          0.5 &   0.5 &  0.0001 &  0.0001 & 289.8 \\
          0.5 &   0.5 &   0.001 &   0.001 & 136.1 \\
        \bottomrule
        \end{tabular}
    \end{minipage}
\end{table}

\begin{figure}[]
    \centering
    \hspace*{-1.8cm}       
    \includegraphics[width=1.25\textwidth]{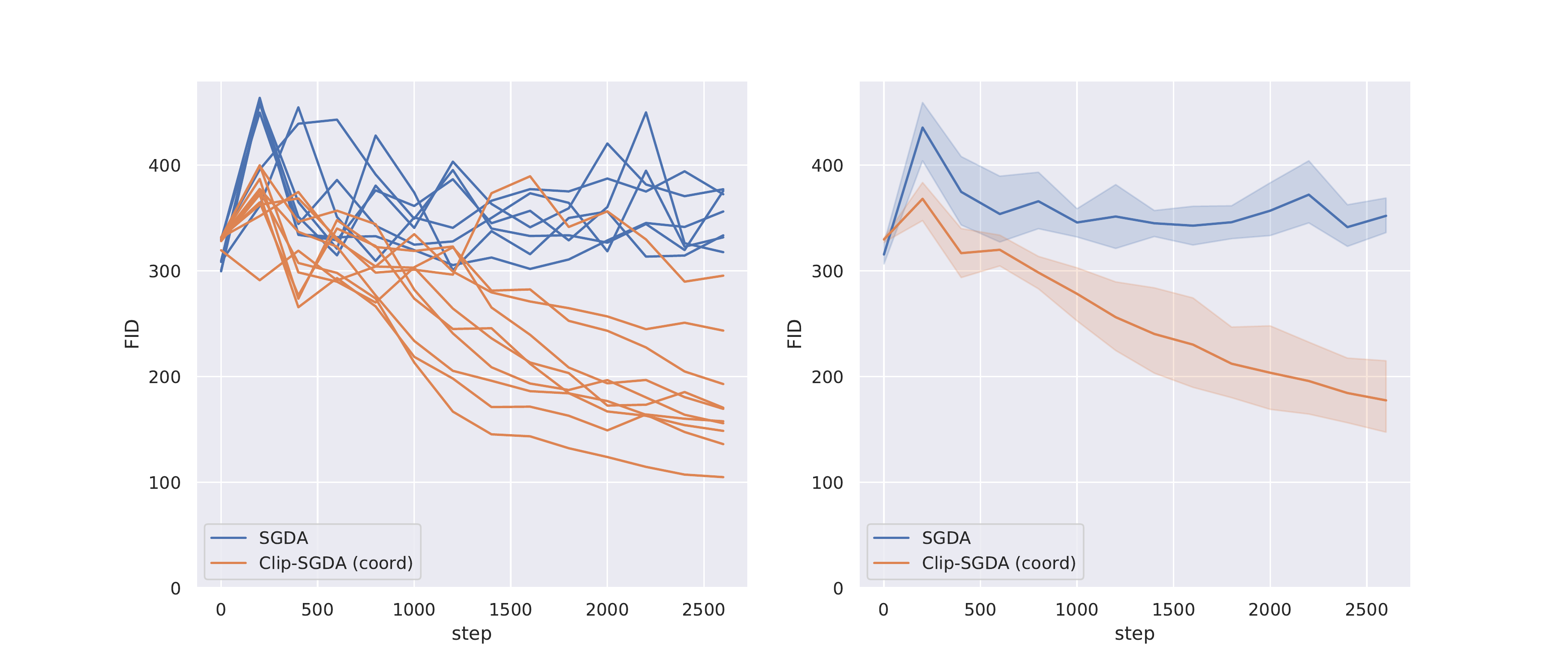}
    \caption{FID curves when training StyleGAN2 for 2600 kimgs (thousands of images seen by the discriminator) with \algname{SGDA} and \algname{clipped-SGDA} (coordinate), corresponding to the hyperparameters in Tables~\ref{tab:stylegan2/hparam/sgd}~\&~\ref{tab:stylegan2/hparam/clipval-sgd} respectively.
    The left figure is the individual runs for each choice of hyperparameters, and the right is the mean and $95\%$ confidence interval of these runs.
    Every \algname{SGDA}-trained model for the wide range of learning rates we tried failed to improve the FID, while models trained with \algname{clipped-SGDA} (with appropriately set hyperparameters) are generally able to learn some meaningful features and improve the FID.}
    \label{fig:stylegan/FID}
\end{figure}

\newpage

\begin{figure}[]
    \subfigure[Additional samples generated from our best model trained with \algname{clipped-SGDA} (lr=0.35, clip=0.0025).]{
        \includegraphics[width=\textwidth]{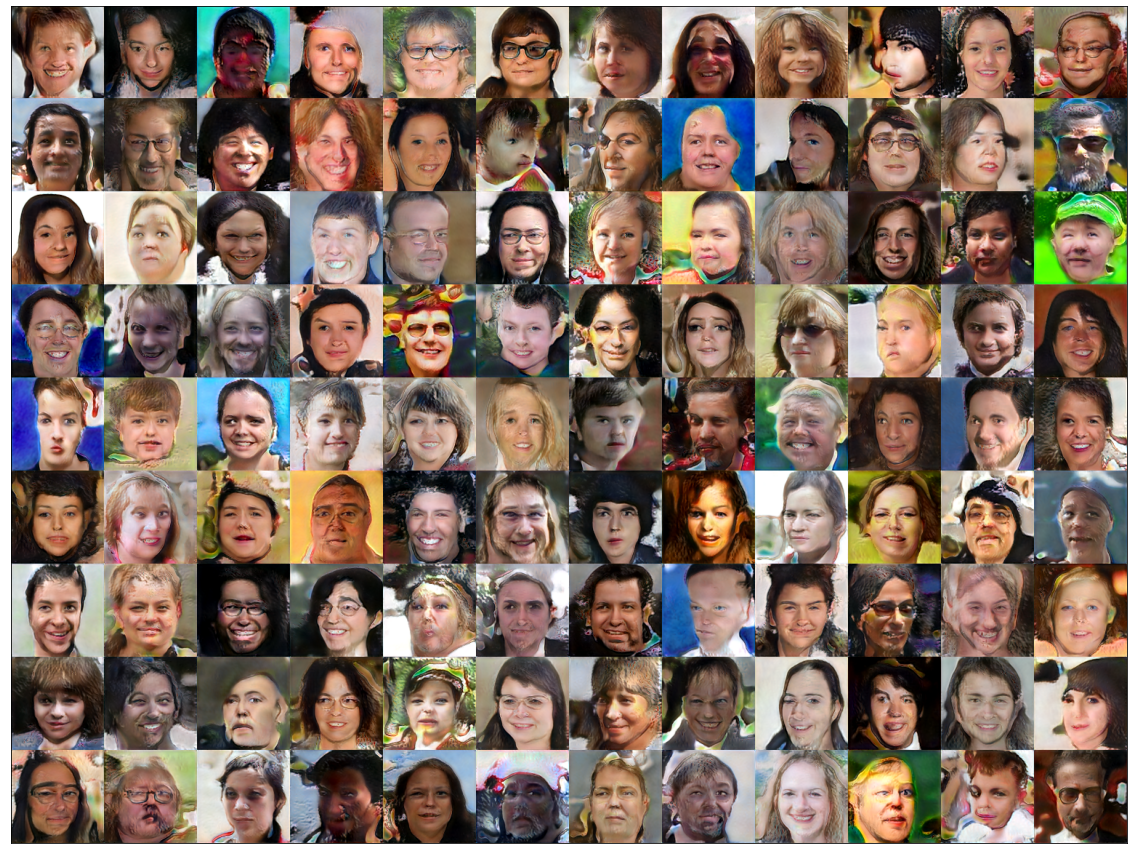}
        \label{fig:stylegan/clippedsgda/moresamples}
    } 
    \subfigure[Additional samples generated from several different \algname{SGDA} trained models, all of which failed to generate meaningful features. Each row corresponds to a model trained with different learning rates.]{
        \includegraphics[width=\textwidth]{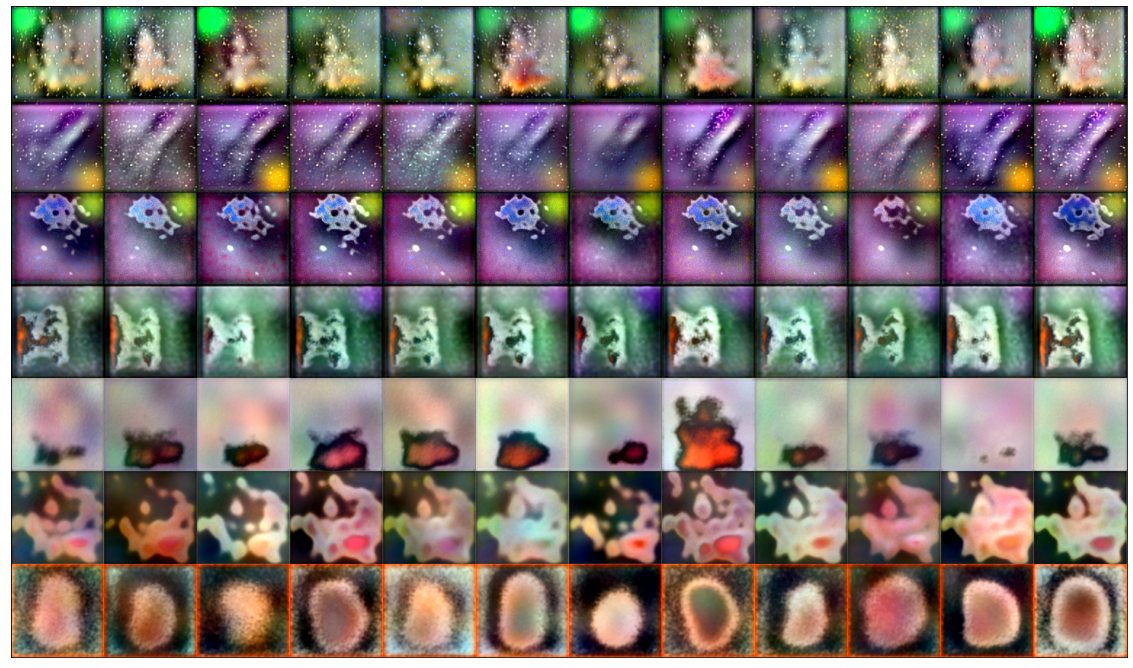}
        \label{fig:stylegan/sgda/moresamples}
    } 
    \caption{More StyleGAN2 samples.}
    \label{fig:stylegan/more}
    \vspace{-6mm}
\end{figure}

\end{document}